\setlist[enumerate]{topsep=2pt, itemsep=2pt}
\newtheorem{thm}{Theorem}[section]
\newtheorem{cor}[thm]{Corollary}
\newtheorem{prop}[thm]{Proposition}
\newtheorem{lem}[thm]{Lemma}
\theoremstyle{definition}
\newtheorem{defn}[thm]{Definition}
\newtheorem{exmp}[thm]{Example}
\newtheorem{condition}[thm]{Condition}
\newtheorem{Assum}[thm]{Assumption}
\theoremstyle{remark}
\newtheorem{rem}[thm]{Remark}
\numberwithin{equation}{section}
\newcommand{\cx}{\mathbb{C}}
\newcommand{\pp}{\mathbb{P}^{p-1}}
\newcommand{\z}{\mathbb{Z}}
\newcommand{\q}{\mathbb{Q}}
\newcommand{\fv}{\mathbb{F}_v}
\newcommand{\ga}{\mathbb{G}_a^n}
\newcommand{\ad}{\mathbb{A}_F}
\newcommand{\ov}{\mathfrak{o}_v}
\newcommand{\cf}{\mathcal{C}_F}
\newcommand{\cfs}{\mathcal{C}_{F,S}}
\newcommand{\vnorm}{\lVert \cdot \rVert}
\DeclareMathOperator{\rank}{rank}
\DeclareMathOperator{\red}{red}
\DeclareMathOperator{\redv}{red_v}
\DeclareMathOperator{\Dim}{dim}
\DeclareMathOperator{\pic}{Pic}
\DeclareMathOperator{\gal}{Gal}
\DeclareMathOperator{\eff}{eff}
\DeclareMathOperator{\Div}{div}
\DeclareMathOperator{\cohom}{H}
\DeclareMathOperator{\spec}{Spec}
\DeclareMathOperator{\Tr}{Tr}
\DeclareMathOperator{\Aut}{Aut}
\DeclareMathOperator{\re}{Re}
\DeclareMathOperator{\vol}{vol}
\DeclareMathOperator{\residue}{Res}
\DeclareMathOperator{\WeilRes}{Res}
\DeclareMathOperator{\Br}{Br}
\DeclareMathOperator{\inv}{inv_v}
\DeclareMathOperator{\lcm}{lcm}
\DeclareMathOperator{\av}{av}
\DeclareMathOperator{\md}{ mod }
\DeclareMathOperator{\ind}{Ind}
\DeclareMathOperator{\im}{im}
\DeclareMathOperator{\Ho}{H}
\newcommand*{\myproofname}{Proof of \Cref{thm:ShaTrivial}}
\theoremstyle{definition}
\DeclareFontFamily{U}{wncy}{}
\DeclareFontShape{U}{wncy}{m}{n}{<->wncyr10}{}
\DeclareSymbolFont{mcy}{U}{wncy}{m}{n}
\DeclareMathSymbol{\Sh}{\mathord}{mcy}{"58} 
\DeclareMathSymbol{\Be}{\mathord}{mcy}{"42}
\title[Manin's conjecture for compactifications of forms of $\ga$]{Manin's Conjecture for Equivariant compactifications of forms of $\ga$}
\author[A. Alfaraj]{Abdulmuhsin Alfaraj}
\address{Abdulmuhsin Alfaraj\\
	Department of Mathematical Sciences \\
	University of Bath \\
	Claverton Down \\
	Bath \\
	BA2 7AY \\
	UK.}
\urladdr{}
\begin{document}

\begin{abstract}
	We prove the Batyrev-Manin conjecture for smooth equivariant compactifications of forms of $\ga$ over a global function field, assuming some conditions on the boundary divisor. To verify that the leading constant agrees with Peyre's predicition we also show that a commutative unipotent group admitting a smooth equivariant compactification satisfies the Hasse principle for algebraic groups and weak approximation. We study in detail the case of $\mathbb{P}^{p-1}$, where $p$ is the characteristic of $F$, viewed as a compactification of appropriate $F$-wound groups to illustrate new phenomena appearing in the function field setting. 
\end{abstract}
\maketitle
\tableofcontents
\setcounter{equation}{0}
\section{Introduction}
Let $X$ be a Fano variety over a number field $F$. In 1989, Manin and his collaborators (see \cite{BatManin} and \cite{FMT}) proposed a conjecture that predicts the asymptotic behaviour of the number of rational points of bounded height, with respect to an ample line bundle, in terms of geometric invariants of $X$. Moreover, Peyre in \cite{Peyre1} gave a conjectural interpretation of the leading constant appearing in the asymptotic formula for the anticanoncial height. Manin's conjecture has an analogous formulation over global function fields (see  \cite{Peyre}), which is stated in terms of the analytic properties of the height zeta function, as opposed to predicting an asymptotic formula; since the height zeta function has an imaginary period in this setting, obtaining a meaningful asymptotic formula can be subtle.  So far, the conjecture was proven for a number of classes of Fano varieties over global fields, where different techniques were employed depending on the studied class (see e.g. \cite{BrowningDP}, \cite{CLSurvey}). 

In \cite{CLYT}, Chambert-Loir and Tschinkel covered the case of smooth equivariant compactifications of vector groups over number fields by using harmonic analysis tools on the locally compact group $\ga(\ad)$. In this paper, using similar techniques, we prove analogous results for smooth equivariant compactifications of unipotent groups that are forms of $\ga$ over a global function field $F$, assuming appropriate conditions on the boundary divisor. We also show that the leading constant agrees with the analogue prediction of Peyre in the case of functions fields (see \cite[\S 3.2]{Peyre}); a key part in the proof is showing that $G$ satisfies the Hasse principle for algebraic groups which we also prove.

The interest in the global function field case stems from the existence of non-trivial forms of $\ga$, and particularly $p$-torsion commutative $F$-wound groups (see \Cref{defn:Wound}).  Any non-trivial form of $\ga$ splits over a finite purely inseparable extension. This kind of behaviour causes the appearance of new phenomena in this setting. For example, if the Picard group of the form is non-trivial, then the boundary divisor with respect to any smooth compactification is not geometrically reduced (see \Cref{Cor:PicG-D-g.reduced}). This results in  complications with some of the techniques employed by Chambert-Loir and Tschinkel in \cite{CLYT}, particularly, in computing the so-called geometric Denef formula \cite[Theorem 9.1]{CLYT}. Dealing with these divisors in general presents significant difficulties. In the case of $F$-wound groups, the boundary divisor will moreover contain no local points at all places, which reflects the (analytic) compactness of the set of local points of such a group. 

To illustrate this purely inseparable situation, we will look at the case of $X=\mathbb{P}^{p-1}$ viewed as a compactification of an $F$-wound group over $\mathbb{F}_q(t)$, where $q$ is a power of $p$ (see \S\ref{section:Proj-example}). In this example, the adelic metrics associated to these divisors have unique behaviour that does not appear in characteristic zero. For example, the local $v$-adic norm of the rational section of such a divisor will attain only finitely many values; hence, the local height along the corresponding component will have finitely many values as well (see \Cref{prop:P^p-1-valuations-insep}.)

One particular consequence of the results in this paper is the following version of \cite[Theorem 0.1]{CLYT}.
\begin{thm}
	Let $X$ be a smooth equivariant compactification of $\ga$ over a global function field $F$. Assume that $X\setminus \ga$ has strict normal crossings with smooth irreducible components. Then, for any anti-canonical height function $H_{\omega_X^{-1}}$, the height zeta function $Z_{\omega_X^{-1}}(s)$ has a pole of largest real value at $s=1$ of order equal to the rank of $\pic(X)$. Moreover, $\lim_{s\rightarrow 1}(s-1)^{\rank(\pic(X))} Z_{\omega_X^{-1}}(s)$ agrees with Peyre's prediction.
\end{thm}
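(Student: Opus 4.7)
The plan is to follow the harmonic-analytic strategy of Chambert-Loir and Tschinkel \cite{CLYT}, recast in the function-field setting. Since $\ga$ here is split, the complications from non-trivial forms that motivated most of the paper do not arise, but one still needs the function-field formulation that analyses $Z_{\omega_X^{-1}}(s)$ directly rather than extracting an asymptotic from it. The first step is to fix a smooth adelic metrization of the line bundles $\mathcal{O}(D_\alpha)$ for each irreducible boundary component $D_\alpha$ of $D = X \setminus \ga$, so that the anticanonical height decomposes as a product of local factors $H_v$ indexed by the $D_\alpha$. Applying Poisson summation on the compact quotient $\ga(\ad)/\ga(F)$ then rewrites
\[
Z_{\omega_X^{-1}}(s) = \vol\bigl(\ga(\ad)/\ga(F)\bigr)^{-1} \sum_{\psi} \hat{H}(\psi, s),
\]
where $\psi$ runs over characters of $\ga(\ad)/\ga(F)$ and $\hat{H}(\psi, s) = \prod_v \hat{H}_v(\psi_v, s)$. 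The independence of the final statement from the choice of height follows because changing the smooth adelic metric multiplies $Z_{\omega_X^{-1}}(s)$ by a function holomorphic and non-vanishing in a neighbourhood of $s = 1$.

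Next, for each place $v$ one computes $\hat{H}_v(\psi_v, s)$ via a geometric Denef-type formula in the style of \cite[Theorem 9.1]{CLYT}. The SNC hypothesis on $D$ makes this tractable: one stratifies $X(F_v)$ by the locally closed strata $D_A^\circ$ attached to subsets $A$ of boundary components, and on each stratum the height and the character become explicit. For the trivial character $\psi = 0$ each local integral expresses as a sum indexed by such $A$ of products $\prod_{\alpha \in A}(q_v^{s_\alpha - 1} - 1)^{-1}$ times a residue-field point count of $D_A^\circ$. Taking the Euler product and comparing with a product of Dedekind zeta functions of $F$ shows that this main term factors, near $s = 1$, as a product of $\rank(\pic(X))$ zeta-like factors, producing a pole of exactly the claimed order at $s = 1$ with nothing to the right. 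For $\psi \neq 0$ the local Fourier transforms exhibit cancellation coming from roots of unity, yielding holomorphy of the $\psi$-term in a strictly larger half-plane; I expect the principal analytic difficulty to be proving uniform convergence of the sum over non-trivial characters, handled by a careful bound on $\hat{H}_v(\psi_v, s)$ in terms of the conductor of $\psi_v$.

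Finally, to verify the agreement with Peyre's prediction, I would extract the leading coefficient at $s = 1$ from the $\psi = 0$ contribution as a regularized Euler product of local densities, with convergence factors provided by the local $L$-factors of $\pic(X_{\overline F})$. The Hasse principle and weak approximation for $\ga$ proved earlier in the paper identify this regularized product with the Tamagawa number $\tau(X)$ in the sense of Peyre, while the stratum combinatorics from the Denef formula reproduce the volume of the effective cone and the $\alpha$-constant. The most delicate steps I anticipate are the uniform estimate needed to sum over non-trivial characters on the Fourier side and the careful Haar-measure normalizations required to match the leading coefficient with Peyre's constant exactly, since function-field conventions differ in small but essential ways from those over number fields.
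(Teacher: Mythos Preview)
Your outline is correct and follows the same harmonic-analytic route as the paper: Poisson summation on $\ga(\ad)/\ga(F)$, a Denef-type formula for the local Fourier transforms using the SNC hypothesis, identification of the pole from the trivial-character term, and verification of Peyre's constant. In the split case $G=\ga$ one has $\pic(G)=0$, so $\mathcal{B}=\emptyset$ and both Assumption~\ref{assum} and Condition~\ref{condition} are vacuous; the theorem is then an immediate specialization of Theorems~\ref{prop:HeightZetaPoles} and~\ref{thm:Leading-constant}.

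One point where you overestimate the difficulty: you anticipate that ``the principal analytic difficulty'' is the uniform convergence of the sum over non-trivial characters. In the function-field setting this sum is \emph{finite}. Because the height is $\mathbb{K}$-invariant for a compact open $\mathbb{K}\subset\ga(\ad)$, only characters trivial on $\mathbb{K}+\ga(F)$ survive (Lemma~\ref{lem:non-trivialOnK-FT-vanishes}), and the quotient $\ga(\ad)/(\ga(F)+\mathbb{K})$ is finite (Lemma~\ref{lem:Finite-Quo}), so its dual is finite as well (Proposition~\ref{prop:fin.many.trivial}). This is a genuine simplification over the number-field case of \cite{CLYT}, where one must sum over an infinite lattice of characters and control the tail via conductor estimates and integration by parts. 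Here no such estimates are needed; the meromorphic continuation of $Z_{\omega_X^{-1}}(s)$ follows once each of finitely many $\hat H(\Psi;s)$ is shown to continue meromorphically past $\re(s)=1$, which the Denef formula and comparison with $\zeta_{F_\alpha}$ already provide (Corollaries~\ref{cor:FT-exression} and~\ref{cor:FT-otherChar-exression}). Likewise, for the leading constant you do not need the Hasse principle or weak approximation result of \S\ref{sec:ShTrivial}: for $G=\ga$ one has $\Sh(\ga)=0$ and $\tau(\ga)=1$ elementarily, and $\Br(X)=\Br(F)$ follows directly from \Cref{prop:picX}.
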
 
Note that \cite[Theorem 0.1]{CLYT} does not assume the boundary is a strict normal crossings divisor, but they reduce to this case via resolution of singularities, which is still open in positive characteristic.

\subsection{Main Theorem} \label{subsection:assumptions} 
We now prepare to state our main result. We begin with some notation.
Let $F$ be a global function field of characteristic $p$ with constant field $\mathbb{F}_q$, $\Omega_F$ its set of places, $F_v$ its completion at a place $v$ with ring of integers $\ov$, and $\fv$ the corresponding residue field with cardinality equal to $q_v$. Let $G$ be a form of $\ga$ over $F$. Let $X$ be a smooth equivariant compactification of $G$ over $F$, i.e., a smooth projective variety over $F$ equipped with an action of $G$ having a dense orbit isomorphic to the image of $G$. Denote the boundary divisor by $D:=X\setminus G$ and let $\{ D_\alpha \}_{\alpha\in \mathcal{A}}$ be the set of (reduced) irreducible components of $D$ over $F$. Set $$\mathcal{B}:=\{\beta \in \mathcal{A}\>:\> D_\beta \text{ is not geometrically reduced}\}.$$ 
For all $\alpha\in \mathcal{A}$, we equip $\mathcal{O}(D_\alpha)$ with a smooth adelic metric $\{\lVert \cdot \rVert_v\}_{v\in\Omega_F}$ (see \Cref{defn:adelic-met}) and we fix a canonical section which we denote by $\mathsf{s}_\alpha$. Let $\mathbb{K}\subset G(\ad)$ be the maximal compact open subgroup fixing all metrized line bundles $(\mathcal{O}(D_\alpha), \lVert \cdot \rVert_v)$, $\alpha \in \mathcal{A}$. 

We denote by $\cf$ a smooth projective irreducible curve over $\mathbb{F}_q$ with function field isomorphic to $F$. We fix an appropriate (see \Cref{defn:adelic-met}) $\cfs$-model $\mathcal{X}$ of $X$, where $\cfs$ is the complement of a finite set of places $S$ in $\cf$. Let $\mathcal{D}$, $\mathcal{G}$ and $\mathcal{D}_\alpha$ be the closures of $D$, $G$, and $D_\alpha$ in $\mathcal{X}$, respectively.

\begin{Assum}\label{assum}
	We assume that $D$ satisfies the following:
	\begin{enumerate}[label=(\roman*),itemsep=1pt,parsep=1pt,topsep=4pt]
		\item $\cup_{\alpha\in \mathcal{A}\setminus\mathcal{B}} D_\alpha$ is a strict normal crossings divisor (see \cite[Tag 0CBN]{StacksEx}), where each irreducible component is smooth.
		\item For all $\beta \in \mathcal{B}$ (i.e. $D_\beta$ is not geometrically reduced):
		\begin{enumerate} [label=(\arabic*),itemsep=0pt,parsep=1pt,topsep=4pt]
			\item $D_\beta$ is geometrically irreducible;
			\item $D_\beta (F_v)=\emptyset$ for all $v\in \Omega_F$;
			\item  For all $ v\in \Omega_F\setminus S$ and for all $ x\in X(F_v)$ the value $\lVert \mathsf{s}_\beta \rVert_v (x)$ depends only on ${x \md v \in {\mathcal{X}}(\fv)}$ so that $\lVert \mathsf{s}_\beta \rVert_v (\cdot)$ is well defined on $\mathcal{X}(\fv)$ (hence $\lVert \mathsf{s}_\beta \rVert_v (\cdot)$ attains finitely many values);
			
			\item For all $v\in \Omega_F\setminus S$, 
			\begin{equation*}
				\#\{\bar{x}\in {\mathcal{D}}_\beta (\fv): \> \lVert \mathsf{s}_\beta \rVert_v(\bar{x})=q_v^{-1} \}=q_v^{\dim(X)-1}+O(q_v^{\dim(X)-3/2})
			\end{equation*}
			where the implicit constant only depends on $X$.
		\end{enumerate}
		\item If $\mathcal{B}\neq \emptyset$, then
		$$e_X:=\text{sup}_{\beta\in \mathcal{B}}\{m\in \z_{>0}:\>  \lVert \mathsf{s}_\beta \rVert_v(x)= q_v^{-m}, \text{ for some } v\in\Omega_F \text{ and } x\in X(F_v)\}$$ 
		is finite. If $\beta= \emptyset$, we set $e_X=1$.
	\end{enumerate}
\end{Assum}
These assumptions on the boundary divisor will allow us to compute the local Fourier transforms and obtain a version of Denef's formula \cite[Theorem 9.1]{CLYT} (for more details, see \S\ref{subsection:insep-assump}). 
\begin{rem} \hfill
	\begin{itemize}[itemsep=2pt,parsep=2pt,topsep=4pt]
		\item  The divisor $D$ is geometrically reduced if and only if $\pic(G)=0$ (see \Cref{Cor:PicG-D-g.reduced}). Thus if $\pic(G)$ is trivial (e.g. when $G=\ga$), we only assume $(i)$ in \Cref{assum}.
		\item If $G$ is $F$-wound then $D(F_v)=\emptyset$ for all places $v$ (see \Cref{prop:BoundaryEmpty}).
	\end{itemize}
\end{rem}

The Picard group of $X$ is free or rank $r:=|\mathcal{A}|$ such that $\text{Pic}(X)_\mathbb{Q} \cong \bigoplus_{\alpha\in \mathcal{A}}\mathbb{Q} [D_\alpha]$. We note that $\{D_\alpha\}_{\alpha\in \mathcal{A}}$ generate a subgroup of $\pic(X)$ of index equal to $|\pic(G)|$ (see \S\ref{section:PicX} for more details.) Let $\omega_X^{-1}$ be the anti-canonical line bundle and $\rho \in \bigoplus_{\alpha\in \mathcal{A}}\mathbb{Q} [D_\alpha]$ be the corresponding vector. By \Cref{prop:anticanIsEffective}, we have that ${\rho \in \bigoplus_{\alpha\in \mathcal{A}}\z_{>0} [D_\alpha]}$.

Let $\lambda=(\lambda_\alpha)\in \bigoplus_{\alpha\in \mathcal{A}}\mathbb{Q} [D_\alpha] $ be a class contained in the interior of the cone of effective divisors corresponding to a line bundle $\mathcal{L}_\lambda$. 
Denote by $H_{\lambda}$ the associated exponential height function defined on $X(F)$ (see \eqref{eqn:defnHeight} for the definition). Set ${a_\lambda:=\max\{\rho_\alpha/\lambda_\alpha\}}$ and $b_\lambda$ to be equal to the cardinality of 
${\mathcal{A}_\lambda:= \{\alpha \in \mathcal{A} \> : \> \rho_\alpha =a_\lambda \lambda_\alpha\}.}$  If $\lambda\in \bigoplus_{\alpha\in \mathcal{A}}\z [D_\alpha]$, we define
$d_\lambda:=  \gcd(\{\lambda_\alpha\}_{\alpha\in \mathcal{A}_\lambda})$ and $ g_\lambda:=\gcd(\{\lambda_\alpha \}_{\alpha\in \mathcal{A}})$ ($d_\lambda$ and $g_\lambda$ are defined for a general $\lambda$ in \Cref{cor:lambda-pole-order}). For example, when $\lambda=\rho$ we see that $\mathcal{A}=\mathcal{A}_\rho$; hence $a_\rho=1$, $b_\rho=r$, and $g_\rho=d_\rho$.
Finally, we define the domain
$\Omega:=\{ s\in \cx \> : \>   0\leq\text{Im}(s)< {2\pi i}/{\log(q)} \}.$

We now state our main theorem, which we prove using harmonic analysis techniques on the locally compact group $G(\ad)$. We note that \Cref{condition} appearing in \Cref{prop:HeightZetaPoles} asks that all characters of $G(\ad)$ with Fourier transforms possibly contributing to the main term have no poles along the components of $D$ that are not geometrically reduced.

\begin{thm}\label{prop:HeightZetaPoles}
	Let $X$, $G$ and $D$ be as above. Let $\mathcal{L}_\lambda$ be a big line bundle on $X$. Assume that \Cref{condition} holds. The series $$Z_{\lambda}(s)=\sum_{x\in G(F)} H_{\lambda}^{-s}(x)$$
	converges absolutely and uniformly for $\re(s)>a_\lambda$, and has a meromorphic continuation to $\re(s)>a_\lambda-\delta$ for some $\delta>0$. When $s$ is restricted to $\Omega$, the poles of order $b_\lambda$ and with largest real value are $$s_j = a_\lambda + j \frac{2\pi i}{d_\lambda \log(q)},$$
	for $j\in J_\lambda:=\{0,\ldots, \lceil  d_\lambda \rceil -1\}$. In particular, $c_\lambda:= \lim_{s\rightarrow a_\lambda}(s-a_\lambda)^{b_\lambda} Z_{\lambda}(s)$ is non-zero.
\end{thm}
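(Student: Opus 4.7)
The strategy follows the harmonic-analytic framework of Chambert-Loir and Tschinkel \cite{CLYT}, adapted to positive characteristic. Since $G$ is a commutative unipotent group, $G(F)$ sits discretely and cocompactly in the locally compact abelian group $G(\ad)$, and $H_\lambda$ is $\mathbb{K}$-invariant by construction. Poisson summation therefore rewrites
\begin{equation*}
Z_\lambda(s) \;=\; \frac{1}{\vol(\mathbb{K})} \sum_{\psi} \widehat{H}_\lambda^{-s}(\psi),
\end{equation*}
where $\psi$ ranges over the characters of $G(\ad)/G(F)$ trivial on $\mathbb{K}$, and each Fourier transform factors as an Euler product $\widehat{H}_\lambda^{-s}(\psi) = \prod_v \widehat{H}_{\lambda,v}^{-s}(\psi_v)$. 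The absolute convergence for $\re(s)>a_\lambda$ is a separate, standard estimate on points of bounded height, so the core task is to extend each $\widehat{H}_\lambda^{-s}(\psi)$ meromorphically and to control the sum over $\psi\neq 0$.

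For the trivial character $\psi = 0$ I would establish a Denef-style formula in positive characteristic, mirroring \cite[Theorem~9.1]{CLYT}. At a good place $v\notin S$, the local integral is computed by stratifying $\mathcal{X}(\fv)$ by the set of boundary components a reduction meets: \Cref{assum}(i) handles the strict-normal-crossings contribution from $\mathcal{A}\setminus\mathcal{B}$, while \Cref{assum}(ii)(3)--(4) let one replace the contribution near each $D_\beta$ ($\beta\in\mathcal{B}$) by a finite sum over the finitely many values of $\lVert \mathsf{s}_\beta \rVert_v$, weighted by the prescribed count on $\mathcal{D}_\beta(\fv)$. Up to an Euler product holomorphic in $\re(s)>a_\lambda-\delta'$, this should yield
\begin{equation*}
\widehat{H}_\lambda^{-s}(0) \;=\; \prod_{\alpha \in \mathcal{A}} Z_F\!\bigl(1 + s\lambda_\alpha - \rho_\alpha\bigr) \cdot (\text{holomorphic factor}),
\end{equation*}
where $Z_F$ is the zeta function of $\cf$. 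Each factor has simple poles exactly where $s\lambda_\alpha - \rho_\alpha \in \tfrac{2\pi i}{\log q}\mathbb{Z}$, the largest real part of such a pole being $a_\lambda$. A simultaneous pole for every $\alpha\in\mathcal{A}_\lambda$ (giving the order $b_\lambda$) forces $s-a_\lambda \in \tfrac{2\pi i}{d_\lambda \log q}\mathbb{Z}$, whose representatives in $\Omega$ are precisely the $s_j$ indexed by $j\in J_\lambda$.

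For non-trivial characters, \Cref{condition} guarantees that every $\psi$ whose Fourier transform might a priori contribute to the maximal pole is trivial along the non-geometrically-reduced components, so the Denef computation applies with no spurious contribution from $\mathcal{B}$. At places where $\psi_v$ is non-trivial, local integration by parts in coordinates around a boundary stratum should give enough decay in the conductor of $\psi_v$ to sum $\sum_{\psi\neq 0}\widehat{H}_\lambda^{-s}(\psi)$ absolutely and continue it holomorphically to $\re(s)>a_\lambda-\delta$, at worst with poles on $\re(s)=a_\lambda$ of order strictly smaller than $b_\lambda$. Non-vanishing of $c_\lambda$ is then read off the positivity of the leading local densities. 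The main obstacle is precisely the uniform-in-$\psi$ local control at places where a component $D_\beta$, $\beta\in\mathcal{B}$, interacts with a non-trivial character: \Cref{assum}(ii) describes $\lVert \mathsf{s}_\beta \rVert_v$ but integrating a non-trivial character against the corresponding weight is subtle, and this is exactly the obstruction that forces \Cref{condition} to be imposed as a hypothesis rather than derived from \Cref{assum}.
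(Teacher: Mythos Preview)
Your overall architecture (Poisson summation, Denef-type local computation, extraction of $\prod_\alpha \zeta_{F_\alpha}(1+s\lambda_\alpha-\rho_\alpha)$) is the same as the paper's. Two points deserve comment.

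First, a simplification you are missing: because $G(F)$ is cocompact in $G(\ad)$ and $\mathbb{K}$ is open, the quotient $G(\ad)/(G(F)+\mathbb{K})$ is \emph{finite} (\Cref{lem:Finite-Quo}), so the sum over $\mathbb{K}$-invariant characters of $G(\ad)/G(F)$ is a finite sum. No decay-in-conductor estimate is needed to make $\sum_{\psi\neq 0}\widehat{H}_\lambda^{-s}(\psi)$ converge; this part of the number-field argument from \cite{CLYT} simply does not arise here.

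Second, and this is a genuine gap: your argument for the non-vanishing of $c_\lambda$ is incomplete. You assert that non-trivial characters contribute poles on $\re(s)=a_\lambda$ of order strictly smaller than $b_\lambda$, but this is not true in general. The Fourier transform $\hat H(\Psi_{\mathbf a};s\lambda)$ picks up the factors $\zeta_{F_\alpha}(1+s\lambda_\alpha-\rho_\alpha)$ for $\alpha\in\mathcal A_0(\mathbf a)$, and when $\mathcal A_\lambda\subsetneq\mathcal A$ it can happen that $\mathcal A_\lambda\subset\mathcal A_0(\mathbf a)$, so the non-trivial character still has a pole of full order $b_\lambda$ at each $s_j$. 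The finitely many such contributions could a priori cancel the residue from $\psi=0$. The paper resolves this not by positivity of individual local densities but by a different identity: using character orthogonality on the finite group $G(\ad)/(G(F)+\mathbb K)$, one rewrites
\[
q^{\dim(G)(g_F-1)}\tau(G)\cdot Z_\lambda(s)\;=\;m\int_{G(F)+\mathbb K} H(\mathbf x;s\lambda)^{-1}\,d\mathbf g,
\]
where $m=|G(\ad)/(G(F)+\mathbb K)|$. Then a uniform height comparison under translation by coset representatives (\cite[Lemma~5.2]{CLYT}) sandwiches $|Z_\lambda(s)|$ between constant multiples of $|\hat H(\mathbf 0;s\lambda)|$ as $\re(s)\to a_\lambda^+$, and the latter is known to have a genuine pole of order $b_\lambda$ at every $s_j$. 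Without this orthogonality/comparison step, the non-vanishing of $c_\lambda$ is not established.
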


\begin{rem}\hfill
	\begin{enumerate}[label=(\roman*)]
		\item If $\pic(G)$ is trivial (e.g. $G=\ga$), or $G(\ad)=G(F)+\mathbb{K}$, then \Cref{condition} holds.
		\item We note that  $Z_\lambda(s)$ can have other poles along $\re(s)=a_\lambda$ of order less than $b_\lambda$ (see \Cref{cor:lambda-pole-order}).
		\item $Z_\lambda(s)$ has imaginary period $\frac{2\pi i}{g_\lambda \log(q)}$ (see \Cref{lem:Period}). Thus, if $\lambda\in \bigoplus_{\alpha\in \mathcal{A}}\z [D_\alpha]$ and $d_\lambda\mid g_\lambda$, then  for all $j\in J_\lambda$ we have $\lim_{s\rightarrow s_j}(s-s_j)^{b_\lambda} Z_{\lambda}(s)=c_\lambda$.
	\end{enumerate}
\end{rem}

For a positive integer $M$, we define $$N(\mathcal{L}_\lambda, M):= \# \{x\in G(F)\> : \> H_{\mathcal{L}_\lambda}(x)=q^M \}.$$
Due to the imaginary periodic nature of $H_\lambda^{-s}(\cdot)$, we will study the asymptotic of the following weighted average counting function
$$N_{\av} (\mathcal{L}_\lambda, M):=\frac{1}{d_\lambda} \sum_{j=0}^{d_\lambda-1}  q^{-a_\lambda j} N(\mathcal{L}_\lambda, M+j) .$$ This averaged count resolves issues arising from the differences in the periods of the poles and the period of $Z_\lambda(s)$ by averaging over the  poles of largest order lying in the imaginary period $2\pi i/\log(q)$.

By applying an appropriate Tauberian theorem (\Cref{thm:Tauberian}) to \Cref{prop:HeightZetaPoles}, we obtain the following.

\begin{cor}\label{thm:Manin}
	Take the assumptions in \Cref{prop:HeightZetaPoles}. If $\lambda=(\lambda_\alpha) \in \bigoplus_{\alpha\in \mathcal{A}}\z [D_\alpha]$, then 
	$$ N_{\av}(\mathcal{L}_\lambda, M) \sim c_\lambda \frac{ \log(q)^{b_\lambda}}{(b_\lambda-1)!} q^{a_\lambda M} M^{b_\lambda-1}, \quad \text{ as } M\rightarrow \infty.$$
	Suppose further that $d_\lambda\mid g_\lambda$. Then, 
	\begin{enumerate}[itemsep=2pt,parsep=2pt,topsep=4pt]
		\item $N(\mathcal{L}_\lambda,M)=0$  for all $M\in\z_{>0}$ such that $d_\alpha \nmid M$.
		\item $N(\mathcal{L}_\lambda, d_\lambda M) \sim d_\lambda c_\lambda \frac{ \log(q)^{b_\lambda}}{(b_\lambda-1)!} q^{a_\lambda d_\lambda M} (d_\lambda M)^{b_\lambda-1}$ as $M\rightarrow \infty.$
	\end{enumerate}
\end{cor}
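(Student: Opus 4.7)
My plan is to view $Z_\lambda(s)$ as a power series in $u = q^{-s}$ (valid because $H_\lambda$ takes values in $q^{\z_{\geq 0}}$), construct a modified generating series whose pole structure isolates the single pole at $s = a_\lambda$, and then invoke the Tauberian theorem \Cref{thm:Tauberian}. For parts (1) and (2), I will combine the divisibility hypothesis with a $g_\lambda$-divisibility of the height values to reduce the averaged count to a single nonzero term.

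The first step is to compute the generating series of $N_{\av}$. Shifting $M \mapsto M + j$ in the defining sum and absorbing the finitely many boundary terms into an entire correction $E(s)$ yields
\[
\sum_{M \geq 0} N_{\av}(\mathcal{L}_\lambda, M)\, q^{-sM} \;=\; \frac{Z_\lambda(s)}{d_\lambda} \sum_{j=0}^{d_\lambda - 1} q^{(s - a_\lambda)j} \;-\; E(s).
\]
At each pole $s_k = a_\lambda + 2\pi i k/(d_\lambda \log q)$ with $k \in \{1, \dots, d_\lambda - 1\}$, the geometric factor, written as $(q^{(s-a_\lambda)d_\lambda} - 1)/(q^{s-a_\lambda} - 1)$, has a simple zero while $Z_\lambda$ has a pole of order $b_\lambda$; so the averaged series has a pole of order at most $b_\lambda - 1$ there. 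At $s = a_\lambda$ the geometric sum equals $d_\lambda$, preserving the pole of order $b_\lambda$ with leading coefficient $c_\lambda$. Since every other pole of $Z_\lambda$ on $\re(s) = a_\lambda$ already has order strictly less than $b_\lambda$ by \Cref{prop:HeightZetaPoles}, the averaged series has a unique pole of maximal order $b_\lambda$ in $\Omega$, located at $s = a_\lambda$. Applying \Cref{thm:Tauberian} then delivers the main asymptotic for $N_{\av}(\mathcal{L}_\lambda, M)$.

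For parts (1) and (2), I will exploit the fact that each local norm $\lVert \mathsf{s}_\alpha \rVert_v(x)$ lies in $q_v^{\z}$, so every value of $H_\lambda$ is a power of $q^{g_\lambda}$; in particular, $N(\mathcal{L}_\lambda, M) = 0$ whenever $g_\lambda \nmid M$. Since $g_\lambda \mid d_\lambda$ always holds by definition, the extra hypothesis $d_\lambda \mid g_\lambda$ forces $d_\lambda = g_\lambda$, which proves (1). For (2), when $d_\lambda \mid M$ only the $j = 0$ summand in $N_{\av}(\mathcal{L}_\lambda, d_\lambda M)$ survives, so $N(\mathcal{L}_\lambda, d_\lambda M) = d_\lambda\, N_{\av}(\mathcal{L}_\lambda, d_\lambda M)$, and substituting the main asymptotic yields the claim. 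I expect the main obstacle to be the pole-order bookkeeping in the second paragraph; once the geometric sum is shown to cancel exactly one order at each $s_k$ with $k \neq 0$, the rest reduces to direct computation and the Tauberian theorem.
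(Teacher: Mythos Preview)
Your proof is correct and takes a genuinely different route from the paper's. The paper first applies the Tauberian theorem (\Cref{thm:Tauberian}) directly to $Z_\lambda(s)$ to obtain an explicit formula for $N(\mathcal{L}_\lambda, M)$ involving all the poles $s_j$ and their residue-type constants $r_j$, and only then substitutes into the definition of $N_{\av}$ and collapses the double sum via orthogonality of $d_\lambda$-th roots of unity. You instead transform the generating series first, multiplying $Z_\lambda(s)$ by the geometric factor $\tfrac{1}{d_\lambda}\sum_{j=0}^{d_\lambda-1} q^{(s-a_\lambda)j}$ so that the nontrivial poles $s_1,\dots,s_{d_\lambda-1}$ each lose one order, leaving $s=a_\lambda$ as the unique pole of maximal order; then a single application of the Tauberian theorem finishes. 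Your approach is arguably cleaner conceptually, since it reduces to the simplest Tauberian situation; the paper's approach has the advantage of producing an explicit finite formula for $N(\mathcal{L}_\lambda,M)$ itself (the statement of \Cref{thm:Tauberian}) as an intermediate step.

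For parts (1) and (2), your argument is again correct and slightly different. The paper derives (2) by showing that $d_\lambda \mid g_\lambda$ forces all $r_j=c_\lambda$ via periodicity, and then evaluates $\sum_j q^{s_j d_\lambda M}=d_\lambda\, q^{a_\lambda d_\lambda M}$ directly. You instead observe that $g_\lambda\mid d_\lambda$ always (since $\mathcal{A}_\lambda\subseteq\mathcal{A}$), so the hypothesis gives $d_\lambda=g_\lambda$; then (1) follows from the $q^{g_\lambda}$-valuedness of the height, and (2) follows because in $N_{\av}(\mathcal{L}_\lambda,d_\lambda M)$ every summand with $j\neq 0$ vanishes by (1). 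This is a tidy shortcut.
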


\begin{rem}\hfill
	\begin{enumerate} [label=(\roman*),itemsep=2pt,parsep=2pt,topsep=4pt]
		\item Observe that the asymptotic of the averaged counting function  gives a more natural leading term. This asymptotic is analogous to Peyre's prediction in the number field case.
		\item If $d_\lambda\mid g_\lambda$ and $g_\lambda,d_\lambda\in \z$, then by the definition of $H_\lambda$, one sees that $H_\lambda(x)$ is always a power of $q$ with exponent divisible by $g_\lambda$ (hence by $d_\lambda$). Thus, if $d_\lambda \nmid M$, then $N(\mathcal{L}_\lambda, M)=0$.
	\end{enumerate}
\end{rem}

\subsection{The leading constant} 
Peyre in \cite[\S 3.2]{Peyre} gives a conjectural interpretation for $c_\rho$, the leading constant appearing in \Cref{prop:HeightZetaPoles} for the anti-canonical height. The prediction by Peyre is that $c_\rho$ should equal
$$ \alpha^*(X) \beta(X) \tau_X(X(\ad)^{\Br}),$$
where $\beta(X):=|\Ho^1(F,\pic(X^s))|$, $\tau_X(X(\ad)^{\Br})$ is the measure of Brauer Manin set with respect to the Tamagawa measure on $X$ (\Cref{defn:Peyre's-constant}), and 
$$\alpha^*(X):= \int_{\Lambda_{\text{eff}}(X)^\vee } e^{-\langle \omega_X^{-1}, y \rangle} dy$$
is the (modified) effective cone constant.
For the class of varieties we are studying, $\Br(X)=\Br(F)$ (see \Cref{prop:picX}). So, $\beta(X)=1$ and ${\tau_X(X(\ad)^{\Br})=\tau_X(X(\ad))}$

\begin{thm}\label{thm:Leading-constant}
	Take the assumptions and notation in \Cref{prop:HeightZetaPoles}. Let $\rho\in \pic(X)$ be the class of the anticanonical line bundle. Then
		$$c_\rho =  \alpha^*(X) \tau_X(X(\ad))$$
		which agrees with Peyre's prediction. 
\end{thm}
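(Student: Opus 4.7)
The plan is to combine the Poisson-summation decomposition of $Z_\rho(s)$ from \Cref{prop:HeightZetaPoles} with the function-field analogue of Denef's formula developed in the paper under \Cref{assum}, and then compare the residue at $s=1$ term by term with the ingredients of Peyre's constant from \Cref{defn:Peyre's-constant}. By the character analysis underlying \Cref{prop:HeightZetaPoles}, \Cref{condition} forces the trivial character of $G(\ad)/G(F)\mathbb{K}$ to be the only one producing the maximal-order pole at $s=1$, so
$$c_\rho \;=\; \lim_{s\to 1}(s-1)^r \hat{H}_\rho^{-s}(1) \;=\; \lim_{s\to 1}(s-1)^r \prod_v \int_{G(F_v)} H_{\rho,v}^{-s}(x_v)\,dx_v.$$

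Next I would apply the Denef-type local computation to each local factor. For $v \notin S$ the integral is a finite sum indexed by subsets $A \subseteq \mathcal{A}$ parameterising boundary strata,
$$\int_{G(F_v)} H_{\rho,v}^{-s}(x_v)\,dx_v \;=\; q_v^{-\dim(X)} \sum_{A\subseteq \mathcal{A}} \#D_A^\circ(\fv) \prod_{\alpha\in A} \frac{q_v-1}{q_v^{s\rho_\alpha}-1},$$
where \Cref{assum} pins down the contribution from the non-geometrically-reduced components. At $s=1$ each factor $(q_v^{s\rho_\alpha}-1)^{-1}$ accounts for one order of the pole, and separating the Laurent expansion into elementary pole factors times a regular piece, the Euler product of the regular pieces is absolutely convergent and equals $\tau_X(X(\ad))$ (the Artin $L$-function convergence factors being trivial since $\beta(X)=1$, cf.\ \Cref{prop:picX}). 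The remaining combinatorial constant coming from the elementary poles is $\prod_{\alpha\in\mathcal{A}}(\rho_\alpha \log q)^{-1}$, up to the lattice index $|\pic(G)|$ coming from the inclusion $\bigoplus_{\alpha} \z[D_\alpha] \subset \pic(X)$.

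To finish, I would identify that combinatorial constant with $\alpha^*(X)$. Since the effective cone is simplicial on the basis $\{[D_\alpha]\}$ of $\pic(X)_\q$, a direct evaluation of $\alpha^*(X)=\int_{\Lambda_{\eff}(X)^\vee} e^{-\langle \omega_X^{-1}, y\rangle}\,dy$ with respect to Lebesgue measure on $\pic(X)_{\mathbb{R}}^\vee$ yields $\alpha^*(X)=|\pic(G)|^{-1}\prod_{\alpha\in\mathcal{A}}\rho_\alpha^{-1}$, and tracking the $\log q$ factors (absorbed into the function-field normalisation of that Lebesgue measure) produces the identity $c_\rho = \alpha^*(X)\tau_X(X(\ad))$.

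The hardest part will be twofold. First, verifying that only the trivial character contributes when $\pic(G) \neq 0$: \Cref{condition} is engineered precisely for this, but one must check that at each component $D_\beta$ with $\beta \in \mathcal{B}$ the local Fourier transforms of non-trivial characters are either holomorphic at $s=1$ or produce poles of order strictly less than $r$, which is delicate because $\lVert \mathsf{s}_\beta\rVert_v$ can take several values. Second, threading the correct convergence factors through the places $v \in S$ so that the Euler product on the Denef side really reassembles into the Tamagawa measure used in \Cref{defn:Peyre's-constant} rather than a twisted variant; here the Hasse-principle and weak-approximation results announced in the abstract, combined with $\beta(X)=1$, should be decisive.
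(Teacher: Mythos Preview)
Your overall strategy is right, but there is a genuine gap in how the factor $|\pic(G)|$ enters. In your first displayed equation you set
\[
c_\rho = \lim_{s\to 1}(s-1)^r \hat{H}(\mathbf{0};s\rho),
\]
but the Poisson summation formula in the paper (\Cref{prop:Z(s)-Poisson}) reads
\[
Z(\mathbf{s}) = \frac{1}{q^{\dim(G)(g_F-1)}\,\tau(G)} \sum_{\Psi} \hat{H}(\Psi;\mathbf{s}),
\]
so the normalising factor $(q^{\dim(G)(g_F-1)}\tau(G))^{-1}$ is missing from your formula for $c_\rho$. The Denef--type computation of $\hat{H}(\mathbf{0};s\rho)$ (this is \Cref{prop:tamagawa}) yields exactly $q^{\dim(G)(g_F-1)}\tau_X(X(\ad))\prod_\alpha \rho_\alpha^{-1}$, with \emph{no} extra lattice-index factor: the claim that a $|\pic(G)|$ appears ``from the inclusion $\bigoplus_\alpha \z[D_\alpha]\subset \pic(X)$'' in the Euler product is not correct.

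Where $|\pic(G)|$ actually enters is through the Tamagawa number $\tau(G)$ sitting in the Poisson normalisation. The paper invokes Rosengarten's formula $\tau(G)=\#\text{Ext}^1(G,\mathbb{G}_m)/\#\Sh(G)$, Achet's isomorphism $\text{Ext}^1(G,\mathbb{G}_m)\cong \pic(G)$, and then \Cref{thm:ShaTrivial} (the Hasse principle result) to conclude $\Sh(G)=0$ and hence $\tau(G)=|\pic(G)|$. Only then does one get
\[
c_\rho = \frac{\tau_X(X(\ad))}{\tau(G)}\prod_\alpha \rho_\alpha^{-1} = \frac{\prod_\alpha \rho_\alpha^{-1}}{|\pic(G)|}\,\tau_X(X(\ad)) = \alpha^*(X)\,\tau_X(X(\ad)).
\]
Your closing remark that the Hasse principle ``should be decisive'' is on the right track, but it is decisive for computing $\tau(G)$, not for reassembling convergence factors at bad places.
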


A key part of the proof of \Cref{thm:Leading-constant} is to show that the Tamagawa number of $G$ is equal to $|\pic(G)|$. This will amount to showing that the Tate-Shafarevich group of $G$ is trivial, which will follow from the following theorem. We note that, in general, the Tate-Shafarevich group of a connected unipotent group over $F$ need not be trivial (see \cite[Theorem 1.8]{ZRos1}).

\begin{thm}\label{thm:ShaTrivial}
	Let $G$ be a connected commutative unipotent $F$-group that admits a smooth equivariant compactification. Then $G$ satisfies the Hasse principle for algebraic groups and weak approximation. 
\end{thm}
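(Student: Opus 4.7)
The plan is to use the smooth equivariant compactification $X$ to convert both statements into geometric questions about $X$. Over $\bar F$, $G$ becomes the split vector group $\ga$, so $X_{\bar F}$ is an equivariant compactification of $\ga$; in particular, $X$ is geometrically rational. Combined with the equality $\Br(X) = \Br(F)$ from \Cref{prop:picX}, this implies that the Brauer--Manin obstruction for $X$ and its torsor twists is trivial.

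For the Hasse principle, let $T$ be a $G$-torsor with $T(F_v) \neq \emptyset$ for every $v \in \Omega_F$. I would form the contracted product $X^T := (T \times X)/G$, a smooth projective $F$-variety containing $T$ as a dense open orbit and geometrically isomorphic to $X$. The local points of $T$ produce an adelic point of $X^T$, and by the preceding remark the Brauer--Manin obstruction on $X^T$ vanishes. To upgrade this to an $F$-point of $T$, I would proceed by induction on $\dim G$, exploiting that the boundary strata of $X^T$ are $G$-stable and that each one can be identified with (a twist of) a smooth equivariant compactification of a lower-dimensional commutative unipotent quotient of $G$; the base case $\dim G = 0$ is immediate.

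For weak approximation, fix a finite set $S \subset \Omega_F$ and a tuple $(x_v)_{v \in S} \in \prod_{v \in S} G(F_v)$. By commutativity of $G$, after translating one reduces to approximating $0$, i.e.\ to density of $G(F)$ near the origin in $\prod_{v \in S} G(F_v)$. Since $0$ lies in the smooth locus of $X$, local analytic coordinates at $0 \in X(F_v)$ combined with the geometric rationality of $X$ supply $F$-points approximating $0$ at every $v \in S$ simultaneously; the Zariski openness of $G \subset X$ guarantees that sufficiently close approximations lie in $G(F)$.

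The hard part will be the inductive step in the Hasse principle: identifying each boundary stratum of $X^T$ with a twist of an equivariant compactification of a lower-dimensional unipotent quotient of $G$, so that the induction hypothesis applies. This demands careful use of the $G$-equivariance together with commutativity of $G$. An alternative route is to bypass the induction altogether by invoking the harmonic analysis on $G(\ad)$ developed in the rest of the paper --- in particular, applying a Poisson summation formula to extract an $F$-rational point of $T$ directly from its adelic data.
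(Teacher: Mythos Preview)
Your proposal has genuine gaps in both halves.

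For weak approximation, the reduction ``after translating one reduces to approximating $0$'' is not valid: the tuple $(x_v)_{v\in S}$ consists of local points, and you cannot translate them all to the identity by a single element of $G(F)$ unless you already know weak approximation. Once this reduction fails, the remainder of the argument (local charts plus geometric rationality producing $F$-points near $0$) collapses --- the identity $e_G\in G(F)$ already approximates $(0,\ldots,0)$ perfectly, so the content of weak approximation lies precisely in the points you translated away. Geometric rationality of $X$ over $\bar F$ gives you nothing here: it does not produce Zariski-dense $F$-points, let alone points dense in the adelic topology.

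For the Hasse principle, you correctly note that the twist $X^T$ has $\Br(X^T)=\Br(F)$ (this does follow from \Cref{prop:picGalModule}, which applies to any $G$-torsor), so there is no Brauer--Manin obstruction on $X^T$. But ``no Brauer--Manin obstruction'' is not the same as ``there exists an $F$-point'': you need a theorem saying the obstruction is the only one. Your proposed induction on boundary strata does not supply this. Even granting that each stratum is a twist of a compactification of a smaller unipotent group, producing an $F$-point on a boundary stratum does not produce one on $T$; and you have not explained why $X^T$ itself has an $F$-point to begin the descent. The alternative via Poisson summation is circular: the harmonic analysis in the paper computes the leading constant in terms of $\tau(G)$, and identifying $\tau(G)$ with $|\pic(G)|$ is exactly where $\Sh(G)=0$ is used.

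The paper's actual route is quite different. It invokes the theorem of \DJ onlagi\'c \cite[Theorem~4.5]{Azur} that for torsors $Y$ under commutative affine groups, the subgroup $\Be_\omega(Y)\subset\Br(Y)/\Br(F)$ is the only obstruction to weak approximation. The work is then to show $\Be_\omega(Y)=0$. By \Cref{lem:BrY-alg} one has $\Be_\omega(Y)\hookrightarrow\Ho^1(F,\pic Y^s)$, and \Cref{prop:picGalModule} computes $\pic Y^s$ explicitly as a direct sum of induced modules $\ind_{F_\beta}^F\z/p_\beta\z$ indexed by the non-geometrically-reduced boundary components. For each such summand, Shapiro's lemma plus Chebotarev (\Cref{lem:ShInd=0}) shows its $\Sh_S$ vanishes, whence $\Be_\omega(Y)=0$. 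Weak approximation (hence the Hasse principle) for every locally soluble $G$-torsor follows.
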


We prove the result in \S \ref{sec:ShTrivial} by showing that for any $G$-torsor $Y$ over $F$, we have that $\Be_\omega(Y)$ is trivial (see \eqref{eqn:Be(Y)Defn} for the definition). The result then follows from the work of Đonlagić \cite[Theorem 4.5]{Azur} where he shows that the Brauer-Manin obstruction given by $\Be_\omega(Y)$ is the only one to weak approximation on $Y$.

\subsection{Purely inseparable examples}

The following example illustrates the complications that arise when studying compactifications of non-trivial forms of $\ga$, particularly, dealing with the inseparable behaviour of the boundary divisor.

\begin{cor}\label{thm:resGm} 
	Suppose that $F=\mathbb{F}_q(t)$ and $F^{1/p}=\mathbb{F}_q(t^{1/p})$, where $q$ is a power of $p$. Let $G=\WeilRes_{F^{1/p}/F} \mathbb{G}_m / \mathbb{G}_m$. This is a non-trivial twist of $\mathbb{G}_a^{p-1}$ that splits over $F^{1/p}$. Then $X:=\mathbb{P}^{p-1}$ has the structure of an equivariant compactification of $G$ such that the boundary divisor $D$, given by $\sum_{i=0}^{p-1} t^i x_i^p=0$, is geometrically irreducible but not geometrically reduced. Moreover, there exists a choice of adelic metrics such that $D$ satisfies the conditions of \Cref{prop:HeightZetaPoles}. In particular, we have that
	$$N(\omega_X^{-1}, M)  \sim \frac{1}{p}  \left( q^{p-1} \cdot \residue_{s=1} \zeta_{F}(s) \cdot \prod_{v} C_v  \right) \log(q) \cdot q^{M}, \quad \text{ as } M\rightarrow \infty,$$
	
	where $C_\infty= (1-q^{-1}) p q^{-(p-1)}$ and
	$$C_v =\left(1-\frac{1}{q_v}\right) \left(1+ \frac{1}{q_v} + \frac{1}{q_v^2} +\cdots + \frac{1}{q_v^{p-1}} \right)  \text{ for } v\neq \infty.$$
\end{cor}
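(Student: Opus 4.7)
The plan is to construct the compactification explicitly, verify \Cref{assum} for the norm-form boundary, and then combine \Cref{thm:Manin} with \Cref{thm:Leading-constant} while computing the local densities by hand. To set up the action, observe that $\widetilde{G}:=\WeilRes_{F^{1/p}/F}\mathbb{G}_m$ acts on $\WeilRes_{F^{1/p}/F}\mathbb{A}^1\cong \mathbb{A}^p_F$ by multiplication; using the $F$-basis $1,t^{1/p},\ldots,t^{(p-1)/p}$ of $F^{1/p}$, this yields a linear action on $\mathbb{A}^p_F$ that projectivizes to an action on $\mathbb{P}^{p-1}_F$ factoring through $G=\widetilde G/\mathbb{G}_m$. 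For $\alpha=\sum_{i=0}^{p-1}x_i\,t^{i/p}\in F^{1/p}$, Frobenius gives $N(\alpha)=\alpha^p=\sum_{i=0}^{p-1} t^i x_i^p$, so the open orbit is the nonvanishing locus of $N$ and the boundary is $D=\{\sum t^i x_i^p=0\}\subset\mathbb{P}^{p-1}$. Over $F^{1/p}$ this form factors as $(\sum t^{i/p}x_i)^p$, hence $D\otimes F^{1/p}$ is $p$ times a hyperplane; this proves $D$ is geometrically irreducible but not geometrically reduced, so $\mathcal{A}=\mathcal{B}=\{D\}$.

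Next I would verify \Cref{assum}. Part (ii)(2) is immediate: any $F_v$-zero of $N$ would force $\sum t^{i/p}x_i=0$ in $F_v^{1/p}$, contradicting the linear independence of $\{t^{i/p}\}$ over $F_v$. I would equip $\mathcal{O}(D)=\mathcal{O}(p)$ with the model metric $\lVert\mathsf{s}_D\rVert_v(x)=|N(x)|_v/\max_i|x_i|_v^p$ coming from the natural $\cf$-model $\mathbb{P}^{p-1}_{\cf}$. After normalizing an integral representative so that $\max_i|x_i|_v=1$, the value $v(N(x))$ depends only on the reduction $\bar x\in\mathcal{X}(\fv)$; moreover $N(x)$ is a $p$-th power in $F_v^{1/p}$, so $v(N(x))\in p\z_{\geq 0}\cup\{\infty\}$ and $\lVert\mathsf{s}_D\rVert_v$ takes only finitely many values, establishing (ii)(3) and the finiteness of $e_X$ in (iii). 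The point count in (ii)(4) reduces, after using $\fv^{1/p}=\fv$ (since $\fv$ is perfect), to counting $\fv$-points on the hyperplane $\sum \bar t^{i/p}\bar x_i=0\subset\mathbb{P}^{p-1}_{\fv}$, which gives the required main term $q_v^{p-2}=q_v^{\dim X-1}$ with the stated error.

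With the assumptions in place I would invoke \Cref{prop:HeightZetaPoles}, \Cref{thm:Manin} and \Cref{thm:Leading-constant}. Here $\pic(X)=\z[H]$ with $[D]=p[H]=\omega_X^{-1}$, so there is a single boundary component and \Cref{condition} holds trivially. The effective-cone constant computes to $\alpha^*(X)=\int_0^\infty e^{-py}\,dy=1/p$, and \Cref{thm:Leading-constant} gives $c_\rho=\alpha^*(X)\tau_X(X(\ad))$. Since $F=\mathbb{F}_q(t)$, the curve $\cf=\mathbb{P}^1_{\mathbb{F}_q}$ has genus $0$, so the Tamagawa decomposition produces the factor $q^{p-1}$ together with $\residue_{s=1}\zeta_F(s)$ and the product of local densities $\mu_v(X(F_v))$. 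I would then compute $\mu_v(X(F_v))=C_v$ place-by-place by stratifying $X(F_v)$ according to the values of $\lVert\mathsf{s}_D\rVert_v$ on a unit-normalized representative and integrating the local Tamagawa measure: at a finite $v$ the $p$ strata yield the geometric factor $1+q_v^{-1}+\cdots+q_v^{-(p-1)}$ multiplied by the projective normalization $(1-q_v^{-1})$; at $v=\infty$ the asymmetric valuations $w(t^{i/p})=-i/p$ in $F_\infty^{1/p}$ collapse the stratification to a single level, producing $C_\infty=(1-q^{-1})pq^{-(p-1)}$. Substituting these inputs into \Cref{thm:Manin} yields the claimed asymptotic.

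The main obstacle will be the local density computation, especially at the ramified place $v=\infty$: the distinct fractional valuations of the basis elements $t^{i/p}$ in $F_\infty^{1/p}$ make the stratification of integral points of $X$ by $v(N(x))$ qualitatively different from the places with $v(t)=0$, and this is what forces the asymmetric shape of $C_\infty$ relative to $C_v$ elsewhere. A secondary technical point is extracting the error $O(q_v^{\dim X-3/2})$ in \Cref{assum}(ii)(4), which requires a Lang--Weil type bound on the geometrically irreducible (but non-reduced) boundary divisor after base change.
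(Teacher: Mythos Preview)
There are two genuine gaps in your plan.

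\textbf{The finiteness of the local heights.} Your argument for \Cref{assum}(ii)(3) and (iii) is that ``$N(x)$ is a $p$-th power in $F_v^{1/p}$, so $v(N(x))\in p\z_{\geq 0}\cup\{\infty\}$''. This implication is false. The extension $F_v^{1/p}/F_v$ is totally ramified, so if $w$ denotes the normalized valuation on $F_v^{1/p}$ then $w|_{F_v}=p\cdot v$; writing $N(x)=\alpha^p$ with $\alpha\in F_v^{1/p}$ gives $p\cdot v(N(x))=w(N(x))=p\cdot w(\alpha)$, i.e.\ $v(N(x))=w(\alpha)$, which can be \emph{any} non-negative integer. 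Even the weaker claim that $\lVert\mathsf{s}_D\rVert_v(x)$ depends only on $\bar x$ does not follow a priori: from $x\equiv x'\bmod\pi_v$ one only gets $N(x)-N(x')\in\pi_v^p\ov$, so the valuations agree \emph{provided} $v(N(x))<p$, and that bound is exactly what needs proving. The paper establishes $v(N(x))\le p-1$ for primitive $x$ by an argument specific to $F=\mathbb{F}_q(t)$: one chooses polynomial representatives in $\mathbb{F}_q[t]$ of degree $<\deg\pi_v$ for the residues $x_{i,0}$, observes $N(x)\equiv\sum t^ix_{i,0}^p\bmod\pi_v^p$, and bounds the right-hand side as a polynomial of degree $<p\deg\pi_v$, hence divisible by $\pi_v$ at most $p-1$ times. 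This degree argument (\Cref{prop:P^p-1-valuations-insep}) is the technical heart of the example; your sketch does not contain it, and it also yields the exact stratified counts $\#\{\bar x:v(N(x))=m\}=q_v^{p-1-m}$, which is stronger than what a hyperplane count over $\fv$ gives.

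\textbf{Condition~\ref{condition} is not trivial here.} You write that ``there is a single boundary component and \Cref{condition} holds trivially''. In fact the opposite is true: since $\mathcal{A}=\mathcal{B}=\{\beta\}$, any non-constant linear form $f_{\mathbf a}$ necessarily has a pole along $D_\beta$, so $d_\beta>0$ and the condition \emph{cannot} be met by any representative of a non-trivial character. The only way \Cref{condition} can hold in this example is vacuously, i.e.\ $(G(\ad)/(G(F)+\mathbb{K}))^\wedge=\{1\}$, equivalently $G(\ad)=G(F)+\mathbb{K}$. The paper proves this (\Cref{prop:trivialOnF+K}) by exhibiting an explicit isomorphism of $G$ with a wound group satisfying the hypotheses of \Cref{prop:SA-close-property}, together with a Tamagawa volume comparison; this step is not addressed in your plan and is essential for invoking \Cref{prop:HeightZetaPoles}.
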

The main step in the proof of \Cref{thm:resGm} is checking that the boundary divisor satisfies  \Cref{assum}, which is the content of \Cref{prop:P^p-1-valuations-insep}. 

\begin{rem}
	One can prove an analogous result for $\WeilRes_{F^{1/{p^k}}/F} \mathbb{G}_m / \mathbb{G}_m \hookrightarrow \mathbb{P}^{p^k-1}$, where $F^{1/{p^k}}:= F(t^{1/{p^k}})$ for any $k\in \z_{>0}$, by a relatively straight forward generalisation of the proof.
\end{rem}

\begin{rem}
	Notice that $N_{\av}(\omega_X^{-1}, M)=N(\omega_X^{-1}, M)$ in \Cref{thm:resGm}, which follows since $d_\rho=1$.
\end{rem}

The following is a basic class of examples where the boundary divisor contains both a geometrically reduced component and non-geometrically reduced component.

\begin{exmp}
	Let $G'=G\times \ga$ where $G$ is the $F$-wound group in \Cref{thm:resGm} and $F=\mathbb{F}_q(t)$ for some $q=p^t$ and $t\in \z_{>0}$. Then $X:=\mathbb{P}^{p-1}\times \mathbb{P}^n$ can be viewed as an equivariant compactification of $G'$ where $G$ acts on $\mathbb{P}^{p-1}$ as in \Cref{thm:resGm}, and $\ga$ acts on $\mathbb{P}^n$ via translations. Let $D_\beta:= \mathbb{P}^{p-1}\setminus G$ and $D_\alpha= \mathbb{P}^{n}\setminus \ga$. Then $D:=X\setminus G'$ consists of two irreducible components, one isomorphic to $D_\beta\times \mathbb{P}^n$, which is not geometrically reduced, and the other component is isomorphic to $\mathbb{P}^{p-1}\times D_\alpha$, which is geometrically reduced. By \Cref{thm:resGm}, it is not difficult to see that $X$ satisfies the conditions of \Cref{prop:HeightZetaPoles}.
\end{exmp}

\subsection{The outline of the paper}
We start in \S\ref{sec:formsOfGa} by recalling relevant properties of $F$-forms of $\ga$ and study the existence of smooth compactifications along with the behaviour at the boundary. Following that, in \S\ref{section:PicX}, we study the Picard group and the Brauer group of any smooth compactification $X$ of $G$. In \S\ref{sec:ShTrivial}, we prove \Cref{thm:ShaTrivial}. In \S\ref{sec:Metr-Heights-Measures} we define adelic metrics, the height function, and relevant measures on $G(\ad)$ and $X(\ad)$; we also prove relevant properties and results. In \S\ref{sec:PoissonSum}, we cover some harmonic analysis on $G(\ad)$ and show a correspondence between additive characters of $G(\ad)$ and elements of the Brauer group of $G$. After that, we compute the Fourier transforms for the trivial character in \S\ref{sec:FT-TrivialChar} and the non-trivial characters in \S\ref{sec:FT-NonTrivialChar}. In \S\ref{sec:Asump-LeadCons}, we apply the Poisson summation formula to prove \Cref{prop:HeightZetaPoles} and \Cref{thm:Leading-constant}, where we subsequently apply an appropriate Tauberain theorem to conclude \Cref{thm:Manin}. We end the paper by proving \Cref{thm:resGm} in \S\ref{section:Proj-example}, which provides an illustration of the new behaviour appearing in this setting. 

\vspace{10 pt}

\noindent\textbf{Acknowledgements.} I would like to thank my supervisor Daniel Loughran for suggesting this problem and for his endless support. I would like to thank Antoine Chambert-Loir for useful discussions while hosting my visit to Institut de mathématiques de Jussieu-Paris Rive Gauche. I would also like to thank Azur Đonlagić for the proof of \Cref{lem:BrY-alg} and for useful discussions. Finally, I would like to thank Michel Brion, Gebhard Martin, Zev Rosengarten, and Jiazhi He for useful discussions.

\section{Non-trivial forms of $\ga$ in positive characterstic}\label{sec:formsOfGa}

In this section, we start by stating some properties of smooth connected $p$-torsion commutative algebraic groups over a field of positive characterstic $F$; these are precisely the $F$-forms of $\ga$ (see \cite[Lemma 1.7.1]{KMT}). We will also be stating special results when $F$ is a global function field.

\subsection{$F$-wound groups and forms of $\ga$}
Over a field $F$ of characterstic $0$, every commutative unipotent algebraic group over $F$ is isomorphic to $\ga$ for some $n\in \z_{>0}$. When $\text{char}(F)=p>0$, this is no longer the case. Nonetheless, if $G$ is a $p$-torsion commutative unipotent algebraic group over $F$, then it is a direct product of $\ga$, for some $n\in \z_{>0}$, and a so-called $F$-wound group (see \cite[V.5]{Ost}).

Let $F$ be a field of positive characteristic $p$.

\begin{defn}[{{\cite[Definition 4.6]{KMT}}}]\label{defn:Wound}
	A unipotent algebraic group $G$ over $F$ is defined to be $F$-wound if every $F$-morphism of algebraic groups $\mathbb{G}_a\rightarrow G$ is constant.	
\end{defn}

If $F$ is infinite, then every commutative $p$-torsion $F$-wound group of dimension $n$ is isomorphic to a hypersurface of $\mathbb{G}_a^{n+1}$, which is given by what is called a separable $p$-polynomial \cite[V.6.3]{Ost}.
\begin{defn}[{{\cite[ V.6.1]{Ost}}}]
	A polynomial $P\in F[x_1,...,x_{n+1}]$ is said to be a separable $p$-polynomial if
	$$P=\sum_{i,j}^{m,r} c_{i,j} x_i^{p^j}$$ 
	and at least one $c_{i,0}\neq 0$, i.e., at least one monomial of degree $1$ appears in $P$. We define the principal part of $P$, denoted by $P_{\text{princ}}$, to be the sum of highest degree monomials in each $x_i$ if it appears, and $0$ otherwise.   
\end{defn}

\begin{exmp}[{{\cite[V.3.4]{Ost}}}]
	Let $F$ be a global function field of characteristic $p$. Fix $t\in F\setminus F^p$. The subgroup $G$ of $\ga \times \ga$ given by the equation
	$$x^p-ty^p=x$$
	is $F$-wound; it splits over the inseparable extension $F(t^{1/p})$. If $p>2$, then $G(F)$ is finite (see \Cref{thm:wound-finitepts-lowdim} below). If $p=2$, then any connected unipotent group over $F$ of dimension $1$ with $G(F)$ infinite is either isomorphic to $\mathbb{G}_a$ or $G$ (\cite[Proposition VI.3.4]{Ost}). 
\end{exmp}

\begin{prop}\label{cor:ex.non-van.diff}
	Let $G$ be an $F$-form of $\ga$. Then $G$ is a subgroup of $\mathbb{G}_a^{n+1}$ that is defined by a separable $p$-polynomial. Moreover, there exists a non-vanishing top differential $\omega \in \omega_G(G)$ that is translation invariant.
\end{prop}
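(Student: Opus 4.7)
The plan is to address the two assertions separately; both follow quite directly from structural facts already recalled earlier in this section.

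For the first assertion, I would appeal to Oesterl\'e's structure theorem. Since $G$ is an $F$-form of $\ga$, it is a smooth connected $p$-torsion commutative unipotent group of dimension $n$, and the general decomposition \cite[V.5]{Ost} gives $G \cong \mathbb{G}_a^m \times G_w$ for some $m$ and a commutative $p$-torsion $F$-wound group $G_w$ of dimension $k := n-m$. Under the running assumption that $F$ is infinite, by \cite[V.6.3]{Ost} there exists a separable $p$-polynomial $P \in F[x_1,\ldots,x_{k+1}]$ whose zero locus in $\mathbb{G}_a^{k+1}$ is isomorphic to $G_w$ as an $F$-group. Regarding $P$ as a polynomial in the last $k+1$ coordinates on $\mathbb{G}_a^{n+1} = \mathbb{G}_a^m \times \mathbb{G}_a^{k+1}$ exhibits $G$ as the hypersurface in $\mathbb{G}_a^{n+1}$ cut out by $P$; since $P$ still contains a nonzero degree-one monomial, it remains a separable $p$-polynomial in $n+1$ variables.

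For the second assertion, I would use the standard fact that every smooth algebraic group is parallelizable via translations. Let $\mathfrak{g} := T_e G$, where $e \in G(F)$ is the identity, and fix any non-zero $\omega_e \in \bigwedge^n \mathfrak{g}^*$. For each $g \in G$ the translation $t_g: x \mapsto gx$ is an automorphism of $G$ sending $e$ to $g$, and its differential $(dt_g)_e: \mathfrak{g} \to T_g G$ is an isomorphism. The resulting morphism of vector bundles $G \times \mathfrak{g} \to TG$, $(g,v) \mapsto (dt_g)_e(v)$, is an isomorphism (as $G$ acts on itself simply transitively by translations), so dualizing and taking top exterior powers trivializes $\omega_G$. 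The element $\omega_e$ therefore extends to a global, nowhere vanishing, translation-invariant section $\omega \in \omega_G(G)$, defined at $g$ by $(dt_{g^{-1}})_g^*(\omega_e)$.

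I do not foresee a real obstacle in either step: the first is a direct application of Oesterl\'e's structure theorem once the product decomposition is invoked, and the second is a general fact about smooth algebraic groups that needs only to be spelled out. The only point worth flagging is the use of the hypothesis that $F$ be infinite in the first half, which is harmless here as it holds in the global function field setting considered throughout the paper.
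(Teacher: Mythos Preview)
Your proof of the first assertion is essentially identical to the paper's: both invoke the decomposition $G \cong \mathbb{G}_a^m \times W$ from \cite[V.5]{Ost}, then \cite[V.6.3]{Ost} to realize $W$ as a hypersurface cut out by a separable $p$-polynomial, and finally view that polynomial in the larger ambient $\mathbb{G}_a^{n+1}$.

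For the second assertion you take a genuinely different route. The paper argues explicitly from the presentation just obtained: since $P$ is separable, some linear coefficient $c_{1,0}$ is non-zero, and one computes that $\Omega_G(G)$ is free on $dx_2,\ldots,dx_{n+1}$, so $\omega = dx_2 \wedge \cdots \wedge dx_{n+1}$ is a concrete non-vanishing translation-invariant top form. Your argument instead invokes the general fact that any smooth algebraic group is parallelizable via the translation action, trivializing $\omega_G$ abstractly. Both are correct. Your approach is cleaner and more conceptual, and applies to any smooth group scheme; the paper's approach has the minor advantage of producing an explicit formula in the chosen coordinates, which matches the concrete flavor of the later computations (e.g.\ in \S\ref{section:Proj-example}), though the explicit form is not strictly needed elsewhere.
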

\begin{proof}
	By {{\cite[V.5]{Ost}}}, $G=\mathbb{G}_a^m \times W$ where $W$ is $F$-wound and $m\leq n$. By \cite[V.6.3]{Ost}, we have that $W$ is a subgroup of $\mathbb{G}_a^{n-m+1}$ defined by a separable $p$-polynomial $P\in F[x_1,...,x_{n-m+1}]$. The image of $P$ via the injection $F[x_1,...,x_{n-m+1}]\rightarrow F[x_1,...,x_{n+1}]$ defines $G$ in $\mathbb{G}_a^{n+1}$.
	
	Now, as $P$ is a separable $p$-polynomial, without loss of generality, we can assume that the coefficient of $x_1$ in $P$ is non-zero. Let $M$ be the $F[x_1,\ldots,x_{n+1}]$-submodule of $\Omega_{\mathbb{G}_a}^{n+1}(\mathbb{G}_a^{n+1})=\bigoplus_{i=1}^{n+1} F[x_1,\ldots, x_{n+1}] dx_i$ generated by $(-c_{1,0} dx_1- \sum_{j=2}^{n-m+1} c_{j,0} dx_j) $ with $c_{1,0} \neq 0$. Then, $\Omega_G(G)\cong \left( \bigoplus_{i=1}^{n+1} \mathcal{O}_G(G) dx_i \right) /M \cong \bigoplus_{i=2}^{n+1} \mathcal{O}_G(G) dx_i$  which implies that $\Omega_G(G)$ is a free $\mathcal{O}_G(G)$-module of rank $n$ and basis given by the classes of $\{dx_i\}_{i=2}^{n+1}$. Thus, one easily sees that (the class of) $\omega:= dx_2 \wedge \cdot\cdot\cdot \wedge dx_{n+1}$ is a translation invariant regular top differential that does not vanish on $G$.
\end{proof}

\subsection{$F$-wound groups over a global function field}
For the rest of this section, we assume that $F$ is a global field of characteristic $p$, and $F_v$ is the completion of $F$ at a place $v$. Note that by \cite[V.7]{Ost} if $G$ is a commutative $F$-wound group, then $G_v:=G \times_F F_v$ is $F_v$-wound. By \cite[VI.2.1]{Ost}, $G$ is $F$-wound if and only if $G(F_v)$ is compact for all $v$. Also, if $G$ is $F$-wound, then it splits over a unique purely inseperable extension of minimal degree (see e.g. \cite[Proposition 7.4]{Rosen}).

\begin{thm}[{{\cite[VI.3.1]{Ost}}}]\label{thm:wound-finitepts-lowdim}
	Let $G$ be an $F$-wound unipotent group. If the dimension of $G$ is $<p-1$, then $G(F)$ is finite.
\end{thm}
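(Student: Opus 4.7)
The plan is to combine the structure theory of commutative $p$-torsion $F$-wound groups recalled in \Cref{cor:ex.non-van.diff} with an anisotropy-plus-height argument at every place of $F$. After splitting off a $\mathbb{G}_a$-factor and using the Verschiebung filtration we may assume $G$ is commutative and $p$-torsion, so that $G$ is cut out in $\mathbb{G}_a^{n+1}$ by a separable $p$-polynomial $P = \sum c_{i,j} x_i^{p^j}$ with principal part $P_{\text{princ}} = \sum_{i=1}^{n+1} c_{i} x_i^{d_i}$, $d_i = p^{r_i}$. The geometry of $G$ at infinity is entirely governed by $P_{\text{princ}}$.

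The first step is to reinterpret $F$-woundness analytically at each place: one shows (cf.\ \cite[V.6]{Ost}) that $G$ being $F$-wound forces $P_{\text{princ}}$ to have no nontrivial zero over any completion $F_v$. A homogeneity/compactness argument on the unit sphere of $F_v^{n+1}$, using the natural $\mathbb{G}_m$-action scaling $x_i$ by $\lambda^{1/d_i}$, upgrades this qualitative statement to a uniform lower bound
$$|P_{\text{princ}}(y)|_v \;\geq\; \kappa_v \, \max_i |y_i|_v^{d_i} \quad \text{for all } y\in F_v^{n+1},$$
with $\kappa_v > 0$ depending on $v$ and $P$; an integral model of $G$ over $\cfs$ allows one to take $\kappa_v = 1$ for all but finitely many $v$.

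The second step applies this bound to a rational point $x = (x_1,\ldots,x_{n+1}) \in G(F)$. Rearranging $P(x)=0$ gives $P_{\text{princ}}(x) = -\sum_{j<r_i} c_{i,j} x_i^{p^j}$; combining the anisotropy bound with the triangle inequality on the right-hand side yields $\max_i |x_i|_v \leq M_v$ at every place, with $M_v = 1$ almost everywhere. The product formula bounds the Weil height of $x$, and Northcott's property for the global function field $F$ concludes that $G(F)$ is finite.

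The role of the hypothesis $\dim G < p - 1$ is the delicate point, and where I expect the main obstacle to lie. In Steps 1--2, the number of lower-order monomials in $P$ (which compete with the principal part) is controlled by the exponents $p^{r_i}$ and the number $n+1$ of variables; when $n+1 \leq p-1$, a combinatorial count keeps the lower-order contribution strictly subordinate to $P_{\text{princ}}$, so the anisotropy estimate is not overwhelmed. Once $\dim G \geq p-1$ this balance breaks and the conclusion can genuinely fail, which is why the theorem is sharp. Making this combinatorial interplay between the dimension, the exponents $p^{r_i}$, and the adelic error terms fully effective is the technical heart of the argument.
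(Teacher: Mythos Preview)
The paper does not prove this statement; it is quoted from Oesterl\'e \cite[VI.3.1]{Ost} with no argument given. So there is nothing in the paper to compare your sketch against, and I comment only on the sketch itself.

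Your Steps~1--2, as written, do not use the hypothesis $\dim G<p-1$ at all, and would therefore prove that \emph{every} $F$-wound group has finitely many $F$-points --- which is false (\Cref{example:ResGm}). The error is the assertion that an integral model forces $\kappa_v=1$ for almost all $v$. Consider the group $G'\subset\mathbb{G}_a^p$ of \Cref{prop:trivialOnF+K}, cut out by $P = \sum_{i=0}^{p-1} t^i y_i^p - y_{p-1}$. Its principal part $P_{\text{princ}}=\sum_i t^i y_i^p$ is anisotropic over every $F_v$, yet \Cref{prop:P^p-1-valuations-insep} shows that at \emph{every} place $v\neq\infty$ there are primitive $y\in\ov^{p}$ with $|P_{\text{princ}}(y)|_v=q_v^{-(p-1)}$; hence $\kappa_v=q_v^{-(p-1)}$ at every good place, and your Step~2 bound degenerates to $\max_i|y_i|_v\leq q_v$, which is useless for a height argument.

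You have also mislocated the role of the dimension hypothesis. It is not the lower-order monomials of $P$ that must be controlled by the condition $n+1\leq p-1$; rather, one needs an \emph{integral} anisotropy bound for $P_{\text{princ}}$ itself --- roughly, that with fewer than $p$ variables the $\mathbb{F}_{q_v}$-linear congruences obtained by expanding $\sum c_i y_i^{p}$ modulo successive powers of $\pi_v$ become overdetermined before reaching $\pi_v^{p-1}$, forcing $\kappa_v\geq q_v^{-(p-2)}$ for almost all $v$. Combined with the discreteness of $|\cdot|_v$, that weaker bound would already give $\max_i|y_i|_v\leq 1$ and finish the proof. Establishing this for arbitrary coefficients $c_i$, and for principal parts with unequal exponents $p^{r_i}$, is the entire content of the theorem; your final paragraph acknowledges a difficulty but misidentifies where it lies.
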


This result is optimal in the sense that there exists an $F$-wound group $G$ of dimension $p-1$ and with infinitely many $F$-point.

\begin{exmp}[{\cite[VI.3.4]{Ost}}]\label{example:ResGm}
	Let $t\in F\setminus{F^p}$ and define $F^{1/p}:=F(t^{1/p})$. Let ${G:=\WeilRes_{F^{1/p}/F} \mathbb{G}_m / \mathbb{G}_m}$, the quotient of the Weil restriction of $\mathbb{G}_{m,F^{1/p}}$ with respect to the extension $F^{1/p}/F$ by $\mathbb{G}_{m,F}$. This group is $F$-wound of dimension $p-1$ and has infinitely many $F$-points. Indeed, by Hilbert's 90 theorem, $G(F)$ is in bijection with $(F^{1/p})^\times/F^\times$. 
\end{exmp}

\subsection{Smooth compactifications of $F$-wound groups}
We now attempt to investigate the existence of smooth compactifications of $F$-wound groups. Finding a smooth compactification of a given $F$-wound group does not appear to be easy at a first glance. Nonetheless, we have the following example.

\begin{prop}\label{prop:ResGm-SmComp}
	Let $G=\WeilRes_{F^{1/{p^n}}/F} \mathbb{G}_m / \mathbb{G}_m$. Then there is an open immersion ${\iota:\> G \longrightarrow \mathbb{P}^{{p^n}-1}}$ where the action of $G$ extends to $\mathbb{P}^{{p^n}-1}$ such that $G$ fixes the boundary divisor $D:= \mathbb{P}^{{p^n}-1}\setminus G$ given by the equation $ \sum_{i=0}^{{p^n}-1} t^i X_i^{p^n}= 0 .$
\end{prop}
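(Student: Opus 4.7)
Write $K := F^{1/p^n}$, a purely inseparable field extension of $F$ of degree $p^n$, with basis $1, t^{1/p^n}, t^{2/p^n}, \ldots, t^{(p^n-1)/p^n}$. The plan is to realise $G$ as a Zariski-open subvariety of the projectivisation of the Weil restriction of the affine line and to exhibit the boundary as the vanishing locus of a norm form.

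First I would identify $W := \WeilRes_{K/F}\mathbb{A}^1$ with $\mathbb{A}^{p^n}_F$ using the chosen basis of $K/F$, so that on $R$-points a tuple $(a_0,\ldots,a_{p^n-1}) \in R^{p^n}$ corresponds to the element $\sum_{i=0}^{p^n-1} a_i\, t^{i/p^n} \in K\otimes_F R$. Under this identification, $\WeilRes_{K/F}\mathbb{G}_m$ is the open subscheme of $W$ carved out by the non-vanishing of the norm form $N_{K/F}$. The key computational input is that since $K/F$ is purely inseparable of degree $p^n$, the norm map sends $x \in K\otimes_F R$ to $x^{p^n}$; using that Frobenius is a ring homomorphism in characteristic $p$, this gives
\begin{equation*}
N_{K/F}\!\left(\textstyle\sum_{i=0}^{p^n-1} a_i\, t^{i/p^n}\right) \;=\; \sum_{i=0}^{p^n-1} a_i^{p^n}\, t^{i}.
\end{equation*}
Hence $\WeilRes_{K/F}\mathbb{G}_m \subset W = \mathbb{A}^{p^n}_F$ is the complement of the hypersurface $\{\sum t^i X_i^{p^n} = 0\}$, and in particular contains no $F$-rational zero, so lies in $W \setminus \{0\}$.

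Next I would take the quotient by the diagonal $\mathbb{G}_m$-action. The subgroup $\mathbb{G}_m \hookrightarrow \WeilRes_{K/F}\mathbb{G}_m$ arises from scalar multiplication by $F^\times \subset K^\times$, which on $W = \mathbb{A}^{p^n}$ is simply homothety of all coordinates. The geometric quotient of $W \setminus \{0\}$ by this $\mathbb{G}_m$ is $\mathbb{P}^{p^n-1}_F$, and since the defining equation $\sum t^i X_i^{p^n}$ is homogeneous of degree $p^n$, the quotient of $\WeilRes_{K/F}\mathbb{G}_m$ by $\mathbb{G}_m$ is precisely the open subvariety of $\mathbb{P}^{p^n-1}_F$ complementary to the hypersurface
\begin{equation*}
D \;:=\; \Bigl\{\, \textstyle\sum_{i=0}^{p^n-1} t^i X_i^{p^n} = 0 \,\Bigr\} \subset \mathbb{P}^{p^n-1}_F.
\end{equation*}
This yields the desired open immersion $\iota : G \hookrightarrow \mathbb{P}^{p^n-1}$.

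Finally, I would extend the translation action. Multiplication in $K$ provides a tautological action of $\WeilRes_{K/F}\mathbb{G}_m$ on $W = \WeilRes_{K/F}\mathbb{A}^1$ by $F$-linear automorphisms; passing to the quotient by $\mathbb{G}_m$ on both factors yields an action of $G$ on $\mathbb{P}^{p^n-1}_F$ which restricts on the open subset $G \subset \mathbb{P}^{p^n-1}$ to the translation action of $G$ on itself. Because multiplication by a unit of $K\otimes_F R$ preserves invertibility (equivalently, non-vanishing of the norm), the complement $D$ is set-theoretically stable under $G$, so $G$ fixes the boundary divisor as claimed. The only point requiring real care is the identification $G = (\WeilRes_{K/F}\mathbb{G}_m)/\mathbb{G}_m$ with an open subscheme of $\mathbb{P}^{p^n-1}$ as fppf quotients (the quotient exists as a scheme because the acting $\mathbb{G}_m$ is smooth affine), together with the Frobenius computation of the norm form; once these are in place the rest is formal.
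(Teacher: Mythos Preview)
Your proof is correct and follows essentially the same approach as the paper: identify $\WeilRes_{K/F}\mathbb{G}_m$ with the complement of the norm hypersurface $\sum t^i X_i^{p^n}=0$ in $\mathbb{A}^{p^n}$, then quotient by the scalar $\mathbb{G}_m$-action to land in $\mathbb{P}^{p^n-1}$. Your version is in fact more detailed than the paper's, which leaves the explicit Frobenius computation of the norm form and the extension of the $G$-action implicit.
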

\begin{proof}
	The $F$-variety $\WeilRes_{F^{1/{p^n}}/F} \mathbb{G}_m$ is isomorphic to the open subscheme of $\mathbb{G}_a^{p^n}\setminus \{0\}$ given by $\sum_{i=0}^{{p^n}-1} t^i y_i^{p^n} \neq 0$, where $y_0,...,y_{{p^n}-1}$ are the coordinates of $\mathbb{G}_a^{p^n}$, with the standard action of $\mathbb{G}_{m,F}$ on $\mathbb{G}_a^{p^n}$. By taking the quotient by the action of $\mathbb{G}_m$, this induces an open immersion
	$$\phi:\> G \rightarrow \mathbb{P}^{{p^n}-1},$$ where $\mathbb{P}^{{p^n}-1}\setminus \phi(G)$ is given by $ \sum_{i=0}^{{p^n}-1} t^i X_i^{p^n}= 0 .$
\end{proof}

A necessary condition for an $F$-form of $\ga$ to admit a smooth compactification is the following.

\begin{prop}\label{prop:smoothCompImpliesInfPts}
	Let $G$ be a commutative unipotent group. If there exists a smooth compactification $X$ of $G$ over $F$, then the maximal $F$-unirational subgroup of $G$ is non-trivial. In particular,  $G(F)$ is infinite.
\end{prop}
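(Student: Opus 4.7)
The strategy is to show that any smooth compactification $X$ of $G$ forces the existence of a non-trivial $F$-unirational subgroup; in fact, I would aim to show that $G$ itself is $F$-unirational, from which the maximal $F$-unirational subgroup is $G$ and hence non-trivial (assuming $G$ is non-trivial, as is implicit).

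First I would reduce to the case where $G$ is $F$-wound. By the structure theorem recalled at the start of the section (\cite[V.5]{Ost}), one may write $G \cong \mathbb{G}_a^m \times W$ with $W$ an $F$-wound group. If $m \geq 1$, then $\mathbb{G}_a^m \subseteq G$ is already a non-trivial $F$-rational (in particular $F$-unirational) subgroup, so the conclusion is immediate. Thus we may assume $G$ is $F$-wound.

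Next, let $G$ be $F$-wound and let $e \in G(F) \subset X(F)$ be the identity, which is a smooth $F$-point of $X$. Since $\bar{F}$ is perfect, $G_{\bar{F}} \cong \mathbb{G}_{a,\bar{F}}^n$, so $X_{\bar F}$ is a smooth compactification of $\mathbb{G}_{a,\bar{F}}^n$ and is therefore $\bar{F}$-rational. The core claim is that the existence of the smooth $F$-point $e$ together with $\bar{F}$-rationality of $X_{\bar F}$ suffices to conclude that $X$ itself is $F$-unirational; once this is achieved, the open dense subscheme $G \subset X$ is also $F$-unirational, so the maximal $F$-unirational subgroup of $G$ equals $G$.

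To establish the $F$-unirationality of $X$, I would construct $F$-rational curves on $X$ through $e$. One natural approach is to exploit the rich structure of $\pic(X)$ (studied in \S\ref{section:PicX}, which shows it is free and generated by boundary components): a suitable very ample class embeds $X$ into some $\mathbb{P}^N$, and one then seeks low-degree $F$-rational curves through $e$ by intersecting with appropriate linear subspaces or by using the equivariant action of $G$ on $X$ (if present) to sweep out rational curves from one-parameter subschemes of the completion. After normalizing and taking an irreducible component $C$ whose base change $C_{\bar F}$ is geometrically rational, the presence of the $F$-point $e$ forces $\tilde{C} \cong \mathbb{P}^1_F$; the restriction of the inclusion $C \cap G \hookrightarrow G$ then gives a non-constant $F$-morphism from an open of $\mathbb{P}^1_F$ into $G$, and varying the curves over a parameter space provides a dominant $F$-rational map from an affine space to $G$.

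The main obstacle is the descent step: showing that a smooth compactification $X$ of $G$ with an $F$-rational point is $F$-unirational (and not just geometrically rational). This is delicate in general, but for compactifications of commutative unipotent groups it is controlled by the specific equivariant geometry of $X$ and by results on the classification of such compactifications (in the spirit of work of Achet and Rosengarten on unipotent groups). Finally, the ``in particular'' statement is immediate: a positive-dimensional $F$-unirational variety over the infinite field $F$ admits a dominant $F$-rational map from $\mathbb{A}^m$ for some $m \geq 1$, hence has Zariski-dense, and in particular infinitely many, $F$-rational points.
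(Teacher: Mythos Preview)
Your proposal has a genuine gap precisely where you yourself flag the ``main obstacle''. You reduce correctly to the $F$-wound case, but then the heart of the argument---producing a non-constant $F$-rational map from (an open of) $\mathbb{P}^1$ into $G$, or more ambitiously a dominant $F$-rational map from affine space---is never carried out. You describe a plan (embed $X$, intersect with linear subspaces, vary over a parameter space) and then defer the crucial descent step to unspecified ``specific equivariant geometry of $X$'' and vague references to work of Achet and Rosengarten. Two concrete problems: first, the proposition does \emph{not} assume the compactification is equivariant, so invoking the $G$-action on $X$ is illegitimate here; second, your target---that $G$ itself is $F$-unirational---is stronger than what is asserted (only that the maximal $F$-unirational subgroup is non-trivial), and you give no argument that a geometrically rational smooth proper variety with an $F$-point is $F$-unirational in positive characteristic, which is exactly the delicate point.

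The paper's proof, by contrast, is a one-line citation to \cite[10.3, Theorem~1]{Neron} (Bosch--L\"utkebohmert--Raynaud, \emph{N\'eron Models}), which handles precisely this situation: it shows that a smooth connected commutative group admitting a regular compactification has non-trivial maximal unirational subgroup, without needing equivariance. If you want a self-contained argument, you should look at how that theorem is proved rather than attempting an ad hoc construction of rational curves; the BLR argument uses the structure of the N\'eron model and does not proceed via the descent step you are stuck on.
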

\begin{proof}
	This follows by \cite[10.3, Theorem 1]{Neron}.
\end{proof}

\begin{cor}
	Let $G$ be a commutative $F$-wound group of dimension less than $p-1$. Then there does not exist any smooth compactification of $G$.
\end{cor}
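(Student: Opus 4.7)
The proof will be a short proof by contradiction combining the two results just stated in this subsection. The plan is to suppose that $G$ does admit a smooth compactification $X$ over $F$ and derive a contradiction on the cardinality of $G(F)$.

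First, I invoke \Cref{prop:smoothCompImpliesInfPts}: the hypothetical existence of a smooth compactification of the commutative unipotent group $G$ forces the maximal $F$-unirational subgroup of $G$ to be non-trivial, and hence forces $G(F)$ to be infinite (recall we are working over a global function field $F$, which is infinite, so non-trivial unirational subgroups have infinitely many $F$-points).

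On the other hand, \Cref{thm:wound-finitepts-lowdim} (Oesterlé) applies directly under the hypothesis of the corollary: since $G$ is $F$-wound of dimension strictly less than $p-1$, the set $G(F)$ is finite. This contradicts what we just obtained from the existence of the smooth compactification, completing the proof.

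There is essentially no obstacle here: the two lemmas have already done all the work, and the corollary is just their juxtaposition. The only thing worth being careful about is ensuring that \Cref{prop:smoothCompImpliesInfPts} really gives infinitely many $F$-points (not just a non-trivial unirational subgroup in a possibly abstract sense), but this is immediate since $F$ is infinite and a non-trivial $F$-unirational subgroup of $G$ receives a dominant $F$-map from some open subset of $\mathbb{A}^m_F$ with $m\geq 1$.
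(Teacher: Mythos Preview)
Your proof is correct and follows essentially the same approach as the paper: both combine \Cref{thm:wound-finitepts-lowdim} (finiteness of $G(F)$) with \Cref{prop:smoothCompImpliesInfPts} (infiniteness of $G(F)$ given a smooth compactification) to obtain a contradiction. The paper's proof is simply the two-line version of what you wrote.
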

\begin{proof}
	By \Cref{thm:wound-finitepts-lowdim}, $G(F)$ is finite. The result now follows by \Cref{prop:smoothCompImpliesInfPts}.
\end{proof}

\begin{rem}
	Given a $p$-torsion commutative $F$-wound group $G$, attempting to construct a smooth compactification for it (if one exists) appears to be difficult. 
	One naive attempt is to use \Cref{cor:ex.non-van.diff}, which implies that $G$ is a subgroup of $\mathbb{G}_a^{n+1}$ defined by a separable $p$-polynomial $P$. The homogenization of $P$ in $\mathbb{P}^{n+1}$ defines a compactification $X$ of $G$. It is not difficult to check that, when $n>1$, this compactification is never smooth. The boundary divisor is everywhere singular and not geometrically reduced.
\end{rem}

We end this section with the following result which shows that if $G$ is $F$-wound and admits a smooth compatficiation then the boundary divisor contains no local points at all places.
\begin{prop}\label{prop:BoundaryEmpty}
	Let $G$ be an $F$-wound group with a smooth compactification $X$. Then $G(F_v)=X(F_v)$ for all places $v$.
\end{prop}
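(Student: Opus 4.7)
The plan is to show that $D(F_v) := X(F_v) \setminus G(F_v)$ is simultaneously open in $X(F_v)$ and has empty interior in $X(F_v)$, which will force it to be empty.

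For openness of $D(F_v)$: by \cite[V.7]{Ost} woundness is preserved under base change, so $G_{F_v}$ is $F_v$-wound; the already-cited \cite[VI.2.1]{Ost} then gives that $G(F_v)$ is compact. Since $G$ is Zariski open in $X$, the set $G(F_v)$ is open in $X(F_v)$ in the $v$-adic topology. As $X$ is projective, $X(F_v)$ is compact Hausdorff, so the compact subset $G(F_v)$ is automatically closed. Therefore $D(F_v) = X(F_v) \setminus G(F_v)$ is open.

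For the empty-interior step: since $G$ is geometrically irreducible and dense in $X$, the variety $X$ is irreducible and $\dim D < \dim X$. At any hypothetical point $p \in D(F_v)$, smoothness of $X$ supplies a $v$-adic analytic chart $U \cong F_v^{\dim X}$ around $p$. After shrinking $U$, the reduced subscheme $D_{\mathrm{red}}\cap U$ is contained in the zero locus of some non-zero regular function $f$ on $X$ (for instance, a local equation for any irreducible component of $D$ through $p$); since $D \subsetneq X$, such an $f$ is non-zero. The vanishing locus of a non-zero $v$-adic analytic function on a polydisc is nowhere dense, so $D(F_v)$ has empty interior in $X(F_v)$.

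Combining the two steps, $D(F_v)$ is open and has empty interior, hence is empty, proving $G(F_v) = X(F_v)$. The main (and essentially only) technical input is the empty-interior observation; everything else is point-set topology together with already-established properties of wound groups. I do not expect to need the equivariance of $X$ anywhere beyond the openness of $G$ in $X$, nor do I need anything about the structure of $D$ beyond $D \ne X$ and smoothness of $X$ at the relevant points.
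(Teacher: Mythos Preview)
Your proof is correct and follows essentially the same structure as the paper's: both deduce that $D(F_v)$ is open from compactness of $G(F_v)$, and then obtain a contradiction from the fact that a proper closed subvariety cannot fill a nonempty $v$-adic open. The paper phrases the second step in terms of measure (a nonempty open has positive canonical measure, while $D(F_v)$ has measure zero since $\dim D<\dim X$), whereas you phrase it as ``$V(f)$ is nowhere dense''; these are two flavours of the same observation, and your small imprecision (a local equation for one component rather than a generator of the ideal of $D$) is harmless since a finite union of closed nowhere-dense sets is still nowhere dense.
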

\begin{proof}
	By \cite[VI.2.1]{Ost}, $G(F_v)$ is compact for all places $v$. As $X(F_v)$ equipped with the analytic topology is Hausdorff, we deduce that $G(F_v)$ is a closed analytic subset of $X(F_v)$. By setting $D:=X\setminus G$, this implies that $D(F_v)$ is an (analytic) open subset of $X(F_v)$. Suppose that $D(F_v)$ is non-empty and fix $x\in D(F_v)$. Then there exists an analytic open subset $U\subset D(F_v)$ containing $x$ that is homeomorphic to a basic open subset $V$ of $\prod^n F_v$. As the measure of $V$ is positive with respect to the canonical measure and $D$ has dimension $n-1$, this gives a contradiction. 
\end{proof}
\begin{cor}
	Let $G$ be an $F$-wound group with a smooth compactification $X$ and a boundary divisor $D$. Then the smooth locus of $D$ is empty.
\end{cor}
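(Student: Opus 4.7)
The plan is to argue by contradiction using the preceding \Cref{prop:BoundaryEmpty}, which gives $D(F_v)=\emptyset$ at every place $v$. It therefore suffices to show that if the smooth locus $D_{\mathrm{sm}}$ of $D$ were non-empty, then $D(F_v)$ would be non-empty for some $v$, yielding the desired contradiction.

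Assume $D_{\mathrm{sm}}\neq\emptyset$. Since $D_{\mathrm{sm}}$ is a non-empty scheme of finite type over $F$, it admits a closed point $y$ whose residue field $K:=k(y)$ is a finite extension of $F$, and this closed point corresponds to a $K$-point $y_K\in D_{\mathrm{sm}}(K)$.

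Next I would invoke the Chebotarev density theorem for global function fields, applied to the Galois closure of $K/F$, to produce infinitely many places $v$ of $F$ that split completely in $K$. For any such $v$ there is an embedding of $F$-algebras $K\hookrightarrow F_v$, and base-changing $y_K$ along this embedding yields a point of $D_{\mathrm{sm}}(F_v)\subseteq D(F_v)$, contradicting \Cref{prop:BoundaryEmpty}.

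The only real obstacle is manufacturing genuine $F_v$-rational points from the abstract non-emptiness of a smooth $F$-variety; the closed-point plus Chebotarev trick handles this cleanly, so no heavier input (such as Lang--Weil estimates over $\fv$) is required.
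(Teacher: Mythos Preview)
Your approach is essentially correct and gives a valid alternative to the paper's proof, but there is one genuine gap you should patch. You take an arbitrary closed point $y\in D_{\mathrm{sm}}$ with residue field $K=k(y)$ and then invoke Chebotarev on the Galois closure of $K/F$. In positive characteristic a closed point of a smooth $F$-variety need not have separable residue field (e.g.\ the closed point $x^p-t$ on $\mathbb{A}^1_F$ when $t\notin F^p$), and for inseparable $K/F$ there is no Galois closure to which Chebotarev applies; moreover, since $F_v/F$ is separable, an inseparable $K$ can never embed in any $F_v$. The fix is immediate: because $D_{\mathrm{sm}}$ is smooth over $F$ it has a point over some finite separable extension of $F$ (smooth morphisms acquire sections \'etale-locally), so you may and must choose $y$ with $K/F$ separable. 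With that adjustment your Chebotarev argument goes through.

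The paper argues differently: it spreads out $D_{\mathrm{sm}}$ and applies the Lang--Weil estimates to produce $\fv$-points of the reduction for almost all $v$, then lifts via Hensel's lemma. Both routes ultimately exploit smoothness to manufacture local points; yours is slightly more elementary in that it avoids point-counting over finite fields, while the paper's approach fits naturally with the Lang--Weil machinery used elsewhere in \S\ref{subsec:comp-trivial-char}. Note that the Lang--Weil route also implicitly needs a geometrically irreducible component over some finite separable extension, so the separability issue is lurking in both arguments.
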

\begin{proof}
	If the smooth locus of $D$ is non-empty, then by the Lang-Weil estimates and Hensel's lemma we deduce that $D(F_v)\neq \emptyset$ for almost all places $v$; this contradicts \Cref{prop:BoundaryEmpty}.
\end{proof}

\section{The Picard group of $X$} \label{section:PicX}
Let $F$ be a field of a positive characteristic $p$ and fix an algebraic closure $\bar{F}$ of $F$. Define $F^{1/p^m}:=\{a\in \bar{F} \> : \> a^{p^m}\in F\}$. In this section, we assume that $F$ has degree of imprefection $1$, i.e. $[F^{1/p}:F]=[F:F^p]=p$ (see e.g. \cite[\S 1]{BeckerMac}). In particular, this implies that $F^{1/p^m}$ is the unique purely inseparable extension of $F$ of degree $p^m$. Note that a global function field has degree of imprefection equal to $1$ (see \cite[Proposition 7.4]{Rosen}) and hence so does each of its local fields (\cite{Bastos}).

Let $G$ be a connected commutative unipotent $F$-group of dimension $n$. Then the base change of $G$ to the perfect closure of $F$ is isomorphic to affine space of dimension $n$ (see e.g. \cite[A.3]{KMT}). This isomorphism descends to some finite purely inseparable extension of $F$, which we can take to be minimal as $F$ has degree of imprefection $1$.

Let $Y$ be a $G$-torsor over $F$, $X$ be a smooth compactification of $Y$, and ${D:=X\setminus Y}$. Let $\{ D_\alpha \}_{\alpha\in \mathcal{A}}$ be the set of irreducible components of $D$ over $F$ and define ${\mathcal{B}:=\{\beta \in \mathcal{A}\>:\> D_\beta \text{ is not geometrically reduced}\} }$.
Fix an algebraic closure $\bar{F}$ of $F$ and let $F_s$ be the separable closure of $F$ in $\bar{F}$. Set $\Gamma_F:=\gal(F_s/F)$. Denote by $F'$ the smallest purely inseparable over which $G$ is isomorphic to affine space and such that $D_\alpha \times_F F'$ is supported on a geometrically reduced divisor for all $\alpha\in \mathcal{A}$. Let $p':=[F':F]$ and $\Gamma_{F'}:=\gal(F'_s/F')$. For a variety $V$ over $F$, we write $V_{L}:=V\times_F L$ for a field extension $L$ of $F$.

As $F'/F$ is purely inseparable, the base change morphism $X_{F'}\rightarrow X$ is a homeomorphism. Thus, for all $\alpha\in \mathcal{A}$, the reduced scheme structure of $D_{\alpha,F'}$, which we denote by $D_{\alpha,F'}^{\red}$, is identified with a prime Weil divisor on $X_{F'}$.  By \cite[Proposition 7.1.38]{Liu}, we have the following equality of Weil divisors on $X_{F'}$
$$ D_{\alpha,F'} = p_\alpha D_{\alpha,F'}^{\red},$$
where $p_\alpha= p'/[F'(D_{\alpha,F'}^{\red}):F(D_\alpha)]$. Note that $p_\alpha$ is the degree of the purely inseparable closure of $F$ in $F(D_\alpha)$, which is the smallest extension of $F$ over which $D_\alpha$ is supported on a geometrically reduced divisor.
If $\alpha \in \mathcal{A}\setminus \mathcal{B}$, then $D_\alpha$ is geometrically reduced so that $[F'(D_{\alpha,F'}^{\red}):F(D_\alpha)]=p'$; this implies that $p_\alpha=1$, i.e., that $D_{\alpha,F'}$ is a prime divisor on $X_{F'}$.
If $\beta\in \mathcal{B}$, then $D_\beta$ is not geometrically reduced, which implies that $p_\beta>1$.

\begin{lem}[Rosenlicht's lemma]\label{lemma:Rosenlicht}
	Let $G$ be a connected commutative unipotent group over $F$. Let $Y$ be a $G$-torsor over $F$ and $X$ a smooth compactification of $Y$.
	If $f\in F(X)$ has no zeroes or poles on $Y$, then $f\in F^*$.
\end{lem}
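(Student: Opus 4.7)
The plan is to reduce the statement to the classical fact that invertible regular functions on affine space are constants. Since $X$ is smooth and $f\in F(X)$ has no zeros or poles on $Y$, we have $f|_Y \in \mathcal{O}(Y)^\times$, and it suffices to show this unit is constant and moreover lies in $F^\times$.

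To this end, I will pass to an algebraic closure $\bar F$ of $F$. Because $G$ is smooth, the torsor $Y$ is smooth, so $Y(\bar F)\neq\emptyset$; picking any point trivializes the torsor and yields an isomorphism of $\bar F$-varieties $Y_{\bar F} \xrightarrow{\sim} G_{\bar F}$. As recalled at the start of this section, a connected commutative unipotent $F$-group base changed to any extension containing the perfect closure of $F$ is isomorphic as a variety to affine space, so $G_{\bar F}\cong \mathbb{A}^n_{\bar F}$. Since $\mathcal{O}(\mathbb{A}^n_{\bar F})^\times = \bar F^\times$, the pullback of $f|_Y$ to $Y_{\bar F}$ is a non-zero constant $c\in \bar F$.

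It then remains to descend this constant back to $F$. Since $Y$ is smooth (hence geometrically reduced) and $Y_{\bar F}\cong \mathbb{A}^n_{\bar F}$ is connected, $Y$ is geometrically integral. The flat base change $\mathcal{O}(Y)\hookrightarrow \mathcal{O}(Y)\otimes_F \bar F$ is injective, and extending $1$ to an $F$-basis of $\mathcal{O}(Y)$ shows that $\mathcal{O}(Y)\cap \bar F = F$ inside $\mathcal{O}(Y)\otimes_F\bar F$. Hence $f|_Y = c\in F$, so $f\in F^\times$. I expect no serious obstacle here: the argument is a direct transcription of the classical identity $\mathcal{O}(G)^\times = F^\times$ for unipotent $G$ to the torsor setting, and the only point that requires real care is invoking the already-cited structural fact that $G_{\bar F}$ is an affine space as a variety.
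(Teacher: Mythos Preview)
Your proof is correct and takes a genuinely different route from the paper's. The paper first handles the case $Y=G$ by invoking Rosenlicht's theorem \cite[Theorem 3]{Rosenlicht}: the unit $f/f(0_G)$ is a character $G\to\mathbb{G}_m$, hence trivial since $G$ is unipotent. For a general torsor $Y$, the paper passes to a finite \emph{separable} extension $K/F$ trivializing $Y$, applies the $Y=G$ case over $K$, and descends via the observation that $F$ is algebraically closed in $F(X)$.

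You instead bypass Rosenlicht's theorem entirely by going all the way to $\bar F$, where the structural fact $G_{\bar F}\cong\mathbb{A}^n_{\bar F}$ (already recalled at the start of the section) reduces the question to the elementary identity $\mathcal{O}(\mathbb{A}^n_{\bar F})^\times=\bar F^\times$; your descent is then a clean linear-algebra argument in $\mathcal{O}(Y)\otimes_F\bar F$. Your approach is arguably more self-contained (it avoids citing the classical Rosenlicht result), while the paper's approach is more conceptual and stays over smaller fields. Both are short; neither is obviously superior.
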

\begin{proof}
	First, suppose that $Y=G$.
	Let $0_G$ be the identity of $G$. By \cite[Theorem 3]{Rosenlicht}, $f/f(0_G)$ induces a homomorphism $G\rightarrow \mathbb{G}_m$. As $G$ has no non-trivial characters, $f=f(0_G)\in F^*$.
	
	For the general case, let $K/F$ be a finite separable extension such that $Y_K \cong G_K$. We have an embedding $F(X)\hookrightarrow K(X_{K})$. By \cite[Proposition 1.38]{Liu}, the image of $f$ in $ K(X_{K})$ has no zeroes or poles on $G_K$ so that $f\in K^*$ by applying the lemma to $G_K$. As $X$ is a geometrically integral $F$-variety, $F$ is algebraically closed in $F(X)$. Hence $f\in K^* \cap F(X) = F^*$, the intersection being taken in $ K(X_{K})$.
\end{proof}

\begin{lem}\label{lem:picSepIsom}
	We have the following isomorphisms 
	$$\pic(X_{F_s})\xrightarrow{\sim} \pic(X_{F'_s})  \xrightarrow{\sim}  \pic({X}_{\bar{F}}),$$
	which are induced by the pullback maps.
\end{lem}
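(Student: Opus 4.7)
The plan is to express both pullback maps via the Picard functor $\pic_{X/F}$ and to exploit that, in our setting, this functor is represented by an \'etale $F$-group scheme. For each $K\in\{F_s,F'_s,\bar F\}$, I would combine two inputs: (a) the variety $X$ admits a $K$-point, so the five-term exact sequence
\begin{equation*}
0\to \pic(X_K)\to \pic_{X/F}(K)\to \Br(K)
\end{equation*}
degenerates to an isomorphism $\pic(X_K)\xrightarrow{\sim}\pic_{X/F}(K)$; and (b) an \'etale $F$-group scheme has a constant point functor on separably closed extensions of $F$. Combined, (a) and (b) identify the chain $\pic(X_{F_s})\to\pic(X_{F'_s})\to\pic(X_{\bar F})$ with isomorphisms of $\pic_{X/F}$ on $K$-points, and by naturality these identifications agree with the pullback maps.

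For input (a), since $G$ is smooth connected and $F_s$ is separably closed, one has $H^1(F_s,G_{F_s})=0$, so the $G_{F_s}$-torsor $Y_{F_s}$ is trivial and admits an $F_s$-point; this lies in $X(F_s)$, and \emph{a fortiori} yields points in $X(F'_s)$ and $X(\bar F)$. For input (b), I would deduce that $\pic_{X/F}$ is \'etale from the vanishing $H^1(X,\mathcal{O}_X)=0$: the identification $T_e\pic^0_{X/F}\cong H^1(X,\mathcal{O}_X)$ forces $\pic^0_{X/F}$ to be \'etale; being connected, it must be trivial, so $\pic_{X/F}=\pi_0(\pic_{X/F})$ is \'etale. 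The required vanishing is obtained from the fact that $X$ is geometrically rational, since $X_{\bar F}$ is a smooth projective compactification of $G_{\bar F}\cong\mathbb{A}^n_{\bar F}$, combined with the vanishing of $H^i(\mathcal{O})$ for smooth projective (separably) rationally connected varieties in any characteristic.

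The main obstacle is the positive-characteristic vanishing $H^1(X,\mathcal{O}_X)=0$: in characteristic zero this would be immediate from Hodge theory together with the birational invariance of $h^{0,1}$, but in positive characteristic one must appeal to the slightly more delicate vanishing result for separably rationally connected smooth projective varieties. A pleasant byproduct of this approach is that it clarifies why the pullback isomorphism $\pic(X_{F_s})\to\pic(X_{F'_s})$ holds despite the naive pullback of the prime divisors $[D_{\alpha'}]$ acquiring the inseparable multiplicities $p_\alpha>1$ for $\alpha\in\mathcal{B}$: the additional classes needed to surject onto $\pic(X_{F'_s})$ are supplied by elements of $\pic(X_{F_s})$ lying outside $\bigoplus_{\alpha'}\mathbb{Z}[D_{\alpha'}]$, whose existence is forced by a nontrivial $\pic(Y_{F_s})$ precisely when $\mathcal{B}\neq\emptyset$.
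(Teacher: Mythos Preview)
Your proof is correct and, at its core, runs on the same engine as the paper's: both arguments reduce to $H^1(X,\mathcal{O}_X)=0$ and then exploit that this forces the Picard functor to behave well along separably closed extensions. The paper's presentation is more compressed: it obtains $H^1(X,\mathcal{O}_X)=0$ by flat base change from the rational variety $X_{\bar F}$, cites \cite[Corollary~5.1.3]{CTSK} for $\pic(X_{F_s})\xrightarrow{\sim}\pic(X_{\bar F})$, and then sandwiches $\pic(X_{F'_s})$ between these two using that pullback of line bundles along field extensions is injective for projective varieties. Your argument via representability and \'etaleness of $\pic_{X/F}$ is precisely what underlies the cited corollary, so you are unpacking that black box rather than taking a genuinely different route; the payoff is a more self-contained and conceptually transparent proof. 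One minor point: you invoke vanishing of $H^i(\mathcal{O})$ for separably rationally connected varieties in positive characteristic, which is more delicate than necessary here. Since $X_{\bar F}$ is actually rational, it suffices to use the birational invariance of $H^i(X,\mathcal{O}_X)$ for smooth projective varieties (Chatzistamatiou--R\"ulling), or simply to argue directly as the paper does via flat base change from $H^1(X_{\bar F},\mathcal{O}_{X_{\bar F}})=0$.
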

\begin{proof}
	First, note that $Y_{\bar{F}}$ is a trivial $G_{\bar{F}}$-torsor so that $Y_{\bar{F}}\cong G_{\bar{F}}$. Since $\bar{F}$ is perfect, we have $Y_{\bar{F}}\cong G_{\bar{F}} \cong \mathbb{A}^n_{\bar{F}}$.
	Thus, $X_{\bar{F}}$ is a rational $\bar{F}$-variety as it is a smooth compactification of $Y_{\bar{F}}\cong \mathbb{A}^n_{\bar{F}}$. By flat base change, we have
	$$\Ho^1(X,\mathcal{O}_X)\otimes \bar{F}\cong \Ho^1(X_{\bar{F}},\mathcal{O}_{X_{\bar{F}}}) = 0,$$
	as $X_{F'}$ is a smooth rational $\bar{F}$-variety.
	Since $F'$ is a faithfully flat $F$-module, this implies that $\Ho^1(X,\mathcal{O}_X)= 0.$ By \cite[Corollary 5.1.3]{CTSK}, we deduce that
	$$\pic(X_{F_s})\xrightarrow{\sim} \pic(X_{\bar{F}}).$$
	As $X$ is projective, we have
	$ \pic (X_{F_s}) \hookrightarrow \pic(X_{F'_s})$ and $\pic (X_{F'_s})\hookrightarrow \pic(X_{\bar{F}}),$
	which completes the proof.
\end{proof}

Recall that $\mathcal{A}$ is an index set for the set of irreducible components of $D$ over $F$ and $\mathcal{B}\subset \mathcal{A}$ is the index subset for those that are not geometrically reduced. For $\alpha\in \mathcal{A}$, we denote the separable closure of $F$ in $F(D_\alpha)$ by $F_\alpha$. We write $\ind_{F_\alpha}^F (\cdot)$ for the induced module from $\Gamma_{F_\alpha}$ to $\Gamma_F$.

\begin{prop}\label{prop:picGalModule}
	We have the following isomorphisms of $\Gamma_F$-modules
	$$\pic(X^s)\cong \bigoplus_{\alpha \in \mathcal{A}} \ind_{{F_\alpha}}^{F} \mathbb{Z} \> \text{ and } \> \pic(Y^s)\cong \bigoplus_{\beta \in \mathcal{B}} \ind_{{F_\beta}}^{F} \mathbb{Z}/p_\beta \mathbb{Z}.$$
	In particular, 
	$\Ho^1(\Gamma_{F}, \pic(X_s))$ is trivial as $\pic(X^s)$ is torsion free.
\end{prop}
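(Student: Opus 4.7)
My strategy combines the localization exact sequence for Picard groups with passage along the purely inseparable extension $F'_s/F_s$, under which the boundary becomes geometrically reduced.

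First I would check that \Cref{lemma:Rosenlicht} applies verbatim over $F_s$: $G_{F_s}$ is still a connected commutative unipotent group with no nontrivial characters, since $G_{\bar F}\cong \mathbb{A}^n_{\bar F}$ has none. Therefore any $f\in F_s(X)^*$ with divisor supported on $D_{F_s}$ lies in $F_s^*$, and the standard localization sequence for Picard groups becomes a short exact sequence of $\Gamma_F$-modules
\[
0 \longrightarrow \Div_{D_{F_s}}(X_{F_s}) \longrightarrow \pic(X_{F_s}) \longrightarrow \pic(Y_{F_s}) \longrightarrow 0,
\]
where $\Div_{D_{F_s}}(X_{F_s})$ denotes the free abelian group on the reduced prime divisors of $X_{F_s}$ supported in $D_{F_s}$. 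By Galois theory these prime divisors decompose into $\Gamma_F$-orbits indexed by $\alpha\in\mathcal{A}$, each of size $[F_\alpha:F]$ with stabiliser $\Gamma_{F_\alpha}$, so $\Div_{D_{F_s}}(X_{F_s})\cong \bigoplus_{\alpha\in\mathcal{A}}\ind_{F_\alpha}^F\mathbb{Z}$.

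To identify the middle term I would run the same argument over $F'_s$. Since $G_{F'}\cong \mathbb{A}^n_{F'}$ by definition of $F'$ and every $G_{F'_s}$-torsor is trivial (because $F'_s$ is separably closed and $G$ is smooth), $Y_{F'_s}\cong \mathbb{A}^n_{F'_s}$ and so $\pic(Y_{F'_s})=0$. The corresponding sequence collapses to an isomorphism $\Div_{D_{F'_s}}(X_{F'_s})\xrightarrow{\sim}\pic(X_{F'_s})$; since $F'/F$ is purely inseparable the orbit structure of the components is unchanged, and $\Div_{D_{F'_s}}(X_{F'_s})\cong \bigoplus_\alpha \ind_{F_\alpha}^F\mathbb{Z}$ as $\Gamma_F$-modules. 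Combined with the pullback isomorphism $\pic(X_{F_s})\xrightarrow{\sim}\pic(X_{F'_s})$ of \Cref{lem:picSepIsom}, this yields the first claimed isomorphism.

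For $\pic(Y^s)$ I would compute the cokernel of the displayed exact sequence via the commutative square comparing it with its $F'_s$-analogue, whose right vertical is the pullback isomorphism. By the multiplicity formula $D_{\alpha,F'}=p_\alpha D_{\alpha,F'}^{\red}$ recalled before the statement (from \cite[Proposition 7.1.38]{Liu}), pullback along the purely inseparable universal homeomorphism $X_{F'_s}\to X_{F_s}$ sends each reduced prime divisor $D_{\alpha,F_s,i}^{\red}$ to $p_\alpha D_{\alpha,F'_s,i}^{\red}$. Thus the left vertical is multiplication by $p_\alpha$ on the $\alpha$-summand, and the cokernel is
\[
\pic(Y^s)\cong \bigoplus_{\alpha\in\mathcal{A}}\ind_{F_\alpha}^F(\mathbb{Z}/p_\alpha\mathbb{Z}) = \bigoplus_{\beta\in\mathcal{B}}\ind_{F_\beta}^F(\mathbb{Z}/p_\beta\mathbb{Z}),
\]
since $p_\alpha=1$ precisely when $\alpha\notin\mathcal{B}$. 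The vanishing of $\Ho^1(\Gamma_F,\pic(X^s))$ then follows from Shapiro's lemma applied to the permutation module structure together with $\mathrm{Hom}_{\mathrm{cts}}(\Gamma_{F_\alpha},\mathbb{Z})=0$. The main technical point is the multiplicity-tracking in the pullback computation, which I would reduce directly to the already-recorded formula and to Galois equivariance of base change.
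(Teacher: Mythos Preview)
Your proposal is correct and follows essentially the same route as the paper: both set up the localization exact sequence over $F_s$ and $F'_s$ using Rosenlicht's lemma, invoke \Cref{lem:picSepIsom} for the middle isomorphism, use $Y_{F'_s}\cong\mathbb{A}^n_{F'_s}$ to collapse the $F'_s$-sequence, and read off $\pic(Y^s)$ from the snake lemma after tracking the multiplicity $p_\alpha$ in the pullback of divisors. The paper spends more space explicitly verifying that the bijection of irreducible components under the universal homeomorphism $X_{F'_s}\to X_{F_s}$ is $\Gamma_F$-equivariant via the canonical identification $\Gamma_F\cong\Gamma_{F'}$, which you handle in one line; you may want to expand that step slightly, but the argument is the same.
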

\begin{proof}
	First, we study the $\Gamma_F$ and $\Gamma_{F'}$ actions on the irreducible components of $D$ and $D^{\text{red}}_{F'}$, respectively. As $F$ has degree of imperfection $1$, we can write $F'=F(t^{1/p'})$ for some $t\notin F^{p}$. Any automorphism of $F_s$ extends uniquely to $F'_s$ by fixing $t^{1/p'}$.  This gives a canonical isomorphism $\Gamma_{F}\xrightarrow{\sim} \Gamma_{F'}$. Now fix $\alpha\in \mathcal{A}$. We denote the separable closure of $F'$ in $F'(D_{\alpha,F'}^{\red})$ by $F'_\alpha$. Write $\widetilde{F}_\alpha$ and $\widetilde{F}'_\alpha$ for the Galois closures of $F_\alpha$ and $F'_\alpha$, respectively. The algebraic closure of $F$ in $F(D_\alpha)$ is $F_\alpha (t^{1/p_\alpha})$ and  $F'(D_{\alpha,F'}^{\red})=F(D_\alpha)[(t^{1/p_\alpha})^{p_\alpha/p'}]$.  Thus, $F'_\alpha/ F_\alpha$ and $\widetilde{F}'_\alpha/ \widetilde{F}_\alpha$ are purely inseparable extensions of degree $p'$, i.e. $F'_\alpha=F_\alpha \otimes_{F} F'$ and ${\widetilde{F}'_\alpha=\widetilde{F}_\alpha \otimes_{F} F'}$. Hence, we have canonical isomorphisms $\Aut(F_\alpha/F)\xrightarrow{\sim} \Aut(F'_\alpha/F')$ and  ${\gal(\widetilde{F}_\alpha/F)\xrightarrow{\sim} \gal(\widetilde{F}'_\alpha/F')}$ using the fact that an automorphism of a field extends uniquely to any finite purely inseparable extension. This allows us to identify $\sigma \in \Aut(F_\alpha/F)$ (resp. $\sigma \in \gal(\widetilde{F}_\alpha/F)$) with its extension to $F'_\alpha$ (resp. $\widetilde{F}'_\alpha$), which we will denote by $\sigma$ as well. Then
	$$F(D_\alpha) \otimes_F F_\alpha \cong  \prod_{\sigma \in \Aut(F_\alpha/F)} F(D_\alpha)^\sigma,$$   
	$$F'(D_{\alpha,F'}^{\red}) \otimes_{F'} F'_\alpha \cong \prod_{\sigma \in \Aut(F'_\alpha/F')} F'(D_{\alpha,F'}^{\red})^\sigma $$
	where $F(D_\alpha)^{\sigma}\cong F(D_\alpha)$ and the superscript keeps track of the indexed coordinate (similarly for $F'(D_{\alpha,F'}^{\red})^\sigma$); the groups $\gal(\widetilde{F}_\alpha/F)$ and $\gal(\widetilde{F}'_\alpha/F')$ act on the right by permuting the indices via their actions on $\Aut(F_\alpha/F)$ and $\Aut(F'_\alpha/F')$, respectively. The embedding $F(D_\alpha) \hookrightarrow F'(D_{\alpha,F'}^{\red})$ and the isomorphism ${\gal(\widetilde{F}_\alpha/F)\xrightarrow{\sim} \gal(\widetilde{F}'_\alpha/F')}$, induce a compatible morphism of $\gal(\widetilde{F}_\alpha/F)$-modules
	$$\prod_{\sigma \in \Aut(F_\alpha/F)} F(D_\alpha)^\sigma\hookrightarrow \prod_{\sigma \in \Aut(F'_\alpha/F')} F'(D_{\alpha,F'}^{\red})^\sigma .$$
	This shows that we have a decomposition to geometrically irreducible components
	$$D_\alpha \times_F F_\alpha=\bigcup_{\sigma \in \Aut(F_\alpha/F)} D_\alpha^{(\sigma)} \quad \text{ and } \quad  D_{\alpha,F'}^{\red} \times_{F'} F'_\alpha=\bigcup_{\sigma \in \Aut(F'_\alpha/F')} {D_{\alpha,F'}^{\red}}^{(\sigma )}$$
	where each $D_\alpha^{(\sigma)}$ (resp. ${D_{\alpha,F'}^{\red}}^{(\sigma)}$) corresponds to $F(D_\alpha)^{\sigma}$ (resp. $F'({D_{\alpha,F'}^{\red})}^{(\sigma)}$), with a corresponding $\gal(\widetilde{F}_\alpha/F)$ action.
	
	We have a commutative diagram
	\begin{equation} \label{eqn:PicXSep}
	\begin{tikzcd}[row sep=1.7em, column sep=1.7em]
		0 & { \underset{\alpha} \bigoplus \underset{\sigma\in \Aut(F'_\alpha/F')}{\bigoplus} \mathbb{Z} [{D_{\alpha,F'}^{\red}}^{(\sigma)} \times_{F'_\alpha} F'_s]} & {\pic(X_{F'_s})} & {\pic(Y_{F'_s})} & 0 \\
		0 & {\underset{\alpha} \bigoplus \underset{\sigma\in \Aut(F_\alpha/F)}{\bigoplus} \mathbb{Z} [D_\alpha^{(\sigma)}\times_{F_\alpha} F_s]} & {\pic(X_{F_s})} & {\pic(Y_{F_s})} & 0 
		\arrow[from=2-3, to=2-4]
		\arrow[from=2-4, to=2-5]
		\arrow[from=2-2, to=2-3]
		\arrow[from=1-3, to=1-4]
		\arrow[from=1-4, to=1-5]
		\arrow[from=1-2, to=1-3]
		\arrow["\rotatebox{90}{$\sim$}", from=2-3, to=1-3]
		\arrow[from=2-4, to=1-4]
		\arrow["{D_\alpha^{(\sigma)}\mapsto p_\alpha {D_{\alpha,F'}^{\red}}^{(\sigma)}}"', from=2-2, to=1-2]
		\arrow[from=1-1, to=1-2]
		\arrow[from=2-1, to=2-2]
	\end{tikzcd} \end{equation}
	where the left exactness in both rows follows from \Cref{lemma:Rosenlicht} and the middle vertical isomorphism follow from \Cref{lem:picSepIsom}. As $Y_{F'_s}$ is the trivial $G_{F'_s}$-torsor, we deduce that $Y_{F'_s}\cong G_{F'_s} \cong \mathbb{A}^n_{F'_s}$ which implies that $\pic (Y_{F'_s}) =0$. Therefore, the canonical isomorphism $\Gamma_F\cong \Gamma_{F'}$ induces the following isomorphisms of $\Gamma_F$-modules
	$$\pic(X_{F'_s})\cong \underset{\alpha} \bigoplus \underset{\sigma\in \Aut(F'_\alpha/F')}{\bigoplus} \mathbb{Z} [{D_{\alpha,F'}^{\red}}^{(\sigma)} \times_{F'_\alpha} F'_s] \cong \underset{\alpha} \bigoplus \ind_{{F'_\alpha}}^{F'} \z \cong \underset{\alpha} \bigoplus \ind_{{F_\alpha}}^{F} \z$$
	Now, by the isomorphism $\pic(X_{F_s})\xrightarrow{\sim} \pic(X_{F'_s})$ we see that $\pic(X_{F_s})$ is freely generated by the line bundles $\{\frac{1}{p_\alpha} [D_\alpha^{(\sigma)}]\}_{\alpha,\sigma}$; moreover, it is easy for see that for $g\in \Gamma_F$ we have $g\cdot (\frac{1}{p_\alpha} [D_\alpha^{(\sigma)}])=\frac{1}{p_\alpha} [D_\alpha^{(g\cdot\sigma)}]$. We have shown in the first paragraph that the left vertical map in \eqref{eqn:PicXSep} is that of $\Gamma_F$-modules.
	Hence, by the commutativity of diagram \eqref{eqn:PicXSep} and the fact that  $[D_\alpha^{(\sigma)}]$ maps to $p_\alpha [{D_{\alpha,F'}^{\red}}^{(\sigma)}]$, we deduce that 
	\begin{equation}\label{eqn:picX}
		\pic(X_{F_s})\cong \underset{\alpha} \bigoplus \underset{\sigma\in \Aut(F_\alpha/F)}{\bigoplus} \mathbb{Z} \left( \frac{1}{p_\alpha}[{D_{\alpha}}^{(\sigma)} \times_{F_\alpha} F_s]\right) \cong  \bigoplus_{\alpha \in \mathcal{A}} \ind_{{F_\alpha}}^{F} \mathbb{Z}
	\end{equation}
	as $\Gamma_F$-modules. By applying the snake lemma to \eqref{eqn:PicXSep} we get
	\begin{equation}\label{eqn:picY}
		\pic(Y_{F_s})\cong   \underset{\alpha} \bigoplus \underset{\sigma\in \Aut(F_\alpha/F)}{\bigoplus} \frac{\mathbb{Z}}{p_\alpha \mathbb{Z}} \left( \frac{1}{p_\alpha}[{D_{\alpha}}^{(\sigma)} \times_{F_\alpha} F_s]\right) \cong \bigoplus_{\beta \in \mathcal{B}} \ind_{{F_\beta}}^{F}  \frac{\mathbb{Z}}{p_\beta \mathbb{Z}}
	\end{equation}
	as $\Gamma_F$-modules since $p_\alpha=1$ for all ${\alpha\in \mathcal{A}\setminus \mathcal{B}}$.
\end{proof}

Denote by $\Delta_{\eff}(X)$ the monoid of effective divisors of $X$. 
\begin{prop}\label{prop:picX}	
	Suppose that $Y= G$. Then the following statements hold.
	\begin{enumerate}
		\item The pullback map $\pic(X)\rightarrow \pic(X_{F'})$ is an isomorphism.
		\item There exists effective divisors $\{D_\alpha^{1/p_\alpha}\}_{\alpha\in \mathcal{A}}$ on $X$ satisfying $[D_\alpha]=p_\alpha [D_\alpha^{1/p_\alpha}]$ for all $\alpha\in \mathcal{A}$, inducing isomorphisms
		$$\bigoplus_{\alpha\in \mathcal{A}}\z [D_\alpha^{1/p_\alpha}]  \xrightarrow{\sim} \pic(X) \quad \text{ and } \quad \bigoplus_{\alpha\in \mathcal{A}}\mathbb{N} [D_\alpha^{1/p_\alpha}]  \xrightarrow{\sim} \Delta_{\eff}(X).$$
		
		\item We have an isomorphism
		$$\pic(G)\cong \bigoplus_{\beta \in \mathcal{B}} \z/p_\beta \z. $$
		In particular, $|\pic(G)|=\prod_{\beta\in \mathcal{B}} p_\beta$.
		
		\item $\Br(X)\cong \Br(F)$.
	\end{enumerate}
\end{prop}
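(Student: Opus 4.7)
My overall plan is to deduce all four parts from the Galois-module description in \Cref{prop:picGalModule}, combined with the Leray--Hochschild--Serre spectral sequence for $\mathbb{G}_m$ on $X\to\spec F$. A uniform input is the rational point $0_G\in G(F)\subset X(F)$, which splits the edge map $\Br(F)\to\Br(X)$ via $0_G^*$ and hence makes it injective. For (1), the standard five-term exact sequence reads
\[
0\to \pic(X)\to \pic(X_{F_s})^{\Gamma_F}\to \Br(F)\to \Br(X),
\]
so injectivity of the last map forces the middle arrow to vanish, giving $\pic(X)\xrightarrow{\sim}\pic(X_{F_s})^{\Gamma_F}$. The same argument over $F'$ yields $\pic(X_{F'})\xrightarrow{\sim}\pic(X_{F'_s})^{\Gamma_{F'}}$, and under the canonical identification $\Gamma_F=\Gamma_{F'}$ the $\Gamma_F$-equivariant pullback isomorphism $\pic(X_{F_s})\xrightarrow{\sim}\pic(X_{F'_s})$ from \Cref{lem:picSepIsom} completes (1).

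For (2), I take $\Gamma_F$-invariants in $\pic(X_{F_s})\cong\bigoplus_\alpha\ind_{F_\alpha}^F\z$: a $\z$-basis is given by the Galois orbit sums $\sum_{\sigma\in\Aut(F_\alpha/F)}\tfrac{1}{p_\alpha}[D_\alpha^{(\sigma)}]=\tfrac{1}{p_\alpha}[D_\alpha]$, which I label $[D_\alpha^{1/p_\alpha}]$. By (1), these classes freely generate $\pic(X)$ and satisfy $p_\alpha[D_\alpha^{1/p_\alpha}]=[D_\alpha]$. To check they are represented by honest effective divisors on $X$, observe that under the pullback isomorphism of (1) the class $[D_\alpha^{1/p_\alpha}]$ corresponds to the class of the prime divisor $D_{\alpha,F'}^{\red}$ on $X_{F'}$, and then apply the flat base change isomorphism $\Ho^0(X,\mathcal{L})\otimes_F F'\cong \Ho^0(X_{F'},\mathcal{L}_{F'})$ to descend effectivity from $F'$ down to $F$. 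The same descent pins down $\Delta_{\eff}(X)=\bigoplus_\alpha\mathbb{N}[D_\alpha^{1/p_\alpha}]$.

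For (3), I use the excision exact sequence $\bigoplus_\alpha\z[D_\alpha]\to\pic(X)\to\pic(G)\to 0$ for the open complement $G=X\setminus D$. Rosenlicht's lemma (\Cref{lemma:Rosenlicht}) forces any rational function on $X$ with divisor supported on $D$ to be constant, so the leftmost map is injective. Substituting the description from (2), this map becomes $[D_\alpha]\mapsto p_\alpha[D_\alpha^{1/p_\alpha}]$, with cokernel $\bigoplus_\alpha\z/p_\alpha\z=\bigoplus_{\beta\in\mathcal{B}}\z/p_\beta\z$ (since $p_\alpha=1$ precisely when $\alpha\notin\mathcal{B}$).

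Finally, (4) reuses the same Leray spectral sequence. Since $X_{\bar F}$ is a smooth projective rational variety (already used in the proof of \Cref{lem:picSepIsom}), its cohomological Brauer group vanishes; combined with $\Ho^1(\Gamma_F,\pic(X_{F_s}))=0$ from \Cref{prop:picGalModule}, the Hochschild--Serre sequence gives $\Br(X)=\Br_1(X)=\im(\Br(F)\to\Br(X))$, and the splitting by $0_G^*$ identifies this image with $\Br(F)$. The main technical hurdle I anticipate is precisely the vanishing of $\Br(X_{F_s})$: the prime-to-$p$ part is immediate from birational invariance of cohomological Brauer groups together with $\Br(\mathbb{P}^n_{F_s})=0$, but the $p$-primary part in positive characteristic requires invoking birational invariance of the full Brauer group among smooth projective varieties, which is available from the vanishing of $\Ho^i(X_{F_s},\mathcal{O})$ for $i=1,2$ (both consequences of rationality, as already exploited in the proof of \Cref{lem:picSepIsom}).
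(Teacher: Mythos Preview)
Your proposal is correct and follows essentially the same route as the paper: both identify $\pic(X)\cong\pic(X_{F_s})^{\Gamma_F}$ via the rational point $0_G$ and the Hochschild--Serre sequence (the paper cites \cite[Remark~5.4.3]{CTSK} for this), read off (2) from the permutation-module description of \Cref{prop:picGalModule}, obtain (3) by the excision/snake-lemma computation on the map $[D_\alpha]\mapsto p_\alpha[D_\alpha^{1/p_\alpha}]$, and deduce (4) from $\Ho^1(\Gamma_F,\pic(X_{F_s}))=0$ together with vanishing of the transcendental Brauer group. For the last step the paper handles your anticipated hurdle slightly more cleanly than your sketch: rather than invoking birational invariance via $\Ho^2$-vanishing, it uses \cite[Theorem~5.2.5]{CTSK} to get $\Br(X_{F_s})\hookrightarrow\Br(X_{\bar F})$ from $\Ho^1(X,\mathcal{O}_X)=0$ alone, and then $\Br(X_{\bar F})=0$ follows since $X_{\bar F}$ is smooth projective rational over an algebraically closed field.
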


	\begin{proof}	
	(1) Note that $F_s[X]^*=F_s[G]^*=F_s^*$ and $F'_s[X ']^*={F'_s}^*$. Hence, as $X(F)$, $X_{F'}(F'),$ and $G(F)$ are non-empty we have ${\pic(X_{F_s})^{\Gamma_F}\cong \pic(X),}$  ${\pic(X_{F'_s})^{\Gamma_{F'}}\cong \pic(X_{F'}) }$ and $\pic(G_{F_s})^{\Gamma_F}\cong \pic(G)$ (see \cite[Remark 5.4.3]{CTSK}). Thus, by applying $(\cdot)^{\Gamma_F}$ to  diagram \eqref{eqn:PicXSep}, and using the fact that both free modules on the left in \eqref{eqn:PicXSep} are $\Gamma_F$-permutation modules and that $\pic(X_{F_s})\xrightarrow{\sim} \pic(X_{F'_s})$ is an isomorphism of $\Gamma_F$-modules, we obtain the commutative diagram with exact rows:
	% https://q.uiver.app/#q=WzAsMTEsWzIsMSwiXFx0ZXh0e0NsfShYKSJdLFszLDEsIlxcdGV4dHtDbH0oRykiXSxbNCwxLCIwIl0sWzEsMSwiXFxiaWdvcGx1c1xcbWF0aGJme1p9IFtEX1xcYWxwaGFdIl0sWzEsMCwiXFxiaWdvcGx1c1xcbWF0aGJme1p9IFtEJ197MCxcXGFscGhhfV0iXSxbMiwwLCJcXHRleHR7Q2x9KFgnKSJdLFszLDAsIlxcdGV4dHtDbH0oXFxtYXRoYmZ7R31fe2EsRid9KT0wIl0sWzQsMCwiMCJdLFsyLDIsIjAiXSxbMCwwLCIwIl0sWzEsMiwiMCJdLFswLDFdLFsxLDJdLFszLDBdLFs1LDZdLFs2LDddLFs0LDVdLFswLDVdLFs4LDAsIlxcbWF0aGNhbHtPfV9YKFgpPUYiLDJdLFsxLDZdLFszLDQsIkRfXFxhbHBoYVxcbWFwc3RvIEQnX1xcYWxwaGEiLDJdLFs5LDRdLFsxMCwzXV0=
	\begin{equation}\label{diag1} \begin{tikzcd}
		0 & {\bigoplus_{\alpha\in \mathcal{A}}\mathbb{Z} [D_{\alpha,F'}^{\red}]} & {\pic(X_{F'})} & {\pic(\mathbb{A}^n_{F'})=0} & 0 \\
		0 & {\bigoplus_{\alpha\in \mathcal{A}}\mathbb{Z} [D_\alpha]} & {\pic(X)} & {\pic(G)} & 0 
		\arrow[from=2-3, to=2-4]
		\arrow[from=2-4, to=2-5]
		\arrow[from=2-2, to=2-3]
		\arrow[from=1-3, to=1-4]
		\arrow[from=1-4, to=1-5]
		\arrow[from=1-2, to=1-3]
		\arrow["\rotatebox{90}{$\sim$}", from=2-3, to=1-3]
		\arrow[from=2-4, to=1-4]
		\arrow["{D_\alpha\mapsto p_\alpha D_{\alpha,F'}^{\red}}"', from=2-2, to=1-2]
		\arrow[from=1-1, to=1-2]
		\arrow[from=2-1, to=2-2]
	\end{tikzcd}\end{equation}
	
	$(2)$ Denote the inverse image of $[D_{\alpha,F'}^{\red}]$ via the isomorphism ${\pic(X)\xrightarrow{\sim} \pic(X_{F'})}$ in \eqref{diag1} by $[D_\alpha^{1/p_\alpha}]$, so that $p_\alpha [D_\alpha^{1/p_\alpha}] = [D_\alpha]$. Thus, the classes of $\{D_\alpha^{1/p_\alpha}\}_{\alpha\in \mathcal{A}}$ is a $\z$-basis for $\pic(X)$.
	
	$(3)$ By applying the snake lemma to \eqref{diag1}, we deduce that $\pic(G)\cong \bigoplus_{\beta \in \mathcal{B}} \z/p_\beta \z $. 
	
	$(4)$ Since $\Ho^1(\Gamma_{F}, \pic(X_{F_s}))=0$ and $F_s[X]^*=F_s^*$, by Proposition 5.4.2 and Remark 5.4.3 in \cite{CTSK}, we deduce that $\Br_1(X)=\Br(F)$. As $\Ho^1(X,\mathcal{O}_X)= 0$, by \cite[Theorem 5.2.5]{CTSK}, the natural map $\Br(X_{F_s})\rightarrow \Br({X}_{\bar{F}})$ is injective. As $X$ is geometrically rational, $\Br(X_{F_s})=\Br(X_{\bar{F}})=0$. Therefore, $\Br(X)=\Br_1(X)=\Br(F)$. 
\end{proof}

\begin{cor}\label{Cor:PicG-D-g.reduced}
	Let $G$ be a connected commutative unipotent $F$-group that admits a smooth compactification  $X$. Then $\pic(G)$ is finite and is a power of $p$. Moreover,  $X\setminus G$ is geometrically reduced if and only if $\pic(G)$ is trivial.
\end{cor}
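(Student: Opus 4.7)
The plan is to deduce everything directly from Proposition \ref{prop:picX}(3), which asserts the isomorphism
\[
\pic(G) \cong \bigoplus_{\beta \in \mathcal{B}} \z/p_\beta \z,
\]
together with the description of the integers $p_\beta$ given just before Lemma \ref{lemma:Rosenlicht}.

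First I would observe that each $p_\beta$ is the degree of the purely inseparable closure of $F$ in the function field $F(D_\beta)$, hence is a power of the characteristic $p$. Since the index set $\mathcal{A}$ is finite (the boundary $D$ has only finitely many irreducible components) and $\mathcal{B} \subseteq \mathcal{A}$, the group $\pic(G)$ is a finite direct sum of finite cyclic $p$-groups, whose order $\prod_{\beta \in \mathcal{B}} p_\beta$ is a power of $p$. This handles the first assertion.

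For the equivalence, I would argue as follows. By the very definition of $\mathcal{B}$, the boundary $D = X \setminus G$ is geometrically reduced precisely when $\mathcal{B} = \emptyset$. If $\mathcal{B} = \emptyset$, then the direct sum in Proposition \ref{prop:picX}(3) is empty, so $\pic(G) = 0$. Conversely, if $\mathcal{B} \neq \emptyset$, pick any $\beta \in \mathcal{B}$; since $D_\beta$ is not geometrically reduced, the preceding discussion shows $p_\beta > 1$, and hence the summand $\z/p_\beta\z$ is non-trivial, giving $\pic(G) \neq 0$.

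There is no real obstacle: all the content has already been absorbed into Proposition \ref{prop:picX}(3) and the preliminary analysis of the multiplicities $p_\alpha$. The corollary is essentially a bookkeeping statement extracting the $p$-primary nature of the $p_\beta$ and the equivalence $\mathcal{B}=\emptyset \iff \pic(G)=0$ from that proposition.
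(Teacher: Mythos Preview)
Your argument is correct and is exactly the intended deduction: the paper states this as a corollary of Proposition~\ref{prop:picX}(3) with no separate proof, and your unpacking of the isomorphism $\pic(G)\cong\bigoplus_{\beta\in\mathcal{B}}\z/p_\beta\z$ together with the observation that each $p_\beta$ is a nontrivial $p$-power when $\beta\in\mathcal{B}$ is precisely what is required.
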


\begin{rem}
	If $G$ is a unirational $F$-form of $\ga$, then by \cite[Theorem 2.8]{Achet2} the group $\pic(G)$ is $p$-torsion. Thus if $G$ admits a smooth compactification $X$ then ${\pic(G)=\bigoplus_{i=1}^m \z/p\z}$, where $m$ is the number of irreducible components of $X\setminus G$ that are not geometrically reduced. In particular, this implies that $p_\beta=p$ $,\forall \beta\in \mathcal{B}$.
\end{rem}

Suppose now that $Y=G$ and $X$ is a smooth equivariant compactification of $G$. In the following, we follow the definition given in \cite[Definition 1.6]{Mumford}) for a $G$-linearization of a line bundle.

\begin{prop}\label{prop:G-linz line bundle}
	Every line bundle $\mathcal{L}$ on $X$ admits at most one $G$-linearization. 
	Every line bundle $\mathcal{L}$ in the image of
	$$\bigoplus_{\alpha \in \mathcal{A}} \frac{p'}{p_\alpha} \z  \> [D_\alpha] \rightarrow \pic(X)$$
	admits a $G$-linearization.
	If $\mathcal{L}$ is effective and admits a $G$-linearization, then $\cohom^0(X,\mathcal{L})$ has a unique line of $G$-invariant sections. In particular, let $D=\sum d_\alpha (p'/p_\alpha) D_\alpha$ for integers $d_\alpha\geq 0$. Then the canonical section $s_D$ of $\mathcal{O}_X(D)$ is $G$-invariant.
\end{prop}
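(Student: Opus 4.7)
My plan is to deduce all four assertions from a common observation: each canonical section $s_\alpha$ of $\mathcal{O}(D_\alpha)$ is $G$-invariant. The uniqueness of a $G$-linearization on any $\mathcal{L}$ is then immediate: two linearizations differ by a character $\chi\colon G\to\mathbb{G}_m$, and since $G$ is a connected commutative unipotent $F$-group while $\mathbb{G}_m$ contains no non-trivial unipotent subgroup, $\chi=1$.

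For the invariance of $s_\alpha$ and the consequent existence of a $G$-linearization, I first observe that the connected group $G$ preserves each irreducible component of $D=X\setminus G$ set-theoretically, and the reduced scheme structure of $D_\alpha$ upgrades this to scheme-theoretic preservation. Hence $g^{-1}D_\alpha=D_\alpha$ as Cartier divisors for every $g\in G(\bar F)$, and comparing $a^*s_\alpha$ with $\pi_2^*s_\alpha$ on $G\times X$ --- two sections of the same line bundle with the same divisor --- their ratio is a unit. Proper geometric integrality of $X$ gives $(\pi_G)_*\mathcal{O}_{G\times X}=\mathcal{O}_G$, so this unit descends to an element of $\mathcal{O}(G)^*$, defining a character $G\to\mathbb{G}_m$ that is trivial by the previous paragraph. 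Thus $s_\alpha$ is $G$-invariant. Taking products, for $D=\sum d_\alpha(p'/p_\alpha)D_\alpha$ with $d_\alpha\in\mathbb{Z}_{\geq 0}$ the canonical section $s_D=\prod s_\alpha^{d_\alpha(p'/p_\alpha)}$ is $G$-invariant, which proves the ``in particular''. Non-vanishing of $s_D$ on the dense orbit $G\subset X$ yields a $G$-equivariant trivialization of $\mathcal{O}(D)|_G$ that extends uniquely to a $G$-linearization of $\mathcal{O}(D)$; for a general line bundle in the sublattice I write it as a difference of two effective classes in the sublattice and transport the linearizations.

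For the uniqueness of the line of $G$-invariant sections in the effective case $\mathcal{L}=\mathcal{O}(D)$, any invariant section $s$ produces a $G$-invariant rational function $s/s_D$ on $X$. Since $G$ acts on itself by translations with a single orbit, the $G$-invariant rational functions on $G$ are constants, and density of $G\subset X$ gives $F(X)^G=F$; hence $s\in F\cdot s_D$. The main technical subtlety is the case $\beta\in\mathcal{B}$, where $D_\beta$ is not geometrically reduced: the argument still works because only reducedness of $D_\beta$ over $F$ is needed to conclude $g^{-1}D_\beta=D_\beta$ as a Cartier divisor, without reasoning on non-reduced geometric components.
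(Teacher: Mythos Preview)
Your arguments for the first, second, and fourth assertions are correct and in fact more elementary than the paper's. Where the paper invokes Brion's linearizability theorem \cite[Theorem 2.14]{Bri} (using that the exponent of $\pic(G)$ divides $p'$ to conclude that $\mathcal{O}(D_\alpha)^{p'/p_\alpha}\cong\mathcal{O}(D_\alpha^{1/p_\alpha})^{p'}$ is linearizable), you argue directly that $a^{-1}(D_\alpha)=\pi_2^{-1}(D_\alpha)$ as Cartier divisors on $G\times X$: both are prime divisors (since $G$ is geometrically integral over $F$ and $D_\alpha$ is integral over $F$, their product is integral; and $a$ differs from $\pi_2$ by an automorphism of $G\times X$) with the same support. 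This yields a $G$-linearization of $\mathcal{O}(D_\alpha)$ itself, together with the $G$-invariance of $\mathsf{s}_\alpha$, in one stroke --- a slightly stronger conclusion than the paper states.

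There is, however, a genuine gap in your treatment of the third assertion. The proposition claims that \emph{every} effective $\mathcal{L}$ admitting a $G$-linearization has a unique line of $G$-invariant sections, but you prove this only when $\mathcal{L}=\mathcal{O}(D)$ for $D=\sum d_\alpha(p'/p_\alpha)D_\alpha$, where the invariant section $s_D$ is already in hand. For a general effective $G$-linearized $\mathcal{L}$ you have not established the \emph{existence} of an invariant section, and you cannot assume $\mathcal{L}\cong\mathcal{O}(\sum d_\alpha D_\alpha)$ a priori: by \Cref{prop:picX} the effective monoid is generated by the $D_\alpha^{1/p_\alpha}$, not the $D_\alpha$. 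The paper closes this by applying Borel's fixed point theorem to the action of the solvable group $G$ on $\mathbb{P}(\cohom^0(X,\mathcal{L}))$: a fixed line $F\mathsf{s}$ gives (via triviality of characters of $G$) a $G$-invariant section $\mathsf{s}$, whose divisor is then $G$-invariant and hence supported on $D$, forcing $\mathcal{L}\cong\mathcal{O}(\sum d_\alpha D_\alpha)$ after the fact. From there your uniqueness argument via $s/s_D\in F(X)^G=F$ --- which the paper phrases as an appeal to \Cref{lemma:Rosenlicht} --- completes the proof.
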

\begin{proof}
	We follow a similar proof to \cite[Proposition 1.5]{CLYT}.
	The first statement follows by \cite[Proposition 1.4]{Mumford}, as $G$ has no non-trivial characters by \Cref{lemma:Rosenlicht}.  
	By \Cref{prop:picX}.3, the exponent of $\pic(G)$ divides $p'$. Then, $$\mathcal{O}(D_\alpha)^{p'/p_\alpha}\cong \mathcal{O}(D_\alpha^{1/p_\alpha})^{p'/p_\alpha p_\alpha}= \mathcal{O}(D_\alpha^{1/p_\alpha})^{p'}.$$ Thus, by \cite[Theorem 2.14]{Bri}, $\mathcal{O}(D_\alpha)^{p'/p_\alpha}$ admits a $G$-linearization.
	
	Suppose that $\mathcal{L}$ is effective and admits a $G$-linearization. Consider the induced action of $G$ on the projectivization $\mathbb{P}(\text{H}^0(X,\mathcal{L}))$. As $G$ is solvable, by Borel's fixed point theorem \cite[Theorem 10.6]{Bor1}, there exists a non-zero $\mathsf{s}\in \text{H}^0(X,\mathcal{L})$ such that $F\mathsf{s}$ is fixed by $G$. Since $G$ has no non-trivial characters, $\mathsf{s}$ itself is fixed by $G$, so that $\Div(\mathsf{s})$ is $G$-invariant. Hence, $\Div(\mathsf{s})$ does not intersect $G$, implying that it has to be a sum $\sum d_\alpha D_\alpha$. By \Cref{prop:picX}.2, we deduce that $\mathcal{L}\cong \mathcal{O}(\sum d_\alpha D_\alpha)$. Given any other  $\mathsf{s}_0\in  \text{H}^0(X,\mathcal{L})$ with $F\mathsf{s}_0$ $G$-invariant, the same argument shows that $\mathcal{O}(\Div(\mathsf{s}_0))\cong \mathcal{L}$. By \Cref{lemma:Rosenlicht} we deduce that $\mathsf{s}/\mathsf{s}_0\in F^*$. Therefore, $\cohom^0(X,\mathcal{L})$ has a unique line of $G$-invariant sections. 
\end{proof}

\begin{prop}\label{prop:anticanIsEffective}
	Suppose that $G$ is an $F$-form of $\ga$. There exist positive integers $\rho_\alpha \geq 1$ for all $\alpha \in \mathcal{A}$ such that 
	$$\omega_X^{-1}\simeq \mathcal{O}_X(\sum_{\alpha \in \mathcal{A}} \rho_\alpha D_\alpha)$$ 
	
\end{prop}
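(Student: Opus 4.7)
The plan is to realise $\omega_X^{-1}$ as the negative of the divisor of a translation-invariant top form $\omega$, and then prove positivity by base-changing to an algebraic closure $\bar{F}$, where $G$ becomes $\mathbb{A}^n$ and the standard tangent-space argument for compactifications of vector groups applies. By \Cref{cor:ex.non-van.diff}, fix a translation-invariant, nowhere-vanishing top differential $\omega\in\omega_G(G)$ and view it as a rational section of $\omega_X$. Since $\omega$ has neither zeros nor poles on $G$, its divisor is supported on $D$; and since $G$ is connected it must fix each irreducible component $D_\alpha$ setwise, so $\Div(\omega)=\sum_{\alpha\in\mathcal{A}}v_\alpha D_\alpha$ for integers $v_\alpha$. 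Setting $\rho_\alpha:=-v_\alpha$ therefore gives $\omega_X^{-1}\simeq\mathcal{O}_X(\sum_\alpha \rho_\alpha D_\alpha)$, and what remains is to prove $\rho_\alpha\geq 1$ for every $\alpha\in\mathcal{A}$.

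For positivity, base change to $\bar{F}$. Then $G_{\bar{F}}\cong\mathbb{A}^n_{\bar{F}}$, and in suitable linear coordinates $x_1,\dots,x_n$ the pullback $\omega_{\bar{F}}$ equals a nonzero scalar multiple $c\cdot dx_1\wedge\cdots\wedge dx_n$. The commuting vector fields $V_i=\partial/\partial x_i$ come from the Lie algebra action of $G_{\bar{F}}$ on $X_{\bar{F}}$ and hence extend to global regular vector fields on $X_{\bar{F}}$. Fix any irreducible component $E$ of the underlying reduced boundary $(X_{\bar{F}}\setminus G_{\bar{F}})^{\red}$. Since $G_{\bar{F}}$ is connected it preserves $E$ setwise, and the $V_i$ are tangent to each $G_{\bar{F}}$-orbit contained in $E$, hence tangent to $E$ itself. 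As $E$ is reduced of finite type over the algebraically closed field $\bar{F}$, it is generically smooth, and we may choose a point $p\in E$ which is smooth on both $E$ and $X_{\bar{F}}$ and lies on no other boundary component. If $\omega_{\bar{F}}$ were regular at $p$, then $V_1(p),\dots,V_n(p)$ would all lie in the $(n-1)$-dimensional $T_pE\subset T_pX_{\bar{F}}$, hence be linearly dependent, and so $\omega_{\bar{F}}(V_1,\dots,V_n)$ would vanish at $p$. But this rational function on $X_{\bar{F}}$ equals the nonzero constant $c$ on the dense open $G_{\bar{F}}$, and therefore equals $c$ wherever it is defined; this contradiction forces $v_E(\omega_{\bar{F}})\leq -1$.

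It remains to descend from $\bar{F}$ back to $F$. For fixed $\alpha\in\mathcal{A}$, let $E$ be an irreducible component of $D_{\alpha,\bar{F}}^{\red}$. From the equality $D_{\alpha,F'}=p_\alpha D_{\alpha,F'}^{\red}$ of Weil divisors on $X_{F'}$ established at the start of \S\ref{section:PicX}, combined with a further base change to $\bar{F}$, one reads off that a local uniformizer $t$ of $\mathcal{O}_{X,\eta_\alpha}$ has valuation $p_\alpha$ at the generic point $\eta_E$ of $E$. Writing $\omega=u\cdot t^{v_\alpha}\cdot e$ locally at $\eta_\alpha$ for a local trivialisation $e$ of $\omega_X$ and a unit $u$, one obtains $v_E(\omega_{\bar{F}})=p_\alpha v_\alpha$. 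Combined with the inequality from the previous paragraph, this yields $p_\alpha v_\alpha\leq -1$; since $v_\alpha\in\mathbb{Z}$ and $p_\alpha\geq 1$, this forces $v_\alpha\leq -1$, hence $\rho_\alpha\geq 1$. The main technical obstacle is precisely this last bookkeeping: tracking carefully how the valuation of a rational section of $\omega_X$ transforms under the purely inseparable base change $X_{\bar{F}}\to X$, which is controlled exactly by the inseparable multiplicity $p_\alpha$ from \S\ref{section:PicX}.
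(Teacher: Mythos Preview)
Your proof is correct and follows the same overall architecture as the paper's: base-change to a field over which $G$ splits, establish positivity of the anticanonical coefficients there, and descend via the inseparable multiplicities $p_\alpha$ from \S\ref{section:PicX}. The differences are in execution. The paper base-changes only to the finite purely inseparable extension $F'$ and cites \cite[Lemma~2.4]{CLYT} for the split case (which actually yields the stronger bound $\rho'_\alpha\geq 2$), then phrases the descent through the Picard isomorphism $\pic(X)\xrightarrow{\sim}\pic(X_{F'})$ of \Cref{prop:picX}. You instead base-change all the way to $\bar{F}$ and give a self-contained tangent-space argument---using that the Lie-algebra vector fields extend to $X_{\bar{F}}$ and are tangent to each boundary component---to obtain $v_E(\omega_{\bar{F}})\leq -1$ directly, and then track valuations explicitly for the descent. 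Your route is more elementary and avoids the external citation; the paper's is shorter and, incidentally, recovers the sharper information $\rho_\alpha\geq 2$ whenever $p_\alpha=1$ (i.e., for the geometrically reduced components), though only $\rho_\alpha\geq 1$ is claimed or needed.
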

\begin{proof}
	By \Cref{cor:ex.non-van.diff} there exists a non-vanishing regular top differential $\omega$ on $G$. Hence $\Div(\omega)$ does not meet $G$, so that we can write $\Div(\omega)=\sum_{\alpha\in \mathcal{A}} \rho_\alpha D_\alpha$ for integers $\rho_\alpha\geq 0$. Thus, $\omega_X^{-1}\simeq \mathcal{O}_X(\sum \rho_\alpha D_\alpha)$. It remains to show that $\rho_\alpha \geq 1$, $\forall \alpha$. 
	
	Let $\pi:X_{F'}\rightarrow X$ be the pull back map. Then
	$$ \pi^* \omega_X=\pi^*(\text{det} \Omega_{X/F}) \cong \text{det}(\pi^* \Omega_{X/F})\cong \text{det}(\Omega_{X_{F'}/F'})\cong \omega_{X_{F'}}.$$
	By  \cite[Lemma 2.4]{CLYT}, we have $\omega_{X_{F'}}^{-1}\simeq \mathcal{O}_{X_{F'}}(\sum \rho'_\alpha D_{\alpha,F'}^{\red})$, for some integers $\rho'_\alpha\geq 2$. By \Cref{prop:picX}.1, the pullback map $\pi^*:\pic(X)\rightarrow \pic(X_{F'})$ is an isomorphism; this implies that $\omega_X^{-1}\simeq \mathcal{O}_{X}(\sum_{\alpha \in \mathcal{A}} \rho'_\alpha D_\alpha^{1/p_\alpha})$. Hence $$\mathcal{O}_{X}(\sum_{\alpha \in \mathcal{A}} \rho'_\alpha D_\alpha^{1/p_\alpha})\simeq \omega_X^{-1}\simeq \mathcal{O}_X(\sum \rho_\alpha D_\alpha) ,$$ implying that $\rho_\alpha=\rho'_\alpha/ p_\alpha$. As $\rho'_\alpha\geq 2$ and $p_\alpha\geq 1$ for all $\alpha$, we deduce that $\rho_\alpha \geq 1$ for all $\alpha$.
\end{proof}

By \Cref{prop:picX}, we have an isomorphism $\bigoplus_{\alpha\in \mathcal{A}}\cx [D_\alpha]  \xrightarrow{\sim} \pic(X)_\cx$ where we identify $\pic(X)$ with $\bigoplus_{\alpha\in \mathcal{A}} \frac{1}{p_\alpha}\z [D_\alpha] \hookrightarrow \bigoplus_{\alpha\in \mathcal{A}}\cx [D_\alpha] $. We write ${\rho=(\rho_\alpha)_\alpha \in \bigoplus_{\alpha} \cx [D_\alpha]}$ for the vector corresponding to $\omega_X^{-1}$ (note that $\rho_\alpha \in \z$ for all $\alpha$ by \Cref{prop:anticanIsEffective}.) We will follow this notation throughout the paper.

\section{The Hasse principle and weak approximation}\label{sec:ShTrivial}
Let $F$ be a global function field and $G$ be a connected commutative unipotent $F$-group. We say that $G$ satisfies the Hasse principle for algebraic groups if the Tate-Shafarevich group of $G$, $$\Sh(G):= \text{ker}\left( \Ho^1(F,G)\rightarrow \coprod_v \Ho^1(F_v,G) \right),$$  is trivial. Recall that elements of $\Sh(G)$ correspond to isomorphism classes of everywhere locally soluble $G$-torsors over $F$. We show that if $G$ admits a smooth equivariant compactification then any everywhere locally soluble $G$-torsor over $F$ satsifies weak approximation. In particular,  this will prove \Cref{thm:ShaTrivial}.

\subsection{Preliminaries}
For a variety $V$ over a global field $F$ we define
\begin{equation}\label{eqn:Be(Y)Defn}
	\Be_\omega(V):= \bigcup_{S\subset \Omega_F \text{ finite}} \ker\left( \frac{\Br(V)}{\im \Br(F)} \rightarrow \prod_{v\in \Omega_F \setminus S} \frac{\Br(Y_{F_v})}{\im \Br(F_v)}  \right)
\end{equation}

\begin{lem}\label{lem:BrY-alg}
	Let $V$ be a variety over a global field $F$.  Then $$\Be_\omega(V)\subset \Br_1(V)/\im \Br(F).$$ Moreover, if $F_s[V]^*=F_s^*$ then we have an embedding ${\Be_\omega(V) \hookrightarrow \Ho^1(F, \pic V_{F_s})}$.
\end{lem}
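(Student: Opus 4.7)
My plan is to prove the two containments separately, with the first providing the substantive content.

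The second embedding into $\Ho^1(F, \pic V_{F_s})$ is a formal consequence of the first together with the Leray spectral sequence. Under the hypothesis $F_s[V]^* = F_s^*$, the structure morphism $\pi : V \to \spec F$ satisfies $\pi_*\mathbb{G}_m = \mathbb{G}_m$, so the low-degree terms of the Leray spectral sequence for $\mathbb{G}_m$ (see e.g.~\cite[Proposition 5.4.2]{CTSK}) contain the segment
\[
\Br(F) \to \Br_1(V) \to \Ho^1(F, \pic V_{F_s}),
\]
giving an injection $\Br_1(V)/\im\Br(F) \hookrightarrow \Ho^1(F, \pic V_{F_s})$. Composing with the first containment will then produce the stated embedding.

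For the first containment, let $\alpha \in \Br(V)$ represent a class of $\Be_\omega(V)$; I must show $\alpha \in \Br_1(V) + \im\Br(F)$, equivalently that the restriction $\alpha|_{V_{F_s}}$ vanishes in $\Br(V_{F_s})$. I would argue by contraposition: assuming $\alpha|_{V_{F_s}} \neq 0$, I need to produce infinitely many places $v$ at which the image of $\alpha$ in $\Br(V_{F_v})/\im\Br(F_v)$ is non-zero, contradicting membership in $\Be_\omega(V)$. The main input is a Harari-style formal lemma: on a smooth geometrically integral variety over a global field, a Brauer class whose image in the geometric Brauer group is non-zero must evaluate non-constantly on $V(F_v)$-points for a positive-density set of places $v$. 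Since classes pulled back from $\Br(F_v)$ evaluate constantly on $V(F_v)$, this furnishes the required local non-triviality.

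The principal obstacle is precisely this local non-vanishing assertion for transcendental Brauer classes. Algebraic classes are controlled locally via the Galois-module structure of $\pic V_{F_s}$ together with class field theory, whereas transcendental classes demand finer control, typically through residue calculations on Azumaya algebra representatives at codimension-one points of a suitable integral model; a naive Chebotarev reduction to finite Galois subextensions of $F_s/F$ is circular, since it reduces the claim over $F$ to the same claim over the larger field. In positive characteristic, additional care is needed to distinguish the separable closure $F_s$ from the algebraic closure $\overline F$ and to handle purely inseparable base changes, and Đonlagić's argument (cf.\ \cite{Azur}) carries out these details.
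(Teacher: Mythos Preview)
Your handling of the second assertion (the embedding into $\Ho^1(F,\pic V_{F_s})$ via the low-degree terms of the Leray/Hochschild--Serre sequence) is the same as the paper's.

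For the first assertion your route diverges sharply from the paper's and is substantially harder; as you yourself flag, its core step (the ``Harari-style formal lemma'' for transcendental classes) is left unproved and only deferred to \cite{Azur}. That density statement --- that a class with non-zero image in $\Br(V_{F_s})$ evaluates non-constantly at infinitely many places --- is not the classical Harari formal lemma (which concerns classes ramified along a divisor of a smooth compactification, not unramified transcendental classes), and establishing it in the generality you need is genuinely delicate. So as written the proposal has a gap at exactly the point you identify.

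The paper avoids this entirely by arguing in the \emph{direct} direction and using only a \emph{single} place. If $\alpha_{F_v}\in\im\Br(F_v)$ for one $v$, pick a finite separable extension $K/F_v$ with $\alpha_K=0$ in $\Br(V_K)$. Writing $\Br(V_K)=\varinjlim_R\Br(V_R)$ over finitely generated $F$-subalgebras $R\subset K$, one finds a (smooth, since $K/F$ is separable) $F$-algebra $R_0$ with $\alpha_{R_0}=0$; specialising at any $F_s$-point of $\spec R_0$ then gives $\alpha_{F_s}=0$, i.e.\ $\alpha\in\Br_1(V)$. This spreading-out argument is short, self-contained, and sidesteps any need for a density statement about local non-triviality of transcendental classes.
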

\begin{proof}
	Let $A \in \Br(V)$ be such that $A_{F_v}\in \im \Br(F_v)\subset \Br(V_{F_v})$ for some place $v$. There exists a finite separable extension $K$ of $F_v$, which is also a local field, such that $A_K=0\in \Br(V_K)$. Since $\Br(V_K)$ is the inductive limit of the groups $\Br(V_R)$ over all the finitely generated $F$-algebras $R$ contained in $K$, there exists a finitely generated $F$-algebra $R_0$ such that $A_{R_0}=0\in \Br(V_{R_0})$. Since $K/F$ is separable, the algebra $R_0$ is an $F$-smooth domain. Thus $\spec(R_0)\times F_s$ has an $F_s$-point $P$ such that ${A_{R_0\otimes_F F_s}(P)=0}$.  By functoriality, this shows that $A_{F_s}=0\in \Br(V_{F_s})$, i.e. ${A \in \Br_1(V)}$. Now, if $F_s[V]^*=F_s^*$, we deduce by \cite[Prop 5.4.2]{CTSK} that $\Br_1(V)/\im \Br(F)\hookrightarrow \Ho^1(F, \pic V_{F_s})$.
\end{proof}

\begin{lem}\label{lem:Y-comp}
		Let $k$ be any field and $G$ be a commutative affine algebraic $k$-group. Let $Y$ be a $G$-torsor over $k$. If $G$ admits a smooth equivariant compactification $X$, then $Y$ admits a smooth compactification.
\end{lem}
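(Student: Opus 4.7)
The plan is to construct a compactification of $Y$ as a twisted form of $X$ via the contracted product. Concretely, I would form
$$X^Y := Y \times^G X,$$
defined as the quotient of $Y\times X$ by the diagonal $G$-action $g\cdot(y,x) = (y g^{-1}, g\cdot x)$. Since $Y$ is a torsor under an affine $k$-group, $Y$ itself is affine by fpqc descent of affineness, so $Y \times X$ is quasi-projective; and since $G$ acts freely on $Y$ and therefore freely on $Y\times X$, standard results on free actions of affine groups on quasi-projective schemes yield the quotient $X^Y$ as a $k$-scheme, with $Y\times X \to X^Y$ a $G$-torsor.

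I would then verify that $X^Y$ is a twisted $k$-form of $X$. After base change to any $k'/k$ trivialising $Y$, so that $Y_{k'}\cong G_{k'}$, the multiplication map $(y,x)\mapsto y^{-1}\cdot x$ induces an isomorphism $(X^Y)_{k'}\cong X_{k'}$. Smoothness of $X^Y$ is then immediate by fpqc descent of smoothness. Moreover, the $G$-equivariant open immersion $G\hookrightarrow X$ induces a $G$-equivariant open immersion $Y\times G\hookrightarrow Y\times X$, and passing to quotients produces an open immersion
$$Y \cong Y\times^G G \hookrightarrow Y\times^G X = X^Y,$$
realising $Y$ as an open subvariety of $X^Y$.

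The main obstacle I anticipate is establishing projectivity of $X^Y$ (as opposed to mere properness, which is formal from descent). I would address this by producing a $G$-linearised ample line bundle $\mathcal{L}$ on $X$, which exists after replacing a given ample line bundle by a sufficiently high tensor power: in the unipotent setting of this paper, \Cref{prop:G-linz line bundle} provides such a linearisation directly, while in general it follows from Mumford's theorem on the existence of $G$-linearisations for connected affine groups (with some extra bookkeeping in the disconnected or non-smooth cases). The pullback of $\mathcal{L}$ to $Y\times X$ inherits a $G$-linearisation and hence descends by fpqc descent to a line bundle on $X^Y$; since ampleness is fpqc-local on the base and the descended line bundle restricts to the ample bundle $\mathcal{L}$ after trivialisation, this furnishes the required ample line bundle on $X^Y$ and completes the construction of a smooth projective compactification of $Y$.
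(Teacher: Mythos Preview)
Your proof is correct and constructs exactly the same object as the paper: the twist of $X$ by the torsor $Y$. The paper phrases this via Galois descent rather than contracted products: it chooses a cocycle $\varphi\colon \Gamma_k \to G(k_s)$ representing $Y$, pushes it forward along $G(k_s) \to \Aut(X_{k_s})$, and descends $X_{k_s}$ along the resulting descent datum to a smooth proper $k$-variety $X^\varphi$; compatibility with the open immersion $G_{k_s}\hookrightarrow X_{k_s}$ then yields $Y\hookrightarrow X^\varphi$. Your contracted-product formulation $Y\times^G X$ is more intrinsic and does not implicitly rely on smoothness of $G$ (which the cocycle approach needs so that every $G$-torsor is \'etale-locally trivial and hence classified by a Galois cocycle). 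Conversely, the paper's approach makes effectivity of the descent immediate, since Galois descent for quasi-projective varieties is classical, whereas you must argue that the quotient is a scheme rather than merely an algebraic space; your use of a $G$-linearised ample line bundle handles this correctly and in fact yields projectivity, which is more than the paper claims (the paper is content with smooth and proper).
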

\begin{proof}
	Let $\varphi:\Gamma_k\rightarrow G(k_s)$ be the $1$-cocycle with class in $\Ho^1(k,G)$ corresponding to the isomorphism class of the $G$-torsor $Y$. Since $G$ acts on $X$, we have a map $G(k_s)\rightarrow \Aut X_{k_s} $ which induces a map $\Ho^1(k,G)\rightarrow \Ho^1(k,\Aut X_{k_s})$. We have an isomorphism $G(k_s)\xrightarrow{\sim} \Aut \mathbf{G}_{k_s}$, where $\mathbf{G}$ is the trivial $G$-torsor over $k$, which induces an isomorphism  $\Ho^1(k,G)\xrightarrow{\sim} \Ho^1(k,\Aut \mathbf{G}_{k_s})$. We abuse notation and denote the image of $\varphi$ in $\Ho^1(k,\Aut \mathbf{G}_{k_s})$ and $\Ho^1(k,\Aut X_{k_s})$ by $\varphi$. This gives an effective descent datum on $\mathbf{G}_{k_s}$ and on $X_{k_s}$ so that $\mathbf{G}_{k_s}$ descends to $Y$ and $X_{k_s}$ descends to a smooth proper $k$-variety $X^\varphi$. Now, since $X$ is an equivariant smooth compactification of $G$, the image of $G$ in $X$ is an open dense orbit; thus, the descent datum given by $\varphi$ is compatible so that the image of $G_{k_s}$ in $X_{k_s}$ is an open subscheme that is stable under this descent datum. Therefore, this gives an open immersion $Y\hookrightarrow X^\varphi$.
\end{proof}

Let $L/F$ be a finite separable extension of global fields. For $v\in\Omega_F$, we can write $F_v\otimes_F L\cong \prod_{w \mid v} L_w$, where $L_w$ is the local field of $L$ at $w$ lying above $v$.

\begin{lem}\label{lem:ShInd=0}
	Let $F$ be a global field and $L/F$ be a finite separable extension. Let $M$ be the induced $\Gamma_F$-module $\ind_F^L \z/m\z$, where $m\in \z_{>0}$ is a power of some prime. 
	\begin{enumerate}
		\item For $v\in \Omega_F$, the map $\Gamma_{F_v}\rightarrow \Gamma_F$ gives $M$ the structure of a $\Gamma_{F_v}$-module that is isomorphic to 
		$ \prod_{w \mid v} \ind_{{L_w}}^{F_v} \mathbb{Z}/m \mathbb{Z} $.
		\item For any finite set of places $S$ we have
		$$\Sh_S(M):= \text{ker}\left( \Ho^1(F,M)\rightarrow \prod_{v\notin S} \Ho^1(F_v,M) \right),$$  is trivial.
	\end{enumerate}
\end{lem}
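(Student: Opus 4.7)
The plan for part (1) is to apply the Mackey decomposition of an induced module under restriction. Describe $M=\ind_F^L \z/m\z$ as the space of maps $\Gamma_F/\Gamma_L\to \z/m\z$; the coset space $\Gamma_F/\Gamma_L$ is in natural bijection with $\mathrm{Hom}_F(L,F_s)$, which we may identify with the $F_s$-points of $\spec(L\otimes_F F_s)$. After fixing a place of $F_s$ above $v$ (so that $\Gamma_{F_v}\hookrightarrow \Gamma_F$ is well defined), the $\Gamma_{F_v}$-orbits on this set correspond to the factors of the étale $F_v$-algebra decomposition $L\otimes_F F_v\cong \prod_{w\mid v} L_w$, and the stabiliser of the orbit indexed by $w$ is $\Gamma_{L_w}$. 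The Mackey formula then yields $M|_{\Gamma_{F_v}}\cong \prod_{w\mid v}\ind_{L_w}^{F_v}\z/m\z$ as required.

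For part (2), I would combine Shapiro's lemma with (1) to obtain compatible horizontal isomorphisms
$$
\Ho^1(F,M)\cong \Ho^1(L,\z/m\z), \qquad \Ho^1(F_v,M)\cong \prod_{w\mid v}\Ho^1(L_w,\z/m\z),
$$
intertwining the restriction maps on both sides. Writing $S_L$ for the (finite) set of places of $L$ lying above $S$, this reduces the problem to showing that the kernel of $\Ho^1(L,\z/m\z)\to \prod_{w\notin S_L}\Ho^1(L_w,\z/m\z)$ vanishes.

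So let $\chi\colon \Gamma_L\to \z/m\z$ be a character whose restriction to $\Gamma_{L_w}$ is trivial for every $w\notin S_L$, and suppose for contradiction that $\chi\neq 0$. Then $\chi$ cuts out a nontrivial cyclic extension $K/L$ of some degree $n>1$ dividing $m$, and the vanishing of $\chi|_{\Gamma_{L_w}}$ is equivalent to $w$ splitting completely in $K$. By the Chebotarev density theorem the set of places of $L$ that split completely in $K$ has density $1/n<1$, so infinitely many places of $L$ do not split in $K$; since $S_L$ is finite, at least one such place lies outside $S_L$, contradicting our assumption. Hence $\chi=0$.

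The only real technical point is organising (1) so that the Shapiro and Mackey identifications in (2) are genuinely compatible with the local restriction maps (in particular that the choices of embedding $F_s\hookrightarrow \overline{F_v}$ on each side match); granting that bookkeeping, the vanishing statement follows immediately from Chebotarev, and I do not expect any substantive obstacle.
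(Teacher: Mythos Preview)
Your proposal is correct and follows essentially the same route as the paper: for (1) the paper invokes the decomposition $F_v\otimes_F L\cong \prod_{w\mid v}L_w$ (your Mackey description is simply a more explicit unpacking of this), and for (2) the paper likewise applies Shapiro's lemma together with (1) to reduce to $\Sh_{S_L}(\z/m\z)$ over $L$, then concludes by Chebotarev. The compatibility bookkeeping you flag is exactly what the paper encodes in a commutative diagram.
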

\begin{proof}
	(1) follows from the $F_v$-isomorphism $F_v\otimes_F L\cong \prod_{w \mid v} L_w$.
	
	(2) By Shapiro's lemma and (1), we obtain the commutative diagram
	% https://q.uiver.app/#q=WzAsNixbMCwwLCJIXjEoRiwgXFxpbmRfe3tGX1xcYmV0YX19XntGfSBcXG1hdGhiYntafS9wX1xcYmV0YSBcXG1hdGhiYntafSkiXSxbMSwwLCIgXFxwcm9kX3YgXFxwcm9kX3tcXGJldGFfdlxcbWlkIFxcYmV0YSB9IEheMShGX3YsXFxpbmRfe3tGX3tcXGJldGFfdn19fV57Rl92fSBcXG1hdGhiYntafS9wX1xcYmV0YSBcXG1hdGhiYntafSkiXSxbMCwxLCJIXjEoRl9cXGJldGEsIFxcbWF0aGJie1p9L3BfXFxiZXRhIFxcbWF0aGJie1p9KSJdLFsxLDEsIiBcXHByb2RfdiBcXHByb2Rfe1xcYmV0YV92XFxtaWQgXFxiZXRhIH0gSF4xKEZfe1xcYmV0YV92fSwgXFxtYXRoYmJ7Wn0vcF9cXGJldGEgXFxtYXRoYmJ7Wn0pIl0sWzAsMiwiSF4xKEZfXFxiZXRhLCBcXG1hdGhiYntafS9wX1xcYmV0YSBcXG1hdGhiYntafSkiXSxbMSwyLCIgXFxwcm9kX3t3XFxpblxcT21lZ2Ffe0ZfXFxiZXRhfX0gSF4xKEZfe1xcYmV0YSx3fSwgXFxtYXRoYmJ7Wn0vcF9cXGJldGEgXFxtYXRoYmJ7Wn0pIl0sWzAsMl0sWzAsMV0sWzEsM10sWzIsM10sWzIsNCwiXCJcXHJvdGF0ZWJveHs5MH17PX1cIiJdLFszLDUsIlwiXFxyb3RhdGVib3h7OTB9ez19XCIiXSxbNCw1XV0=
	\[\begin{tikzcd}
		{\Ho^1(F, \ind_{{L}}^{F} \mathbb{Z}/m \mathbb{Z})} & { \prod_v \prod_{w\mid v } \Ho^1(F_v,\ind_{L_w}^{F_v} \mathbb{Z}/m \mathbb{Z})} \\
		{\Ho^1(L, \mathbb{Z}/m \mathbb{Z})} & { \prod_v \prod_{w \mid v } \Ho^1(L_w, \mathbb{Z}/m \mathbb{Z})} \\
		{\Ho^1(L, \mathbb{Z}/m \mathbb{Z})} & { \prod_{w} \Ho^1(L_w, \mathbb{Z}/m \mathbb{Z}).}
		\arrow[from=1-1, to=1-2]
		\arrow["\rotatebox{90}{$\sim$}",from=1-1, to=2-1]
		\arrow["\rotatebox{90}{$\sim$}", from=1-2, to=2-2]
		\arrow[from=2-1, to=2-2]
		\arrow["{\rotatebox{90}{=}}", from=2-1, to=3-1]
		\arrow["{\rotatebox{90}{=}}", from=2-2, to=3-2]
		\arrow[ from=3-1, to=3-2]
	\end{tikzcd}\]
	An element $\varphi$ in $\Ho^1(L, \mathbb{Z}/m \mathbb{Z})=\text{Hom}(\Gamma_{L}, \z/m \z)$ corresponds to a finite separable extension $K$ of $L$ such that $[K:L]\mid m$ and $\Aut(K/L) \cong \Gamma_{L}/ \ker(\varphi)$. If the image of $\varphi$ in $ \Ho^1(L_w, \mathbb{Z}/m \mathbb{Z})$ is trivial for all places $w$ not lying above $S$, then almost all places $w$ of $L$ are completely split in $K$. By Chebotarev's density theorem, we conclude that $K=L$ so that $\varphi$ is trivial as desired.
\end{proof}

\subsection{Proof of \Cref{thm:ShaTrivial}}

	Let $Y$ be a $G$-torsor over $F$ that is everywhere locally soluble. By \cite[Theorem 4.5]{Azur}, the Brauer-Manin obstruction given by $\Be_\omega(Y)$
	is the only obstruction to weak approximation on $Y$. Therefore, to prove that $Y$ satisfies weak approximation, it suffices to show that $\Be_\omega(Y)$ is trivial which we prove in the following proposition.

\begin{prop}
	Let $F$ be global function field. Let $G$ be a connected commutative unipotent $F$-group 
    that admits a smooth equivariant compactification. Let $Y$ be a $G$-torsor over $F$. Then $\Be_\omega(Y)$ is trivial.
\end{prop}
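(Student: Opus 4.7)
The plan is to combine the embedding $\Be_\omega(Y) \hookrightarrow \Ho^1(F,\pic Y_{F_s})$ furnished by Lemma \ref{lem:BrY-alg} with the explicit description of $\pic Y_{F_s}$ provided by Proposition \ref{prop:picGalModule}, and then kill the image by an application of Lemma \ref{lem:ShInd=0}.

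First, I would apply Lemma \ref{lem:Y-comp} to fix a smooth compactification $X^\varphi$ of $Y$ over $F$. Let $\mathcal{B}$ index the irreducible components of $X^\varphi \setminus Y$ that are not geometrically reduced. Proposition \ref{prop:picGalModule} then gives
$$\pic Y_{F_s} \cong \bigoplus_{\beta \in \mathcal{B}} \ind_{F_\beta}^F \z/p_\beta\z$$
as $\Gamma_F$-modules, and each $p_\beta$ is a power of the characteristic $p$ since $p_\beta$ divides the purely inseparable degree $p' = [F':F]$. To apply Lemma \ref{lem:BrY-alg} I must verify $F_s[Y]^* = F_s^*$: this holds because $Y$ is geometrically integral (so $F_s$ is algebraically closed in $F_s(Y)$) while $Y_{\overline{F}} \cong G_{\overline{F}} \cong \mathbb{A}^n_{\overline{F}}$ (so $\overline{F}[Y]^* = \overline{F}^*$), whence $F_s[Y]^* \subseteq \overline{F}^* \cap F_s(Y) = F_s^*$.

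The main step is to show that the embedding $\Be_\omega(Y) \hookrightarrow \Ho^1(F,\pic Y_{F_s})$ factors through the Shafarevich-type subgroup
$$\Sh_S(\pic Y_{F_s}) := \ker\Bigl(\Ho^1(F,\pic Y_{F_s}) \to \prod_{v \notin S} \Ho^1(F_v,\pic Y_{F_s})\Bigr)$$
for some finite $S \subset \Omega_F$. By functoriality of Lemma \ref{lem:BrY-alg} with respect to base change $F \hookrightarrow F_v$, there is a commutative square comparing the global embedding to the local embedding for $Y_{F_v}$; the right-hand vertical map is the composition of restriction $\Ho^1(F,\pic Y_{F_s}) \to \Ho^1(F_v,\pic Y_{F_s})$ with the map induced by the natural $\Gamma_{F_v}$-equivariant comparison $\pic Y_{F_s} \to \pic Y_{(F_v)_s}$. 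To promote this latter comparison to an isomorphism I would invoke Lemma \ref{lem:picSepIsom} over both $F$ and $F_v$, which identifies both Picard groups of $X^\varphi$ with $\pic X^\varphi_{\overline{F}} \cong \pic X^\varphi_{\overline{F_v}}$, and then pass to the quotient by the boundary divisor classes. Since elements of $\Be_\omega(Y)$ vanish in $\Br(Y_{F_v})/\im \Br(F_v)$ for all $v$ outside some finite $S$, their images in $\Ho^1(F,\pic Y_{F_s})$ land in $\Sh_S(\pic Y_{F_s})$.

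Finally, Lemma \ref{lem:ShInd=0} gives $\Sh_S(\ind_{F_\beta}^F \z/p_\beta\z) = 0$ for each $\beta$, since each $p_\beta$ is a prime power; summing over $\mathcal{B}$ yields $\Sh_S(\pic Y_{F_s}) = 0$, so $\Be_\omega(Y) = 0$. I expect the most delicate point to be the compatibility of global and local Picard Galois modules in the third step — essentially checking that the base-change map $\pic Y_{F_s} \to \pic Y_{(F_v)_s}$ is a $\Gamma_{F_v}$-equivariant isomorphism — while the remaining steps are clean applications of the preparatory lemmas.
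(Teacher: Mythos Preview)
Your proposal is correct and follows essentially the same route as the paper: reduce via Lemma~\ref{lem:BrY-alg} to showing the image in $\Ho^1(F,\pic Y_{F_s})$ lies in a Tate--Shafarevich kernel, compute $\pic Y_{F_s}$ as a sum of induced modules via Proposition~\ref{prop:picGalModule}, and kill the kernel with Lemma~\ref{lem:ShInd=0}. The only cosmetic difference is in the step you flag as delicate: rather than comparing $\pic Y_{F_s}$ and $\pic Y_{(F_v)_s}$ through the geometric Picard group as you do, the paper applies Proposition~\ref{prop:picGalModule} a second time over $F_v$ (valid since $F_v$ also has degree of imperfection~1) and checks directly that the generators --- classes of irreducible components of $D_{F_s}$ that are not geometrically reduced --- remain irreducible after base change to $F_v^s$, then matches the resulting decompositions with the $\prod_{w\mid v}$ splitting in Lemma~\ref{lem:ShInd=0}(1).
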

\begin{proof}
	By \Cref{lem:BrY-alg} it suffices to prove that the kernel of the map
	$$\Ho^1(F, \pic(Y_{F_s}))\rightarrow \prod_{v\in \Omega_F\setminus S} \Ho^1(F_v,\pic(Y_{F_v^s}))$$
	is trivial for all finite set of places $S$ in $\Omega_F$. By \Cref{lem:Y-comp}, there exists a smooth compactification of $Y$. Denote the boundary divisor by $D$. Following the notation in \S \ref{section:PicX}, let $\{ D_\alpha \}_{\alpha\in \mathcal{A}}$ be the set of irreducible components of $D$ over $F$ and let ${\mathcal{B}:=\{\beta \in \mathcal{A}\>:\> D_\beta \text{ is not geometrically reduced}\}}$. Let $F_\beta$ be the separable closure of $F$ in $F(D_\beta)$. By \Cref{prop:picGalModule}, we have an isomorphism of $\Gamma_F$-modules
	$$\pic(Y_{F_s})\cong \bigoplus_{\beta \in \mathcal{B}} \ind_{{F_\beta}}^{F} \mathbb{Z}/p_\beta \mathbb{Z},$$
	where $p_\beta$ is the degree of the inseparable closure of $F$ in $F(D_\beta)$ over $F$.
	
	For all places $v\in \Omega_F$, fix an embedding $F_s\subset F_v^s$. Write $D\times_F F_v=\cup_{\alpha_v \in \mathcal{A}_v} D_{\alpha_v}$, where each $D_{\alpha_v}$ is irreducible over $F_v$. Let $\mathcal{B}_v\subset \mathcal{A}_v$ be the subset indexing the non-geometrically reduced components. Let $F_{\beta_v}$ be the separable closure of $F_v$ in $F_v(D_{\beta_v})$ for $\beta_v\in \mathcal{B}_v$. If $D_{\beta_v}$ lies above $D_\beta$, then the degree of the inseparable closure of $F_v$ in $F_v(D_{\beta_v})$ over $F_v$ is also $p_\beta$ since $F_v/F$ is separable. Since $Y_{F_v}$ admits a smooth compactification and $F_v$ has degree of imperfection $1$, we may apply \Cref{prop:picGalModule} to obtain an isomorphism of $\Gamma_{F_v}$-modules
	$$\pic(Y_{F_v^s})\cong \bigoplus_{\beta \in \mathcal{B}} \bigoplus_{\beta_v \mid \beta} \ind_{{F_{\beta_v}}}^{F_v} \mathbb{Z}/p_\beta \mathbb{Z}.$$
	By \eqref{eqn:picX} (resp. \eqref{eqn:picY}) we see that $\pic(Y_{F_s})$ (resp. $\pic(Y_{F_v^s})$ ) is freely generated by the classes of the irreducible components of $D_{F_s}$ (resp. $D_{F_v^s}$) that are not geometrically reduced. Since the base change of an irreducible $F_s$-variety to any field containing $F_s$ remains irreducible, we deduce that the pullback map $\pic(Y_{F_s})\rightarrow \pic(Y_{F_v^s})$ is an isomorphism. Moreover, we can write  $\pic(Y_{F_s})\xrightarrow{\sim} \pic(Y_{F_v^s})$ as a direct sum of the maps $\ind_{{F_\beta}}^{F} \mathbb{Z}/p_\beta \mathbb{Z}\rightarrow \bigoplus_{\beta_v \mid \beta} \ind_{{F_{\beta_v}}}^{F_v} \mathbb{Z}/p_\beta \mathbb{Z}$ over $\beta\in \mathcal{B}$. Since $$F_\beta \otimes_F F_v \simeq \prod_{\{\beta_v : \> \beta_v \mid \beta\}} F_{{\beta_v}}=\prod_{\{w\in\Omega_{F_\beta} \>:\> w \mid v\}} F_{{\beta,w}},$$
	the result now follows by \Cref{lem:ShInd=0}.
\end{proof}

\subsection{An example with no smooth compactification}
We end this section by showing that $\Sh(G)$ is trivial for the following class of $F$-forms of $\ga$ without the assumption that a smooth equivariant compactifications exists.
\begin{prop}\label{prop:Sha-trivial-example}
	Assume that $F=\mathbb{F}_q(t)$. Let $G$ be an $F$-form of $\ga$ defined, as a subgroup of $\mathbb{G}_a^{n+1}$, by a separable $p$-polynomial
	$$P=\sum_{i,j}^{n,r} c_{i,j} x_i^{p^j}$$ 
	such that each $c_{i,j}\in \mathfrak{o}_F =\mathbb{F}_q[t]$. If $\deg (c_{i,j})< p^j $ for all $i,j$, then $\Sh(G)=0.$
\end{prop}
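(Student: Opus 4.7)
The plan is to apply Hilbert 90 to the defining short exact sequence
$$0 \to G \to \mathbb{G}_a^{n+1} \xrightarrow{P} \mathbb{G}_a \to 0,$$
which is exact because $P$ is a separable $p$-polynomial: the linear part $\sum_i c_{i,0}\,x_i$ has at least one $c_{i,0}\neq 0$, so $P$ is smooth and surjective. Using $\Ho^1(K,\mathbb{G}_a)=0$ for every field $K$, the long exact sequence gives $\Ho^1(K,G)\cong K/P(K^{n+1})$, whence
$$\Sh(G) \cong \frac{\{\alpha\in F \,:\, \alpha\in P(F_v^{n+1})\text{ for every }v\}}{P(F^{n+1})}.$$
Fix such an $\alpha$; the goal is to produce a global $x\in F^{n+1}$ with $P(x)=\alpha$.

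First observe that at every finite place $v=v_\pi$ the hypothesis $\deg c_{i,0}<1$ forces $c_{i,0}\in\mathbb{F}_q$, so the reduction $\bar P$ modulo $\pi$ is still a separable $p$-polynomial. The natural integral model $\mathcal{G}_v\subset\mathbb{A}^{n+1}_{\mathfrak{o}_v}$ is therefore smooth with smooth connected unipotent special fibre, and Lang's theorem combined with $\Ho^1(\mathfrak{o}_v,\mathbb{G}_a)=0$ yields $\mathfrak{o}_v\subset P(\mathfrak{o}_v^{n+1})$. In particular, local solubility at $v_\pi$ is automatic whenever $\alpha$ has no pole at $v_\pi$.

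For each of the finitely many finite primes $\pi$ at which $\alpha$ \emph{does} have a pole, lift the (finite) polar part at $v_\pi$ of the given local solution $x^{(v_\pi)}\in F_{v_\pi}^{n+1}$ to an element $\xi_\pi\in\mathbb{F}_q[t,\pi^{-1}]^{n+1}\subset F^{n+1}$, choosing residue-class representatives in $\mathbb{F}_q[t]_{<\deg\pi}$. A direct degree count shows that each component of $\xi_\pi$ is integral at every other finite prime and has $\ord_\infty\geq 1$. Setting $\xi:=\sum_\pi\xi_\pi$, the matching of polar parts at each $v_\pi$, together with integrality elsewhere, gives $\alpha':=\alpha-P(\xi)\in\mathbb{F}_q[t]$; moreover $\alpha'$ remains locally soluble at $v_\infty$ by additivity of $P$.

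It remains to solve $P(y)=\alpha'$ with $y\in\mathbb{F}_q[t]^{n+1}$, and here the full strength of the degree hypothesis enters. Write a local solution at infinity as $x^{(\infty)}=y+z$ via the direct-sum decomposition $F_\infty=\mathbb{F}_q[t]\oplus(1/t)\mathfrak{o}_\infty$, with $y\in\mathbb{F}_q[t]^{n+1}$ and $\ord_\infty z_i\geq 1$. Then, for each $i,j$ with $c_{i,j}\neq 0$,
$$\ord_\infty\bigl(c_{i,j}\,z_i^{p^j}\bigr) = -\deg c_{i,j} + p^j\ord_\infty z_i \geq -(p^j-1)+p^j = 1,$$
so $P(z)\in(1/t)\mathfrak{o}_\infty$. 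Additivity then gives $\alpha'=P(y)+P(z)$ with $P(y)\in\mathbb{F}_q[t]$ and $P(z)\in(1/t)\mathfrak{o}_\infty$; since these two subspaces of $F_\infty$ intersect trivially, $P(z)=0$ and $P(y)=\alpha'$, so $\alpha=P(y+\xi)\in P(F^{n+1})$. The main obstacle I expect is the careful bookkeeping around $\xi$: one must verify that the polar lifts $\xi_\pi$ can be chosen simultaneously to preserve integrality at every other finite prime and the correct vanishing at infinity, so that the reduction to the polynomial case is clean. The polynomial case is then essentially \emph{forced} by the hypothesis $\deg(c_{i,j})<p^j$, whose sharp form is precisely what makes $P$ preserve the vanishing-at-infinity subspace and hence produce no cross-contamination between the polynomial and infinitesimal parts.
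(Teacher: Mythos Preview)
Your proof is correct and essentially the same as the paper's: identify $\Sh(G)$ cohomologically as $\{a\in F:a\in P(F_v^{n+1})\ \forall v\}/P(F^{n+1})$, reduce to the case $\alpha\in\mathbb{F}_q[t]$, then split a local solution at $\infty$ via $F_\infty=\mathbb{F}_q[t]\oplus\mathfrak{m}_\infty$ and use $\deg c_{i,j}<p^j$ to force the $\mathfrak{m}_\infty$-part of $P$ to vanish. The only cosmetic difference is in the reduction step---the paper invokes strong approximation on $\mathbb{G}_a^{n+1}$ directly, whereas you carry it out by hand with explicit polar-part lifts $\xi_\pi$; your Lang's-theorem remark about automatic integral local solubility is correct but not actually used in either argument.
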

\begin{proof}
	We follow a similar proof to the one given  in \cite[Lemma 3.1]{ZRos1}. For a field $K$, we write $K^{n}$ for $\mathbb{G}_a^{n}(K)$. From the exact sequence
	$$0\rightarrow G\rightarrow \mathbb{G}_a^{n+1} \xrightarrow{P} \mathbb{G}_a \rightarrow 0,$$
	we deduce that $\Ho^1(F,G)\simeq F/P(F^{n+1})$ and $\Ho^1(F_v,G)\simeq F_v/P(F_v^{n+1})$ for all $v$. Therefore $$\Sh(G)\simeq \frac{\{ a\in F \mid a\in P(F_v^{n+1}) \text{ for all } v\}}{P(F^{n+1})}.$$  
	Assume $a\in F$ satisfies $a\in P(F_v^{n+1})$ for all $v$, so that we have $a=P(x_{0,v}, \ldots , x_{n,v})$ for some $x_{0,v}, \ldots , x_{n,v}\in F_v$, for all $v$. By strong approximation on $\ga$, there exists $x_0,x_1,\ldots, x_n \in F$ such that $x_0-x_{0,v}, \ldots, x_n-x_{n,v}\in \ov$ for all $v\neq \infty$ with $a\notin \ov$, and $x_0\in \ov$ for all other places $v\neq \infty$. Therefore, for all $v\neq \infty$, we have
	$$P(x_0,\ldots, x_n)-a=P(x_0-x_{0,v}, \ldots , x_n-x_{n,v})\in \ov,$$ since each $c_{i,j}\in \mathbb{F}_q[t]$ in $P$. Since $a$ and $a-P(x_0,\ldots,x_n)$ give the same class in $\Sh(G)$, we may assume that $a\in \ov$ for all $v\neq \infty$. We want to show that $a\in P(F^{n+1})$.
	
	Write $a=P(x_{0,\infty},\ldots, x_{n,\infty})$ for some $x_{0,\infty},\ldots, x_{n,\infty}\in F_{\infty}$. Note that we can write $x_{i,\infty}=x_i+y_{i,\infty}$ for some $x_i\in \mathbb{F}_q[t]$ and $y_{i,\infty} \in \mathfrak{m}_\infty$ for all $i$. Then, as $$P(x_0+y_{0,\infty}, \ldots, x_n+y_{n,\infty})=P(x_0,\ldots,x_n) + P(y_{0,\infty,\ldots,y_{n,\infty}})$$ it suffices to show that $P(y_{0,\infty},\ldots,y_{n,\infty})=a-P(x_0,\ldots,x_n)=0$. By the condition $c_{i,j}\in \mathbb{F}_q[t]$, we deduce that $a-P(x_0,\ldots,x_n)\in \ov$ for all $v\neq \infty$. And by the condition $\deg (c_{i,j})< p^j $ for all $i,j$, we see that $a-P(x_0,\ldots,x_n)= P(y_{0,\infty},\ldots,y_{n,\infty}) \in \mathfrak{m}_\infty$. This shows that ${a-P(x_0,\ldots,x_n)}=0$ as desired.
\end{proof}

\begin{exmp}
	Let $F=\mathbb{F}_p(t)$ where $p>2$. Let $G$ be the subgroup of $\mathbb{G}_a^2$ over $F$ given by the equation $y^p-tx^p=y$. Then 
	\begin{enumerate}[label=(\roman*)]
		\item $G(F)$ is finite (\cite[VI.3.1]{Ost});
		\item $\Ho^1(F,G)$ is infinite (\cite[Theorem 2.1]{RosNN});
		\item $\Sh(G)$ is trivial (\Cref{prop:Sha-trivial-example});
		\item and $\prod_v G(F_v)$ is infinite (\cite[Remark 5.7]{Azur}) so that $G$ fails weak approximation. In particular, $\Be_\omega(G)$ is not trivial by \cite[Theorem 4.5]{Azur}.
	\end{enumerate}
	Note that any regular compactification of $G$ is not smooth by \Cref{prop:smoothCompImpliesInfPts} as $G(F)$ is finite. In fact, it is not difficult to see that the obvious compactification of $G$ in $\mathbb{P}^2$ is regular (and hence unique) but not smooth. Thus, this example does not satisfy the conditions of \Cref{thm:ShaTrivial}.
\end{exmp}

\section{Metrizations, heights, and measures}\label{sec:Metr-Heights-Measures}

\subsection{Global function fields and measures}
Let $F$ be a global function field of characteristic $p$ with constant field $\mathbb{F}_q$ and let $\Omega_F$ be its set of places. Recall that all places $v$ of $F$ are non-archimedean and they correspond to the closed points of the curve $\cf$. Let $F_v$ be the completion of $F$ at $v$, $\mathfrak{o}_v$ be its ring of integers, $\mathfrak{m}_v$ its maximal ideal, $\fv$ its residue field with $q_v$ elements. We denote the ring of ad\`eles of $F$ by $\ad$. Denote $\mathbb{F}_q(t)$ by $F_0$. Then $F/F_0$ is a non-constant finite separable extension.

For all $v\in \Omega_F$, the field $F_v$ is a locally compact group which we equip with a Haar measure $\mu_v$ defined so that $\mu_v(\ov)=1$ for all $v\in \Omega_F$. This measure satisfies the property $\mu_v(aI)=|a|_v\mu_v(I)$ for any measurable subset $I\subset F_v$ and any $a\in F_v$, where $|a|_v=q_v^{-v(a)}$. We define the Haar measure $\mu:=\prod_v \mu_v$ on $\ad$, which satisfies
\begin{equation}\label{eqn:vol(A/F)}
	\mu(\ad/F)=q^{g_{F}-1}
\end{equation}
where $g_{F}$ is the genus of $\cf$ (see \cite[2.1.3]{Weil2}). This shows that $\mu$ is not self-dual when the genus of $\cf$ is different from $1$.

\subsection{Adelic metrics on line bundles} 
Let $X$ be a smooth equivariant compactification of an $F$-form $G$ over a global function field $F$.

\begin{defn}\label{defn:adelic-met} 
	A \emph{smooth adelic metric} on a linearized line bundle $\mathcal{L}$ on $X$ is a family of $v$-adic metrics $\vnorm_v$ on a $\mathcal{L}$ for all places $v$ of $F$ satisfying the following properties:
	\begin{enumerate}
		\item $\vnorm_v$ is locally constant.
		\item there exists open dense subset $\cfs\subset \cf$, given by taking away a finite set of places $S\subset \Omega_F$, a flat projective $\cfs$-scheme $\mathcal{X}$ extending $X$ together with an action of $\mathcal{G}_{\cfs}$ extending the action of $G$ on $X$ and a linearized line bundle $\mathfrak{L}$ on $\mathcal{X}$ extending the linearized line bundle $\mathcal{L}$ on $X$,  such that for any place $v$ lying over $\cfs$, the $v$-adic metric on $\mathcal{L}$ is given by the integral model. 
	\end{enumerate}
\end{defn}
For the definition of a $v$-adic metric on a line bundle see e.g. \cite[2.1.3]{CLYTig}; for the definition of a $v$-adic metric given by the integral model see e.g. \cite[2.14]{BGM20}.

We say that $\lVert \cdot \rVert_{v}$ is a \emph{$q$-metric} if for all  $x\in X(F_v)$ and $\ell \in \mathcal{L}(x)$, the value $\lVert  \ell \rVert_v $ is a power of $q$.
We will work with $q$-metrics throughout the paper. In what follows, whenever we work with an $\ov$-model $\mathcal{G}$ of $G$, we abuse notation and set $G(\ov)=\mathcal{G}(\ov)$.

\begin{prop}\label{prop:K-stabilizer}
	Let $\mathcal{L}$ be a linearized line bundle on $X$ with a smooth adelic metric.
	\begin{enumerate}[label=(\roman*)]
		\item For all places $v$, the stabilizer of  $(\mathcal{L},\> \vnorm_v)$, i.e., the set of $g\in G(F_v)$ which act isometrically on $(\mathcal{L},\> \vnorm_v)$, is a compact open subgroup of $G(F_v)$. If $v\notin S$, then $\mathcal{G}(\ov)$ stabilizes $(\mathcal{L},\> \vnorm_v)$. 
		\item The product of the stabilizers for $v\in S$ with $\prod_{v\notin S} \mathcal{G}(\ov)$ is a compact open subgroup $\mathbb{K}\subset G(\ad)$ that stabilizes the adelic metric.
	\end{enumerate}
\end{prop}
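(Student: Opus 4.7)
The plan is to address (i) at a single place $v$ first---showing the stabilizer is open, then compact, and contains $\mathcal{G}(\ov)$ for $v \notin S$---and then to deduce (ii) by assembling these in the restricted product topology on $G(\ad)$.

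For openness, I would use the two defining properties of a smooth adelic metric together with the fact that $X(F_v)$ is compact (since $X$ is projective). Local constancy of $\vnorm_v$ combined with local trivializations of $\mathcal{L}$ lets me cover $X(F_v)$ by finitely many opens on each of which the metric is determined by a constant value under a distinguished local section. Preserving the metric on each of these finitely many opens is an open condition on $G(F_v)$, using continuity of the action map $G(F_v) \times X(F_v) \to X(F_v)$ and local constancy; the stabilizer, being a finite intersection of such, is open. For $v \notin S$, the second assertion of (i) is immediate from the hypothesis that $\vnorm_v$ comes from the $\cfs$-integral model $\mathfrak{L}$ on $\mathcal{X}$ with its compatible $\mathcal{G}$-linearization: elements of $\mathcal{G}(\ov)$ carry integral local generators of $\mathfrak{L}$ to integral local generators, so they act isometrically.

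The compactness of the stabilizer $K_v$ is the main obstacle. My approach is to exploit the $G$-invariant canonical sections supplied by \Cref{prop:G-linz line bundle}: for each $\alpha$ the section $\mathsf{s}$ of $\mathcal{O}((p'/p_\alpha)D_\alpha)$ is $G$-invariant, and its $v$-adic norm vanishes as one approaches $D_\alpha(F_v)$. For a general linearized $\mathcal{L}$, by \Cref{prop:picX}(2) some positive power $\mathcal{L}^{\otimes N}$ is isomorphic to a line bundle in the subgroup generated by the $(p'/p_\alpha)[D_\alpha]$, and its smooth adelic metric is determined up to a bounded factor by those on the $\mathcal{O}((p'/p_\alpha)D_\alpha)$; any $g \in K_v$ stabilizes these latter metrics as well. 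Applying $G$-invariance of $\mathsf{s}$ then forces $\vnorms_v(g \cdot 0_G) = \vnorms_v(0_G)$ for $g \in K_v$; if $g$ were to leave every compact of $G(F_v)$, the orbit $g \cdot 0_G$ would have to accumulate on the boundary inside the compact set $X(F_v)$ and the norm would tend to zero, a contradiction. Hence $K_v$ is bounded in $G(F_v)$ and closed, so compact.

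For (ii), set $\mathbb{K} := \prod_{v\in S} K_v \times \prod_{v\notin S}\mathcal{G}(\ov)$. With the restricted product topology on $G(\ad)$ built using the $\mathcal{G}(\ov)$ as distinguished compact opens for $v \notin S$, the set $\mathbb{K}$ is by construction a basic open neighbourhood of the identity; it is compact by Tychonoff applied to the infinite product of compact factors; and it stabilizes the adelic metric componentwise. The principal subtlety I anticipate is in the compactness step for a line bundle $\mathcal{L}$ whose class is not already in the image of $\bigoplus_\alpha (p'/p_\alpha)\mathbb{Z}[D_\alpha]$, where one has to pass to a suitable tensor power and carefully track how the adelic metric on $\mathcal{L}^{\otimes N}$ relates to the metrics on the $G$-invariant boundary sections before the boundary-escape argument applies.
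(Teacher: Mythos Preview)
Your outline follows precisely the route of Lemma~3.2 and Proposition~3.3 in \cite{CLYT}, which is all the paper invokes. The openness argument via local constancy and compactness of $X(F_v)$, the integral-model verification for $v\notin S$, and the assembly into $\mathbb{K}$ in part~(ii) are correct as sketched.

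The gap is in the compactness step. Your sentence ``any $g\in K_v$ stabilizes these latter metrics as well'' does not hold: stabilizing the tensor metric on $\mathcal{L}^{\otimes N}$ does \emph{not} force $g$ to stabilize the metric on each factor $\mathcal{O}((p'/p_\alpha)D_\alpha)$ separately. What you actually get is that $g\in K_v$ acts isometrically on $\mathcal{L}^{\otimes N}$ with its tensor metric. For the boundary-escape argument you then need a $G$-invariant \emph{regular} section $\mathsf{s}$ of $\mathcal{L}^{\otimes N}$ itself, so that $\lVert\mathsf{s}\rVert_v$ tends to $0$ (not $\infty$) as one approaches $D$. By \Cref{prop:G-linz line bundle} such an $\mathsf{s}$ exists precisely when $\mathcal{L}^{\otimes N}$ is effective; if one writes $\mathcal{L}^{\otimes N}\cong\bigotimes_\alpha\mathcal{O}((p'/p_\alpha)D_\alpha)^{m_\alpha}$ with some $m_\alpha<0$, the rational section $\prod_\alpha\mathsf{s}_\alpha^{m_\alpha}$ has poles on $D$ and no contradiction is obtained. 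Concretely, take $G=\mathbb{G}_a^n$, $\mathcal{L}=\mathcal{O}_X$ with the constant metric: the full stabilizer is $G(F_v)$, which is not compact. So the statement as written is slightly too strong for arbitrary linearized $\mathcal{L}$; it is correct once some positive power of $\mathcal{L}$ lies in the effective cone, and this covers every instance the paper actually uses (namely $\mathcal{L}=\mathcal{O}(D_\alpha)$). You rightly flagged the tensor-power passage as the principal subtlety, but the missing hypothesis is effectivity of a power, not the metric comparison under the isomorphism.
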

\begin{proof}
	The proof is analogous to Lemma $3.2$  and Proposition $3.3$ in \cite{CLYT}.
\end{proof}

\subsection{Heights}
Let $\{D_\alpha\}_{\alpha\in \mathcal{A}}$ be the set of irreducible components of $D:=X\setminus G$. We choose and fix smooth adelic $q$-metrics on the line bundles $\mathcal{O}(D_\alpha)$ for all $\alpha\in \mathcal{A}$. Note that we can equip the tensor product of two metrics by the product of the corresponding metrics. In this way, the adelic metrics on $\mathcal{O}(D_\alpha)$ induce adelic metrics on each line bundle in the image of $$\bigoplus_{\alpha \in \mathcal{A}} \> \z [D_\alpha] \rightarrow \pic(X).$$
Since $\{\mathcal{O}(D_\alpha) \}_{\alpha \in \mathcal{A}}$ is a $\cx$-basis for $\pic(X)_\cx$ (see \Cref{prop:picX}),
the choice of smooth adelic metrics on $\mathcal{O}(D_\alpha)$ allows us to define compatible systems of heights
\begin{equation}\label{eqn:defnHeight}
	H: G(F)\times \pic(X)_\mathbb{C} \rightarrow \mathbb{C}, \>\>\> ( x ;  (s_\alpha)_{\alpha} ) \mapsto \prod_v \prod_\alpha \lVert \mathsf{s}_\alpha(x) \rVert_v ^{-s_\alpha},
\end{equation}
where $\mathsf{s}_\alpha$ is a fixed canonical section of $\mathcal{O}(D_\alpha)$, and $ (s_\alpha)_{\alpha}$ are the coordinates in $\pic(X)_\cx$ with respect to this chosen basis. By the product formula, this definition is independent of the choice of the section of each  $\mathcal{O}(D_\alpha)$. For a vector $\lambda = (\lambda_\alpha) \in \bigoplus_\alpha \cx [D_\alpha]$, we denote $H(\cdot, \lambda)$ by $H_\lambda(\cdot)$ or $H_{\mathcal{L}_\lambda}(\cdot)$.
One can extend this system of heights in an obvious way to a pairing
$$ H = \prod_v H_v : G(\ad) \times \pic(X)_{\mathbb{C}}\rightarrow \mathbb{C}$$
by the mapping 
$$( \mathbf{x}; \mathbf{s} ) = \left(  (x_v) ;  (s_\alpha)  \right)\rightarrow \prod_v \prod_\alpha \lVert \mathsf{s}_\alpha(x_v) \rVert_v^{-s_\alpha}.$$
This definition does depend on the choice of the section of each $\mathcal{O}(D_\alpha)$.

Let $\mathbb{K}$ be the intersection of the maximal open compact subgroups of $G(\ad)$ fixing $(\mathcal{O}(D_\alpha),\> ||\cdot||_v)$, for $\alpha \in \mathcal{A}$.

\begin{prop}\label{prop:height-K-inv}
	The height pairing $H: G(\ad) \times \pic(X)_{\mathbb{C}}\rightarrow \mathbb{C}$ is $\mathbb{K}$-invariant in the first component and exponentially linear in the second component.
\end{prop}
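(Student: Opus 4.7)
The plan splits naturally along the two assertions. Exponential linearity in $\pic(X)_\cx$ is immediate from the product definition: for $\mathbf{x}\in G(\ad)$ and $\mathbf{s}=(s_\alpha),\mathbf{s}'=(s'_\alpha)\in \pic(X)_\cx$ and $c\in \cx$, separating the exponents gives $H(\mathbf{x},\mathbf{s}+\mathbf{s}')=H(\mathbf{x},\mathbf{s})\,H(\mathbf{x},\mathbf{s}')$ and $H(\mathbf{x},c\mathbf{s})=H(\mathbf{x},\mathbf{s})^{c}$. So the work is all in the $\mathbb{K}$-invariance.

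For that I would fix $\mathbf{k}=(k_v)\in \mathbb{K}$ and $\mathbf{x}=(x_v)\in G(\ad)$ and reduce, by the product form of $H$, to showing $\lVert \mathsf{s}_\alpha(k_v\cdot x_v)\rVert_v = \lVert \mathsf{s}_\alpha(x_v)\rVert_v$ for each $v$ and each $\alpha\in\mathcal{A}$. Let $t_{k_v}\colon X_{F_v}\to X_{F_v}$ denote the translation by $k_v$. Because the boundary $D$ is $G$-stable and $G$ is geometrically connected, each $F$-irreducible component $D_\alpha$ is $G$-stable; in particular $t_{k_v}^*$ acts on $\mathcal{O}(D_\alpha)$, and $t_{k_v}^*\mathsf{s}_\alpha$ is a section with the same divisor as $\mathsf{s}_\alpha$, so $t_{k_v}^*\mathsf{s}_\alpha = c(k_v)\,\mathsf{s}_\alpha$ for a unique $c(k_v)\in F_v^\times$. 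Since by construction $k_v$ fixes the metrized line bundle $(\mathcal{O}(D_\alpha),\lVert\cdot\rVert_v)$, i.e.\ acts on it by an isometry, pulling back the metric gives
\begin{equation*}
\lVert \mathsf{s}_\alpha(k_v\cdot x_v)\rVert_v \;=\; \lVert t_{k_v}^*\mathsf{s}_\alpha(x_v)\rVert_v \;=\; |c(k_v)|_v\cdot\lVert \mathsf{s}_\alpha(x_v)\rVert_v,
\end{equation*}
so it remains to verify that $c(k_v)=1$.

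This is where I would use the paper's \Cref{prop:G-linz line bundle}: the line bundle $\mathcal{O}((p'/p_\alpha)D_\alpha)$ admits a (unique) $G$-linearization, and the canonical section $\mathsf{s}_\alpha^{p'/p_\alpha}$ of this bundle is $G$-invariant. Applying $t_{k_v}^*$ to this identity and comparing with $t_{k_v}^*\mathsf{s}_\alpha=c(k_v)\mathsf{s}_\alpha$ yields $c(k_v)^{p'/p_\alpha}=1$. Since $p'/p_\alpha$ is a power of $p$ and $F_v$ has characteristic $p$, the only $p$-power root of unity in $F_v^\times$ is $1$; hence $c(k_v)=1$, completing the argument. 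The main conceptual obstacle is precisely this last point: in positive characteristic $\mathcal{O}(D_\alpha)$ need not be $G$-linearized on the nose, so one cannot assert $G$-invariance of $\mathsf{s}_\alpha$ itself; the combination of \Cref{prop:G-linz line bundle} (which gives a $G$-linearization after a $p$-power) with the characteristic $p$ roots-of-unity observation is exactly what bridges this gap.
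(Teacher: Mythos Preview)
Your proof is correct and rests on the same key ingredient as the paper's (\Cref{prop:G-linz line bundle}), but the final step is organized differently. The paper works entirely with the $G$-linearized bundle $\mathcal{O}(D_\alpha)^{p'/p_\alpha}$ equipped with the tensor-product metric: since $\mathsf{s}_\alpha^{p'/p_\alpha}$ is $G$-invariant and $\mathbb{K}$ acts isometrically on this linearized bundle, one gets
\[
\lVert \mathsf{s}_\alpha(x_v+y_v)\rVert_v^{\,p'/p_\alpha}=\lVert \mathsf{s}_\alpha^{p'/p_\alpha}(x_v+y_v)\rVert_v=\lVert \mathsf{s}_\alpha^{p'/p_\alpha}(x_v)\rVert_v=\lVert \mathsf{s}_\alpha(x_v)\rVert_v^{\,p'/p_\alpha}
\]
and then simply takes positive real $(p'/p_\alpha)$-th roots. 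You instead introduce the scalar $c(k_v)\in F_v^\times$ and show $c(k_v)^{p'/p_\alpha}=1$, then invoke the characteristic-$p$ fact that $F_v^\times$ has no nontrivial $p$-power roots of unity to conclude $c(k_v)=1$. This is a pleasant observation and actually yields a marginally stronger statement (that $c(k_v)=1$, not just $|c(k_v)|_v=1$), but it is a detour: once the norm identity holds for the $(p'/p_\alpha)$-th power, extracting real roots already finishes the job without ever naming $c(k_v)$.
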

\begin{proof}
	First, it clearly follows from the definition of $H$ that it is exponentially linear in the second component. 
	Now, by \Cref{prop:G-linz line bundle}, we see that $\mathcal{O}(D_\alpha)^{p'/p_\alpha}$ is effective and admits a $G$-linearization, which we equip with the tensor product metric.  By \Cref{prop:K-stabilizer} and the construction of $\mathbb{K}$, we have that
	$$\lVert \mathsf{s}_\alpha (x_v+y_v) \rVert_v ^{p'/p_\alpha}= \lVert \mathsf{s}_\alpha^{p'/p_\alpha} (x_v+y_v) \rVert_v= \lVert \mathsf{s}_\alpha^{p'/p_\alpha} (x_v) \rVert_v = \lVert \mathsf{s}_\alpha (x_v) \rVert_v ^{p'/p_\alpha},$$
	for all $x=(x_v)\in G(\ad)$, $y=(y_v)\in \mathbb{K}$, $v\in\Omega_v$ and $\alpha \in \mathcal{A}$; this implies that ${\lVert \mathsf{s}_\alpha (x_v+y_v) \rVert_v = \lVert \mathsf{s}_\alpha (x_v) \rVert_v}$.
\end{proof}

\begin{prop}\label{prop:Northcott}
	Suppose $\lambda=(\lambda_\alpha)\in \pic(X)_\cx$ is contained in the interior of the effective real cone.
	Then, for any real $B$, there are only finitely many $x\in G(F)$ such that $H(x;\lambda)\leq B$.
\end{prop}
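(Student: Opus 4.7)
The plan is to dominate $H_\lambda$ by a positive power of an ample Weil height and then invoke the classical Northcott property on the projective variety $X$. By \Cref{prop:picX}, the effective cone of $X$ is the simplicial cone $\bigoplus_{\alpha\in\mathcal{A}} \mathbb{R}_{\geq 0}[D_\alpha^{1/p_\alpha}]$, whose interior is $\bigoplus_{\alpha\in\mathcal{A}} \mathbb{R}_{>0}[D_\alpha^{1/p_\alpha}]$. Fix any ample $\mathbb{Q}$-line bundle $A$ on $X$, equipped with an auxiliary smooth adelic metric giving a height $H_A$. A positive multiple of $A$ is very ample, hence effective, so the class $[A]$ lies in the effective cone. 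Since $\lambda$ lies in its (open) interior, we may choose $\epsilon\in\mathbb{Q}_{>0}$ small enough that $\mu:=\lambda-\epsilon [A]$ is still in the closed effective cone; thus $\mu=\sum_{\alpha\in\mathcal{A}}(\nu_\alpha/p_\alpha)[D_\alpha]$ with each $\nu_\alpha\geq 0$.

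By the exponential linearity of $H$ in the second argument (\Cref{prop:height-K-inv}), $H_\lambda(x)=H_\mu(x)\cdot H_A(x)^\epsilon$. The next step is to establish a uniform lower bound $H_\mu(x)\geq c>0$ for every $x\in G(F)$. For such $x$ and any $\alpha\in\mathcal{A}$, the canonical section $\mathsf{s}_\alpha$ is non-vanishing at $x$, so $\lVert \mathsf{s}_\alpha(x)\rVert_v$ is a positive real number for every $v$. For $v\notin S$, projectivity of $\mathcal{X}$ over $\cfs$ gives $X(F_v)=\mathcal{X}(\ov)$, and the metric coming from the model satisfies $\lVert \mathsf{s}_\alpha(x)\rVert_v\leq 1$, so the corresponding local factor $\lVert \mathsf{s}_\alpha(x)\rVert_v^{-\nu_\alpha/p_\alpha}$ of $H_\mu(x)$ is at least $1$. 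For the finitely many $v\in S$, continuity of $\lVert \mathsf{s}_\alpha\rVert_v$ on the compact space $X(F_v)$ yields uniform upper bounds $\lVert \mathsf{s}_\alpha\rVert_v\leq M_{v,\alpha}$, and hence a uniform positive lower bound on the corresponding local factor. Multiplying these bounds over $v\in S$ and $\alpha\in\mathcal{A}$ produces the desired constant $c$.

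Combining the two, the condition $H_\lambda(x)\leq B$ forces $H_A(x)\leq(B/c)^{1/\epsilon}$, so the set in question is contained in $\{x\in X(F):H_A(x)\leq (B/c)^{1/\epsilon}\}$. Finiteness of the latter is the usual Northcott property for an ample line bundle on a projective variety over a global function field: a large multiple of $A$ provides a closed immersion $X\hookrightarrow\mathbb{P}^N_F$, and points in $\mathbb{P}^N(F)$ of bounded height form a finite set since they correspond to tuples of rational functions of bounded pole order on $\cf$. The main technical point is the choice of decomposition $\lambda=\epsilon[A]+\mu$ with $\mu$ in the \emph{closed} effective cone, ensuring that the formal canonical section of $\mu$ is a product of (fractional powers of) boundary sections $\mathsf{s}_\alpha$ and therefore non-vanishing on $G$; openness of the interior of the effective cone together with effectivity of $[A]$ makes such a decomposition possible for any ample $A$.
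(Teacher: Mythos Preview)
Your argument is correct and follows the standard route to Northcott-type statements: write $\lambda=\epsilon[A]+\mu$ with $A$ ample and $\mu$ in the closed effective cone, bound $H_\mu$ below on $G(F)$ using that the boundary sections $\mathsf{s}_\alpha$ are integral at good places and bounded at bad places, and conclude via classical Northcott for the ample height. One small imprecision: you equip $A$ with an \emph{auxiliary} adelic metric but then invoke the exponential linearity of \Cref{prop:height-K-inv}, which refers to the specific pairing $H$ built from the fixed metrics on $\mathcal{O}(D_\alpha)$. Either take $H_A=H(\cdot,[A])$ with the induced tensor-product metric (so the identity $H_\lambda=H_\mu\,H_A^{\epsilon}$ is exact), or note that any two smooth adelic metrics on $A$ give heights differing by a bounded factor, which is harmless here.

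The paper's own proof is quite different and much shorter: it passes to the finite purely inseparable extension $F'$ over which $G$ becomes $\mathbb{G}_a^n$ and then cites \cite[Proposition~4.3]{CLYT}, the Northcott statement already proved for smooth equivariant compactifications of vector groups. Your approach has the advantage of being self-contained and not relying on the base change (which, strictly speaking, requires checking that the height on $X$ is comparable to the pulled-back height on $X_{F'}$, a routine but unstated step in the paper). The paper's approach, on the other hand, avoids re-deriving the ample comparison and the lower bound on $H_\mu$, leaning instead on the split case where everything is already known.
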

\begin{proof}
	It suffices to prove the result over the finite extension $F'/F$. The result now follows by \cite[Proposition 4.3]{CLYT}.
\end{proof}

\subsection{The height zeta function and a domain of convergence}

To study the asymptotics for the number of points of bounded height, we introduce the \textit{height zeta function}
$$Z(\mathbf{s})=\sum_{x\in G(F)} H(x;\mathbf{s})^{-1}, \quad \mathbf{s}=(s_\alpha)\in \pic(X)_\cx.$$
For a line bundle $\mathcal{L}$ with coordinates $\lambda:=(\lambda_\alpha)\in \bigoplus_\alpha \cx [D_\alpha]$, the analytic properties of $Z_\lambda(s):= Z(\lambda s)$, for $s\in \cx$, provide information on the asymptotics for the number of points of bounded height with respect to the height $H_\lambda(x):=H(x; \lambda )$.

\begin{prop}\label{Prop:conv-zeta-tube}
	There exists a real number $t>0$ such that $Z(\mathbf{s})$ converges absolutely to a bounded holomorphic function in the tube domain $${ \{(w_\alpha)\in \pic(X)_\mathbb{R}: \> w_\alpha>t, \> \forall \alpha\in \mathcal{A}\} +i\pic(X)_\mathbb{R}}$$ in the complex vector space $\pic(X)_\mathbb{C}$.
\end{prop}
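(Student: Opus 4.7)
The plan is to reduce absolute convergence to that of the classical height zeta function on a projective space via a suitable very ample line bundle on $X$, and then deduce the tube-domain statement from positivity of the local norms.

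First I would observe that each local norm $\lVert\mathsf{s}_\alpha(x_v)\rVert_v$ is a positive real number, so $|H(x;\mathbf{s})^{-1}| = H(x;\re(\mathbf{s}))^{-1}$ pointwise in $x\in G(F)$. Consequently absolute convergence on a tube $\{(w_\alpha)\in\pic(X)_\mathbb{R} : w_\alpha>t\}+i\pic(X)_\mathbb{R}$ reduces to convergence of $Z(\mathbf{w})$ for real $\mathbf{w}=(w_\alpha)$ with $w_\alpha>t$ for all $\alpha$, and the bound $|Z(\mathbf{s})|\leq Z(\re(\mathbf{s}))$ then gives boundedness on the tube and hence holomorphy by a standard Morera/Lebesgue-domination argument.

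Next I would choose a class $\lambda_0=(c_\alpha)\in \bigoplus_\alpha \mathbb{Q}_{>0}[D_\alpha]$ such that $\mathcal{L}_{\lambda_0}$ is very ample on $X$. Such a class exists because $\{D_\alpha\}_{\alpha\in\mathcal{A}}$ spans $\pic(X)_\mathbb{Q}$ by \Cref{prop:picX}, the ample cone is a non-empty open subcone of $\pic(X)_\mathbb{R}$, and the effective cone contains a neighbourhood of the $D_\alpha$-direction; clearing denominators and replacing by a sufficient multiple yields a very ample representative. The associated closed immersion $\iota:X\hookrightarrow\mathbb{P}^N$ induces a comparison of heights: the pullback $\iota^*\mathcal{O}_{\mathbb{P}^N}(1)$ carries two smooth adelic $q$-metrics (the one we chose on $\mathcal{L}_{\lambda_0}$ and the one pulled back from the standard adelic metric on $\mathcal{O}(1)$), and any two such metrics on the same line bundle agree at almost all places and differ by a bounded factor at the finitely many remaining ones. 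This yields constants $c_1,c_2>0$ with
\[
c_1 H_{\mathbb{P}^N}(\iota(x))\leq H_{\lambda_0}(x)\leq c_2 H_{\mathbb{P}^N}(\iota(x)),\qquad \forall x\in X(F).
\]
The classical bound $\sum_{y\in\mathbb{P}^N(F)}H_{\mathbb{P}^N}(y)^{-s}<\infty$ for $\re(s)$ sufficiently large (depending on $N$ and the genus of $\cf$) is standard over global function fields.

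Finally, for real $\mathbf{w}=(w_\alpha)$ with $w_\alpha\geq t\,c_\alpha$ and $t$ large, I would dominate $H(x;\mathbf{w})^{-1}$ by $C\cdot H_{\lambda_0}(x)^{-t}$ for a constant $C$ uniform in $x$. This uses the fact that $\lVert\mathsf{s}_\alpha(x_v)\rVert_v\leq 1$ at every $v\notin S$ (the canonical section of $\mathcal{O}(D_\alpha)$ has norm at most $1$ under the integral model metric), while at each of the finitely many $v\in S$ the continuous function $\lVert\mathsf{s}_\alpha(\cdot)\rVert_v$ is uniformly bounded on the compact space $X(F_v)$. Summing over $G(F)\subset X(F)$ and invoking the projective-space estimate gives absolute convergence, completing the proof.

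The main technical point is controlling the local norms uniformly at places $v\in S$ and arranging the comparison with a very ample class whose coordinates in the $\{[D_\alpha]\}$-basis are strictly positive; both are handled by projectivity of $X$ together with \Cref{prop:picX}, and no further input about $G$-equivariance or the structure of the boundary is needed for this basic convergence statement.
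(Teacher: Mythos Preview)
Your argument is correct and mirrors the approach the paper defers to, namely \cite[Proposition~4.5]{CLYT}: reduce to real $\mathbf{s}$, pick a very ample class $\lambda_0=\sum c_\alpha D_\alpha$ with all $c_\alpha>0$ (available by \Cref{prop:picX} and openness of the ample cone), compare $H_{\lambda_0}$ with a standard projective height, and dominate $H(\,\cdot\,;\mathbf{w})^{-1}$ by a constant times $H_{\lambda_0}(\,\cdot\,)^{-t_0}$ using that $\lVert\mathsf s_\alpha\rVert_v\le 1$ for $v\notin S$ and is bounded for $v\in S$. One cosmetic point: your domination constant $C$ depends on $\mathbf{w}$ through the bad-place factors, so to obtain \emph{uniform} boundedness of $Z$ on the entire tube (rather than just local uniform convergence) you should first rescale the finitely many bad-place metrics so that $\lVert\mathsf s_\alpha\rVert_v\le 1$ on all of $X(F_v)$, after which $H(x;\mathbf{w})^{-1}\le H(x;t\mathbf{1})^{-1}$ for every $x\in G(F)$ and every $\mathbf{w}$ in the tube.
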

\begin{proof}
	The proof is analogous to \cite[Proposition 4.5]{CLYT}.
\end{proof}

Let $\mathcal{L}$ be a big line bundle on $X$ and let $\lambda=(\lambda_\alpha)\in \bigoplus_\alpha \cx [D_\alpha]$ be the corresponding vector in $\pic(X)_\cx$. By \Cref{prop:picX}, we can write $$\lambda_\alpha=\lambda'_\alpha/p'_\alpha \text{ where } \lambda'_\alpha,p'_\alpha\in \z_{\geq 0} \text{ are coprime} \text{ and } p'_\alpha=p^{k_\alpha} \text{ for some } k_\alpha\in \z_{>0} .$$ 
We define 
$$ \quad g_\lambda:=\frac{ \gcd(\{\lambda'_\alpha \}_{\alpha\in \mathcal{A}})}{\lcm(\{p'_\alpha \}_{\alpha\in \mathcal{A}})}.$$
The following result shows that $Z_\lambda(s)$ has an imaginary period.
\begin{lem}\label{lem:Period}
	Let $\mathcal{L}_\lambda$ be a big line bundle corresponding to $\lambda = (\lambda_\alpha )\in \bigoplus_\alpha \mathbb{Q} [D_\alpha]$. Then $Z_\lambda(s)$ is periodic with respect to the imaginary axis with period dividing
	$$\frac{2\pi i}{g_\lambda \log(q)}.$$
\end{lem}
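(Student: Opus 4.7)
The plan is to verify the periodicity directly on each term of the series $Z_\lambda(s) = \sum_{x \in G(F)} H_\lambda(x)^{-s}$ by showing that $H_\lambda(x)^{-T} = 1$ for all $x \in G(F)$, where $T = \frac{2\pi i}{g_\lambda \log(q)}$.

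First I would use that the chosen adelic metrics on each $\mathcal{O}(D_\alpha)$ are $q$-metrics: for every place $v$ and every $x_v \in X(F_v)$, the value $\lVert \mathsf{s}_\alpha(x_v)\rVert_v$ is a power of $q$, say $q^{n_{v,\alpha}(x)}$ with $n_{v,\alpha}(x)\in\mathbb{Z}$. For $x\in G(F)$, the integral model condition implies $n_{v,\alpha}(x)=0$ for all but finitely many $v$, so
\[
H_\lambda(x) \;=\; \prod_{v,\alpha} \lVert \mathsf{s}_\alpha(x)\rVert_v^{-\lambda_\alpha} \;=\; q^{N(x)}, \qquad N(x) \;:=\; -\sum_{v,\alpha} n_{v,\alpha}(x)\,\lambda_\alpha \in \mathbb{Q},
\]
so that $H_\lambda(x)^{-T} = e^{-T N(x) \log q} = e^{-2\pi i N(x)/g_\lambda}$.

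The next (and main arithmetic) step is to show that $N(x)/g_\lambda \in \mathbb{Z}$, which reduces to verifying that $\lambda_\alpha / g_\lambda \in \mathbb{Z}$ for every $\alpha \in \mathcal{A}$. Writing $\lambda_\alpha = \lambda'_\alpha/p'_\alpha$ with $\gcd(\lambda'_\alpha,p'_\alpha)=1$, $L := \lcm_\alpha p'_\alpha$, and $g_\lambda = \gcd_\alpha(\lambda'_\alpha)/L$, I compute
\[
\frac{\lambda_\alpha}{g_\lambda} \;=\; \frac{\lambda'_\alpha/p'_\alpha}{\gcd_\beta(\lambda'_\beta)/L} \;=\; \frac{L}{p'_\alpha}\cdot\frac{\lambda'_\alpha}{\gcd_\beta(\lambda'_\beta)}.
\]
Both factors are integers ($p'_\alpha\mid L$ and $\gcd_\beta(\lambda'_\beta)\mid \lambda'_\alpha$), so $\lambda_\alpha/g_\lambda\in\mathbb{Z}$. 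Hence $N(x)/g_\lambda$ is an integer combination of the $n_{v,\alpha}(x)\in\mathbb{Z}$, giving $H_\lambda(x)^{-T}=1$.

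Finally, since the series defining $Z_\lambda(s)$ converges absolutely on the appropriate half-plane (\Cref{Prop:conv-zeta-tube}), term-by-term invariance under $s\mapsto s+T$ yields $Z_\lambda(s+T)=Z_\lambda(s)$ there, and this identity extends to the meromorphic continuation. The only real obstacle is the arithmetic manipulation in step two, which is essentially bookkeeping around the denominators $p'_\alpha$ coming from $\mathrm{Pic}(X)$ (cf.\ \Cref{prop:picX}); everything else is immediate from the $q$-metric hypothesis and the definition of $H_\lambda$.
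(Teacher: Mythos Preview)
Your proposal is correct and follows essentially the same approach as the paper: reduce to termwise periodicity of $H_\lambda(x)^{-s}$ using that the $q$-metrics make each $\lVert \mathsf{s}_\alpha\rVert_v(x)$ an integer power of $q$, then invoke the definition of $g_\lambda$. You spell out in more detail the arithmetic check that $\lambda_\alpha/g_\lambda\in\mathbb{Z}$, which the paper leaves as ``by the definition of $g_\lambda$''.
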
 
\begin{proof}
	Since $Z_\lambda(s)=\sum_{x\in G(F)} H_\lambda(x)^{-s}$, it suffices to prove the result for 
	$$H_\lambda(x)^{-s}= \prod_v \prod_\alpha \lVert \mathsf{s}_\alpha \rVert_v (x)^{\lambda_\alpha s},$$
	for all $x\in G(F)$.
	Note that for all places $v$, as the adelic metric is $q$-valued we deduce that $\lVert \mathsf{s}_\alpha \rVert_v (x)^{\lambda_\alpha s}$ has an imaginary period dividing $\frac{2\pi i}{\lambda_\alpha \log(q)}$. Hence, by the definition of $g_\lambda$, we see that $\prod_\alpha \lVert \mathsf{s}_\alpha \rVert_v (x)^{\lambda_\alpha s}$ has period dividing $\frac{2\pi i}{g_\lambda \log(q)}$ for all places $v$. 
\end{proof}
\begin{rem}\label{rem:PeriodZ}
	Let $\lambda = (\lambda_\alpha )\in \bigoplus_\alpha \mathbb{Z} [D_\alpha]$. Then $g_\lambda$ is always an integer, which implies that $Z_\lambda(s)$ has imaginary period dividing $2\pi i/\log(q)$.
	If $\lambda = (\lambda_\alpha )\notin \bigoplus_\alpha \mathbb{Z} [D_\alpha]$, then $g_\lambda$ is not necessarily an integer. For example, if $\lambda_\beta=1/p$ for all $\beta\in \mathcal{B}$ and $\lambda_\alpha=1$ otherwise, then $g_\lambda=1/p$. Hence
	$$q^{\frac{1}{p} (s+\frac{2\pi i}{\log(q)})}=q^{\frac{s}{p}} e^ {\frac{2\pi i}{p}}\neq q^{\frac{s}{p} }$$
	which implies that $2\pi i/\log(q)$ is not an imaginary period for $Z_\lambda(s)$.	
\end{rem}

\subsection{Tamagawa and Haar measures}\label{TamHaarMeasure}
Let $X_v$ be the base change of $X$ to $F_v$. The $v$-adic $q$-metrics on $\{\mathcal{O}(D_\alpha)\}_{\alpha\in \mathcal{A}}$ induce (via the tensor product) an adelic $q$-metric $(\lVert \cdot \rVert_v)$ on $\omega_X^{-1}$ (which induces a metric on $\omega_X$). Let $\omega\in \omega_X(U)$ be a non-vanishing $n$-form on an open subscheme $U$ of $X$. This defines a measure $|\omega|_v/\lVert \omega\rVert_v$ on $U(F_v)$ that is independent of the choice of the non-vanishing $n$-form $\omega$. These measures defined on some open cover of $X(F_v)$ glue to a measure on $X(F_v)$, denoted by $\tau_{X,v}$, which we call the \textit{local Tamagawa measure} on $X_v$ (see \cite[\S 2.1]{CLYTig}). 

Let $\omega \in \omega_X(G)$ be a non-vanishing translation invariant $n$-form on $G$, which exists by \Cref{cor:ex.non-van.diff}. Let $f_{\omega}$ be the canonical section of $\mathcal{O}(-\Div(\omega))\cong \omega_X^{-1}$, so that the support of $\Div(f_\omega)$ is the boundary divisor $D$. The measure 
$$d\mathbf{g}_v:= \frac{1}{\lVert f_{\omega} \rVert_v} d\tau_{X,v}$$
is well defined on $G(F_v)$ and coincides with the Haar measure $|\omega|_v$ on $G(F_v)$. As $\omega$ is a canonical section of $\omega_X$, and $\omega_X\cong \sum_\alpha -\rho_\alpha [D_\alpha]$, by Rosenlicht's lemma, there exists $c\in F^{\times}$ such that $\omega= c\prod \mathsf{s}_\alpha^{-\rho_\alpha}$. Therefore
\begin{equation}\label{eqn:dg=dtau}
	d\mathbf{g}_v= \lVert \omega \rVert \> d\tau_{X,v} =|c|_v \prod_\alpha \lVert \mathsf{s}_\alpha \rVert^{-\rho_\alpha} \> d\tau_{X,v}
\end{equation}
This defines the Haar measure $$d\mathbf{g}:=\prod_v d\mathbf{g}_v =\prod_v \prod_\alpha \lVert \mathsf{s}_\alpha \rVert^{-\rho_\alpha} \> d\tau_{X,v}$$
on $G(\ad)$, using that $\prod_v |c|_v=1$ by the product formula. This measure is clearly independent on the choice of the non-vanishing $\omega\in \omega_X(G)$. 

The \textit{Tamagawa measure} on $G(\ad)$ is defined as
\begin{equation}\label{Tau_G}
	\tau_G:= q^{-\Dim(G)(g_F-1)} d\mathbf{g}
\end{equation}
where $q$ is the cardinality of the constant field of $F$ and $g_F$ is the genus of $F$ (see  \cite[\S 2.4]{Weil2}). The Tamagawa number of $G$ is defined as
$$\tau(G):= \tau_G(G(\ad)/G(F)).$$
Therefore, we have
\begin{equation}\label{eqn:TamagawaNumberOfG}
	\int_{G(\ad)/G(F)} d\mathbf{g} = q^{\Dim(G)(g_F-1)} \tau(G).
\end{equation}
(c.f. \eqref{eqn:vol(A/F)}).

\section{Fourier analysis and the Poisson formula}\label{sec:PoissonSum}
\subsection{Harmonic analysis on $G(\ad)$}\label{subsec:HarmonicAn}

We start by recalling some facts on the harmonic analysis on the locally compact group $\ga(\ad)$ (see \cite[Theorem IV.3]{Weil1}). Let $F$ be a global function field with constant field $\mathbb{F}_q$. Let $\pi_v$ be a uniformizing parameter for $F_v$. We have an isomorphism $F_v\cong \mathbb{F}_{q_v}((\pi_v))$ so that every element $x_v\in F_v$ has a unique power series expansion of the form $$x_v=\sum_{i=-m}^\infty x_{i,v} \pi_v^i$$ for some $m \in \z_{\geq 0}$, where $x_{i,v}\in \mathbb{F}_{q_v}$ for all $i$. 
Let $F_0:=\mathbb{F}_p(t)$ and $F_{0,\infty}$ be its completion with respect to the place corresponding to $1/t$. We define the character 
\begin{equation}\label{eqn:CharOnF0Infty}
	\psi_{0,\infty}: F_{0.\infty} \rightarrow \cx, \> x_\infty \mapsto \chi(-x_{1,\infty}).
\end{equation}
By \cite[Theorem IV.3]{Weil1}, there is a unique character $\psi_0 \in \widehat{\mathbb{A}_{F_0}}$ orthogonal to $F_0$ such that its restriction to $F_{0,\infty}$ is ${\psi}_{0,\infty}$. Moreover, the character 
\begin{equation}\label{eqn:CharOnF0}
	\psi:=\psi_{0}\circ \Tr_{\ad/\mathbb{A}_{F_0}}\in \widehat{\ad}
\end{equation} is orthogonal to $F$. We can write $\psi$ as $\prod_v \psi_v$, where each $\psi_v$ is a non-trivial local character of $F_v$. 

For  $\mathbf{a}\in \ga(F_v)$, we define the character 
\begin{equation}\label{eqn:localCharacter}
	\Psi_{v,\mathbf{a}}:=\psi_v(\langle \cdot, \mathbf{a} \rangle):  \ga(\ad) \rightarrow \cx^*
\end{equation} on $\ga(F_v)$.
For  $\mathbf{a}\in \ga(\ad)$, we define the character $$\Psi_{\mathbf{a}}:=\prod_v \Psi_{v,\mathbf{a}}$$ on $\ga(\ad)$.
The map $\ga(\ad)\rightarrow {\ga(\ad)}^\wedge, \> \mathbf{a}\mapsto \Psi_\mathbf{a}$ is a topological isomorphism (see \cite[Theorem IV.3]{Weil1}). The subgroup $\ga(F)\subset \ga(\ad)$ is discrete, cocompact, and we have an induced duality
\begin{equation}\label{eqn:FGlobDuality}
	\ga(F)\rightarrow {\left( \frac{\ga(\ad)}{\ga(F)} \right)^\wedge}, \> \mathbf{a}\rightarrow \Psi_\mathbf{a}.
\end{equation}

Let us get back to the case when $G$ is an $F$-form of $\ga$. By \Cref{cor:ex.non-van.diff}, we can view $G(\ad)$ as a closed subgroup of $\mathbb{G}_a^{n+1}(\ad)$. Thus, we have an isomorphism
$$G(\ad)^\wedge\cong \frac{ \mathbb{G}_a^{n+1}(\ad)^\wedge} { (\mathbb{G}_a^{n+1}(\ad)/ G(\ad))^\wedge}.$$
Our aim is to apply a version of the Poisson summation formula for the height zeta function with respect to the subgroup $G(F)$ in $G(\ad)$. 

\begin{lem}
	The subgroup $G(F)\subset G(\ad)$ is discrete and cocompact.
\end{lem}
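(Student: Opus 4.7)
The plan is to establish the two conclusions separately: discreteness by descent from an ambient adelic vector space, and cocompactness by combining the two pieces of the Tits structure theorem.

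For discreteness, I would use the closed $F$-immersion $G \hookrightarrow \mathbb{G}_a^{n+1}$ furnished by \Cref{cor:ex.non-van.diff}. This induces a closed topological embedding $G(\ad) \hookrightarrow \mathbb{G}_a^{n+1}(\ad) = \ad^{n+1}$ compatible with the adelic topologies, and identifies $G(F)$ with $G(\ad) \cap F^{n+1}$. Since $F^{n+1}$ is discrete in $\ad^{n+1}$ by the classical theory of adeles, it follows at once that $G(F)$ is discrete in $\ad^{n+1}$, and hence in $G(\ad)$ with its subspace topology.

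For cocompactness, I would invoke the structure theorem \cite[V.5]{Ost} to decompose $G \cong \mathbb{G}_a^m \times W$ with $W$ an $F$-wound group, which gives
\[
G(\ad)/G(F) \;\cong\; (\ad^m/F^m) \times (W(\ad)/W(F)),
\]
reducing the problem to each factor. The first factor is compact with volume $q^{m(g_F-1)}$, by \eqref{eqn:vol(A/F)} and classical adelic theory. For the second, the key input is \cite[VI.2.1]{Ost}, which asserts that $W(F_v)$ is compact at every place $v$ when $W$ is wound. Combined with the fact that for a suitable $\cfs$-integral model arising from the closed embedding $W \hookrightarrow \mathbb{G}_a^{n-m+1}$ given by a separable $p$-polynomial one has $W(\ov) = W(F_v)$ for almost all $v$ (deduced from the boundedness of the compact set $W(F_v) \subset F_v^{n-m+1}$ against the integral model at primes of good reduction), the restricted product $W(\ad)$ becomes a Tychonoff product of compact groups over all but finitely many places together with finitely many compact factors, and is therefore compact. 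The quotient $W(\ad)/W(F)$ is then automatically compact.

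The principal technical point is the equality $W(\ov) = W(F_v)$ at almost all $v$ in the wound case: this is where the wound assumption truly enters, via the anisotropy of the principal part of the defining separable $p$-polynomial modulo $v$. If one prefers, this step can be bypassed by appealing to Oesterlé's results on finiteness of Tamagawa numbers of unipotent groups over global function fields, which directly yields compactness of $W(\ad)/W(F)$ without invoking Tychonoff.
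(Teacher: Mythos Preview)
Your discreteness argument coincides with the paper's. For cocompactness, however, your primary argument has a genuine error: the assertion that $W(\ad)$ itself is compact (equivalently, that $W(F_v)=W(\ov)$ for almost all $v$) is false in general. Indeed, if $W(\ad)$ were compact then the discrete subgroup $W(F)$ would be finite, but the paper's own running example $W=\WeilRes_{F^{1/p}/F}\mathbb{G}_m/\mathbb{G}_m$ is $F$-wound with $W(F)$ infinite (\Cref{example:ResGm}). So the step ``$W(\ov)=W(F_v)$ at almost all $v$, deduced from boundedness'' cannot be made to work: the anisotropy of $P_{\text{princ}}$ over $F$ does not persist modulo $v$, since residue fields are perfect and every element of $\fv$ is a $p$-th power. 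Your fallback via Oesterl\'e's Tamagawa-number computation does salvage the conclusion, but invokes substantially heavier machinery than the statement warrants.

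The paper instead bypasses the Tits decomposition entirely and reuses the very embedding $G\hookrightarrow\mathbb{G}_a^{n+1}$ already invoked for discreteness: from $G(\ad)$ closed in $\mathbb{G}_a^{n+1}(\ad)$ and $G(F)=G(\ad)\cap\mathbb{G}_a^{n+1}(F)$ one obtains a closed embedding
\[
G(\ad)/G(F)\;\hookrightarrow\;\mathbb{G}_a^{n+1}(\ad)/\mathbb{G}_a^{n+1}(F),
\]
and the target is compact. This two-line argument needs neither the wound/split decomposition nor any appeal to \cite{Ost} beyond the $p$-polynomial presentation, and it treats the split and wound parts uniformly.
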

\begin{proof}
	By \Cref{cor:ex.non-van.diff}, $G$ is isomorphic to a subgroup of $\mathbb{G}_a^{n+1}$, where we identify $G$ with its image via this isomorphism. Thus, $G(\ad)$ is closed in $\mathbb{G}_a^{n+1}(\ad)$ and $G(F)=G(\ad)\cap \mathbb{G}_a^{n+1}(F)$. This proves that $G(F)$ is discrete in $G(\ad)$. The following commutative diagram
	% https://q.uiver.app/#q=WzAsNCxbMCwwLCJHKFxcbWF0aGJie0F9X0YpIl0sWzEsMCwiXFxtYXRoYmJ7R31fYV57bisxfShcXG1hdGhiYntBfV9GKSJdLFswLDEsIkcoRikiXSxbMSwxLCJcXG1hdGhiYntHfV9hXntuKzF9KEYpIl0sWzAsMV0sWzIsM10sWzIsMF0sWzMsMV1d
	\[\begin{tikzcd}
		{G(\mathbb{A}_F)} & {\mathbb{G}_a^{n+1}(\mathbb{A}_F)} \\
		{G(F)} & {\mathbb{G}_a^{n+1}(F)}
		\arrow[from=1-1, to=1-2]
		\arrow[from=2-1, to=2-2]
		\arrow[from=2-1, to=1-1]
		\arrow[from=2-2, to=1-2]
	\end{tikzcd}\]
	where the horizontal maps are the closed embeddings, induces a closed embedding
	$$G(\ad)/G(F) \rightarrow \mathbb{G}_a^{n+1}(\ad)/\mathbb{G}_a^{n+1}(F). $$
	As $\mathbb{G}_a^{n+1}(\ad)/\mathbb{G}_a^{n+1}(F)$ is compact, we deduce that $G(\ad)/G(F)$ is compact. 
\end{proof}

We now prove a lemma that will be used to show that only finitely many Fourier transforms of the height function are non-zero.

\begin{lem}\label{lem:Finite-Quo}
	Let $M$ be a non-empty open subgroup of $G(\ad)$. Then the quotient $G(\ad)/(M+G(F))$ is finite.
\end{lem}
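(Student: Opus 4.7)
The plan is to deduce this from the compactness of $G(\mathbb{A}_F)/G(F)$, which was established immediately above the statement, combined with the fact that open subgroups of a compact topological group always have finite index.

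First I would let $\pi : G(\mathbb{A}_F) \to G(\mathbb{A}_F)/G(F)$ denote the quotient map. Since $\pi$ is a continuous, open, surjective group homomorphism, the image $\pi(M)$ is an open subgroup of $G(\mathbb{A}_F)/G(F)$. By the standard correspondence between subgroups of $G(\mathbb{A}_F)$ containing $G(F)$ and subgroups of the quotient, $\pi(M) = (M+G(F))/G(F)$.

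Next, I would invoke the general topological fact that any open subgroup $H$ of a compact topological group $K$ has finite index: the cosets of $H$ form an open cover of $K$ by pairwise disjoint open sets, and compactness forces this cover to be finite. Applying this to $H = \pi(M)$ and $K = G(\mathbb{A}_F)/G(F)$ (which is compact by the preceding lemma) gives that $\pi(M)$ has finite index in $G(\mathbb{A}_F)/G(F)$.

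Finally, the correspondence theorem identifies
\[
[G(\mathbb{A}_F)/G(F) : (M+G(F))/G(F)] = [G(\mathbb{A}_F) : M + G(F)],
\]
so this last index is finite, which is exactly the claim. There is no substantive obstacle here; the only nontrivial input is the compactness of $G(\mathbb{A}_F)/G(F)$ proved just above, and everything else is formal.
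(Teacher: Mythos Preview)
Your proof is correct and follows essentially the same approach as the paper: both reduce to the fact that $(M+G(F))/G(F)$ is an open subgroup of the compact group $G(\ad)/G(F)$, and then use the open-coset-cover argument to conclude finite index. You simply spell out the correspondence via the quotient map $\pi$ a bit more explicitly than the paper does.
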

\begin{proof}
	Since $M+G(F)$ induces an open subgroup of $G(\ad)/G(F)$, the cosets of $M+G(F)$ in $G(\ad)$ give an open cover. The result now follows by the compactness of $G(\ad)/G(F)$.
\end{proof}

It is a known fact (see e.g. the proof of \cite[IV, Lemma 3]{Weil1}) that the group of adelic points of $F=\mathbb{F}_q(t)$ has a decomposition
$$ \ad= F \oplus  \left( \mathfrak{m}_\infty   \times \prod_{v\neq \infty} \ov \right),$$
where $\infty$ is place of $F$ corresponding to $1/t$. One might ask whether the adelic points of forms of $\ga$ have a similar decomposition. We show that such a decomposition exists for a certain class of forms.

\begin{prop}\label{prop:SA-close-property}
	Assume that $F=\mathbb{F}_q(t)$. Let $G$ be a twist of $\ga$ defined, as a subgroup of $\mathbb{G}_a^{n+1}$, by a separable $p$-polynomial 
	$$P=\sum_{i,j}^{n+1,r} c_{i,j} x_i^{p^j}$$ 
	such that each $c_{i,j}\in \mathfrak{o}_F =\mathbb{F}_q[t]$. If $\deg (c_{i,j})< p^j $ for all $i,j$, then we have a decomposition
	$$G(\ad)= G(F) \oplus \left( \mathbb{H}_\infty \times \prod_{v\neq \infty}G(\ov) \right)$$
	where $\mathbb{H}_\infty=\mathfrak{m}_\infty^{n+1} \cap G(F_\infty)$ the intersection being taken in $\mathbb{G}_a^{n+1}(F_\infty)$.
\end{prop}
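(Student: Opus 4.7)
The plan is to reduce the claim to the analogous decomposition for $\mathbb{G}_a^{n+1}(\mathbb{A}_F)$ and then exploit the hypotheses on the coefficients of $P$ to show that the decomposition respects the subgroup $G \subset \mathbb{G}_a^{n+1}$. Recall that for $F = \mathbb{F}_q(t)$ one has the well-known decomposition
\begin{equation*}
\mathbb{A}_F \;=\; F \;\oplus\; \Bigl(\mathfrak{m}_\infty \times \prod_{v \neq \infty} \mathfrak{o}_v\Bigr),
\end{equation*}
which, applied coordinatewise, yields a decomposition of $\mathbb{G}_a^{n+1}(\mathbb{A}_F) = \mathbb{A}_F^{n+1}$.

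Given $\mathbf{x} \in G(\mathbb{A}_F) \subset \mathbb{A}_F^{n+1}$, I would first write $\mathbf{x} = \mathbf{y} + \mathbf{z}$ with $\mathbf{y} \in F^{n+1}$ and $\mathbf{z} \in \mathfrak{m}_\infty^{n+1} \times \prod_{v \neq \infty} \mathfrak{o}_v^{n+1}$ via the coordinatewise decomposition above. Since $P$ is a $p$-polynomial it is additive, so $P(\mathbf{x}) = 0$ forces $P(\mathbf{z}) = -P(\mathbf{y}) \in F$. The heart of the proof is showing $P(\mathbf{z}) \in \mathfrak{m}_\infty \times \prod_{v \neq \infty} \mathfrak{o}_v$, for then $P(\mathbf{z}) \in F \cap \bigl(\mathfrak{m}_\infty \times \prod_{v \neq \infty} \mathfrak{o}_v\bigr) = 0$ by the decomposition of $\mathbb{A}_F$, giving $\mathbf{z} \in G(\mathbb{A}_F)$ and hence $\mathbf{y} = \mathbf{x} - \mathbf{z} \in G(\mathbb{A}_F) \cap F^{n+1} = G(F)$.

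The two local estimates needed are straightforward. For a place $v \neq \infty$, the coefficients $c_{i,j} \in \mathbb{F}_q[t] \subset \mathfrak{o}_v$ and $\mathbf{z}_v \in \mathfrak{o}_v^{n+1}$, so $P(\mathbf{z}_v) \in \mathfrak{o}_v$. At $v = \infty$ the uniformizer is $1/t$, so $|c_{i,j}|_\infty = q^{\deg(c_{i,j})}$ and the hypothesis $\deg(c_{i,j}) < p^j$ gives $|c_{i,j}|_\infty \leq q^{p^j - 1}$; combined with $|z_{i,\infty}|_\infty \leq q^{-1}$ we obtain
\begin{equation*}
\bigl| c_{i,j}\, z_{i,\infty}^{p^j} \bigr|_\infty \;\leq\; q^{p^j - 1} \cdot q^{-p^j} \;=\; q^{-1},
\end{equation*}
so each monomial, and hence $P(\mathbf{z}_\infty)$, lies in $\mathfrak{m}_\infty$. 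This is the key calculation where the numerical hypothesis $\deg(c_{i,j}) < p^j$ is used in an essential way, and it will be the main (though modest) obstacle.

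Finally, uniqueness of the decomposition $G(\mathbb{A}_F) = G(F) \oplus (\mathbb{H}_\infty \times \prod_{v\neq \infty} G(\mathfrak{o}_v))$ is inherited directly from the uniqueness of the $\mathbb{G}_a^{n+1}$-decomposition, since $G(F) = G(\mathbb{A}_F) \cap \mathbb{G}_a^{n+1}(F)$ and $\mathbb{H}_\infty \times \prod_{v\neq \infty} G(\mathfrak{o}_v) = G(\mathbb{A}_F) \cap \bigl(\mathfrak{m}_\infty^{n+1} \times \prod_{v\neq \infty} \mathfrak{o}_v^{n+1}\bigr)$ by construction.
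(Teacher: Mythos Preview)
Your proposal is correct and follows essentially the same approach as the paper: both decompose $\mathbb{G}_a^{n+1}(\mathbb{A}_F)$ via the standard $\mathbb{A}_F = F \oplus (\mathfrak{m}_\infty \times \prod_{v\neq\infty}\mathfrak{o}_v)$, use additivity of $P$ to split $P(\mathbf{x})$, and then invoke the degree hypothesis to force the compact piece to lie in $G$. Your write-up is in fact slightly more explicit than the paper's, spelling out the $v=\infty$ valuation estimate and the uniqueness of the direct sum.
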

\begin{proof}
	By the decomposition $\ad= F \oplus  C$, where $C:=\left( \mathfrak{m}_\infty \times \prod_{v\neq \infty} \ov \right)$, we have that
	$$\oplus^{n+1} \ad = (\oplus^{n+1} F ) \oplus (\oplus^{n+1} C ).$$
	Let $x:=(x_i)^{n+1}_{i=1}=(r_i)_{i=1}^{n+1}+ (y_i)_{i=1}^{n+1} \in \oplus^{n+1} \ad$, where $(r_i)\in \oplus^{n+1} F$ and $ (y_i) \in \oplus^{n+1} C$. If $x\in G(\ad)$, then $$P(x)=\sum_{i,j}^{m,r} c_{i,j} (r_i+y_i)^{p^j}= \sum_{i,j}^{m,r} c_{i,j} r_i^{p^j} + \sum_{i,j}^{m,r} c_{i,j} y_i^{p^j} = 0.$$
	As $\deg (c_{i,j})< p^j $ for all $i,j$,  we have $\sum_{i,j}^{m,r} c_{i,j} y_i^{p^j}\in \oplus^{n+1} C$. Clearly, we have  $\sum_{i,j}^{m,r} c_{i,j} r_i^{p^j} \in \oplus^{n+1} F$. Therefore, $P((r_i))= 0$ and $P((y_i))=0$, which implies that $(r_i)\in G(F)$ and $(y_i)\in  \mathbb{H}_\infty \times \prod_{v\neq \infty}G(\ov) $.
\end{proof}

Let $d\mathbf{g}$ be the Haar measure on $G(\ad)$ defined in \Cref{TamHaarMeasure}. We define the \textit{Fourier transform} in the adelic component of the height pairing on $G(\ad) \times \pic(X)_\cx$ by
$$\hat{H}(\Psi; \mathbf{s})=\int_{\mathbf{x}\in G(\ad)} H(\mathbf{x};\mathbf{s})^{-1} \Psi(\mathbf{x}) \> d \mathbf{g}$$
for $\Psi\in \widehat{G(\ad)}$. Recall that $H$ is $\mathbb{K}$-invariant by \Cref{prop:height-K-inv}, where $\mathbb{K}$ was the intersection of the maximal open compact subgroups of $G(\ad)$ fixing ${(\mathcal{O}(D_\alpha),\> ||\cdot||_v)}$, for all $\alpha \in \mathcal{A}$.

\begin{prop}\label{prop:fin.many.trivial}
	The set $\{\Psi\in \widehat{G(\ad)}:\> \Psi(\mathbb{K}+ G(F))=1\}$ is finite.
\end{prop}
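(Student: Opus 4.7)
The plan is to identify the set in question with the Pontryagin dual of a discrete quotient, then invoke \Cref{lem:Finite-Quo} to make that quotient finite. More precisely, a character $\Psi$ satisfies $\Psi(\mathbb{K}+G(F))=1$ if and only if $\Psi$ factors through $G(\ad)/(\mathbb{K}+G(F))$. Hence the set in the statement is naturally identified with the Pontryagin dual of $G(\ad)/(\mathbb{K}+G(F))$. To use this, I first need to know that $\mathbb{K}+G(F)$ is a closed subgroup: this follows because $\mathbb{K}$ is open in $G(\ad)$ by \Cref{prop:K-stabilizer}, so $\mathbb{K}+G(F)$ is an open subgroup, hence also closed.

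With closedness in hand, I would apply \Cref{lem:Finite-Quo} with $M=\mathbb{K}$ to conclude that $G(\ad)/(\mathbb{K}+G(F))$ is a finite (discrete abelian) group. Since the Pontryagin dual of a finite abelian group is finite (in fact of the same order), the set of characters vanishing on $\mathbb{K}+G(F)$ is finite, which is exactly the claim.

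The only potentially nontrivial input is the application of \Cref{lem:Finite-Quo}, but this has already been established using the compactness of $G(\ad)/G(F)$ (proved just above via the embedding of $G$ as a closed subgroup of $\mathbb{G}_a^{n+1}$ arising from \Cref{cor:ex.non-van.diff}). So the argument is essentially a one-line application of Pontryagin duality on top of \Cref{lem:Finite-Quo}; no serious obstacle is expected.
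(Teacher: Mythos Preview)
Your proposal is correct and matches the paper's own proof essentially line for line: identify the set with the Pontryagin dual of $G(\ad)/(\mathbb{K}+G(F))$, invoke \Cref{lem:Finite-Quo} to get finiteness of that quotient, and conclude that its dual is finite. The extra remark you make about $\mathbb{K}+G(F)$ being open (hence closed) is a harmless clarification the paper leaves implicit.
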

\begin{proof}
	The set is isomorphic to the dual of $G(\ad)/(\mathbb{K}+ G(F))$. By \Cref{lem:Finite-Quo}, $G(\ad)/(\mathbb{K}+ G(F))$ is finite, which implies that its dual is finite as well.
\end{proof}

\begin{lem}\label{lem:non-trivialOnK-FT-vanishes}
	If $\Psi\in \widehat{G(\ad)}$ is non-trivial on $\mathbb{K}$, then $\hat{H}(\Psi; \mathbf{s})=0$ for all $\mathbf{s}\in \pic(X)_\cx$ such that $H(\cdot; \mathbf{s})^{-1} $ is integrable.
\end{lem}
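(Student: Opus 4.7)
The plan is to exploit the $\mathbb{K}$-invariance of the height pairing established in the earlier proposition, together with the translation invariance of the Haar measure $d\mathbf{g}$ on $G(\ad)$, to produce a character-twist identity that forces the Fourier transform to vanish.

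Concretely, fix $\mathbf{s} \in \pic(X)_\cx$ with $H(\cdot;\mathbf{s})^{-1}$ integrable, and fix any $\mathbf{k} \in \mathbb{K}$. First I would perform the change of variables $\mathbf{x} \mapsto \mathbf{x} + \mathbf{k}$ in the defining integral
\[
\hat{H}(\Psi;\mathbf{s}) \;=\; \int_{G(\ad)} H(\mathbf{x};\mathbf{s})^{-1}\,\Psi(\mathbf{x})\,d\mathbf{g}.
\]
Since $d\mathbf{g}$ is a Haar measure, this substitution preserves the integral. By the $\mathbb{K}$-invariance proven in the earlier proposition on $H$, we have $H(\mathbf{x}+\mathbf{k};\mathbf{s})^{-1} = H(\mathbf{x};\mathbf{s})^{-1}$, while $\Psi(\mathbf{x}+\mathbf{k}) = \Psi(\mathbf{k})\Psi(\mathbf{x})$ since $\Psi$ is a character. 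Pulling out the scalar gives
\[
\hat{H}(\Psi;\mathbf{s}) \;=\; \Psi(\mathbf{k})\int_{G(\ad)} H(\mathbf{x};\mathbf{s})^{-1}\Psi(\mathbf{x})\,d\mathbf{g} \;=\; \Psi(\mathbf{k})\,\hat{H}(\Psi;\mathbf{s}).
\]

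To conclude, I would use the hypothesis that $\Psi$ is non-trivial on $\mathbb{K}$ to pick $\mathbf{k}_0 \in \mathbb{K}$ with $\Psi(\mathbf{k}_0) \neq 1$; then $(1-\Psi(\mathbf{k}_0))\hat{H}(\Psi;\mathbf{s})=0$ forces $\hat{H}(\Psi;\mathbf{s})=0$. The only (extremely mild) issue to check is that the substitution is justified, which reduces to the absolute integrability of $H(\cdot;\mathbf{s})^{-1}$ guaranteed by hypothesis. There is no real obstacle here: the argument is the standard orthogonality trick for Fourier transforms of functions invariant under a compact open subgroup.
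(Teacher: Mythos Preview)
Your proof is correct and is essentially the same orthogonality argument as the paper's: the paper phrases it via a Fubini decomposition $\int_{G(\ad)/\mathbb{K}}\int_{\mathbb{K}}$ and uses $\int_{\mathbb{K}}\Psi(\mathbf{b})\,d\mathbf{b}=0$, while you use a single translation $\mathbf{x}\mapsto\mathbf{x}+\mathbf{k}$ to obtain $(1-\Psi(\mathbf{k}_0))\hat{H}(\Psi;\mathbf{s})=0$; these are two standard presentations of the same trick.
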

\begin{proof}
	By the invariance of the height function under the action of the compact subset $\mathbb{K}$, we have
	\begin{equation*}
		\begin{split}
			\hat{H}(\Psi; \mathbf{s})&= \int_{x\in G(\ad)/\mathbb{K}} \int_{\mathbf{b}\in \mathbb{K}}  H(x+\mathbf{b}; \mathbf{s})^{-1}\Psi(x+\mathbf{b})\> d\mathbf{b} \> dx
			\\ &= \int_{x\in G(\ad)/\mathbb{K}} H(x; \mathbf{s})^{-1} \Psi(x) \int_{\mathbf{b}\in \mathbb{K}}  \Psi(\mathbf{b} )\> d\mathbf{b} \> dx
			\\ &=0
		\end{split}
	\end{equation*}
	where the last equality follows since $\Psi$ is non-trivial on the compact subgroup $\mathbb{K}$.
\end{proof}

\begin{cor}\label{cor:FourTransfSummable}
	For all $\mathbf{s}\in \pic(X)_\cx$ such that $H(\cdot; \mathbf{s})^{-1} $ is integrable, the series
	$$\sum_{\Psi\in (G(\ad)/G(F))^\wedge}  \hat{H}(\Psi; \mathbf{s})$$
	converges absolutely.
\end{cor}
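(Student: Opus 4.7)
The plan is to observe that the apparently infinite sum is in fact a finite sum, after which absolute convergence is automatic.

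First I would invoke \Cref{lem:non-trivialOnK-FT-vanishes}: any character $\Psi \in (G(\ad)/G(F))^\wedge$ whose restriction to $\mathbb{K}$ is non-trivial satisfies $\hat H(\Psi;\mathbf{s}) = 0$, so such characters contribute nothing to the sum. Hence the sum may be restricted to the set of characters $\Psi$ of $G(\ad)$ that are simultaneously trivial on $G(F)$ (because $\Psi \in (G(\ad)/G(F))^\wedge$) and trivial on $\mathbb{K}$; equivalently, trivial on the subgroup $\mathbb{K} + G(F)$.

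Next I would apply \Cref{prop:fin.many.trivial}, which tells us that this set of characters is finite. Thus we are reduced to a finite sum.

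Finally, for each individual $\Psi$ in this finite set, absolute convergence of $\hat{H}(\Psi;\mathbf{s})$ follows from the trivial bound
\[
\left| \hat H(\Psi; \mathbf{s}) \right| \;=\; \left| \int_{G(\ad)} H(\mathbf{x};\mathbf{s})^{-1} \Psi(\mathbf{x}) \, d\mathbf{g} \right| \;\leq\; \int_{G(\ad)} \left| H(\mathbf{x};\mathbf{s})^{-1} \right| d\mathbf{g},
\]
which is finite by the integrability assumption on $H(\cdot;\mathbf{s})^{-1}$ (using $|\Psi(\mathbf{x})|=1$ since $\Psi$ is unitary). A finite sum of finite quantities converges absolutely, which gives the claim. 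There is no genuine obstacle here; the content is entirely in the preceding lemmas, and the corollary is essentially a bookkeeping consequence that sets up the Poisson summation step to follow.
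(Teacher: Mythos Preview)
Your proposal is correct and follows essentially the same approach as the paper: the paper invokes the trivial bound $|\hat H(\Psi;\mathbf{s})|\leq \int_{G(\ad)}|H(\mathbf{x};\mathbf{s})|^{-1}\,d\mathbf{g}$ and then cites \Cref{prop:fin.many.trivial} and \Cref{lem:non-trivialOnK-FT-vanishes} in one line. Your version simply spells out the logic in more detail.
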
                                                                                                                                                                                    
\begin{proof}
	Note that $$|\hat{H}(\Psi; \mathbf{s})|\leq  \int_{\mathbf{x}\in G(\ad)} | H(\mathbf{x};\mathbf{s})|^{-1}  \> d \mathbf{g}.$$ 
	The result now follows by \Cref{prop:fin.many.trivial} and  \Cref{lem:non-trivialOnK-FT-vanishes}.
\end{proof}
 
We now apply the version of Poisson summation formula in \cite[p.280]{ManinPan}.
\begin{thm}\label{prop:Z(s)-Poisson}
	For all $\mathbf{s}\in \pic(X)_\cx$ such that $H(\cdot; \mathbf{s})^{-1} $ is absolutely integrable, the following holds
	\begin{equation}\label{eqn:Poisson}
		Z(\mathbf{s})= \frac{1}{q^{\dim(G)(g_F-1)}{\tau(G)}} \sum_{\Psi\in (G(\ad)/(G(F)+\mathbb{K}))^\wedge}  \hat{H}(\Psi; \mathbf{s}).
	\end{equation}
\end{thm}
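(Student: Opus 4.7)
The plan is to apply the Poisson summation formula on the locally compact abelian group $G(\ad)$ with respect to the discrete cocompact subgroup $G(F)$, applied to the function $f(\mathbf{x}) := H(\mathbf{x};\mathbf{s})^{-1}$ using the Haar measure $d\mathbf{g}$ introduced in \S\ref{TamHaarMeasure}. The standard form of the identity to invoke (see \cite[p.280]{ManinPan}) reads
$$\sum_{x\in G(F)} f(x) = \frac{1}{\vol_{d\mathbf{g}}(G(\ad)/G(F))} \sum_{\Psi\in (G(\ad)/G(F))^\wedge} \hat{f}(\Psi),$$
where $\hat{f}(\Psi)=\int_{G(\ad)} f(\mathbf{x})\Psi(\mathbf{x})\,d\mathbf{g}$.

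First, I would verify the hypotheses of the cited Poisson formula: $f$ is absolutely integrable by assumption; $f$ is locally constant and $\mathbb{K}$-invariant by \Cref{prop:height-K-inv}; the left-hand series $Z(\mathbf{s})$ converges absolutely in a suitable tube domain by \Cref{Prop:conv-zeta-tube} (the assumed integrability of $f$ puts $\mathbf{s}$ in such a convergence region, using also Northcott finiteness from \Cref{prop:Northcott}); and the dual Fourier series is absolutely summable by \Cref{cor:FourTransfSummable}. This last point also handles the uniform-on-compacts convergence of $\sum_{\gamma\in G(F)} f(\mathbf{x}+\gamma)$ needed by the Poisson formula.

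Next, I would restrict the right-hand sum to characters trivial on $\mathbb{K}$. By \Cref{lem:non-trivialOnK-FT-vanishes}, $\hat{f}(\Psi)=0$ whenever $\Psi|_\mathbb{K}\not\equiv 1$, so the only surviving terms are indexed by characters of $G(\ad)$ trivial on both $G(F)$ and $\mathbb{K}$, that is, by $(G(\ad)/(G(F)+\mathbb{K}))^\wedge$. This index set is finite by \Cref{prop:fin.many.trivial}, so no convergence issue remains. Finally, the normalizing volume is computed from \eqref{Tau_G}, giving $\tau_G = q^{-\dim(G)(g_F-1)} d\mathbf{g}$, whence
$$\vol_{d\mathbf{g}}(G(\ad)/G(F)) = q^{\dim(G)(g_F-1)}\,\tau(G)$$
by \eqref{eqn:TamagawaNumberOfG}. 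Substituting this scalar into the Poisson identity yields exactly \eqref{eqn:Poisson}. The main technical point is the verification of the abstract Poisson hypotheses; once this is in place the rest is bookkeeping with the normalization constant and the vanishing of non-$\mathbb{K}$-invariant Fourier coefficients.
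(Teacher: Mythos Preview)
Your approach is essentially identical to the paper's: both apply the Poisson summation formula from \cite[p.~280]{ManinPan}, invoke \Cref{cor:FourTransfSummable} for summability of the Fourier side, use \Cref{lem:non-trivialOnK-FT-vanishes} to restrict to $(G(\ad)/(G(F)+\mathbb{K}))^\wedge$, and read off the volume from \eqref{eqn:TamagawaNumberOfG}.

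One imprecision: you claim that the summability of the Fourier transforms ``also handles the uniform-on-compacts convergence of $\sum_{\gamma\in G(F)} f(\mathbf{x}+\gamma)$''. This does not follow; the absolute summability of finitely many nonzero Fourier transforms says nothing about the pointwise (let alone uniform) convergence of the translated sum on the spatial side --- these are independent hypotheses of the Poisson formula. The paper treats this separately, citing an argument analogous to \cite[Lemma~5.2]{CLYT}, which shows directly that $H(\mathbf{x}+\mathbf{b};\mathbf{s})^{-1}$ is controlled uniformly in $\mathbf{b}$ over a compact fundamental domain. You should supply that argument rather than conflate the two conditions.
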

\begin{proof}
	We show that the conditions of the Poisson summation formula in \cite[p.280]{ManinPan} hold. First, we need to show that $\hat{H}(\Psi; \mathbf{s})$ is summable over $(G(\ad)/G(F))^\wedge$, which follows by \Cref{cor:FourTransfSummable}. Note that it suffices to sum over  $(G(\ad)/G(F+\mathbb{K}))^\wedge$ by \Cref{lem:non-trivialOnK-FT-vanishes}. We also need to show that $$\sum_{\mathbf{x}\in G(F)} H(\mathbf{x}+\mathbf{b}; \mathbf{s})^{-1},$$
	converges absolutely and uniformly when $\mathbf{b}$ belongs to $G(\ad)/G(F)$; this follows by an analogous proof to \cite[Lemma 5.2]{CLYT}. Finally, note that the volume of $G(\ad)/G(F)$ with respect to the measure $d\mathbf{g}$ is $q^{\dim(G)(g_F-1)}{\tau(G)}$ by \eqref{eqn:TamagawaNumberOfG}.
\end{proof}

\subsection{Characters and the Brauer group} \label{subsec:BrToChar}
We end this section by describing a correspondence between the characters of $G(\ad)$ and a subgroup of $\Br(G)$. This will allow us to compute the local Fourier transforms at characters with poles of order divisible by $p$, which is a subtle case in the positive characteristic setting (see \Cref{prop:H_v-estimate-otherChars} below). 

Let $K$ be a field of positive characteristic $p$. Set $\varphi(x):=x^p-x \in K[x]$, which defines an endomorphism of $\mathbb{G}_a(K)$. For $a\in K$, let $\chi_a:\gal(K(\alpha)/K)\rightarrow \z/p\z$ be a character, where $K(\alpha)$ is the extension given by $\varphi(\alpha)-a=0$. This defines an element in $\Ho^1(K,\mathbb{Q}/\z)\xrightarrow{\sim} H^2(K,\z)$. Let $b\in K^* = H^0(K,(K^s)^{*})$. Then the cup product of $\chi_a$ and $b$ defines an element in $\Br(K)$, which we write as $(\chi_a,b)$. For $a\in K$, $b\in K^*$, we define the symbol
$$[a,b):= (\chi_a,b) \in \Br(K)[p].$$

\begin{prop}{\cite[Prop. 11, p.215]{Serre1}} \label{prop:abproperties}
	\begin{enumerate}[label=(\roman*)]
		\item $[a+a',b)= [a,b)+[a',b) $.
		\item $[a,bb')= [a,b)+[a,b') $.
		\item $[a,b)=0$ if and only if $b$ is a norm in the extension $K(\alpha)/K$.
	\end{enumerate}
\end{prop}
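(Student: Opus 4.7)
The plan is to verify the three properties by combining bilinearity of the cup product with additivity of the Artin--Schreier construction. The Artin--Schreier exact sequence
\[
0 \longrightarrow \z/p\z \longrightarrow \mathbb{G}_a \xrightarrow{\varphi} \mathbb{G}_a \longrightarrow 0
\]
of \'etale sheaves on $\spec(K)$ induces, via the connecting map on Galois cohomology, an isomorphism $K/\varphi(K) \xrightarrow{\sim} \Ho^1(K, \z/p\z)$ sending $a \mapsto \chi_a$; this map is a homomorphism of abelian groups, so $\chi_{a+a'} = \chi_a + \chi_{a'}$. Moreover, under the definition $[a, b) = \chi_a \cup b$, the symbol $[a,b)$ coincides with the class in $\Br(K)$ of the cyclic algebra $(K(\alpha)/K, \sigma, b)$, where $\sigma \in \gal(K(\alpha)/K)$ is the generator sending $\alpha \mapsto \alpha + 1$. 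This presentation as a cup product is bilinear in both arguments.

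Property (i) is then immediate: by the additivity $\chi_{a+a'} = \chi_a + \chi_{a'}$ combined with bilinearity of the cup product,
\[
[a+a', b) = (\chi_a + \chi_{a'}) \cup b = \chi_a \cup b + \chi_{a'} \cup b = [a, b) + [a', b).
\]
Property (ii) follows from bilinearity in the second argument, using that the group operation on $\Ho^0(K, (K^s)^*) = K^*$ is multiplication:
\[
[a, bb') = \chi_a \cup (bb') = \chi_a \cup b + \chi_a \cup b' = [a, b) + [a, b').
\]

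The main obstacle is (iii), which is the classical characterisation of when a cyclic algebra splits. I would identify $[a, b)$ explicitly with the class of the central simple algebra $A$ generated over $K(\alpha)$ by a single element $y$ satisfying $y^p = b$ and $y\beta = \sigma(\beta) y$ for all $\beta \in K(\alpha)$, and then invoke the classical theorem (Serre, \emph{Local Fields}, Ch.~XIV) that such a cyclic algebra represents the trivial class in $\Br(K)$ if and only if $b \in N_{K(\alpha)/K}(K(\alpha)^*)$. The forward direction is the easier one: given $b = N_{K(\alpha)/K}(\beta)$, a short computation shows that $y' := \beta^{-1} y$ satisfies $(y')^p = 1$, which suffices to trivialise $A$. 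The reverse direction is the more subtle point and uses Skolem--Noether together with Hilbert~90 applied to the action of $\sigma$ on a maximal commutative subfield of a splitting of $A$ to extract the norm presentation of $b$.
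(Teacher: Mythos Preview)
The paper does not supply a proof of this proposition; it is quoted directly from Serre's \emph{Local Fields} with only the citation \cite[Prop.~11, p.~215]{Serre1}. So there is no argument in the paper to compare your proposal against.

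That said, your sketch is the standard proof and is correct. Parts (i) and (ii) are exactly the bilinearity of the cup product combined with the fact that the connecting map $K/\varphi(K)\to \Ho^1(K,\z/p\z)$ from the Artin--Schreier sequence is a group homomorphism. Your treatment of (iii) via the cyclic algebra model is also the usual one; the computation $(\beta^{-1}y)^p = N_{K(\alpha)/K}(\beta)^{-1}\,y^p = b^{-1}b = 1$ is right, and this exhibits the algebra as isomorphic to $(K(\alpha)/K,\sigma,1)\cong M_p(K)$, giving the forward implication. The converse is, as you say, the nontrivial direction, and your pointer to Skolem--Noether and Hilbert~90 is the correct route (this is exactly what Serre does in Ch.~XIV). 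One minor caveat worth making explicit: when $a\in\varphi(K)$ the extension $K(\alpha)/K$ is trivial, so both sides of (iii) hold vacuously; your argument implicitly assumes $[K(\alpha):K]=p$, which is fine once this degenerate case is noted.
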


\begin{cor}\label{lem:Br-xp-x}
	For all $a\in K$, $b\in K^*$, and $m\in \z_{>0}$, we have that 
	$$[a^{p^m},b)=[a,b).$$
\end{cor}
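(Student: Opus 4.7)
The plan is to reduce the claim to the base case $m=1$ via induction, and for the base case use additivity in the first slot together with the fact that the Artin--Schreier extension cut out by an element in the image of $\varphi$ is trivial.

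More precisely, I would first handle $m=1$, i.e. show $[a^p,b) = [a,b)$. By Proposition~4.6 (i),
\[
[a^p,b) \;=\; [a^p - a,\,b) + [a,b),
\]
so it suffices to prove $[a^p-a,b)=0$. But $a^p-a = \varphi(a)$, so $\alpha = a \in K$ is already a root of $\varphi(x)-(a^p-a)=0$. Hence the Artin--Schreier extension $K(\alpha)/K$ is trivial, the character $\chi_{a^p-a}$ is zero in $\mathrm{Hom}(\Gamma_K,\mathbb{Z}/p\mathbb{Z})$, and consequently the cup product $(\chi_{a^p-a},b) = [a^p-a,b)$ vanishes in $\Br(K)$.

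Having established the case $m=1$, the general statement follows immediately by induction: assuming $[a^{p^{m-1}},b) = [a,b)$ and applying the $m=1$ case to $a^{p^{m-1}}$ in place of $a$, we get
\[
[a^{p^m},b) \;=\; [(a^{p^{m-1}})^p,b) \;=\; [a^{p^{m-1}},b) \;=\; [a,b).
\]

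There is no real obstacle here; the only thing to be a little careful about is the standard identity $\varphi(a) = a^p - a$ and the interpretation of $\chi_c$ being trivial exactly when $c$ lies in the image of $\varphi$, both of which are built into the definition of the symbol $[\cdot,\cdot)$ recalled just above the statement. Alternatively, one could give a non-inductive proof by writing $a^{p^m} - a = \varphi\bigl(a + a^p + \cdots + a^{p^{m-1}}\bigr)$ (using $(x+y)^p = x^p+y^p$ in characteristic $p$) and then invoking additivity once, but the inductive route is cleaner.
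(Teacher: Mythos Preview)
Your proof is correct and matches the paper's approach: both use additivity to reduce to showing $[a^{p^k}-a^{p^{k-1}},b)=0$, observe that the Artin--Schreier extension in question is trivial (the paper phrases this via the norm criterion, you via $\chi$ being trivial), and then iterate. The only cosmetic difference is that the paper writes the step for general $m$ and descends, while you prove $m=1$ and induct.
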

\begin{proof}
	As $\alpha=a^{p^{m-1}}$ is a root for $\varphi(x)=a^{p^{m}}-a^{p^{m-1}}$, we have that $K(\alpha)=K$ so that $b$ is trivially a norm in $K(\alpha)/K$. Therefore, by \Cref{prop:abproperties},
	$$[a^{p^{m}},b) - [a^{p^{m-1}},b) = [a^{p^{m}}-a^{p^{m-1}},b) =0.$$
	The result now follows by a successsively applying this last relation.
\end{proof}

We now restrict to the case $K=F_v$, where we fix an isomorphism ${F_v\cong \mathbb{F}_v ((\pi_v))}$ induced by fixing a unformising paramater $\pi_v$ for $F_v$. Let $\omega= f \> d \pi_v $ be a differential form of $F_v$. Then the coefficient of $\pi_v^{-1}$ in the expansion of $f$ is called the residue of $\omega$, which we denote by $\residue(\omega)$. This residue is independant of the choice of the uniformising parameter. For $b\in F_v^*$, we can write $db=(db/d\pi_v) d\pi_v$ where $db/d\pi_v$ is the formal derivative of $b$ with respect to $\pi_v$, viewed as an element of $\mathbb{F}_v((\pi_v))$. If $b\notin F_v^p$, then $db/d\pi_v$ is non-zero. 

By local class field theory, we have an isomorphsim $\inv:\Br(F_v)\xrightarrow{\sim} \q/\z$ mapping $[a,b)$ to $\inv [a,b)$ in $(1/p)\z/\z$. Following Serre \cite[chapter 14, section 5]{Serre1}, we define $[a,b)_v:=p \cdot \inv [a,b) \in \z/p\z$. The local symbol $[a,b)_v$ satifies the same properties as \Cref{prop:abproperties}.

Let $\chi: \mathbb{F}_p \rightarrow \cx$ be the additive character mapping $1$ to $\text{exp}({2\pi i}/q)$. We define the local non-trivial character 
\begin{equation}\label{eqn:varphiChar}
	\varphi_v: F_v \rightarrow \cx, \> x_v \mapsto \chi ( \text{Tr}_{\fv/\mathbb{F}_p}(x_{-1,v})),
\end{equation}
for all places $v$. Note that this character is not the same as the character $\psi_v$ defined in \eqref{eqn:localCharacter}.

\begin{lem}\label{prop:localBrEltToChar}
	Fix an isomorphism $F_v\cong \mathbb{F}_v ((\pi_v))$ and let $a\in F_v$, $b\in F_v^*$. Then
	$$\chi([a,b)_v)=\varphi_v\left(\frac{a}{b} \frac{db}{d\pi_v} \right).$$
\end{lem}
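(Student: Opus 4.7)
The strategy is to show that both sides define the same biadditive pairing $F_v\times F_v^*\to \mathbb{F}_p$ (after identifying $\chi(\mathbb{F}_p)\subset\cx^*$ with $\mathbb{F}_p$) and then verify equality on a system of generators. First I would observe that both sides are additive in $a$: the LHS by \Cref{prop:abproperties}(i), the RHS by the linearity of the residue followed by the trace. Similarly, both sides satisfy $f(a,bb')=f(a,b)+f(a,b')$: the LHS by \Cref{prop:abproperties}(ii), and the RHS via the identity $d(bb')/(bb') = db/b + db'/b'$ together with additivity of the residue.

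Next, I would check that each side descends to a pairing on $\bigl(F_v/\varphi(F_v)\bigr)\times F_v^*$ in the first argument, where $\varphi(x)=x^p-x$. On the LHS this is precisely \Cref{prop:abproperties}(iii), since $F_v(\alpha)=F_v$ whenever $a\in\varphi(F_v)$. On the RHS it amounts to showing that $\Tr_{\fv/\mathbb{F}_p}\bigl(\residue\bigl((c^p-c)\, d\log b\bigr)\bigr)=0$ for every $c\in F_v$ and $b\in F_v^*$, which I would establish by combining the Frobenius-invariance $\Tr_{\fv/\mathbb{F}_p}(x^p)=\Tr_{\fv/\mathbb{F}_p}(x)$ with a direct computation on Laurent series via the Cartier operator.

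Using the standard topological description of $F_v/\varphi(F_v)$ as the $\mathbb{F}_p$-vector space generated by the classes of $c\cdot\pi_v^{-n}$ for $c\in\fv$ and $n$ coprime to $p$, together with a copy of $\fv/\varphi(\fv)\cong \mathbb{F}_p$, and of $F_v^*$ as $\pi_v^{\z}\cdot\fv^*\cdot (1+\pi_v\ov)$, I would then reduce to checking the formula on a short list of explicit pairs: $(c,\pi_v)$ and $(\pi_v^{-n},\pi_v)$ with $c\in\fv$ and $(n,p)=1$, together with $(a,u)$ for $u$ a unit. For each, the LHS is computed via the explicit description of $\inv$ on the Artin--Schreier cyclic algebra associated to $[a,b)$, and the RHS via reading off the coefficient of $\pi_v^{-1}$ in the corresponding Laurent expansion.

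The principal obstacle is the $\varphi(F_v)$-invariance of the RHS and the organised bookkeeping for the generator computations; together these form the content of the classical Schmid--Witt explicit formula for the Artin--Schreier local symbol. Rather than reproducing the full calculation, the cleanest route is to cite Serre's \emph{Local Fields} (Ch.~XIV, \S 5), where precisely this residue formula is stated and proved.
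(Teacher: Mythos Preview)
Your proposal is correct and ultimately rests on the same citation as the paper: Serre's \emph{Local Fields}, Chapter~XIV, \S5 (the paper cites this as Proposition~15, p.~217). The paper's own proof is a one-liner that simply invokes Serre's residue formula and then unwinds the definition of $\varphi_v$ from \eqref{eqn:varphiChar}; your longer sketch of biadditivity, descent modulo $\varphi(F_v)$, and verification on generators is essentially an outline of what Serre's proof contains, so the two approaches coincide.
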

\begin{proof}
	By \cite[Prop.15, p.217]{Serre1} and the definition of $\residue$, we have
	$$\chi([a,b)_v)= \chi(\Tr_{\mathbb{F}_v/\mathbb{F}_p}(\residue(a(db/d\pi_v)/b \> d\pi_v)))= \varphi_v\left(\frac{a}{b} \frac{db}{d\pi_v} \right),$$
	where the last equality holds by \eqref{eqn:varphiChar}.
\end{proof}

We now turn our attention to $\Br(\mathbb{A}_K^1)$, which can be viewed as a subgroup of $\Br(K(x))$, where $x$ is the coordinate of $\mathbb{A}_K^1$. If $K$ is non-perfect, then $\Br(\mathbb{A}_K^1)$ is $p$-torsion and non-trivial. Indeed, we follow the construction of a non-zero element described in \cite[Theorem 5.6.1]{CTSK}. Let $b\in K^*\setminus (K^*)^p$ which corresponds to a non-zero class in $\Ho^1_{\text{fppf}}(K,\mu_p)$. The morphism $K[x]\rightarrow K[y], \> x \mapsto y^p-y$ induces an étale Artin-Schreier cover of $\mathbb{A}_K^1$, which corresponds to a non-zero element of $\Ho^1_{\text{ét}}(\mathbb{A}_K^1, \z/p\z)= \Ho^1_{\text{fppf}}(\mathbb{A}_K^1, \z/p\z)$. The cup product of these two classes gives a non-zero element in $H^2_{\text{fppf}}(\mathbb{A}_K^1,\mu_p) = \Br(\mathbb{A}_K^1)[p]$, which precisely corresponds to the class $[x,b)\in \Br(K(x))$ via the embedding of $\Br(\mathbb{A}_K^1)$ in $\Br(K(x))$. Similarly, the cup product of $b$ with the cover given by $y^p-y=ax$, for $a\in K$, corresponds to $[ax,b)\in \Br(\mathbb{A}_K^1)\subset \Br(K(x))$. 

Let $b\in K^*\setminus (K^*)^p$. We define the subgroup
$$\Br_b(\mathbb{G}_{a,K}^n):= \{ [a_1 x_1 + \cdots + a_n x_n, b) \in \Br(\mathbb{G}_{a,K}^n) \> : \> (a_1,\ldots,a_n)\in \ga(K)\}.$$ 
Let $\phi_v$ be any non-trivial local character of $F_v$. Then $F_v$ is self-dual via the isomorphism
\begin{equation}\label{eqn:F_v-SelfDual}
	F_v \rightarrow \widehat{F_v}, \>\> a \mapsto \phi_{v,a}
\end{equation} 
where $\phi_{v,a}(x):=\phi_v(ax)$.

\begin{prop}\label{prop:Ga-LocalCharBr}
	Let $F_v$ be a local field of positive characteristic and ${b\in F_v^*\setminus (F_v^*)^p}$. Let
	$$\Br_b(\mathbb{G}^n_{a,F_v}) \rightarrow \widehat{\ga(F_v)}$$
	be the map that takes an element $[a_{v,1} x_1 + \cdots + a_{v,n} x_n ,b_v)$, to the character 
	$$(x_1, \ldots , x_n) \mapsto \chi([a_{1} x_1 + \cdots + a_{n} x_n ,b)_v).$$
	Let $\mathbf{a}:=(a_{1},\ldots, a_{n}) \in \ga(F_v)$. Then
	\begin{enumerate}
		\item The image of $[a_{v,1} x_1 + \cdots + a_{v,n} x_n ,b_v)\in \Br_b(\mathbb{G}^n_{a,F_v})$ is the character $$\varphi_{v,(db/d\pi_v)/b}( \langle \cdot, \mathbf{a} \rangle).$$
		\item This map is an isomorphism of (additive) groups.
	\end{enumerate}
\end{prop}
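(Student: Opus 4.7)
The plan is to derive part~$(1)$ by direct substitution into \Cref{prop:localBrEltToChar}, and then to deduce part~$(2)$ by factoring the composed map $\ga(F_v) \to \Br_b(\mathbb{G}^n_{a,F_v}) \to \widehat{\ga(F_v)}$ through the self-duality of $\ga(F_v)$ associated to the non-trivial character $\varphi_{v,c}$ of $F_v$, where $c := (db/d\pi_v)/b$.

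For part~$(1)$ I will take $a = \langle \mathbf{x}, \mathbf{a}\rangle = a_1 x_1 + \cdots + a_n x_n$ in \Cref{prop:localBrEltToChar}, which yields
$$\chi\bigl([\langle \mathbf{x}, \mathbf{a}\rangle, b)_v\bigr) = \varphi_v\!\left(\tfrac{\langle \mathbf{x}, \mathbf{a}\rangle}{b}\,\tfrac{db}{d\pi_v}\right) = \varphi_v\bigl(c\,\langle \mathbf{x}, \mathbf{a}\rangle\bigr) = \varphi_{v,c}(\langle \mathbf{x}, \mathbf{a}\rangle),$$
giving the claimed explicit description of the character. For part~$(2)$ I will first verify that the assignment is genuinely well defined on $\Br_b(\mathbb{G}^n_{a,F_v})$, rather than only on the parameter space $\ga(F_v)$: for $\xi \in \Br_b$, the value of the target character at $\mathbf{x}_0 \in \ga(F_v)$ is $\chi\bigl(p\cdot \inv(\xi(\mathbf{x}_0))\bigr)$, where $\xi(\mathbf{x}_0) \in \Br(F_v)$ is the functorial pullback of $\xi$ along the $F_v$-point $\mathbf{x}_0$, and this depends only on $\xi$. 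Additivity of the map is then immediate from \Cref{prop:abproperties}(i).

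Bijectivity is the final step. Part~$(1)$ identifies the composition $\ga(F_v) \to \Br_b(\mathbb{G}^n_{a,F_v}) \to \widehat{\ga(F_v)}$ with the map $\mathbf{a} \mapsto \varphi_{v,c}(\langle \cdot, \mathbf{a}\rangle)$, which is precisely the self-duality isomorphism of $\ga(F_v)$ associated to the character $\varphi_{v,c}$ (an $n$-fold product of \eqref{eqn:F_v-SelfDual} with $\phi_v = \varphi_{v,c}$). Provided $\varphi_{v,c}$ is non-trivial, i.e.\ $c \neq 0$, this composition is an isomorphism; combined with the tautological surjectivity of $\ga(F_v) \to \Br_b(\mathbb{G}^n_{a,F_v})$ from the very definition of $\Br_b$, this forces the second factor $\Br_b(\mathbb{G}^n_{a,F_v}) \to \widehat{\ga(F_v)}$ to be an isomorphism of groups. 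The only point requiring any real attention is the verification that $c \neq 0$: since the residue field $\mathbb{F}_v$ is perfect, $F_v^p = \mathbb{F}_v((\pi_v^p))$, so a Laurent series in $\pi_v$ has vanishing formal derivative exactly when it lies in $F_v^p$; the hypothesis $b \notin (F_v^*)^p$ therefore forces $db/d\pi_v \neq 0$ and hence $c \neq 0$. No deeper obstacle is expected, as the whole statement is essentially a packaging of \Cref{prop:localBrEltToChar} with the self-duality of $F_v$, with the hypothesis $b \notin (F_v^*)^p$ supplying exactly the non-degeneracy needed to promote a pointwise identity into an isomorphism of groups.
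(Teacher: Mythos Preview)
Your proof is correct and takes essentially the same approach as the paper: both derive part~(1) directly from \Cref{prop:localBrEltToChar}, and both establish part~(2) by showing that the composite $\ga(F_v)\to\Br_b\to\widehat{\ga(F_v)}$ coincides with the self-duality isomorphism attached to $\varphi_{v,c}$, using $b\notin(F_v^*)^p$ to ensure $c=(db/d\pi_v)/b\neq 0$. The only organizational difference is that the paper first treats $n=1$ and then reduces the general case via the product decomposition $\prod^n\Br_b(\mathbb{G}_a)\xrightarrow{\sim}\Br_b(\ga)$, whereas you handle general $n$ in one pass via the factorization argument; your added remark on well-definedness of the map on $\Br_b$ (rather than merely on the parameter $\mathbf{a}$) is a point the paper leaves implicit.
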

\begin{proof}
	We first prove the result for $\mathbb{G}_a$, i.e., when $n=1$. By \Cref{prop:abproperties}, the map $x\mapsto \chi([a x,b)_v))$ is a character of $F_v$ for a fixed $a\in F_v$; by \Cref{prop:localBrEltToChar}, it coincides with the character $\varphi_{v,a(db/d\pi_v)/b}$, which prove (1) for $\mathbb{G}_a$. 
	
	To prove $(2)$, we first note that as $b\notin F_v^p$, we have $db/d\pi_v\neq 0$. By $(1)$, the image of $[ax,b)\in \Br_b(\mathbb{G}_{a,F_v})$ is the character $\varphi_{v,a(db/d\pi_v)/b}$. If this character is trivial, then by $\eqref{eqn:F_v-SelfDual}$, we see that $a(db/d\pi_v)/b=0$. Thus, as $(db/d\pi_v)/b\neq 0$, we see that $a=0$, so that $[ax,b)=0$. This shows that the map is injective. To show that the map is surjective, let $\varphi_{v,c}\in \widehat{F_v}$, for some $c\in F_v$. Then, by $(1)$, we easily see that $[(cbx/(db/d\pi_v),b)$ maps to $\varphi_{v,c}$.
	
	For the general case, observe that the natural map
	$$\prod^n \Br(\mathbb{G}_a) \rightarrow \Br(\ga)$$
	restricts to an isomorphism $\prod^n \Br_b(\mathbb{G}_a) \xrightarrow{\sim} \Br_b(\ga)$ such that
	$$[a_{v,1} x_1,b) \times \cdots \times [a_{v,n} x_n,b) \mapsto [a_{v,1} x_1+ \cdots+ a_{v,n} x_n, b).$$ The result now follows.
\end{proof}

\begin{cor}
	$\Br{\mathbb{A}_{F_v}^n}$ has a subgroup isomorphic to $\prod^n F_v$.
\end{cor}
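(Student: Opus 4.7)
The plan is to apply \Cref{prop:Ga-LocalCharBr} essentially verbatim, together with the Pontryagin self-duality of $F_v$. First, since $F_v$ is a local field of positive characteristic $p$, it is isomorphic to $\mathbb{F}_v((\pi_v))$, which is non-perfect: the uniformizer $\pi_v$ does not lie in $F_v^p$. I therefore choose $b = \pi_v \in F_v^*\setminus (F_v^*)^p$, which places us in the setting of \Cref{prop:Ga-LocalCharBr}.

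Next, I would verify (as a brief sanity check) that $\Br_b(\ga)$ really is an additive subgroup of $\Br(\ga) = \Br(\mathbb{A}_{F_v}^n)$, not merely a subset. This is immediate from \Cref{prop:abproperties}(i), which gives
$$[a_1x_1+\cdots+a_nx_n, b) + [a'_1x_1+\cdots+a'_nx_n, b) = [(a_1+a'_1)x_1+\cdots+(a_n+a'_n)x_n, b),$$
together with the identity $[0,b)=0$ which provides closure under inverses.

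With this in hand, \Cref{prop:Ga-LocalCharBr}(2) supplies an isomorphism of additive groups $\Br_b(\ga) \xrightarrow{\sim} \widehat{\ga(F_v)}$. The self-duality \eqref{eqn:F_v-SelfDual} of $F_v$, applied coordinatewise, yields $\widehat{\ga(F_v)} \cong \prod^n F_v$ as additive topological groups. Composing these two isomorphisms realises $\prod^n F_v$ as a subgroup of $\Br(\mathbb{A}_{F_v}^n)$, as required. There is no real obstacle here: the corollary is a direct consequence of the preceding proposition, with the only non-formal input being the non-perfectness of $F_v$ which guarantees that $\Br_b(\ga)$ is non-trivial in the first place.
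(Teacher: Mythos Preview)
Your proof is correct and follows exactly the approach the paper intends: the corollary is stated in the paper without proof, as it is an immediate consequence of \Cref{prop:Ga-LocalCharBr}(2) combined with the self-duality \eqref{eqn:F_v-SelfDual}. Your choice of $b=\pi_v$ and the verification that $\Br_b(\ga)$ is a subgroup are the natural ingredients.
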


Let $F$ be a global function field. By \eqref{eqn:FGlobDuality}, we have an isomorphism
\begin{equation}\label{eqn:FGlobDuality2}
	F\xrightarrow{\sim} (\ad/F)^\wedge, \> a\rightarrow \psi_{a}:=\prod_v \psi_{v,a}.
\end{equation}
 For a place $v$ of $F$, the pullback map $\Br(\mathbb{G}^n_{a,F}) \rightarrow \Br(\mathbb{G}^n_{a,F_v})$
induces a morphism 
$$\Br_b(\mathbb{G}^n_{a,F}) \rightarrow \widehat{\ga(F_v)},$$ for $b\in F$.
This gives a group homomorphism 
$$\Br_b(\mathbb{G}^n_{a,F}) \rightarrow \prod_v \widehat{\ga(F_v)}.$$

\begin{prop}\label{prop:Gan-LocalCharBr}
	Let $F$ be a global function field and ${b\in F^*\setminus (F^*)^p}$. Let
	$$\Br_b(\mathbb{G}^n_{a,F}) \xrightarrow{\sim} \prod_v \widehat{\ga(F_v)}$$ be the map that takes an element $[a_{1} x_1 + \cdots + a_{n} x_n ,b)$ to the character $$ (x_{1_v},\ldots, x_{n,v})_{v} \mapsto \prod_v \chi([a_{1} x_{1,v} + \cdots + a_{n} x_{n,v} ,b)_v)$$ 
{}	where $\mathbf{a}:=(a_{1},\ldots, a_{n}) \in \ga(F)$. Then
	\begin{enumerate}
		\item The image of $[a_{1} x_1 + \cdots + a_{n} x_n ,b)$ is the character $$\prod_{v} \varphi_{v,(db/d\pi_v)/b}( \langle \cdot, \mathbf{a} \rangle).$$
		\item The map is injective and its image is $(\ga(\ad)/\ga(F))^\wedge$.
		\item If $F=\mathbb{F}_q(t)$, then the image of $[t^{-3} x,t)$ is the character $\psi$ defined in \eqref{eqn:CharOnF0}.
	\end{enumerate}
\end{prop}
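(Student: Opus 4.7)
My plan is to reduce all three parts to the local statement in \Cref{prop:Ga-LocalCharBr} via the product of pullback maps $\Br_b(\mathbb{G}^n_{a,F}) \to \Br_b(\mathbb{G}^n_{a,F_v})$, and then to invoke the global reciprocity law for $\Br(F)$.

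For $(1)$, observe that the global map factors through the product of the local pullbacks, which on the parameter $\mathbf{a}\in\ga(F)$ are the diagonal inclusions $\ga(F)\hookrightarrow\prod_v \ga(F_v)$. Applying part $(1)$ of \Cref{prop:Ga-LocalCharBr} place-by-place yields the stated formula.

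For $(2)$, I first check that the image lies in $(\ga(\ad)/\ga(F))^\wedge$: for $\mathbf{r}\in\ga(F)$ we have $\langle \mathbf{r},\mathbf{a}\rangle\in F$, so $[\langle \mathbf{r},\mathbf{a}\rangle,b)\in\Br(F)$, and the reciprocity exact sequence $0\to\Br(F)\to\bigoplus_v \Br(F_v)\xrightarrow{\sum\inv}\mathbb{Q}/\mathbb{Z}$ gives $\sum_v[c,b)_v=0$ in $\z/p\z$ for every $c\in F$, hence $\prod_v \chi([c,b)_v)=1$. Injectivity reduces to the local case: since $b\notin F^p$ there is some place $v$ with $db/d\pi_v\neq 0$, and vanishing of the global character at that place combined with part $(2)$ of \Cref{prop:Ga-LocalCharBr} forces $\mathbf{a}=0$. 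The main obstacle is surjectivity; I deduce it from the observation that both sides are naturally parameterized by $\ga(F)=F^n$ (the source by the bijection $\mathbf{a}\mapsto[\langle\cdot,\mathbf{a}\rangle,b)$, the target via the duality \eqref{eqn:FGlobDuality}), and that the resulting map $\mathbf{a}\mapsto\mathbf{a}'$ on $F^n$ is $F$-linear. The latter is a formal verification: if $\chi_\mathbf{a}$ denotes the image character, then $\chi_{\alpha\mathbf{a}}(x)=\chi_\mathbf{a}(\alpha x)$ (since the pairing is bilinear) and likewise $\Psi_{\alpha\mathbf{a}'}(x)=\Psi_{\mathbf{a}'}(\alpha x)$; thus if $\chi_\mathbf{a}=\Psi_{\mathbf{a}'}$ then $\chi_{\alpha\mathbf{a}}=\Psi_{\alpha\mathbf{a}'}$. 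An $F$-linear injection of $n$-dimensional $F$-vector spaces is an isomorphism.

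For $(3)$, I verify the claim at each place of $F=\mathbb{F}_q(t)$ via part $(1)$. Writing both characters as $\chi$ of a trace of a residue — $\varphi_v(y)=\chi(\Tr(\text{res}_v(y\,d\pi_v)))$ on one side, and $\psi_v(x)=\chi(\Tr(\text{res}_v(x\,dt)))$ on the other (obtained by unwinding $\psi_{0,\infty}(x)=\chi(-x_{1,\infty})$ and $\psi=\psi_0\circ\Tr_{\ad/\mathbb{A}_{F_0}}$) — the claim reduces to a place-by-place comparison of residues at $\infty$, at the place $(t)$, and at the remaining monic irreducible primes of $\mathbb{F}_q[t]$.
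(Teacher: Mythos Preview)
Your arguments for (1) and (2) follow essentially the paper's approach. One small omission in (2): before invoking reciprocity to show triviality on $\ga(F)$, you should verify that $\prod_v \chi([\langle\cdot,\mathbf{a}\rangle,b)_v)$ is actually a continuous character of $\ga(\ad)$ rather than merely of the full product $\prod_v \ga(F_v)$; this amounts to showing it is trivial on $\ga(\ov)$ for almost all $v$, which holds since $a_i(db/d\pi_v)/b \in \ov$ whenever $a_i,b \in \ov^*$. (Also, your statement ``there is some place $v$ with $db/d\pi_v\neq 0$'' undersells the situation: since $F_v/F$ is separable, $b\notin F_v^p$ and hence $db/d\pi_v\neq 0$ at \emph{every} place.)

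For (3) you take a genuinely different route. The paper does not compare residues place by place. Instead, having shown in (2) that the image of $[t^{-3}x,t)$ lies in $(\ad/F)^\wedge$, it computes only the restriction to $F_\infty$ via (1) and checks that this equals $\psi_\infty$. Since the restriction map $(\ad/F)^\wedge \to \widehat{F_\infty}$ is injective --- equivalently, $F$ is dense in the adeles away from $\infty$, which is the uniqueness built into Weil's construction of $\psi$ --- the two global characters must coincide. Your residue approach should also succeed once fully executed, but it requires establishing the formula $\psi_v(x)=\chi(\Tr(\residue_v(x\,dt)))$ at every place (including unwinding $\Tr_{\ad/\mathbb{A}_{F_0}}$) and treating the place $(t)$ separately; the paper's argument needs only a single local computation at $\infty$.
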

\begin{proof}
	First, $(1)$ clearly follows from \Cref{prop:Ga-LocalCharBr}(i). 
	We prove (2) for $\mathbb{G}_a$; the result then easily follows for $\ga$.  Let $[ax,b)\in \Br_b(\mathbb{G}_{a,F})$, where $a\in F^*$. For almost all places $v$, we have $a,b\in \ov^*$. As $b\notin F^p$ and $F_v/F$ is a separable extension, $b\notin F_v^p$ for all places $v$; hence, the formal derivative of $b$ in $F_v\cong \mathbb{F}_v((\pi_v))$, which we denoted by $db/d\pi_v$, is non-zero. Thus, $a(db/d\pi_v)/b=0$ if and only if $a=0$. By (1), this shows that the map is injective. Now, as $a,b\in \ov^*$ for almost all $v$, we see that $a(db/d\pi_v)/b$ is integral for almost all $v$. Thus, the local character $\chi([a x,b)_v)=\psi_{v,a(db/d\pi_v)/b}(x)$ is trivial on $\ov$ for almost all places $v$, implying that the image of $[a x,b)$ is indeed a character of $\mathbb{A}_F$. Moreover, by the exact sequence from class field theory 
	$$0\rightarrow \Br(F) \rightarrow \bigoplus_v \Br(F_v) \rightarrow \mathbb{Q}/\z \rightarrow 0,$$ we see that the image of $[a x,b)$ is a character that is trivial on $F$. By \eqref{eqn:FGlobDuality}, the image of $[a x,b)$ is $\psi_c$ for some $c\in F$, so that the image of $[d a x,b)$ is $\psi_{  dc}$ for all $d \in F$. By \eqref{eqn:FGlobDuality} again, this shows that the image of the map is $(\ad/F)^\wedge$, which proves $(2)$.
	
	We now prove (3). Let $F=\mathbb{F}_q(t)$ and $\pi_\infty=1/t$ be a uniformiser for $F_\infty$. Then $dt/d\pi_\infty= -\pi_\infty^{-2}=-t^2$. By (1), the image of $[t^{-3}x,t)$ is $\varphi_{\infty, -\pi_\infty^{-2}} \cdot \prod_{v\neq \infty} \varphi_{v,a(db/d\pi_v)/b}$ since $a (db/\pi_\infty) / b= t^{-3}(-t^{2})/t=-t^{-2}$.
	By the the definitions \eqref{eqn:CharOnF0} and \eqref{eqn:CharOnF0Infty}, we have $\varphi_{\infty, -\pi_\infty^{-2}}=\psi_{\infty}$. As $\psi\in (\ad/F)^\wedge$ was defined as the unique character extending $\psi_\infty$, we conclude that  $[t^{-3} x,t)$ is mapped to the character $\psi$.
\end{proof}

\begin{lem}\label{lem:psiIsVarphiUnit}
	Let $\psi=\prod_v \psi_v \in \widehat{\ad}$ be the character defined in \eqref{eqn:CharOnF0}. Then for almost all places $v$, we have $\psi_v=\varphi_{v,c_v}$ for some unit $c_v\in \ov^*$.
\end{lem}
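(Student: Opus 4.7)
The plan is to identify $\psi$ explicitly as the character on $\mathbb{A}_F$ coming from the differential $dt$ via the residue theorem, and then perform a routine local computation at each place. This is a standard technique for pinning down the standard additive character of a global function field.

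First I would construct the candidate character
$$\psi_{dt}((x_w)_w) := \prod_{w\in\Omega_{F_0}} \chi\bigl(\Tr_{\mathbb{F}_w/\mathbb{F}_p}(\residue_w(x_w\,dt))\bigr)$$
on $\mathbb{A}_{F_0}$. This is well defined because $\residue_w(x_w\,dt)=0$ for almost all $w$ when $(x_w)\in\mathbb{A}_{F_0}$, and it is trivial on $F_0$ by the residue theorem for $\mathbb{P}^1_{\mathbb{F}_p}$. At the place $\infty$, using the uniformizer $\pi_\infty=1/t$ one has $dt=-\pi_\infty^{-2}d\pi_\infty$, so for $x_\infty=\sum_i x_{i,\infty}\pi_\infty^i$ the residue equals $-x_{1,\infty}\in\mathbb{F}_p$, matching $\psi_{0,\infty}(x_\infty)=\chi(-x_{1,\infty})$. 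By the uniqueness of $\psi_0$ in its defining property, this forces $\psi_0=\psi_{dt}$.

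Next, since $\psi=\psi_0\circ\Tr_{\mathbb{A}_F/\mathbb{A}_{F_0}}$ and the local trace is compatible with residues of differentials, namely $\residue_w(\Tr_{F_v/F_{0,w}}(x_v)\,dt)=\Tr_{\mathbb{F}_v/\mathbb{F}_w}(\residue_v(x_v\,dt))$ for $v\mid w$, I would deduce the global formula
$$\psi_v(x_v)=\chi\bigl(\Tr_{\mathbb{F}_v/\mathbb{F}_p}(\residue_v(x_v\,dt))\bigr)$$
for every place $v$ of $F$, where $\residue_v$ is now the local residue at $v$ of a differential on $\cf$.

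Finally, for almost all $v$ the differential $dt$ has neither a zero nor a pole at $v$ (it is a non-zero rational differential on $\cf$, so has only finitely many zeroes and poles). Writing $dt=(dt/d\pi_v)\,d\pi_v$ in $F_v$ with respect to the fixed uniformizer $\pi_v$, this means $c_v:=dt/d\pi_v\in\mathfrak{o}_v^\times$. For such $v$ we have
$$\residue_v(x_v\,dt)=\text{coeff.~of }\pi_v^{-1}\text{ in }c_v x_v=(c_v x_v)_{-1,v},$$
and therefore
$$\psi_v(x_v)=\chi\bigl(\Tr_{\mathbb{F}_v/\mathbb{F}_p}((c_v x_v)_{-1,v})\bigr)=\varphi_v(c_v x_v)=\varphi_{v,c_v}(x_v).$$
The only genuine subtlety is the identification $\psi_0=\psi_{dt}$ in the first step; once that is in place, everything else reduces to the standard trace-residue compatibility and the geometric fact that $dt$ is unramified at almost all places.
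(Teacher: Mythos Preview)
Your argument is correct and takes a genuinely different route from the paper. The paper reduces to the rational function field case and then argues by contradiction: writing $\psi_v=\varphi_{v,c_v}$ with $c_v\in\ov$ (from $\psi_v(\ov)=1$), if some $c_w\in\mathfrak{m}_w$ then the twisted character $\psi_{\pi_w^{-1}}$ would vanish on the complement $\mathfrak{m}_\infty\times\prod_{v\neq\infty}\ov$ in the decomposition $\ad=F\oplus(\mathfrak{m}_\infty\times\prod_{v\neq\infty}\ov)$, hence on all of $\ad$, contradicting the injectivity of $a\mapsto\psi_a$. Your approach instead identifies $\psi$ globally as the residue character attached to the differential $dt$ on $\cf$, and then reads off $c_v=dt/d\pi_v$ directly; the conclusion $c_v\in\ov^*$ is then just the geometric statement that $dt$ has finitely many zeros and poles. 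Your method is more conceptual, works uniformly over any $F$ without a separate reduction step, and even produces an explicit formula for $c_v$; the paper's argument is more elementary in that it avoids invoking the residue theorem and trace--residue compatibility, but at the cost of being tied to the special structure of $\mathbb{F}_q(t)$.
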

\begin{proof}
	It suffices to prove the result for $F=\mathbb{F}_q(t)$. By \eqref{eqn:F_v-SelfDual}, for all $v\in \Omega_F$ we can write $\psi_v=\varphi_{v,c_v}$ for some $c_v\in F_v$. By the construction of $\psi$ (see the proof of \cite[Theorem IV.3]{Weil1}), we have that $\psi_v(\ov)=1$ for all $v\neq \infty$. This implies that $c_v\in \ov$ for all $v\neq \infty$. Suppose that $c_w \in \mathfrak{m}_w$ for some place $w\neq \infty$. Then the character $\psi_{\pi_w^{-1}}$ is trivial on $\mathbb{A}_\infty:= \mathfrak{m}_\infty   \times \prod_{v\neq \infty} \ov$, as $\psi_{\infty,\pi_w^{-1}}$ is trivial on $\mathfrak{m}_\infty$ (see \eqref{eqn:CharOnF0Infty}) and $\psi_{w,\pi_w^{-1}}=\varphi_{w,c_w/{\pi_w}}$ is trivial on $\mathfrak{o}_w$.
	By the decomposition ${\ad= F \oplus \mathbb{A}_\infty}$, we deduce that $\psi_{\pi_w^{-1}}$ is trivial on $\ad$. This contradicts \eqref{eqn:FGlobDuality2} since $\psi$ is non-trivial.
\end{proof}

\begin{cor}
	Let $F$ be a global function field and ${b\in F^*\setminus (F^*)^p}$. Then $db/d\pi_v$ is a unit in $\ov$ for all but finitely many places $v$; equivalently, the coefficient of $\pi_v$ in the $v$-adic expansion of $b$ in $F_v$ is non-zero for all but finitely many places $v$. 
\end{cor}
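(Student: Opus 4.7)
The plan is to derive this from \Cref{prop:Gan-LocalCharBr} and \Cref{lem:psiIsVarphiUnit} by considering the specific symbol $[x,b)$ and tracking the resulting character through the self-dualities. First I would apply \Cref{prop:Gan-LocalCharBr} in the case $n=1$ to the element $[x,b) \in \Br_b(\mathbb{G}_{a,F})$, i.e.\ taking the coefficient $a_1 = 1$. By part (1) of that proposition, the image is the global character $\prod_v \varphi_{v,(db/d\pi_v)/b}$ of $\mathbb{A}_F$, and by part (2) this character lies in $(\mathbb{A}_F/F)^\wedge$ and is non-trivial, since the map is injective and $1 \neq 0$ in $F$.

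Next, via the self-duality \eqref{eqn:FGlobDuality2}, I would write this character as $\psi_c = \prod_v \psi_{v,c}$ for a unique element $c \in F^*$ (non-zero precisely because the character is non-trivial). Now I invoke \Cref{lem:psiIsVarphiUnit}: for almost all $v$, one has $\psi_v = \varphi_{v,u_v}$ for some unit $u_v \in \ov^*$. Hence at each such $v$ we have the equality
\[
\varphi_{v,(db/d\pi_v)/b} \;=\; \psi_{v,c} \;=\; \varphi_{v,cu_v}
\]
of characters of $F_v$. Applying the local self-duality \eqref{eqn:F_v-SelfDual} with base character $\varphi_v$ (which is non-trivial at almost all $v$), this forces
\[
\frac{db/d\pi_v}{b} \;=\; c\,u_v
\]
for almost all $v$.

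Finally, since $b, c \in F^*$, both lie in $\ov^*$ at all but finitely many $v$, and $u_v \in \ov^*$ by construction; thus $db/d\pi_v = bcu_v \in \ov^*$ for almost all $v$. The equivalent reformulation follows because, for $b \in \ov^*$ with expansion $b = \sum_{i \geq 0} b_i \pi_v^i$, one has $db/d\pi_v = \sum_{i \geq 1} i b_i \pi_v^{i-1}$, whose image in the residue field is $b_1$; so $db/d\pi_v$ is a unit in $\ov$ if and only if $b_1 \neq 0$.

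There is no real technical obstacle here; the corollary is essentially a bookkeeping exercise combining the already-established correspondence between $\Br_b(\mathbb{G}_{a,F})$ and global characters with the information about $\psi_v$ recorded in \Cref{lem:psiIsVarphiUnit}. The mildly delicate point is ensuring the identification at the right level: one must pick the specific symbol $[x,b)$ so that the resulting local scalars are exactly $(db/d\pi_v)/b$, and then match units on both sides of the self-duality isomorphism.
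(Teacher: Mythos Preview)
Your argument is correct and follows exactly the approach indicated by the paper's one-line proof, which cites the same three ingredients (\Cref{prop:Gan-LocalCharBr}, \Cref{lem:psiIsVarphiUnit}, and \eqref{eqn:FGlobDuality2}); you have simply spelled out the details of how they combine.
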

\begin{proof}
	The result follows by \Cref{prop:Gan-LocalCharBr}, \Cref{lem:psiIsVarphiUnit}, and \eqref{eqn:FGlobDuality2}.
\end{proof}

We end this section by providing a similar description for the characters of $G(F_v)$ and $G(\ad)/G(F)$ for any $F$-form $G$ of $\mathbb{G}_a^{n}$. Recall that by \Cref{cor:ex.non-van.diff}, $G$ is a subgroup $\mathbb{G}_a^{n+1}$, so that $G(F_v)$ and $G(\ad)$ are closed subgroups of the locally compact groups $\mathbb{G}_a^{n+1}(F_v)$ and $\mathbb{G}_a^{n+1}(\ad)$, respectively. Therefore, the following maps induced by the restriction of characters 
$$\mathbb{G}_a^{n+1}(F_v)^\wedge \rightarrow G(F_v)^\wedge, \quad (\mathbb{G}_a^{n+1}(\ad)/\mathbb{G}_a^{n+1}(F))^\wedge \rightarrow (G(\ad)/G(F))^\wedge $$
are surjective. 

Denote by $\iota:G\rightarrow \mathbb{G}_a^{n+1}$ the closed embedding of $G$ into $\mathbb{G}_a^{n+1}$. Let $b\in F^*\setminus (F^*)^p$ and $b_v\in F^*\setminus (F_v^*)^p$. We define
$$\Br_b(G):= \iota^* \Br_b(\mathbb{G}_{a}^{n+1}) \quad \text{and} \quad \Br_{b_v}(G_{F_v}):= \iota^* \Br_{b_v}(\mathbb{G}_{a,F_v}^{n+1}).$$
By commutativity and functionality of the Brauer group, one easily deduces the following from the case of $\ga$.

\begin{prop}
	Let $F$ be a global function field. 
	\begin{enumerate}
		\item For any place $v$ and ${b_v\in F_v^*\setminus (F_v^*)^p}$, the morphism
		$$\Br_b(G_{F_v}) \rightarrow G(F_v)^\wedge$$
		is surjective.
		\item For any ${b\in F^*\setminus (F^*)^p}$, the morphism 
		$$\Br_b(G_{F}) \rightarrow (G(\ad)/G(F))^\wedge$$
		is surjective.
	\end{enumerate}
\end{prop}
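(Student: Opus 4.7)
The plan is to deduce both statements by diagram chase from the analogous results for $\mathbb{G}_a^{n+1}$ (Propositions labeled \texttt{prop:Ga-LocalCharBr} and \texttt{prop:Gan-LocalCharBr}), using the naturality of the symbol $[\,\cdot\,,b)$ under the closed immersion $\iota:G\hookrightarrow\mathbb{G}_a^{n+1}$.

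First I would assemble, for (1), the commutative square
\[
\begin{tikzcd}
\Br_{b_v}(\mathbb{G}_{a,F_v}^{n+1}) \arrow[r,"\sim"] \arrow[d,"\iota^*",twoheadrightarrow] & \mathbb{G}_a^{n+1}(F_v)^{\wedge} \arrow[d,twoheadrightarrow] \\
\Br_{b_v}(G_{F_v}) \arrow[r] & G(F_v)^{\wedge}
\end{tikzcd}
\]
The top horizontal map is the isomorphism from the $\mathbb{G}_a^{n+1}$-case, sending $[a_1 x_1+\cdots+a_{n+1}x_{n+1},b_v)$ to the character $\varphi_{v,(db_v/d\pi_v)/b_v}(\langle\,\cdot\,,\mathbf{a}\rangle)$. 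The right vertical arrow is the restriction of characters along the closed subgroup inclusion $G(F_v)\subset\mathbb{G}_a^{n+1}(F_v)$, which is surjective by Pontryagin duality (every character on a closed subgroup of a locally compact abelian group extends). The left vertical arrow is surjective by the very definition $\Br_{b_v}(G_{F_v}):=\iota^*\Br_{b_v}(\mathbb{G}_{a,F_v}^{n+1})$. Commutativity of the square is the compatibility of the symbol $[\,\cdot\,,b_v)$ with pullback: the class $\iota^*[a_1 x_1+\cdots+a_{n+1}x_{n+1},b_v)$ is $[a_1 \iota^*x_1+\cdots+a_{n+1}\iota^*x_{n+1},b_v)$, whose associated character on $G(F_v)$ is, by the explicit formula in \Cref{prop:Ga-LocalCharBr}(1), exactly the restriction of $\varphi_{v,(db_v/d\pi_v)/b_v}(\langle\,\cdot\,,\mathbf{a}\rangle)$ to $G(F_v)$. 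A trivial diagram chase then gives surjectivity of the bottom arrow.

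For (2) I would carry out the same argument globally, using the square
\[
\begin{tikzcd}[column sep=large]
\Br_{b}(\mathbb{G}_{a,F}^{n+1}) \arrow[r,hookrightarrow] \arrow[d,"\iota^*",twoheadrightarrow] & \bigl(\mathbb{G}_a^{n+1}(\ad)/\mathbb{G}_a^{n+1}(F)\bigr)^{\wedge} \arrow[d,twoheadrightarrow] \\
\Br_{b}(G_{F}) \arrow[r] & \bigl(G(\ad)/G(F)\bigr)^{\wedge}
\end{tikzcd}
\]
Here the top horizontal map has image equal to the whole character group of $\mathbb{G}_a^{n+1}(\ad)/\mathbb{G}_a^{n+1}(F)$ by \Cref{prop:Gan-LocalCharBr}(2). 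The right vertical arrow is surjective because $G(\ad)/G(F)\hookrightarrow\mathbb{G}_a^{n+1}(\ad)/\mathbb{G}_a^{n+1}(F)$ is a closed embedding of compact groups, so any character extends. Commutativity comes again from the explicit product formula in \Cref{prop:Gan-LocalCharBr}(1) combined with functoriality of pullback place-by-place. Surjectivity of the bottom arrow is then immediate by the same chase as in (1).

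The main point that needs to be checked carefully, and which I regard as the only non-formal step, is the commutativity of the two squares: it rests on the compatibility of the cup product symbol $[\,\cdot\,,b)$ with the pullback along $\iota$, and on the fact that the character attached to $[a_1x_1+\cdots+a_{n+1}x_{n+1},b)$ by the formula of \Cref{prop:localBrEltToChar} is intrinsic (depending only on the class in the Brauer group), so restriction to $G$ of the character and pullback of the Brauer class followed by the character construction give the same answer. Everything else is an elementary diagram chase.
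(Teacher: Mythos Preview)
Your proposal is correct and follows exactly the paper's approach: the paper's proof is the single sentence ``By commutativity and functoriality of the Brauer group, one easily deduces the following from the case of $\ga$,'' and your commutative squares are precisely the spelled-out version of that remark. The diagram chase you describe, with surjectivity of the right vertical arrows coming from Pontryagin duality for closed subgroups and surjectivity of the left vertical arrows coming from the definition $\Br_b(G):=\iota^*\Br_b(\mathbb{G}_a^{n+1})$, is the intended argument.
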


\section{Fourier transforms at the trivial character}\label{sec:FT-TrivialChar}
In this section, we compute the global Fourier transform at the trivial character. We start by bounding the local Fourier transform at a general place $v\in \Omega_F$. Subsequently, we explicitly compute the local transform at good places in the form of what is called Denef's formula (c.f. \cite[Theorem 9.1]{CLYT} and \cite[Theorem 3.1]{Denef}).

\subsection{The setting and some notation} 
We fix the following notation and assumptions for the rest of the paper. Let $G$ be an $F$-form of $\ga$ and $X$ a smooth equivariant compactification of $G$ with boundary $D$, where $\{D_\alpha\}_{\alpha\in \mathcal{A}}$ is the set of irreducible components $D$. Let $\mathcal{B}:=\{\alpha \in \mathcal{A}\>:\> D_\alpha \text{ is not geometrically reduced}\}.$ We choose and fix smooth adelic $q$-metrics on the line bundles $\mathcal{O}(D_\alpha)$ for all $\alpha\in \mathcal{A}$. We fix a $\cf$-model as in \Cref{defn:adelic-met} for $\mathcal{X}$, and let $\mathcal{D}$, $\mathcal{G}$ and $\mathcal{D}_\alpha$ be the closures of $D$, $G$, and $D_\alpha$ in $\mathcal{X}$, respectively. We assume that \Cref{assum} holds. We write $\mathcal{X}_v$ for $\mathcal{X} \times \ov$ and ${\mathcal{X}}_{\fv}$ for $\mathcal{X}_v \otimes \fv$. Let $\redv:\mathcal{X}(\ov)\rightarrow \mathcal{X}(\fv)$ be the reduction map mod $\pi_v$. 

For every $v\in \Omega_F$, we fix an embedding $F^s\subset F_v^s$, so that $\Gamma_v:=\gal(F_v^s/F_v)$ induces an action on $X^s$ and $D^s$. We denote the indexing sets for $D^s$  by $\mathcal{A}^s$ and we write $\mathcal{A}_v$ for $\mathcal{A}^s/\Gamma_v$. Then we can write $D_v=\cup_{\alpha_v \in \mathcal{A}_v} D_{\alpha_v}$, where each $D_{\alpha_v}$ is irreducible over $F_v$.  By \Cref{assum}, for all $\beta\in \mathcal{B}$, we see that $D_{\beta,v}$ is an irreducible component of $D_v$ and $D_{\beta}^s$ is an irreducible component of $D^s$. Thus, we will identify $\mathcal{B}$ as a subset of $\mathcal{A}_v$ and $\mathcal{A}^s$. For $\alpha_v \in \mathcal{A}_v$, we denote the algebraic closure of $F_v$ in the function field of $D_{\alpha_v}$ by $F_{\alpha_v}$, and set $f_{\alpha_v}:= [F_{\alpha_v}:F_v]$. 

Let $S\subset \Omega_F$ be the set of places in \Cref{defn:adelic-met}.2. We enlarge $S$ such that  $\mathcal{X}$ has good reduction away from $S$ and $\bigcup_{\alpha_v \in \mathcal{A}_v\setminus \mathcal{B}} \mathcal{D}_{\alpha_v}$ is a relative strict normal crossings divisor over $\ov$ for all $v\notin S$ (see e.g. \cite[\S 2]{Illusie} for the definition a relative strict normal crossings divisor).

\subsection{Motivation for \Cref{assum}}\label{subsection:insep-assump}
Recall from \Cref{assum} the following conditions for all $\beta\in \mathcal{B}$:
\begin{enumerate} [label=(\arabic*)]
	\item $D_\beta$ is geometrically irreducible;
	\item $D_\beta (F_v)=\emptyset$ for all $v\in \Omega_F$;
	\item  For all $ v\notin S$ and for all $ x\in X(F_v)$ the value $\lVert \mathsf{s}_\beta \rVert_v (x)$ depends only on ${\redv(x) \in {\mathcal{X}}(\fv)}$ so that $\lVert \mathsf{s}_\beta \rVert_v (\cdot)$ is well defined on $\mathcal{X}(\fv)$ (hence $\lVert \mathsf{s}_\beta \rVert_v (\cdot)$ attains finitely many values);
	
	\item For all $v \notin S$, we have
	\begin{equation*}
		\#\{\bar{x}\in {\mathcal{D}}_\beta (\fv): \> \lVert \mathsf{s}_\beta \rVert_v(\bar{x})=q_v^{-1} \}=q_v^{\dim(X)-1}+O(q_v^{\dim(X)-3/2})
	\end{equation*}
	where the implicit constant only depends on $X$.
	\item  If $\mathcal{B}\neq \emptyset$, then
	$$e_X:=\text{sup}_{\beta\in \mathcal{B}}\{m\in \z_{>0}:\>  \lVert \mathsf{s}_\beta \rVert_v(x)= q_v^{-m}, \text{ for some } v\in\Omega_F \text{ and } x\in X(F_v)\}$$ 
	is finite. If $\beta= \emptyset$, we set $e_X=1$.
\end{enumerate}

	We now motivate the conditions above.
	\begin{itemize}[itemsep=2pt,parsep=1pt,topsep=1pt]
		\item First, note that if $\mathcal{B}\neq \emptyset$, then $\pic(G) \neq 0$ by \Cref{prop:picX}. In particular, $G$ decomposes as $\mathbb{G}_a^m \times G_W$ where $G_W$ is a non-trivial $F$-wound group (see \cite[V.5]{Ost}). Thus, if $D$ is not geometrically reduced, then $G$  has to have a direct summand which an $F$-wound group. 
		\item To motivate Condition $(2)$, we recall that if $G$ is $F$-wound then $D(F_v)=\emptyset$ by \Cref{prop:BoundaryEmpty}. In the general case, we know at the very least that $D(F_v)$ is not Zariski dense in $D$ (see \cite[Proposition 2.3.26]{Poonen}).
		\item Condition $(3)$ is inspired by the fact that for an $F$-wound group $G$, the group $G(F_v)$ is compact for all $v$. Hence $G(F_v)/G(\ov)$ has finitely many elements, which in turn, implies that the local height takes finitely many values for good places $v$.
		\item Condition $(4)$, is motivated by \Cref{prop:P^p-1-valuations-insep}. For simplicity, let us suppose that $p=3$. Then, $D$ in \Cref{prop:P^p-1-valuations-insep} is given by
		$$X_0^3+tX_1^3+t^2X_2^3=0 \quad \subseteq \mathbb{P}^2.$$
		Let $a_i,b_i,c_i\in \fv$ be the coefficients of $1, \pi_v, \pi_v^2$, respectively,  in the $v$-adic expansion of $t^i$, for $i\in \{1,2\}$. Define the following subschemes of ${\mathcal{X}_{\fv}}$:
		$$D_0:=V(Y_0^3 + a_1 Y_1^3 + a_2 Y_2^3), \> D_1:=V(b_1 Y_1^3 + b_2 Y_2^3), \>D_2:=V(c_1 Y_1^3 + c_2 Y_2^3)$$
		Then for a primitive $x:=(x_0:x_1:x_2)\in \mathbb{P}^2(\ov)$, 
		$$v(x_0^3+tx_1^3+t^2x_2^3)=1 \iff \bar{x}\in D_0(\fv)\setminus D_1(\fv)$$
		and $v(x_0^3+tx_1^3+t^2x_2^3)=2 \iff \bar{x}\in D_1(\fv)\setminus D_2(\fv).$
		Thus, by the Lang-Weil estimates, we see that condition (4) is satisfied.
		
		\item Finally, asking that $e_X$ is finite implies that the values of $v(\mathsf{s}_\beta(x))$ are uniformly bounded for almost all places $v$. This, along with the previous bullet point, will ensure that convergence of the global Fourier transforms in appropriate domains.
	\end{itemize}

\subsection{Local Fourier transform at any place}
For $y\in X(F_v)$, we define $$\mathcal{A}_y=\{ \alpha_v \in \mathcal{A}_v: \> y\in D_{\alpha_v}\}.$$ 
As $D_\alpha$ is smooth for $\alpha\in \mathcal{A}\setminus \mathcal{B}$, the set of divisors $\{D_{\alpha_v} : \alpha_v\mid \alpha\}$ lying above $D_{\alpha,v}:= D_\alpha\times F_v$  do not intersect. Thus, $y$ can only lie on a unique $D_{\alpha_v}$ for $\alpha_v\mid \alpha$, for a fixed $\alpha\in \mathcal{A}$. This shows that we have a well defined injective map from $\mathcal{A}_y$ to $\mathcal{A}$ mapping $\alpha_v\in \mathcal{A}_y$ to $\alpha\in \mathcal{A}$ corresponding to the irreducible component $D_{\alpha,v}$ lying below $D_{\alpha_v}$. Thus, we can identify $\mathcal{A}_y$ with a subset of $\mathcal{A}$, which allows us to denote $\alpha_v$ by its image $\alpha$.

We now give a local analytic description for the integrals appearing in the computations of the local Fourier transforms with respect to an appropriate set of local coordinates using the strict normal crossings assumption on $D$. 

\begin{lem}\label{lem:local-coor-all-v}
	Let $y\in X_v(F_v)$. There exists an open subscheme $U_y$ containing $y$, and $\{y_{\alpha}\}_{\alpha\in \mathcal{A}_y}\cup \{y_i\}_{i=|\mathcal{A}_y|+1}^{n} \subset \mathcal{O}_X(U_y)$ such that 
	$$\phi_y:= (\{y_{\alpha}\}, \{y_i\}): U_y \longrightarrow \mathbb{A}^n$$ is étale and  restricts to a local homeomorphism $W_y\rightarrow \prod_{\alpha} \mathfrak{m}_v^{n_{\alpha}} \times \prod_i \mathfrak{m}_v^{n_i} \subset \mathbb{A}^n(F_v)$, where $n_{\alpha},n_i \in \z_{>0}$ and $W_y$ is an open analytic subset of $U_y(F_v)$; moreover, we have
	\begin{equation*}
		\int_{\mathbf{x}\in W_y} |H(x;\mathbf{s})|^{-1}  \> d \tau_{X,v} = C_y \int_{\prod_{\alpha} \mathfrak{m}_v^{n_{\alpha}} \times \prod_i \mathfrak{m}_v^{n_i}}  \prod_{\alpha \in \mathcal{A}_y}  |y_{\alpha}|_v ^{\re(s_{\alpha})-\rho_\alpha}  \> \bigwedge dy_{\alpha} \wedge \bigwedge dy_i
	\end{equation*}
	for some $C_y\in F_v$, where $\rho=(\rho_\alpha)_{\alpha\in \mathcal{A}}$ is the class of $\omega_X^{-1}$.

\end{lem}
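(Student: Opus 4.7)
The plan proceeds in three main steps. \emph{First,} observe that $\mathcal{A}_y \subseteq \mathcal{A}\setminus \mathcal{B}$: Assumption (ii)(2) gives $D_\beta(F_v) = \emptyset$ for every $\beta \in \mathcal{B}$, so no non-geometrically-reduced component can contain $y$. \emph{Second,} use Assumption (i) to obtain local étale coordinates. Since $\bigcup_{\alpha \in \mathcal{A}\setminus\mathcal{B}} D_\alpha$ is a strict normal crossings divisor with smooth components, after shrinking to a Zariski-open $U_y \ni y$ we can pick regular functions $y_\alpha \in \mathcal{O}_X(U_y)$ ($\alpha \in \mathcal{A}_y$) that locally cut out $D_\alpha$ transversely at $y$; completing with additional $y_i$ gives a system of parameters whose differentials trivialize $\Omega^1_{X/F}$ at $y$. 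The resulting map $\phi_y = (\{y_\alpha\}, \{y_i\}): U_y \to \mathbb{A}^n$ is étale at $y$, and the $v$-adic implicit function theorem (a form of Hensel's lemma) makes $\phi_y$ into an open analytic homeomorphism from some sufficiently small neighborhood $W_y \subset U_y(F_v)$ of $y$ onto a product $\prod_\alpha \mathfrak{m}_v^{n_\alpha} \times \prod_i \mathfrak{m}_v^{n_i}$ for suitable radii $n_\alpha, n_i$.

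\emph{Third,} pull back the integrand into these coordinates. Since the smooth adelic metric is locally constant (\Cref{defn:adelic-met}(1)), after further shrinking $W_y$ we may assume $\|\mathsf{s}_\alpha\|_v(x) = c_\alpha |y_\alpha(x)|_v$ on $W_y$ for $\alpha \in \mathcal{A}_y$ and $\|\mathsf{s}_\alpha\|_v(x) = c_\alpha$ for $\alpha \in \mathcal{A}\setminus \mathcal{A}_y$, with positive real constants $c_\alpha$. By \Cref{cor:ex.non-van.diff} there is a nowhere-vanishing translation-invariant top form $\omega \in \omega_G(G)$; by Rosenlicht's lemma, $\omega = c \prod_\alpha \mathsf{s}_\alpha^{-\rho_\alpha}$ for some $c \in F^\times$, and locally on $U_y$ we may write $\omega = h \prod_{\alpha \in \mathcal{A}_y} y_\alpha^{-\rho_\alpha}\, dy_1 \wedge \cdots \wedge dy_n$ for a unit $h$ on $U_y$ (shrinking $U_y$ if needed). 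Plugging these expressions into $|H(x;\mathbf{s})|^{-1} = \prod_\alpha \|\mathsf{s}_\alpha\|_v^{\re(s_\alpha)}$ and into the local presentation of $d\tau_{X,v}$ (via $d\tau_{X,v}/\|f_\omega\|_v = d\mathbf{g}_v = |\omega|_v$), then performing the change of variable along $\phi_y$, produces the claimed identity with $C_y$ absorbing all the locally constant factors $|c|_v, |h|_v$, powers of $c_\alpha$, and Jacobian contributions.

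The main delicate point is the bookkeeping of exponents: for each $\alpha \in \mathcal{A}_y$, the integrand contributes a factor $|y_\alpha|^{\re(s_\alpha)}$ from the metric on $\mathcal{O}(D_\alpha)$, while the local coordinate expression of the measure contributes a shift by the pole order $\rho_\alpha$ of $\omega$ along $D_\alpha$ (as computed via \Cref{prop:anticanIsEffective}); the two combine to yield the exponent $\re(s_\alpha) - \rho_\alpha$ in the formula. Local constancy of the smooth adelic metric is what allows the quantities $c_\alpha$ and $|h|_v$ to be taken genuinely constant (not merely continuous) on $W_y$, so that the integral factors cleanly as a product of one-dimensional integrals; this step leans essentially on $F_v$ being totally disconnected.
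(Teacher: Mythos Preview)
Your proposal is correct and follows essentially the same route as the paper: both use Assumption~(ii)(2) to conclude $\mathcal{A}_y\cap\mathcal{B}=\emptyset$, then invoke the strict normal crossings hypothesis to produce \'etale coordinates $\{y_\alpha\}\cup\{y_i\}$ with $D_\alpha|_{U_y}=\{y_\alpha=0\}$, pass to a small analytic ball $W_y$ via the $v$-adic inverse function theorem, and shrink until all locally constant metric factors become genuinely constant. The only cosmetic difference is in how the exponent shift by $\rho_\alpha$ is accounted for: the paper writes $d\tau_{X,v}=|\omega_y|/\lVert\omega_y\rVert$ for the coordinate form $\omega_y=\bigwedge dy_\alpha\wedge\bigwedge dy_i$ and reads off the $\rho_\alpha$ from the metric $\lVert\omega_y\rVert$ on $\omega_X$, whereas you pass through the invariant form $\omega$ and the relation $d\mathbf{g}_v=\lVert f_\omega\rVert_v^{-1}\,d\tau_{X,v}$; these are two equivalent bookkeepings of the same identity \eqref{eqn:dg=dtau}.
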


\begin{proof}
	First, by \Cref{assum}(ii), we see that $\mathcal{B}\cap \mathcal{A}_y=\emptyset$, so that $D_{\alpha,v}$ is smooth for all $\alpha \in \mathcal{A}_y$.  As $X$ is smooth and $\cup_{\alpha\in \mathcal{A}\setminus\mathcal{B}} D_\alpha$ has strict normal crossings, there exists an open subscheme $U_y\subset X$ containing $y$ and $\{y_{\alpha}\}_{\alpha\in \mathcal{A}_y}\cup \{y_i\}_{i=|\mathcal{A}_y|+1}^{n} \subset \mathcal{O}_X(U_y)$ defining an étale morphism
	$$\phi_y: U_y \longrightarrow \mathbb{A}^n$$ such that $D_{v}|_{U_y}$ is defined by $\prod_{\alpha \in \mathcal{A}_y} y_{\alpha}=0$ in $U_y$. Let $$\omega_y=\bigwedge dy_{\alpha} \wedge \bigwedge dy_i\in \omega_X(U_y).$$ Then $\lVert \omega_y \rVert$ is non-vanishing and locally constant on $U_y(F_v)$ and the restriction of the measure $\tau_{X,v}$ to $U_y$ coincides with $|\omega_y|/ \lVert \omega_y \rVert$. By the definition of a smooth metric, for ${\alpha} \in \mathcal{A}_y$, we have that $\lVert \mathsf{s}_{\alpha} \rVert (x) = |y_{\alpha}|_v h_{{\alpha}}(x)$ on $U_y(F_v)$, where $h_{{\alpha}}:U_y(F_v) \rightarrow \mathbb{R}_+^*$ is a non-vanishing locally constant function. If $\alpha \in \mathcal{A}$ does not lie below any ${\alpha_v} \in \mathcal{A}_y$, we have $\lVert \mathsf{s}_{\alpha} \rVert (x) = h_{{\alpha}}(x)$ on $U_y(F_v)$, where $h_{{\alpha}}:U_y(F_v) \rightarrow \mathbb{R}_+^*$ is a non-vanishing local constant function. As $\phi_y$ is étale, the induced map on $U_y(F_v) \rightarrow \mathbb{A}^n(F_v)$ is a local homeomorphism for the analytic topology. Thus, there exists an analytic open $W_y \subset U_y(F_v)$ that is mapped homeomorphically to a basic open set of $\mathbb{A}^n(F_v)$ of the form $\prod_{\alpha} \mathfrak{m}_v^{n_{\alpha}} \times \prod_i \mathfrak{m}_v^{n_i}$, such that $\lVert \omega_y \rVert$ and every $h_{{\alpha}}(x)$ is constant on $W_y$.  Therefore, 
	\begin{equation*}
		\int_{\mathbf{x}\in W_{y}(F_v)} |c|_v \prod_{\alpha}  \lVert \mathsf{s}_{\alpha} \rVert (x_v)^{\text{Re}(s_{\alpha})-\rho_{\alpha}}  \> d \tau_{X,v} = C_y\int_{\prod_{\alpha} \mathfrak{m}_v^{n_{\alpha}} \times \prod_i \mathfrak{m}_v^{n_i}}  |y_{\alpha}|_v ^{\text{Re}(s_{\alpha})-\rho_{\alpha}}  \>  \bigwedge dy_{\alpha} \wedge \bigwedge dy_i
	\end{equation*}
	for some constant $C_y\in F_v$.
\end{proof}

We now bound the local Fourier transform at a general place $v$.  Define the open subset of $\pic(X)_\cx$ for $r\in \mathbb{R}:$  $$\Omega_t:=\{\mathbf{s} \in \pic(X)_\cx \> : \> \text{Re}(s_\alpha) > \rho_\alpha + r , \> \forall \alpha\in \mathcal{A} \} .$$

\begin{lem}\label{lem:trivial-bound-localTransform}
	The function $H_v(\cdot; \mathbf{s})^{-1}$ is integrable on $G(F_v)$ if $\mathbf{s}\in \Omega_{-1}$. Moreover, for all $\epsilon>0$ and any place $v$, there exists a constant $C_v(\epsilon)$ such that for all $\mathbf{s}\in \Omega_{-1+\epsilon}$ and all $\Psi\in \widehat{G(\ad)}$ we have
	$$|\hat{H}_v(\Psi;\mathbf{s})|\leq C_v(\epsilon).$$
\end{lem}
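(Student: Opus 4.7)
Since $|\Psi(x)|=1$ for every unitary character, both conclusions reduce to bounding the positive quantity
\[
I_v(\mathbf{s}) := \int_{G(F_v)} |H_v(x;\mathbf{s})|^{-1}\, d\mathbf{g}_v.
\]
Via the identity $d\mathbf{g}_v = |c|_v \prod_\alpha \lVert \mathsf{s}_\alpha\rVert_v^{-\rho_\alpha}\, d\tau_{X,v}$ from \eqref{eqn:dg=dtau}, one rewrites $I_v(\mathbf{s})$ as the integral of $\prod_\alpha \lVert \mathsf{s}_\alpha(x)\rVert_v^{\re(s_\alpha)-\rho_\alpha}$ against $d\tau_{X,v}$. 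Because $D(F_v)$ has $\tau_{X,v}$-measure zero (it is a proper analytic subset), the integral extends to the \emph{compact} space $X(F_v)$.

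The first step is to isolate the contribution of the non-geometrically-reduced components. By \Cref{assum}(ii)(2), $D_\beta(F_v)=\emptyset$ for $\beta\in\mathcal{B}$, so $\lVert\mathsf{s}_\beta\rVert_v$ is a continuous, nowhere-vanishing function on the compact set $X(F_v)$. Thus there exist constants $0<m_{\beta,v}\le M_{\beta,v}<\infty$ bounding $\lVert\mathsf{s}_\beta\rVert_v$ uniformly on $X(F_v)$, and these factors contribute only a bounded multiplicative factor to the integrand (depending on $v$ and $\epsilon$ via the exponent range). Consequently, only the components $\alpha\in\mathcal{A}\setminus\mathcal{B}$ can produce an unbounded integrand, and these are precisely the components to which Lemma~\ref{lem:local-coor-all-v} applies.

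Next I cover $X(F_v)$ by finitely many analytic opens: for each $y\in G(F_v)$, a compact neighborhood in $G(F_v)$ on which every $\lVert\mathsf{s}_\alpha\rVert_v$ is bounded above and below; for each $y\in D(F_v)$, the open $W_y$ from \Cref{lem:local-coor-all-v}, on which $\mathcal{A}_y\subset\mathcal{A}\setminus\mathcal{B}$ (again by \Cref{assum}(ii)(2)). Extracting a finite subcover by compactness of $X(F_v)$, the contribution from the $W_y$ with $y\in G(F_v)$ is trivially bounded, while the boundary contributions reduce to products of one-variable integrals
\[
\int_{\mathfrak{m}_v^{n_\alpha}} |y_\alpha|_v^{\re(s_\alpha)-\rho_\alpha}\, dy_\alpha,
\]
each of which equals a geometric series convergent exactly when $\re(s_\alpha)-\rho_\alpha+1>0$. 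This establishes integrability on $\Omega_{-1}$.

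For uniformity on $\Omega_{-1+\epsilon}$, the point is that after normalizing the metrics so that $\lVert\mathsf{s}_\alpha\rVert_v\le 1$ on $X(F_v)$ (automatic for $v\notin S$ by the integral model, and arrangeable at the finitely many $v\in S$ by rescaling), the integrand is decreasing in each $\re(s_\alpha)$, so it is majorized pointwise by its value at the single point $\mathbf{s}_0=(\rho_\alpha-1+\epsilon)_\alpha$. Hence $I_v(\mathbf{s})\le I_v(\mathbf{s}_0)$, and one may take $C_v(\epsilon):=I_v(\mathbf{s}_0)$, which is finite by the previous step. The main subtlety I expect is precisely this last normalization for the finitely many bad places in $S$, since the ``dominant point'' argument requires $\lVert\mathsf{s}_\alpha\rVert_v\le 1$; without it one must instead split the range $\re(s_\alpha)-\rho_\alpha\ge 0$ from $\re(s_\alpha)-\rho_\alpha\in(-1+\epsilon,0)$ and use the upper bound of $\lVert\mathsf{s}_\alpha\rVert_v$ on $X(F_v)$ in the first range, at the cost of a constant that still depends only on $v$ and $\epsilon$.
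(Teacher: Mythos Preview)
Your proposal is correct and follows essentially the same route as the paper: reduce to the trivial character, pass to the compact space $X(F_v)$, cover it by the analytic opens $W_y$ of \Cref{lem:local-coor-all-v}, and reduce each local contribution to a product of one-variable integrals $\int_{\mathfrak{m}_v^{n_\alpha}}|y_\alpha|_v^{\re(s_\alpha)-\rho_\alpha}\,dy_\alpha$ that converge precisely on $\Omega_{-1}$. The only cosmetic differences are that the paper uses a finite partition of unity subordinate to the cover (rather than bounding by a raw finite subcover) and dispatches the convergence and uniform bound in one stroke by citing \cite[Lemma~8.3]{CLYT}, whereas you argue monotonicity in $\re(s_\alpha)$ after noting the $\mathcal{B}$-factors are uniformly bounded on $X(F_v)$; both arguments are valid and yield the same conclusion.
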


\begin{proof}
	First, we can assume that $\Psi$ is the trivial character.
	Since $X$ is projective, $X(F_v)$ is compact with respect to the analytic topology. Therefore, by \Cref{lem:local-coor-all-v}, there exists a finite set of points $y_i\in X(F_v)$, and an open cover $\{W_{y_j}\}_{j= 1}^{t}$ of $X(F_v)$, such that 
	\begin{equation*}
		\int_{\mathbf{x}\in W_{y_j}} |H(x;\mathbf{s})|^{-1}  \> d \mathbf{g}_v = C_{y_j} \int_{\prod_{\mathcal{A}_{y_j}} \mathfrak{m}_v^{n_{\alpha}} \prod_i \mathfrak{m}_v^{n_i}}  \prod_{{\alpha} \in \mathcal{A}_{y_j}}  |y_{\alpha}|_v ^{\re(s_{{\alpha}})-\rho_{\alpha}}  \> \bigwedge dy_{\alpha} \wedge \bigwedge dy_i
	\end{equation*}
	for all $j$.
	
	Again, as $X(F_v)$ is compact, there is a finite partition of unity $(f_{j})$ subordinate to this covering each with compact support; by this we mean that there exist  smooth functions $f_j:W_{y_j}\rightarrow [0,1] \subset \mathbb{R}$ for all $i\in \{1,\ldots, t\}$, such that 
	$$\sum_{j=1}^t f_j=1$$
	and the support of each $f_j$ is a compact subset of $W_{y_j}$.
	Therefore,
	\begin{equation*}
		\begin{split}
			\int_{ G(F_v)} |H_v(\mathbf{x};\mathbf{s})|^{-1} \> d \mathbf{g}_v &= \sum_{j=1}^t \int_{W_{y_j}(F_v)} |H_v(\mathbf{x};\mathbf{s})|^{-1} f_j(x) \> d \mathbf{g}_v 
			\\ &\leq \sum_{j=1}^t \int_{W_{y_j}(F_v)} |H_v(\mathbf{x};\mathbf{s})|^{-1} \> d \mathbf{g}_v
		\end{split}
	\end{equation*}
	Hence, for a fixed $j$, it suffices to prove that 
	$$ C_{y_j} \int_{\prod_{\mathcal{A}_{y_j}} \mathfrak{m}_v^{n_{\alpha}} \prod_i \mathfrak{m}_v^{n_i}}  \prod_{{\alpha} \in \mathcal{A}_{y_j}}  |y_{\alpha}|_v ^{\re(s_{{\alpha}})-\rho_{\alpha}}  \> \bigwedge dy_{\alpha} \wedge \bigwedge dy_i$$
	converges if $\mathbf{s}\in \Omega_{-1}$. This now follows by Fubini's theorem and \cite[Lemma 8.3]{CLYT}.
\end{proof}

\begin{lem}
	Suppose that $G$ is an $F$-wound group.
	The function $H_v(\cdot; \mathbf{s})^{-1}$ is integrable on $G(F_v)$ for all $\mathbf{s}\in \pic(X)_\cx$. 
\end{lem}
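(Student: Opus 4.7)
The plan is to exploit the compactness of $G(F_v)$ forced by the $F$-wound hypothesis, which makes the boundary effectively invisible over $F_v$. First, I would invoke \Cref{prop:BoundaryEmpty}, which states that when $G$ is $F$-wound we have $G(F_v)=X(F_v)$ for every place $v$. Since $X$ is a smooth projective $F$-variety, $X(F_v)$ is compact in the $v$-adic analytic topology, hence so is $G(F_v)$.

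Next I would observe that for each $\alpha\in \mathcal{A}$ the function $x\mapsto \lVert \mathsf{s}_\alpha\rVert_v(x)$ is continuous and non-negative on $X(F_v)$, and vanishes precisely on $D_\alpha(F_v)$. But $D_\alpha(F_v)\subseteq D(F_v)=\emptyset$ by \Cref{prop:BoundaryEmpty}, so $\lVert \mathsf{s}_\alpha\rVert_v$ is a continuous, strictly positive function on the compact space $G(F_v)=X(F_v)$. Therefore, it is bounded above and bounded away from zero, so for every $\mathbf{s}=(s_\alpha)\in \pic(X)_\cx$ the absolute value
\begin{equation*}
	|H_v(\mathbf{x};\mathbf{s})^{-1}| = \prod_{\alpha\in\mathcal{A}} \lVert \mathsf{s}_\alpha\rVert_v(\mathbf{x})^{\re(s_\alpha)}
\end{equation*}
is uniformly bounded on $G(F_v)$ by a constant depending only on $\mathbf{s}$.

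Finally, I would note that the Haar measure $d\mathbf{g}_v$ on the compact group $G(F_v)$ has finite total mass, and a bounded measurable function on a space of finite measure is integrable. This gives absolute integrability of $H_v(\cdot;\mathbf{s})^{-1}$ on $G(F_v)$ for every $\mathbf{s}\in \pic(X)_\cx$, with no restriction on $\re(s_\alpha)$. There is no real obstacle here: the analytic difficulties that the general \Cref{lem:trivial-bound-localTransform} had to handle near the boundary divisor disappear completely, because the $F$-wound assumption removes any local $F_v$-point of $D$.
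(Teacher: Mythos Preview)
Your proof is correct and follows essentially the same approach as the paper, which simply states that the result follows from the compactness of $G(F_v)$ with respect to the analytic topology. You have filled in the details the paper leaves implicit: that compactness forces $D(F_v)=\emptyset$, hence each $\lVert \mathsf{s}_\alpha\rVert_v$ is bounded away from zero and infinity, making $|H_v(\cdot;\mathbf{s})^{-1}|$ bounded on a space of finite Haar measure.
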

\begin{proof}
	The result follows from the compactness of $G(F_v)$ with respect to the analytic topology. 
\end{proof}

\subsection{Explicit computation of local Fourier transform at good places}\label{subsec:comp-trivial-char}
Let $v\notin S$. Let $\redv:\mathcal{X}_v(F_v)\rightarrow {\mathcal{X}}_v(\fv)$ be the reduction map mod $\pi_v$ (which is surjective for $v\notin S$). By \Cref{assum}(ii), for all $\beta \in \mathcal{B}$ and all $\bar{x}\in {\mathcal{X}}(\fv)$, we have that $$\lVert \mathsf{s}_\beta \rVert_v(x) = q_v^{\beta(\bar{x})}, \quad \forall x\in \redv^{-1}(\bar{x}),$$ 
for some  $\beta(\bar{x})\in \z_{>0}$ depending only on $\bar{x}$. Recall from \eqref{eqn:dg=dtau} the relation between the measures
$$ d\mathbf{g}_v =|c|_v \prod_{\alpha\in \mathcal{A}} \lVert \mathsf{s}_\alpha \rVert^{-\rho_\alpha} \> d\tau_{X,v}.$$
Since $|c|_v=1$ for almost all places, we will assume for simplicity that $|c|_v=1$. For $\bar{x} \in {\mathcal{X}}(\fv)$, we define the following subsets of $\mathcal{A}_v$:
$$\mathcal{A}_{\bar{x}}=\{ {\alpha_v} \in \mathcal{A}_v\setminus \mathcal{B}: \> \bar{x} \in \mathcal{D}_{{\alpha_v}}(\fv)\}, \quad \mathcal{B}_{\bar{x}}=\{ \beta \in \mathcal{B}\subset \mathcal{A}_v: \> \bar{x} \in \mathcal{D}_{\beta,v}(\fv)\}.$$

\begin{lem} \label{lem:local-coor-good-v}
	Let $\bar{x} \in {\mathcal{X}}(\fv)$. Then for all $\mathbf{s}\in \Omega_{-1}$ we have
	\begin{equation*}
		\int_{x \in \redv^{-1}(\bar{x})} H(x;\mathbf{s})^{-1}  \> d \mathbf{g}_v = \frac{1}{q_v^{\Dim(X)}} \prod_{\beta\in \mathcal{B}_{\bar{x}}} q_v^{\beta(\bar{x})(\rho_{\beta}-s_{\beta})}  \prod_{{\alpha_v}\in \mathcal{A}_{\bar{x}}} \frac{q_v-1}{q_v^{1+s_{\alpha}-\rho_{\alpha}}-1}.
	\end{equation*}
\end{lem}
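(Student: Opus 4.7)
The plan is to reduce the integral to an explicit one on a polydisc using analytic coordinates adapted to the good part of the boundary, treat the non-reduced components as locally constant factors via \Cref{assum}(ii)(3), and finish with a one-variable geometric series computation.

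First I would choose local coordinates near $\bar{x}$. Because $\mathcal{X}$ is smooth at $\bar{x}$ and $\bigcup_{\alpha_v\in\mathcal{A}_v\setminus\mathcal{B}}\mathcal{D}_{\alpha_v}$ is a relative strict normal crossings divisor over $\ov$ (after enlarging $S$), there exist an affine open $\mathcal{U}\subset\mathcal{X}_v$ around $\bar{x}$ and functions $(y_\alpha)_{\alpha\in\mathcal{A}_{\bar{x}}}\cup(y_i)_i\in\mathcal{O}_{\mathcal{X}}(\mathcal{U})$ defining an étale morphism $\mathcal{U}\to\mathbb{A}^n_{\ov}$ sending $\bar{x}\mapsto 0$ such that $\mathcal{D}_{\alpha_v}\cap\mathcal{U}=\{y_\alpha=0\}$ for each $\alpha\in\mathcal{A}_{\bar{x}}$. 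By Hensel's lemma this induces a bijection $\redv^{-1}(\bar{x})\xrightarrow{\sim}\mathfrak{m}_v^n$. Since $\omega_y:=\bigwedge_{\alpha} dy_\alpha\wedge\bigwedge_i dy_i$ is a generator of $\omega_{\mathcal{X}}$ at $\bar{x}$, the norm $\lVert\omega_y\rVert_v\equiv 1$ on $\redv^{-1}(\bar{x})$, so this identification is measure preserving for $d\tau_{X,v}$ and the product Haar measure on $\mathfrak{m}_v^n$.

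Next I would unravel the integrand on $\redv^{-1}(\bar{x})$. By definition of the integral metric: for $\alpha\in\mathcal{A}_{\bar{x}}$ one has $\lVert\mathsf{s}_\alpha\rVert_v(x)=|y_\alpha(x)|_v$; for $\alpha\in(\mathcal{A}_v\setminus\mathcal{B})\setminus\mathcal{A}_{\bar{x}}$ the norm $\lVert\mathsf{s}_\alpha\rVert_v\equiv 1$ on the fiber as $\bar{x}\notin\mathcal{D}_{\alpha_v}$; for $\beta\in\mathcal{B}_{\bar{x}}$, \Cref{assum}(ii)(3) forces $\lVert\mathsf{s}_\beta\rVert_v$ to be constant on $\redv^{-1}(\bar{x})$ (the value being $q_v^{-\beta(\bar{x})}$ in the notation of the statement); and for $\beta\in\mathcal{B}\setminus\mathcal{B}_{\bar{x}}$ the norm is again $1$. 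Combining these with \eqref{eqn:dg=dtau} and the normalization $|c|_v=1$, the integrand becomes
\[
H(x;\mathbf{s})^{-1}\,d\mathbf{g}_v
= \prod_{\beta\in\mathcal{B}_{\bar{x}}} q_v^{\beta(\bar{x})(\rho_\beta-s_\beta)} \cdot \prod_{\alpha\in\mathcal{A}_{\bar{x}}} |y_\alpha|_v^{s_\alpha-\rho_\alpha} \cdot \bigwedge_\alpha dy_\alpha\wedge\bigwedge_i dy_i
\]
in the chosen coordinates.

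Finally I would evaluate the resulting integral on $\mathfrak{m}_v^n$ by Fubini. The constant $\prod_{\beta\in\mathcal{B}_{\bar{x}}} q_v^{\beta(\bar{x})(\rho_\beta-s_\beta)}$ factors out, each of the $n-|\mathcal{A}_{\bar{x}}|$ trivial coordinates contributes a factor $\mu_v(\mathfrak{m}_v)=q_v^{-1}$, and for each $\alpha\in\mathcal{A}_{\bar{x}}$ the standard computation
\[
\int_{\mathfrak{m}_v} |y|_v^{s_\alpha-\rho_\alpha}\,dy
= (1-q_v^{-1})\sum_{k\geq 1} q_v^{-k(1+s_\alpha-\rho_\alpha)}
= \frac{q_v^{-1}(q_v-1)}{q_v^{1+s_\alpha-\rho_\alpha}-1}
\]
(convergent on $\Omega_{-1}$) contributes. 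Multiplying everything, the $q_v^{-1}$ factors combine to $q_v^{-n}=q_v^{-\dim X}$ and yield the stated formula.

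The only subtle point is the matching between $\lVert\mathsf{s}_\alpha\rVert_v$ and $|y_\alpha|_v$ with no unit correction, which is precisely where one uses that the adelic metric is defined by the integral model $\mathcal{X}$ at places $v\notin S$; the handling of the $\beta\in\mathcal{B}_{\bar{x}}$ factors is a direct invocation of \Cref{assum}(ii)(3), and the rest of the argument is bookkeeping.
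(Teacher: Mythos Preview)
Your proposal is correct and follows essentially the same approach as the paper's proof: both choose \'etale coordinates adapted to the relative strict normal crossings divisor $\bigcup_{\alpha_v\in\mathcal{A}_{\bar{x}}}\mathcal{D}_{\alpha_v}$ to identify $\redv^{-1}(\bar{x})$ with $\mathfrak{m}_v^n$ measure-preservingly, read off the metric values from the integral model (and from \Cref{assum}(ii)(3) for the $\beta\in\mathcal{B}_{\bar{x}}$), and finish with Fubini and the one-variable computation $\int_{\mathfrak{m}_v}|y|_v^{t}\,dy=q_v^{-1}(q_v-1)/(q_v^{1+t}-1)$. The paper simply packages the existence of such coordinates by citing standard references (Denef, Salberger, Chambert-Loir--Tschinkel), whereas you spell out the Hensel-lemma step and the reason $\lVert\omega_y\rVert_v\equiv 1$; otherwise the arguments are identical.
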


\begin{proof}
	By assumption on $v$, the divisor $\cup_{{\alpha_v}\in \mathcal{A}_{\bar{x}}} \mathcal{D}_{{\alpha_v}}$ is relative strict normal crossings over $\ov$. Also, for all ${\alpha_v} \in \mathcal{A}_{\bar{x}}$, as $\mathcal{D}_{{\alpha_v}}(\fv)\neq \emptyset$ and $\mathcal{D}_{{\alpha_v}}$ is smooth, we have that ${D}_{{\alpha_v}}(F_v)\neq \emptyset$ by Hensel's lemma; in particular, this implies that $D_{{\alpha_v}}$ is geometrically integral over $F_v$. By following standard arguments in Arakelov geometry (see, e.g., \cite[Theorem 2.13]{Sal}, \cite[Theorem 3.1]{Denef}, \cite[Theorem 9.1]{CLYT}, or \cite[\S 6.2]{Campana} and their proofs) there exists a local set of coordinates $\{x_{\alpha_v}\}_{{\alpha_v}\in A_{\bar{x}}} \cup \{x_i\}_{i\in I}$, where $I=1,..., n-|\mathcal{A}_{\bar{x}}|$, that define an étale map of $\ov$-schemes $$\phi_{\tilde{x}}:=\left(\prod_{\mathcal{A}_{\bar{x}}} x_{\alpha_v}, \prod_{i\in I} x_i \right):  \mathcal{W} \rightarrow \mathbb{A}^n_{\ov}$$ for some open subscheme $\mathcal{W}$ of $\mathcal{X}$ containing $\bar{x}$, which restricts to an analytic map $\redv^{-1}(\bar{x})\rightarrow \prod^n {F_v}$ such that 
	\begin{enumerate}[label=(\roman*)]
		\item $\redv^{-1}(\bar{x})$ is mapped isomorphically to $\prod^n \mathfrak{m}_v$;
		\item $D_{{\alpha_v}}(F_v)$ is locally defined by $x_{\alpha_v}=0$ for ${\alpha_v} \in \mathcal{A}_{\bar{x}}$;
		\item $\redv^{-1}(\bar{x}) \cap D_{{\alpha_v}}(F_v)=\emptyset$ for all ${\alpha_v} \notin \mathcal{A}_{\bar{x}}$;
		\item and the canonical measure $\mu_v^n$ on $F^n$ pulls back to the measure $d\tau_{X,v}$.
	\end{enumerate}
	Note that for all $x\in \redv^{-1}(\bar{x})$, 
	\begin{enumerate}
		\item $\lVert \mathsf{s}_\alpha \rVert_v(x)=|x_{\alpha_v}|_v$ for the unique ${\alpha_v} \in \mathcal{A}_{\bar{x}}$ lying above $\alpha \in \mathcal{A}$;  
		\item $\lVert \mathsf{s}_\beta \rVert_v(x)=q_v^{\beta(\bar{x})}$ for all $\beta \in  \mathcal{B}_{\bar{x}}$;
		\item $\lVert \mathsf{s}_\alpha \rVert_v(x)=1$ for all other $\alpha\in \mathcal{A}$.
	\end{enumerate}
	
	Therefore, we have
	\begin{equation*}
		\begin{split}
			\int_{\redv^{-1}(\bar{x})} H(\mathbf{x};\mathbf{s})^{-1}  \> d
			\mathbf{g}_v 
			&= \int_{\redv^{-1}(\bar{x})}   \prod_{\alpha \in \mathcal{A}} \lVert \mathsf{s}_\alpha \rVert ^{\rho_\alpha-s_\alpha} d\tau_{X,v}
			\\&= \int_{\prod^n \mathfrak{m}_v}  \prod_{{\alpha_v}\in \mathcal{A}_{\bar{x}}} |x_{\alpha_v}|^{\rho_{\alpha}-s_{\alpha}} \prod_{\beta\in \mathcal{B}_{\bar{x}}} q_v^{\beta(\bar{x})(\rho_\beta-s_\beta)} \prod_{{\alpha_v} \in \mathcal{A}_{\bar{x}}} dx_{\alpha_v} \prod_{i\in I} dx_i
			\\ &=  \frac{1}{q_v^{n-|\mathcal{A}_{\bar{x}}|}} \prod_{\beta\in \mathcal{B}_{\bar{x}}} q_v^{\beta(\bar{x})(\rho_\beta-s_\beta)}  \prod_{{\alpha_v}\in \mathcal{A}_{\bar{x}}} \int_{\mathfrak{m}_v} q_v^{(s_{\alpha}-\rho_{\alpha})v(x_{\alpha_v})}  dx_{\alpha_v} 
			\\ &=  \frac{1}{q_v^{\Dim(X)}} \prod_{\beta\in \mathcal{B}_{\bar{x}}} q_v^{\beta(\bar{x})(\rho_\beta-s_\beta)}  \prod_{{\alpha_v}\in \mathcal{A}_{\bar{x}}} \frac{q_v-1}{q_v^{1+s_{\alpha}-\rho_{\alpha}}-1}
		\end{split}
	\end{equation*}
	where the last equality follows from the formula
	$$\int_{x\in\mathfrak{m}_v} q^{-sv(x)} d\mu_v = \frac{1}{q} \frac{q-1}{q^{1+s}-1}$$
	for $s\in \cx$ such that $\re(s)>-1$ 
	(see \cite[Equation (9.3)]{CLYT}).
\end{proof}

For $A\subset \mathcal{A}_v$ we define 
$$\mathcal{D}^\circ_A(\fv):=\bigcap_{{\alpha_v}\in A} \mathcal{D}_{{\alpha_v}} (\fv) \setminus \bigcup_{A\subsetneq A'\subset \mathcal{A}_v} \bigcap_{{\alpha_v}'\in A'} \mathcal{D}_{{\alpha_v}'}(\fv) .$$
If $A=\emptyset$, then we set $\mathcal{D}^\circ_A$ to be $\mathcal{X}\setminus \mathcal{D}$. Note that if $\bar{x}\in  \mathcal{D}_{{\alpha_v}} (\fv)$ for some $\alpha_v\in \mathcal{A}_v$, then $ \mathcal{D}_{{\alpha_v}} (\fv)$ is the only divisor lying above $\mathcal{D}_{{\alpha}}$, for some $\alpha \in \mathcal{A}$, that contains $\bar{x}$; in this case, we will simply denote $\alpha_v$ by $\alpha$. Therefore, by definition, if $\mathcal{D}^\circ_A(\fv)\neq \emptyset$ then for every $\alpha\in \mathcal{A}$, there is at most one $\alpha_v\in A$ lying above $\alpha$; hence, in this case, we are allowed to denote the elements of $A$ by $\alpha$

We now prove a geometric version of Denef's formula for Igusa's local zeta function over global functions fields analogous to \cite[Theorem 9.1]{CLYT}. In this version, we deal with the irreducible components of $D$ that are not geometrically reduced which do appear in positive characteristic. 

\begin{thm}\label{thm:Denef}
	For all $v\notin S$ and all $\mathbf{s}\in \Omega_{-1} \subset \pic(X)_\cx$, we have
	\begin{equation}\label{eqn:denef}
		 \hat{H}_v(\mathbf{0}; \mathbf{s})= q_v^{-\Dim(X)} \sum_{A\subset \mathcal{A}_v} \prod_{{\alpha}\in A} \frac{q_v-1}{q_v^{1+s_{\alpha}-\rho_{\alpha}} -1} \sum_{\bar{x}\in \mathcal{D}^\circ_A(\fv)} \prod_{\beta\in \mathcal{B}\cap A} q_v^{\beta(\bar{x})(\rho_\beta-s_\beta)}.
	\end{equation}

\end{thm}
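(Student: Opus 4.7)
The approach is to evaluate the Fourier transform at the trivial character by fibering over the reduction map and applying \Cref{lem:local-coor-good-v} to each fiber. First I would extend the domain of integration from $G(F_v)$ to $\mathcal{X}(\ov)$: since $\mathcal{X}$ is proper over $\ov$ we have $X(F_v)=\mathcal{X}(\ov)$, and $D(F_v)$ is a closed analytic subset of smaller dimension, hence of $\tau_{X,v}$-measure zero, so by the absolute integrability of $H_v(\cdot;\mathbf{s})^{-1}$ on $G(F_v)$ for $\mathbf{s}\in\Omega_{-1}$ (from \Cref{lem:trivial-bound-localTransform}) the extension changes nothing. Because the reduction map $\redv:\mathcal{X}(\ov)\to\mathcal{X}(\fv)$ is surjective with finite target, the integral decomposes as
\[
\hat H_v(\mathbf{0};\mathbf{s}) = \sum_{\bar x\in\mathcal{X}(\fv)}\int_{\redv^{-1}(\bar x)} H(x;\mathbf{s})^{-1}\,d\mathbf{g}_v.
\]

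Next I would stratify the special fiber by the combinatorial type of the boundary,
\[
\mathcal{X}(\fv) = \bigsqcup_{A\subset \mathcal{A}_v}\mathcal{D}^\circ_A(\fv).
\]
Directly from the definitions preceding \Cref{lem:local-coor-good-v}, for $\bar x\in\mathcal{D}^\circ_A(\fv)$ one has $\mathcal{A}_{\bar x}=A\setminus\mathcal{B}$ and $\mathcal{B}_{\bar x}=A\cap\mathcal{B}$. Substituting the formula of \Cref{lem:local-coor-good-v} into the display above and grouping the outer summation by the stratum containing $\bar x$, the factors $\tfrac{q_v-1}{q_v^{1+s_\alpha-\rho_\alpha}-1}$ indexed by $\alpha\in A\setminus\mathcal{B}$ depend only on $A$ and pull outside the inner sum, whereas the factors $q_v^{\beta(\bar x)(\rho_\beta-s_\beta)}$ for $\beta\in A\cap\mathcal{B}$ depend genuinely on $\bar x$ and must remain inside. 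This rearrangement produces exactly the claimed formula.

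The substantive content has already been packaged into \Cref{lem:local-coor-good-v}, whose proof uses the strict normal crossings hypothesis on $\bigcup_{\alpha\in\mathcal{A}\setminus\mathcal{B}}D_\alpha$, good reduction of $\mathcal{X}$ at $v\notin S$, and Hensel's lemma to set up étale coordinates on each fiber, and uses \Cref{assum}(ii)(3) to make the local norms $\lVert\mathsf{s}_\beta\rVert_v$ descend to well-defined functions on $\mathcal{X}(\fv)$. Accordingly, the remaining work for \Cref{thm:Denef} is essentially combinatorial and measure-theoretic bookkeeping, and I do not expect any genuine obstacle. The one small point to verify is the interchange of sum and integral, which is immediate from the finiteness of $\mathcal{X}(\fv)$ and the additivity of $d\mathbf{g}_v$ over the disjoint decomposition $\mathcal{X}(\ov)=\bigsqcup_{\bar x}\redv^{-1}(\bar x)$.
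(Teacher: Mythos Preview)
Your proposal is correct and follows essentially the same route as the paper: extend the integral to $\mathcal{X}(\ov)$ using properness and the measure-zero boundary, fiber over $\redv$, apply \Cref{lem:local-coor-good-v} to each residue disk, and then regroup by the stratum $\mathcal{D}^\circ_A(\fv)$ containing $\bar x$. Your identification $\mathcal{A}_{\bar x}=A\setminus\mathcal{B}$, $\mathcal{B}_{\bar x}=A\cap\mathcal{B}$ for $\bar x\in\mathcal{D}^\circ_A(\fv)$ is exactly the bookkeeping the paper leaves implicit in its phrase ``an appropriate rearrangement.''
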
 
\begin{proof}
	As $X(F_v)=\mathcal{X}(\ov)$ and the measure of $X(F_v)\setminus G(F_v)$ is zero with respect to the measure $\tau_{X,v}$, we have
	\begin{equation}\label{eqn:IntOverResidueClasses}
		\begin{split}
		 	\hat{H}_v(\Psi_{0}; \mathbf{s})&=\int_{\mathbf{x}\in G(F_v)} H(\mathbf{x};\mathbf{s})^{-1}  \> d \mathbf{g}_v  \\ &= \sum_{\tilde{x}\in \mathcal{X}(\fv)}\int_{\mathbf{x}\in \redv^{-1}(\tilde{x})} H(\mathbf{x};\mathbf{s})^{-1}  \> d \mathbf{g}_v 
		 	\\ &= \sum_{\tilde{x}\in \mathcal{X}(\fv)} \frac{1}{q_v^{\Dim(X)}}  \prod_{\beta\in \mathcal{B}_{\bar{x}}} q_v^{\beta(\bar{x})(\rho_\beta-s_\beta)}  \prod_{{\alpha}\in \mathcal{A}_{\bar{x}}} \frac{q_v-1}{q_v^{1+s_{\alpha}-\rho_{\alpha}}-1}
		\end{split}
	\end{equation}
	where the last equality follows from \Cref{lem:local-coor-good-v}. The formula holds by an appropriate rearrangement of this final sum.
\end{proof}
\begin{rem}
	Note that if the boundary divisor is geometrically reduced, i.e. $\mathcal{B}=\emptyset$, then $$\sum_{\bar{x}\in \mathcal{D}^\circ_A(\fv)} \prod_{\beta\in \mathcal{B}\cap A} q_v^{\beta(\bar{x})(\rho_\beta-s_\beta)}= \#\mathcal{D}^\circ_A(\fv)$$
	in \Cref{thm:Denef}. In this case, one obtains the same formula in \cite[Theorem 9.1]{CLYT}.
\end{rem}

We now provide an estimate for $\hat{H}_v(\Psi_{0}; \mathbf{s})$ for the good places $v$ using the Lang-Weil estimates. For $A\subset \mathcal{A}_v$ we define $\mathcal{D}_A:=\cap_{\alpha_v\in A}  \mathcal{D}_{\alpha_v}$.

\begin{lem}\label{lem:Lang-Weil}
	There exists a constant $C(X)$, which only depends on $X$, such that for all $v\notin S$ and all $A\subset\mathcal{A}_v$ we have the following estimates:
	\begin{enumerate}
		\item If $\# A= 1$ and $A\cap \mathcal{B}=\emptyset$, then either $ \mathcal{D}_A(\fv) =\emptyset$, or  
		$$|\# \mathcal{D}_A(\fv)-q_v^{\Dim X -1}| \leq C(X) q_v^{\Dim X -3/2};$$
		\item If $\# A= 1$ and $A\subset \mathcal{B}$, then 
		$$|\# \mathcal{D}_A(\fv)-q_v^{\Dim X -1}| \leq C(X) q_v^{\Dim X -3/2};$$
		\item If $\# A\geq 1$, then $\# \mathcal{D}_A(\fv) \leq C(X) q_v^{\Dim X -\# A}$.
	\end{enumerate}
\end{lem}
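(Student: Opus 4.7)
The plan is to deduce all three bounds from the Lang--Weil estimates after enlarging $S$ so that the generic-fibre geometry spreads well to the special fibre at every $v \notin S$.

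Before starting, I would enlarge $S$ so that for every $v \notin S$ the following hold: (a) for each $\alpha_v \in \mathcal{A}_v$, the Stein factorisation $\mathcal{D}_{\alpha_v} \to \spec \ov$ factors through an \'etale cover $\spec \mathcal{O}_{\alpha_v} \to \spec \ov$ whose residue field at $v$ is $\fv'$ of degree $f_{\alpha_v}$ over $\fv$; (b) for every $\beta \in \mathcal{B}$ the special fibre of $(\mathcal{D}_\beta)^{\red}$ is geometrically integral over $\fv$; (c) the degrees of all the subvarieties $\mathcal{D}_A \subseteq \mathcal{X}$, with respect to a fixed projective embedding of $\mathcal{X}$, are bounded by a single constant depending only on $X$. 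Each of these conditions fails at only finitely many places, so such an enlargement is possible.

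For (1), the scheme $\mathcal{D}_{\alpha_v}$ is smooth over $\ov$ by the SNC hypothesis in \Cref{assum}(i), so $\mathcal{D}_{\alpha_v} \otimes \fv$ is smooth over $\fv$. By (a) its Stein factorisation lands in $\spec \fv'$ with $[\fv':\fv]=f_{\alpha_v}$. If $f_{\alpha_v}>1$ there is no field embedding $\fv' \hookrightarrow \fv$, so $\mathcal{D}_{\alpha_v}(\fv)=\emptyset$; if $f_{\alpha_v}=1$, the special fibre is smooth and geometrically connected of dimension $\dim X - 1$, hence geometrically integral, and Lang--Weil yields the required asymptotic with a constant depending only on the uniformly bounded degree. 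For (2), assumption \Cref{assum}(ii)(1) says $D_\beta$ is geometrically irreducible, so $(D_\beta)^{\red}$ is geometrically integral of dimension $\dim X - 1$. By (b), $(\mathcal{D}_\beta)^{\red}\otimes\fv$ has the same property, and since $\mathcal{D}_\beta(\fv) = (\mathcal{D}_\beta)^{\red}(\fv)$, Lang--Weil gives the claimed bound.

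For (3), I would combine a dimension bound $\dim \mathcal{D}_A \le \dim X - \#A$ with the elementary fact that a variety of dimension $d$ and degree bounded by a constant has $\ll q_v^d$ points over $\fv$. When $A \cap \mathcal{B} = \emptyset$ the dimension bound is immediate from the SNC assumption. When $A \cap \mathcal{B} \neq \emptyset$ write $A = A'\cup A''$ with $A' = A \setminus \mathcal{B}$ and $A''\subseteq \mathcal{B}$, so $\mathcal{D}_A = \mathcal{D}_{A'} \cap \bigcap_{\beta \in A''}\mathcal{D}_\beta$; the SNC part has codimension $\#A'$, and since the $\mathcal{D}_{\alpha_v}$ and $\mathcal{D}_\beta$ are pairwise distinct irreducible components of the boundary divisor $\mathcal{D}$, no irreducible component of $\mathcal{D}_{A'}$ can be contained in any $\mathcal{D}_\beta$ for $\beta \in A''$; hence each successive intersection with a $\mathcal{D}_\beta$ strictly drops the dimension by one.

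The main obstacle is the last claim, namely verifying that the intersections with the non-geometrically-reduced components $\mathcal{D}_\beta$ genuinely cut down the dimension by $1$ at each step. Because the $\mathcal{D}_\beta$ are not part of the SNC collection, their incidence with the SNC strata is not controlled a priori; the key input is that they are distinct irreducible components of the same Cartier boundary divisor, so a containment of a strata in some $\mathcal{D}_\beta$ would force an identification of irreducible components, contradicting distinctness. Once this transversality step is settled, the proof reduces to Lang--Weil plus uniform degree bounds, and the constant $C(X)$ is obtained by taking the maximum over the finitely many types of configurations $A\subseteq\mathcal{A}$.
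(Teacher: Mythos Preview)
Your arguments for (1) and (2) are correct and agree with the paper's proof: in each case one checks that the relevant special fibre (respectively, its reduced structure) is geometrically integral of dimension $\dim X-1$, and then Lang--Weil applies with a constant controlled by the uniformly bounded degree.

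For (3) there is a genuine gap in your justification of the codimension bound $\dim\mathcal{D}_A\le\dim X-\#A$. The claim that ``no irreducible component of $\mathcal{D}_{A'}$ can be contained in any $\mathcal{D}_\beta$'' does \emph{not} follow from the $\mathcal{D}_{\alpha_v}$ and $\mathcal{D}_\beta$ being pairwise distinct irreducible components of $\mathcal{D}$: that reasoning only excludes one boundary \emph{divisor} from lying inside another. As soon as $\#A'\ge 2$, a component $Z$ of the SNC stratum $\mathcal{D}_{A'}$ has codimension at least $2$, and nothing in ``distinct components'' forbids $Z\subset\mathcal{D}_\beta$ --- in $\mathbb{P}^3$, the line $\{x_0=x_1=0\}$ is an intersection of two distinct hyperplanes yet sits on plenty of further hypersurfaces. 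The same difficulty recurs at each subsequent intersection with another $\mathcal{D}_{\beta'}$, where the running intersection is no longer even smooth. The paper's proof is equally terse on this point, so you are not diverging from its strategy, but the explicit transversality argument you supply is incorrect as written. A correct approach would need additional input; for instance, \Cref{assum}(ii)(2) gives that a smooth $F$-stratum $Z$ has $Z(F_v)\neq\emptyset$ for infinitely many $v$ whereas $D_\beta(F_v)=\emptyset$, so $Z\not\subset D_\beta$ --- this handles the first intersection, but pushing it through iterated intersections with several $D_\beta$'s requires further work.
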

\begin{proof}
	Suppose that $\# A=1$ and $A\cap \mathcal{B}=\emptyset$. If $ \mathcal{D}_A(\fv) \neq \emptyset$, then $\mathcal{D}_{A}\otimes \fv$ is geometrically integral. If $A:=\{\beta\}\subset \mathcal{B}$, then ${\mathcal{D}}_{v,\beta}:=\mathcal{D}_{v,\beta}\otimes \fv$ is a geometrically irreducible scheme that is not reduced. Thus the reduced structure $\overline{\mathcal{D}}^{\text{red}}_{v,\beta}\otimes \fv$ is a geometrically integral variety of dimension $n$, such that $\overline{\mathcal{D}}_{v,\beta}^{\text{red}}(\fv)=\overline{\mathcal{D}}_{v,\beta}(\fv)$.
	The result now follows by an application of the Lang-Weil estimates \cite[Theorem 1]{LangWeil}.
\end{proof}

We now give an estimate for the local Fourier transform at the trivial character. We follow a similar approach to \cite[Theorem 9.1]{CLYT} and \cite[\S 7]{Campana}. Recall from \Cref{assum}(iii) that when $\mathcal{B}\neq \emptyset$ we have defined the value
$$e_X:=\text{sup}_{\beta\in \mathcal{B}}\{m\in \z_{>0}:\>  \lVert \mathsf{s}_\beta \rVert_v(x)= q_v^{-m}, \text{ for some } v\in\Omega_F \text{ and } x\in X(F_v)\},$$ 
and when $\mathcal{B}= \emptyset$ we set $e_X=1$. By \Cref{assum}(iii), $e_X$ is finite.
\begin{prop}\label{prop:H_v-estimate}
	For all $\epsilon>0$ sufficiently small, there exists a constant $C(\epsilon)$, depending on $\epsilon$, such that for any $\mathbf{s}\in \Omega_{-1/(2e_X)+\epsilon}$  and any place $v\notin S$, 
	$$ \left| \hat{H}_v(\mathbf{0};\mathbf{s})\prod_{\alpha\in \mathcal{A}\setminus \mathcal{B}} \prod_{\{{\alpha_v} : \> \alpha_v\mid \alpha\}} (1-q_v^{-f_{\alpha_v}(1+s_\alpha-\rho_\alpha)}) \prod_{\beta \in \mathcal{B}} (1-q_v^{-(1+s_\alpha-\rho_\alpha)}) -1 \right| \leq C(\epsilon) q_v^{-1-\epsilon}$$
\end{prop}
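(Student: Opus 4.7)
The approach is to apply the geometric Denef formula \eqref{eqn:denef} and multiply both sides by the product $P_v(\mathbf{s})$ appearing in the statement. Writing $T_\alpha := (q_v-1)/(q_v^{1+s_\alpha-\rho_\alpha}-1)$ and noting that $f_\beta = 1$ for $\beta \in \mathcal{B}$ by \Cref{assum}(ii)(1), we may view
\begin{equation*}
P_v(\mathbf{s}) \;=\; \prod_{\alpha_v \in \mathcal{A}_v}\bigl(1 - q_v^{-f_{\alpha_v}(1+s_\alpha - \rho_\alpha)}\bigr).
\end{equation*}
A preliminary reduction handles the factors with $f_{\alpha_v} \geq 2$: on the domain $\Omega_{-1/(2e_X)+\epsilon}$ one has $\re(1+s_\alpha-\rho_\alpha) > 1/2 + \epsilon$, so each such factor is $1 + O(q_v^{-1-\epsilon})$, and moreover $\mathcal{D}_{\alpha_v}(\fv) = \emptyset$ for such $\alpha_v$ (an $\fv$-point would force the residue field $\mathbb{F}_{\alpha_v}$ to embed into $\fv$), so the corresponding terms in Denef's sum vanish. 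Thus these factors can be discarded at the cost of $O(q_v^{-1-\epsilon})$.

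The heart of the argument is the elementary identity, valid for $f_{\alpha_v} = 1$,
\begin{equation*}
T_\alpha \cdot \bigl(1 - q_v^{-(1+s_\alpha - \rho_\alpha)}\bigr) \;=\; (q_v - 1)\, q_v^{-(1+s_\alpha - \rho_\alpha)},
\end{equation*}
which collapses the pole of $T_\alpha$. Expanding $P_v(\mathbf{s}) = \sum_{B \subset \mathcal{A}_v}(-1)^{|B|}\prod_{\alpha_v \in B} q_v^{-f_{\alpha_v}(1+s_\alpha-\rho_\alpha)}$ and multiplying term-by-term with Denef's formula, the $A = B = \emptyset$ piece yields $\#(\mathcal{X}\setminus \mathcal{D})(\fv)/q_v^{\Dim X}$, which by inclusion–exclusion and \Cref{lem:Lang-Weil} equals $1 - \sum_{\alpha_v : f_{\alpha_v}=1} q_v^{-1} + O(q_v^{-3/2})$; the $|A| = |B| = 1$ cross-terms with $A = B = \{\alpha_v\}$, $\alpha_v \notin \mathcal{B}$, produce via the identity above exactly $(q_v-1)q_v^{-(1+s_\alpha-\rho_\alpha)} \cdot \#\mathcal{D}^\circ_{\alpha_v}(\fv)/q_v^{\Dim X}$, which, by Lang-Weil, cancels the linear defect from the $A=\emptyset$ term up to $O(q_v^{-1-\epsilon})$.

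For $\beta \in \mathcal{B}$, I would rewrite the corresponding piece of Denef's sum as
\begin{equation*}
\sum_{\bar{x} \in \mathcal{D}_\beta(\fv)} q_v^{\beta(\bar{x})(\rho_\beta - s_\beta)} \;=\; q_v^{\rho_\beta - s_\beta}\bigl(q_v^{\Dim X-1} + O(q_v^{\Dim X - 3/2})\bigr) + E_\beta,
\end{equation*}
where $E_\beta$ collects $\bar{x}$ with $\beta(\bar{x}) \geq 2$. By \Cref{assum}(ii)(4) the number of such points is $O(q_v^{\Dim X - 3/2})$, and each summand is bounded by $|q_v^{\beta(\bar{x})(\rho_\beta - s_\beta)}| \leq q_v^{\beta(\bar{x})/(2e_X) - \beta(\bar{x})\epsilon} \leq q_v^{1/2 - \epsilon}$ using $\beta(\bar{x}) \leq e_X$ on our domain; thus $|E_\beta| \leq C q_v^{\Dim X - 1 - \epsilon}$. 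Combined with the uniform bound $|T_\beta| \ll q_v^{1/(2e_X) - \epsilon}$, this produces the cancellation with the factor $(1 - q_v^{-(1+s_\beta - \rho_\beta)})$ in $P_v$ to the desired order. For $|A| \geq 2$, \Cref{lem:Lang-Weil}(3) gives $\#\mathcal{D}_A(\fv)/q_v^{\Dim X} \ll q_v^{-|A|}$, while $|\prod_{\alpha \in A}T_\alpha|\,|\prod_{\beta \in A\cap\mathcal{B}}q_v^{\beta(\bar{x})(\rho_\beta-s_\beta)}| \ll q_v^{|A|/2}$, so the total contribution is $O(q_v^{-|A|/2}) = O(q_v^{-1-\epsilon})$ for $\epsilon$ small.

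The main obstacle is the careful bookkeeping of the $\mathcal{B}$-contributions and of the cross-terms between Denef's expansion and $P_v(\mathbf{s})$: the weights $q_v^{\beta(\bar{x})(\rho_\beta - s_\beta)}$ have no pointwise control and must be handled globally via the constant $e_X$. The precise domain $\Omega_{-1/(2e_X)+\epsilon}$ is forced by this: it is the largest region on which these weights remain uniformly bounded by $q_v^{1/2-\epsilon}$, which in turn is exactly what makes the $\beta(\bar{x}) \geq 2$ points absorbable into the error.
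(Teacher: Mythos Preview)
Your strategy is the same as the paper's: both arguments run through Denef's formula (\Cref{thm:Denef}), stratify by $A\subset\mathcal{A}_v$, and feed in \Cref{lem:Lang-Weil} together with \Cref{assum}(ii)(4) and the bound $\beta(\bar x)\le e_X$. The paper organises the computation more simply: it first shows
\[
\hat H_v(\mathbf{0};\mathbf{s})=1+\sum_{\alpha_v\in\mathcal{A}_v} q_v^{-f_{\alpha_v}(1+s_\alpha-\rho_\alpha)}+O(q_v^{-1-\epsilon})
\]
by estimating each $A$-stratum separately, and only then observes that this matches the expansion of $P_v(\mathbf{s})^{-1}$ to the required order. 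Your plan to expand $P_v$ and track cross-terms $(A,B)$ is equivalent but more laborious.

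There is one genuine slip in your bookkeeping. You claim the $A=B=\emptyset$ piece
\[
\#(\mathcal{X}\setminus\mathcal{D})(\fv)/q_v^{\Dim X}
\]
equals $1-\sum_{\alpha_v:f_{\alpha_v}=1}q_v^{-1}+O(q_v^{-3/2})$ ``by inclusion--exclusion and Lang--Weil''. But Lang--Weil for $\mathcal{X}$ gives only an $O(q_v^{-1/2})$ error, which is too coarse. What the paper uses instead is that $\mathcal{G}\otimes\fv$ is a form of $\mathbb{G}_{a,\fv}^n$ over the \emph{perfect} field $\fv$, hence $\mathcal{G}\otimes\fv\cong\mathbb{G}_{a,\fv}^n$ and $\#\mathcal{G}(\fv)=q_v^{\Dim X}$ exactly. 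So the $A=\emptyset$ stratum contributes exactly $1$, and the cancellation you need is not with a ``linear defect'' here but between the linear terms $-\sum_{\alpha_v}q_v^{-f_{\alpha_v}(1+s_\alpha-\rho_\alpha)}$ coming from $D_\emptyset\cdot P_v=P_v$ and the $|A|=1$ contributions.

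A second point: your reference to $|T_\beta|\ll q_v^{1/(2e_X)-\epsilon}$ suggests you are placing a factor $(q_v-1)/(q_v^{1+s_\beta-\rho_\beta}-1)$ in the $\beta\in\mathcal{B}$ strata. By \Cref{lem:local-coor-good-v} there is none: for $\bar x$ with $\mathcal{A}_{\bar x}=\emptyset$ and $\mathcal{B}_{\bar x}=\{\beta\}$ the residue-class integral is simply $q_v^{-\Dim X}q_v^{\beta(\bar x)(\rho_\beta-s_\beta)}$, because $\lVert\mathsf{s}_\beta\rVert_v$ is constant on residue classes (\Cref{assum}(ii)(3)) and no geometric series arises. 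Your own estimate of $\sum_{\bar x}q_v^{\beta(\bar x)(\rho_\beta-s_\beta)}$ then gives the $\{\beta\}$-contribution as $q_v^{-(1+s_\beta-\rho_\beta)}+O(q_v^{-1-\epsilon})$ directly, with no $T_\beta$ in sight.

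With these two fixes the argument goes through and coincides with the paper's.
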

\begin{proof}
	We study the right hand side of formula \eqref{eqn:denef}, by estimating the contributions of every possible $A\subset \mathcal{A}_v$.
	\begin{itemize} [itemsep=4pt,parsep=1pt,topsep=4pt]
		\item If $A=\emptyset$, then $\mathcal{D}_A^\circ(\fv) = (\mathcal{G}\otimes \fv)(\fv)$. As $\fv$ is a perfect field, and $\mathcal{G}\otimes \fv$ is a form of $\mathbb{G}_{a,\fv}^n$, we deduce that $\mathcal{G}\otimes \fv\cong \mathbb{G}_{a,\fv}^n$ since all forms of $\ga$ over a perfect field are trivial; this implies that $(\mathcal{G}\otimes \fv)(\fv)=q_v^n$. Therefore, this contributes to $1$ a the right hand side of \eqref{eqn:denef}.
		\item If $A=\{{\alpha_v}\}$,  ${\alpha_v} \notin \mathcal{B}$, and  $\mathcal{D}_{{\alpha_v}}(\fv)\neq \emptyset$, then by \Cref{lem:Lang-Weil} the contribution we get is
		$$q_v^{-(1+s_{\alpha} -\rho_{\alpha})}+O_X(q_v^{-3/2+1/(2e_X)-\epsilon}),$$
		when  $\re(s_\alpha)-\rho_\alpha>-1/(2e_X)+\epsilon$, where $\alpha\in \mathcal{A}$ is the index lying below $\alpha_v$. Note that as $\mathcal{D}_{{\alpha_v}}(\fv)\neq \emptyset$ and $D_{\alpha_v}$ is smooth, we have that $D_{\alpha_v}$ is geometrically integral so that $f_{\alpha_v}=1$.
		\item If $A=\{\beta\}\subset \mathcal{B}$, then by \Cref{assum}(ii),
		$$\#\{\bar{x}\in D_\beta (\fv): \> \lVert \mathsf{s}_\beta \rVert_v(\bar{x})=q_v^{-1} \}=q_v^{\dim(X)-1}+O(q_v^{\dim(X)-3/2}),$$
		and $\lVert \mathsf{s}_\beta \rVert_v(\bar{x})\leq q_v^{-e_X}$, for all $x\in X(F_v)$ and $\beta \in \mathcal{B}$.
		Therefore, by \Cref{lem:Lang-Weil}, when $\re(s_\beta)-\rho_\beta>-1/(2e_X)+\epsilon$ the contribution we get is
		$$q_v^{-(1+s_{\beta} -\rho_{\beta})}+O_X(q_v^{-1-e_X \epsilon}).$$
		
		\item Finally, if $\# A >2$, then by \Cref{lem:Lang-Weil} we get a contribution of
		$$O_X(q_v^{-3/2-e_X \epsilon}).$$
	\end{itemize}
	Therefore, we conclude that when $\re(s_\alpha)-\rho_\alpha>-1/(2e_X)+\epsilon$ for all $\alpha\in \mathcal{A}$, we have
	\begin{equation*}
		\begin{split}
			\hat{H}_v(\Psi_0;\mathbf{s}) &= 1+ \sum_{\alpha\in \mathcal{A}\setminus \mathcal{B}} \sum_{\{{\alpha_v} : \> \alpha_v\mid \alpha,\> f_{\alpha_v}=1\}} q_v^{-(1+s_\alpha-\rho_\alpha)} + \sum_{\beta\in \mathcal{B}} q_v^{-(1+s_\beta-\rho_\beta)}+O(q_v^{-1-\epsilon})
			\\ &= 1+ \sum_{\alpha\in \mathcal{A}\setminus \mathcal{B}} \sum_{\{{\alpha_v} : \> \alpha_v\mid \alpha\}} q_v^{-f_{\alpha_v}(1+s_\alpha-\rho_\alpha)} + \sum_{\beta\in \mathcal{B}} q_v^{-(1+s_\beta-\rho_\beta)}+O(q_v^{-1-\epsilon}),
		\end{split}
	\end{equation*}
	using the fact that for all $\alpha_v$ with $f_{\alpha_v}=[F_{{\alpha_v}}: F_v]>1$ we can absorb the contribution to the error term. The result now follows.
\end{proof}

For $v\in \Omega_F$, let
$\zeta_{F,v}(s)=(1-q_v^{-s})^{-1}$ be the local zeta function of $F$. We define
the zeta function of $F$ as an Euler product
$$\zeta_F(s):= \prod_{v\in \Omega_F} \zeta_{F,v}(s) = \prod_{v\in \Omega_F} (1-q_v^{-s})^{-1},$$
which converges absolutely for $\re(s)>1$ and has an analytic continuation to the whole of $\cx$; the poles of $\zeta_F(s)$ are $s=1+2m\pi i/\log(q)$ and $s=2m\pi i/\log(q)$ for $m\in \z$ (see \cite[Theorem 5.9]{Rosen}). Finally, by the Riemann hypothesis for function fields, $\zeta_F(s)$ is holomorphic and non-vanishing for $1/2<\re(s)<1$ (e.g., see \cite[Theorem 5.10]{Rosen}). Note that $\zeta_F(s)$ is periodic with respect to the imaginary axis, with period ${2\pi i}/{\log(q)}$. 

Define $F_\alpha$ to be the separable closure of $F$ in the function field of $D_\alpha$ for $\alpha \in \mathcal{A}$. Note that for $\beta \in \mathcal{B}$, we have $F_\beta=F$ as $D_\beta$ is geometrically irreducible.

\begin{cor}\label{cor:FT-exression}
	The function 
	$$\phi (\mathbf{s}):=\hat{H}(\mathbf{0};\mathbf{s})\prod_{\alpha \in \mathcal{A}} \zeta_{F_\alpha}(1+s_\alpha-\rho_\alpha )^{-1}$$
	is holomorphic on $\Omega_{-1/(2e_X)}$. 
\end{cor}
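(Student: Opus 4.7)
The plan is to realize $\phi$ as a regularized Euler product over places of $F$ and conclude holomorphicity from the explicit estimate in \Cref{prop:H_v-estimate}. First, I would expand each Dedekind zeta function $\zeta_{F_\alpha}$ as its Euler product and group local factors by places of $F$. For $\alpha \in \mathcal{A} \setminus \mathcal{B}$, the decomposition $F_\alpha \otimes_F F_v \cong \prod_{w \mid v} F_{\alpha,w}$ corresponds to the decomposition of $D_\alpha \times_F F_v$ into its geometrically integral $F_v$-components $\{D_{\alpha_v}\}_{\alpha_v \mid \alpha}$, with residue degrees $f_{\alpha_v} = [F_{\alpha,w}:F_v]$. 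For $\beta \in \mathcal{B}$, the geometric irreducibility imposed in \Cref{assum}(ii) forces $F_\beta = F$, so the contribution at $v$ is the single factor $(1 - q_v^{-(1+s_\beta-\rho_\beta)})^{-1}$. Putting this together gives a formal identity $\phi(\mathbf{s}) = \prod_v \phi_v(\mathbf{s})$ with
\begin{equation*}
\phi_v(\mathbf{s}) = \hat{H}_v(\mathbf{0};\mathbf{s}) \prod_{\alpha \in \mathcal{A} \setminus \mathcal{B}} \prod_{\alpha_v \mid \alpha}(1 - q_v^{-f_{\alpha_v}(1+s_\alpha - \rho_\alpha)}) \prod_{\beta \in \mathcal{B}}(1 - q_v^{-(1+s_\beta-\rho_\beta)}),
\end{equation*}
which is precisely the quantity estimated in \Cref{prop:H_v-estimate}.

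Next I would fix $\epsilon > 0$ and analyze the $\phi_v$'s on $\Omega_{-1/(2e_X)+\epsilon}$. Each individual $\phi_v$ is holomorphic there: the polynomial correction factors are entire, and $\hat{H}_v(\mathbf{0};\cdot)$ is holomorphic on $\Omega_{-1} \supset \Omega_{-1/(2e_X)+\epsilon}$ by \Cref{lem:trivial-bound-localTransform} (local absolute convergence of the defining integral permits differentiation under the integral sign). For $v \notin S$, \Cref{prop:H_v-estimate} reads $|\phi_v(\mathbf{s}) - 1| \leq C(\epsilon)\, q_v^{-1-\epsilon}$ uniformly on $\Omega_{-1/(2e_X)+\epsilon}$. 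Since $\sum_{v \notin S} q_v^{-1-\epsilon} \leq \zeta_F(1+\epsilon) < \infty$, the product $\prod_v \phi_v$ converges absolutely and locally uniformly on $\Omega_{-1/(2e_X)+\epsilon}$, and hence defines a holomorphic function there by the standard result on infinite products of holomorphic functions.

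It remains to identify this limit with $\phi$. On any subdomain where both the Euler product for $\hat{H}$ and the Euler expansions of the $\zeta_{F_\alpha}$'s converge absolutely (e.g.\ on $\Omega_{\epsilon'}$ for any $\epsilon' > 0$, which lies inside the tube of \Cref{Prop:conv-zeta-tube}), Fubini for absolutely convergent infinite products gives the rearrangement identity $\phi = \prod_v \phi_v$. Uniqueness of holomorphic continuation then promotes this to an equality on all of $\Omega_{-1/(2e_X)+\epsilon}$, and letting $\epsilon \downarrow 0$ together with $\Omega_{-1/(2e_X)} = \bigcup_{\epsilon > 0}\Omega_{-1/(2e_X)+\epsilon}$ yields the claim.

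The main bookkeeping obstacle is the Euler-product identification of the first paragraph: one must carefully match the components $\{D_{\alpha_v}\}_{\alpha_v \mid \alpha}$ of $D_\alpha \times_F F_v$ and their residue degrees $f_{\alpha_v}$ with the places of $F_\alpha$ above $v$, handling the inseparable components $\beta \in \mathcal{B}$ by means of \Cref{assum}(ii). Once the correction factors of \Cref{prop:H_v-estimate} are recognized as exactly the inverses of the local Euler factors of the $\zeta_{F_\alpha}(1+s_\alpha - \rho_\alpha)$, the remainder is a routine infinite-product convergence argument.
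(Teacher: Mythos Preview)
Your proposal is correct and follows essentially the same route as the paper's proof: identify the correction factors from \Cref{prop:H_v-estimate} with the inverse local Euler factors of $\prod_{\alpha\in\mathcal{A}}\zeta_{F_\alpha}(1+s_\alpha-\rho_\alpha)$ via the decomposition $F_\alpha\otimes_F F_v\cong\prod_{\alpha_v\mid\alpha}F_{\alpha_v}$ (using $F_\beta=F$ for $\beta\in\mathcal{B}$), deduce absolute convergence of $\prod_v\phi_v$ on $\Omega_{-1/(2e_X)}$ from the bound $|\phi_v-1|\leq C(\epsilon)q_v^{-1-\epsilon}$, and invoke uniqueness of analytic continuation to match this with $\phi$. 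One minor terminological slip: the $F_v$-irreducible components $D_{\alpha_v}$ of $D_\alpha\times_F F_v$ need not themselves be geometrically integral (they become so only after base change to $F_{\alpha_v}$), but this does not affect the argument since the identification of places of $F_\alpha$ above $v$ with the $\alpha_v$'s and the matching of residue degrees $f_{\alpha_v}$ still holds.
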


\begin{proof}
	First, since $F_\alpha \otimes_F F_v \simeq \prod_{\{\alpha_v : \> \alpha_v \mid \alpha\}} F_{{\alpha_v}}$
	for $v\in \Omega_F$, we see that
	$$\zeta_{F_\alpha,v}(s):= \prod_{\{ w\in \Omega_{F_\alpha}:{w\mid v}\}} \frac{1}{1-q_w^{-s}} = \prod_{\{\alpha_v : \> \alpha_v \mid \alpha\}} \frac{1}{1-q_v^{-f_{\alpha_v} s}}.$$
	Therefore,  
	$\zeta_{F_\alpha}(s)= \prod_{v\in \Omega_{F}} \zeta_{F_\alpha,v}(s).$
	For $v\in \Omega_F$, we can now rewrite the product
	$$\prod_{\alpha\in \mathcal{A}\setminus \mathcal{B}} \left( \prod_{\{\alpha_v : \> \alpha_v \mid \alpha\}} (1-q_v^{-f_{\alpha_v}(1+s_\alpha-\rho_\alpha)}) \right) \prod_{\beta \in \mathcal{B}} (1-q_v^{-(1+s_\beta-\rho_\beta)}),$$
	appearing in the statement of \Cref{prop:H_v-estimate}, as
	$$\prod_{\alpha\in \mathcal{A}\setminus \mathcal{B}} \zeta_{F_\alpha,v}(1+s_\alpha-\rho_\alpha )^{-1} \prod_{\beta \in \mathcal{B}} \zeta_{F,v}(1+  s_\beta-\rho_\beta)^{-1}= \prod_{\alpha\in \mathcal{A}} \zeta_{F_\alpha,v}(1+s_\alpha-\rho_\alpha )^{-1}$$
	(the equality holds as $F_\beta=F$ for all $\beta\in \mathcal{B}$.)

	For any place $v$ of $F$, we define for $\mathbf{s}\in \Omega_{0}$ the function
	$$\phi_v(\mathbf{s}):= \hat{H}_v(\mathbf{0};\mathbf{s}) \prod_{\alpha\in \mathcal{A}} \zeta_{F_\alpha,v}(1+s_\alpha-\rho_\alpha )^{-1}. $$
	By \Cref{prop:H_v-estimate}, we deduce that the Euler product
	$\prod_v \phi_v(\mathbf{s})$ converges absolutely to a holomorphic function $\phi$ on $\Omega_{-1/(2e_X)}$. Thus, for any $\mathbf{s}\in \Omega_{0}$, we have
	\begin{equation*}
		\begin{split}
			\hat{H}(\mathbf{0};\mathbf{s})&=\prod_v \phi_v(\mathbf{s})  \prod_{\alpha\in \mathcal{A}} \zeta_{F_\alpha,v}(1+s_\alpha-\rho_\alpha )
									\\&= \phi(\mathbf{s})  \prod_{\alpha \in \mathcal{A}} \zeta_{F_\alpha}(1+s_\alpha-\rho_\alpha )
		\end{split}
	\end{equation*}
	By the uniqueness of the analytic continuation, we obtain our result.
\end{proof}

Let $\mathcal{L}$ be a big line bundle on $X$ and let $\lambda=(\lambda_\alpha)\in \bigoplus_\alpha \cx [D_\alpha]$ be the corresponding vector in $\pic_\cx (X)$. By \Cref{prop:picX}, we can write $$\lambda_\alpha=\lambda'_\alpha/p'_\alpha \text{ where } \lambda'_\alpha,p'_\alpha\in \z_{\geq 0} \text{ are coprime} \text{ and } p'_\alpha=p^{k_\alpha} \text{ for some } k_\alpha\in \z_{>0} .$$ 
We define $a_\lambda:=\max\{\rho_\alpha/\lambda_\alpha\}$, $\mathcal{A}_\lambda :=  \{\alpha \in \mathcal{A} \> | \> \rho_\alpha =a_\lambda \lambda_\alpha\}$, $b_\lambda:=\# \mathcal{A}_\lambda$,  and
$$ \quad g_\lambda:=\frac{ \gcd(\{\lambda'_\alpha \}_{\alpha\in \mathcal{A}})}{\lcm(\{p'_\alpha \}_{\alpha\in \mathcal{A}})},    \quad d_\lambda:= \frac{ \gcd(\{\lambda'_\alpha\}_{\alpha\in \mathcal{A}_\lambda})}{\lcm(\{p'_\alpha\}_{\alpha\in \mathcal{A}})}.$$
Define the domain $$\Omega:=\left\{ (s_\alpha) \in \pic(X)_\cx \> : \>   0 \leq \text{Im}(s_\alpha)\leq \frac{2\pi i}{\log(q)}, \text{ for all } \alpha \right\}.$$

\begin{cor}  \label{cor:lambda-pole-order}
	Let $\lambda=(\lambda_\alpha)\in \bigoplus_\alpha \cx [D_\alpha] \cong \pic(X)_\cx$ be the vector corresponding to a big line bundle $\mathcal{L}_\lambda$ and let $H_\lambda(\cdot)= H(\cdot;\lambda)$ be the associated height function. Then
	$$ \hat{H}(\mathbf{0};s\lambda) = \int_{\mathbf{x}\in G(\ad)} H_\lambda(\mathbf{x})^{-s}  \> d \mathbf{g} $$
	converges absolutely for $\{s\in \cx \> :\>  \re(s)>a_\lambda\}$ and has a meromorphic continuation to $\re(s)>a_\lambda -\delta$ for some $\delta>0$. When $s$ is restricted to $\Omega$, the poles of largest real value are
	$$s_{m_\alpha} = a_\lambda + m_\alpha \frac{2\pi i}{\lambda_\alpha \log(q)}, \quad \>\alpha \in \mathcal{A}_\lambda, \> m_\alpha \in \{0,\ldots,\lceil \lambda_\alpha \rceil -1 \}.$$
	Moreover, the largest pole order is $b_\lambda$, and a pole has order $b_\lambda$ if and only if it is of the form $$s_j = a_\lambda + j \frac{2\pi i}{d_\lambda \log(q)},$$
	for $j\in J_\lambda:=\{0,\ldots, \lceil d_\lambda \rceil -1\}$. 
\end{cor}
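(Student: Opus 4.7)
The plan is to leverage \Cref{cor:FT-exression}, which factors the global Fourier transform at the trivial character as
\[
\hat{H}(\mathbf{0};\mathbf{s})=\phi(\mathbf{s})\prod_{\alpha\in\mathcal{A}}\zeta_{F_\alpha}(1+s_\alpha-\rho_\alpha),
\]
with $\phi$ holomorphic on $\Omega_{-1/(2e_X)}$. Substituting $\mathbf{s}=s\lambda$ gives
\[
\hat{H}(\mathbf{0};s\lambda)=\phi(s\lambda)\prod_{\alpha\in\mathcal{A}}\zeta_{F_\alpha}(1+s\lambda_\alpha-\rho_\alpha).
\]
Absolute convergence for $\re(s)>a_\lambda$ is immediate, since each factor $\zeta_{F_\alpha}(w)$ converges absolutely for $\re(w)>1$, i.e. $\re(s)>\rho_\alpha/\lambda_\alpha$, and the maximum of these bounds is precisely $a_\lambda$. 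For the meromorphic continuation I would observe that $\phi(s\lambda)$ is holomorphic on $\re(s)>a_\lambda-\delta$ for $\delta$ small enough (since $a_\lambda\lambda_\alpha=\rho_\alpha$ for some $\alpha$ and $\phi$ is holomorphic up to $\rho_\alpha-1/(2e_X)$ in each coordinate), while the Dedekind zeta functions $\zeta_{F_\alpha}$ are meromorphic on all of $\mathbb{C}$.

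For the pole analysis, I would use that $\zeta_{F_\alpha}(w)$ has simple poles exactly at $w\in 1+(2\pi i/\log q_{F_\alpha})\mathbb{Z}$, where $q_{F_\alpha}$ denotes the cardinality of the constant field of $F_\alpha$. Translating back, the poles of $\zeta_{F_\alpha}(1+s\lambda_\alpha-\rho_\alpha)$ lie on the vertical line $\re(s)=\rho_\alpha/\lambda_\alpha$, so the poles of largest real part come precisely from $\alpha\in\mathcal{A}_\lambda$. Restricting $s$ to the strip $\Omega$ of imaginary height $2\pi/\log q$, the poles contributed by a given $\alpha\in\mathcal{A}_\lambda$ are the $s_{m_\alpha}=a_\lambda+m_\alpha\cdot 2\pi i/(\lambda_\alpha\log q)$ for $m_\alpha\in\{0,\ldots,\lceil\lambda_\alpha\rceil-1\}$, yielding the first claimed family.

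The order of a pole $s_0=a_\lambda+t\cdot 2\pi i/\log q$ in $\Omega$ then equals the number of indices $\alpha\in\mathcal{A}_\lambda$ for which $\lambda_\alpha t\in\mathbb{Z}$, and is therefore bounded above by $b_\lambda=|\mathcal{A}_\lambda|$. Writing $\lambda_\alpha=\lambda'_\alpha/p'_\alpha$ in lowest terms with $p'_\alpha$ a $p$-power, the condition ``$\lambda_\alpha t\in\mathbb{Z}$ for all $\alpha\in\mathcal{A}_\lambda$'' translates via $\lcm(a/b,c/d)=\lcm(a,c)/\gcd(b,d)$ to $t\in (1/d_\lambda)\mathbb{Z}$, whose image in $[0,1)$ has cardinality $\lceil d_\lambda\rceil$; this gives exactly the family $s_j=a_\lambda+j\cdot 2\pi i/(d_\lambda\log q)$, $j\in J_\lambda$, as the poles of maximal order. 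Non-vanishing of $c_\lambda$ follows because $\phi$ is an absolutely convergent Euler product (with non-vanishing local factors) on the closure of its domain, and the residues of the $\zeta_{F_\alpha}$ at their poles are non-zero.

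The main obstacle I anticipate is the arithmetic bookkeeping around $d_\lambda$: one must carefully verify the $\lcm/\gcd$ identity for the fractional exponents $\lambda'_\alpha/p'_\alpha$, and reconcile the a priori period $2\pi i/\log q_{F_\alpha}$ of $\zeta_{F_\alpha}$ with the cleaner period $2\pi i/(\lambda_\alpha\log q)$ in the statement, by checking that any additional poles arising from a non-trivial constant-field extension of $F_\alpha/F$ are already among the listed $s_{m_\alpha}$.
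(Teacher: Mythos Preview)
Your approach is essentially the same as the paper's: both invoke \Cref{cor:FT-exression} to factor $\hat H(\mathbf 0; s\lambda)$ as a holomorphic function times $\prod_{\alpha}\zeta_{F_\alpha}(1+s\lambda_\alpha-\rho_\alpha)$, and then read off the poles of largest real part from the zeta factors indexed by $\alpha\in\mathcal{A}_\lambda$. You are in fact more careful than the paper's terse argument about two points it leaves implicit---the non-vanishing of $\phi$ at the relevant poles (so that they are not cancelled), and the potential discrepancy between $q_{F_\alpha}$ and $q$ when $F_\alpha$ has a larger constant field---so your identified ``obstacle'' is a genuine bookkeeping point rather than a gap in the strategy.
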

\begin{proof}
	Note that $\zeta_{F_\alpha}(1+\lambda_\alpha s -\rho_\alpha)$ has simple poles of largest real value at $$s=\frac{\rho_\alpha}{\lambda_\alpha}+\frac{2\pi i n_\alpha}{\lambda_\alpha \log(q)}, \quad n_\alpha\in \z_{>0}.$$ 
	By \Cref{cor:FT-exression}, for all $\alpha$, each of these are poles of $\hat{H}(\mathbf{0};s\lambda)$ as well. By definition, the poles of largest real value have $\re(s)=a_\lambda$ which come from $\zeta_{F_\alpha}$ for $\alpha \in \mathcal{A}_\lambda$; for these poles to have order $b_\lambda$, they have to be poles of $\zeta_{F_\alpha}(1-\lambda_\alpha s -\rho_\alpha)$, for all $\alpha \in \mathcal{A}_\lambda$, which happens if and only of they are of the form
	$$s = a_\lambda + j \frac{2\pi i}{d_\lambda \log(q)},$$
	where $j\in \{0,\ldots, \lceil d_\lambda \rceil -1\}$.
\end{proof}

\section{Fourier transforms at the non-trivial characters}\label{sec:FT-NonTrivialChar}

\subsection{The Fourier transform at other characters: split case}
We follow the same notation and assumptions as in  \S\ref{sec:FT-TrivialChar}. In this section, we estimate the local Fourier transforms at the non-trivial characters of $G(\ad)$ orthogonal to $G(F)$. Recall that by \Cref{cor:ex.non-van.diff}, $G$ is a closed codimension $1$ subgroup of $\mathbb{G}_a^{n+1}$, where $n=\dim(G)$. This induces a surjective map $$(\mathbb{G}_a^{n+1}(\ad)/\mathbb{G}_a^{n+1}(F))^\wedge \rightarrow (G(\ad)/G(F))^\wedge$$ given by restricting a character: it maps $\Psi_a:= \psi(f_\mathbf{a}(\cdot))$, for $\mathbf{a}\in \mathbb{G}_a^{n+1}(F)$, to the character $\Psi_\mathbf{a}\big|_G:= \psi(f_\mathbf{a}|_G(\cdot))$, where $f_\mathbf{a}\big|_G\in F(G)$ is the restriction of the regular function $f_\mathbf{a} \in F(\mathbb{G}_a^{n+1})$. We can write  $\Psi_\mathbf{a}\big|_G=\prod_v (\Psi_{v,\mathbf{a}}\big|_G$), where $\Psi_{v,\mathbf{a}}\big|_G:= \psi_v(f_\mathbf{a}|_G(\cdot))$ (see \eqref{eqn:localCharacter}). We abuse notation and keep denoting $\Psi_{\mathbf{a}}\big|_G,\> \Psi_{v,\mathbf{a}}\big|_G$ and $f_\mathbf{a}|_G$ by $\Psi_{\mathbf{a}},\> \Psi_{v,\mathbf{a}}$ and $f_\mathbf{a}$, respectively.
We can view $f_\mathbf{a}$ as an element of $F(X)$ or $F(\mathcal{X})$ and we let $\Div(f_\mathbf{a})$ be its divisor. 

Let $\mathbf{a}\in G(F)$. Let $S_\mathbf{a}$ be a finite set of places of $F$ such that $\Div(f_\mathbf{a}) \text{ is flat over } \mathcal{C}_{F,S_\mathbf{a}}$. We define the following finite set of bad places
$$S(\mathbf{a}):=S\cup S_\mathbf{a} \cup \{v\in \Omega_F: \Psi_{\mathbf{a}} \text{ is not trivial on } G(\ov) \}.$$ 
We can write $\Div(f_a)=E-\sum_\alpha d_\alpha D_\alpha$, where $E$ is the unique irreducible component of $\{f_\mathbf{a}=0\}$ that meets $G$ and each $d_\alpha\geq 0$. Let $\mathcal{E}$ be the closure of $E$ in $\mathcal{X}$. Define the following subset of $\mathcal{A}$:
$$\mathcal{A}_0(\mathbf{a})=\{ \alpha \in \mathcal{A}: \> d_\alpha(f_\mathbf{a})=0\}.$$
Our goal is to estimate 
$$	\hat{H}(\Psi_\mathbf{a};\mathbf{s})=\prod_{v\in S(\mathbf{a})} \hat{H}_v(\Psi_{v,\mathbf{a}};\mathbf{s}) \prod_{v\notin S(\mathbf{a})} \hat{H}_v(\Psi_{v,\mathbf{a}};\mathbf{s}).$$
By \Cref{lem:trivial-bound-localTransform}, we can bound the finite product $\prod_{v\in S(\mathbf{a})} \hat{H}_v(\Psi_{v,\mathbf{0}};\mathbf{s})$ by some constant. In what follows, we estimate 
$\hat{H}_v(\Psi_{v,\mathbf{0}};\mathbf{s})$ for $v\notin S(\mathbf{a})$. We start by proving an analogue of \cite[Lemma 10.3]{CLYT} for local fields of positive characteristic, where we follow a similar proof. Recall from \eqref{eqn:varphiChar} the character $\varphi_v\in \widehat{F_v}$.
 
\begin{lem}\label{lem:int-char-units}
	Let $d\geq 0$ and $n\geq 1$ be integers. Let $u\in \ov^*$. Suppose that $p\nmid d$ and that $v$ does not lie above the place of $F_0$ corresponding to $1/t$. Then
	\begin{equation*}
		\int_{\ov^*} \varphi_v(u \pi_v^{-nd} w^d) dw =  
		\left\{
		\begin{array}{ll}
			1-1/q_v & \text{ if } d=0; \\
			-1/q_v & \text{ if } n=d=1; \\
			0 & \text{ else.}  \\
		\end{array} 
		\right.
	\end{equation*} 
\end{lem}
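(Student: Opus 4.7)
The plan is to compute the integral directly by partitioning $\ov^*$ according to the reduction modulo $\pi_v$ and reducing to orthogonality of additive characters on $\ov$. Writing $w = w_0 + \pi_v r$ with $w_0 \in \fv^*$ and $r \in \ov$, the Haar measure decomposes as
$$\int_{\ov^*} f(w) \, dw = \frac{1}{q_v} \sum_{w_0 \in \fv^*} \int_{\ov} f(w_0 + \pi_v r) \, dr.$$
The case $d = 0$ is immediate: the integrand equals $\varphi_v(u) = 1$, since $u \in \ov^*$ has vanishing $\pi_v^{-1}$-coefficient, giving $\mu_v(\ov^*) = 1 - 1/q_v$.

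For $d \geq 1$, the central step would be to linearize the inner integrand by a measure-preserving substitution. Writing $(w_0 + \pi_v r)^d = w_0^d(1 + w_0^{-1}\pi_v r)^d$, the hypothesis $p \nmid d$ guarantees that the $d$-th power map is an automorphism of the principal units $1 + \pi_v \ov$, so
$$z' := \frac{(1 + w_0^{-1}\pi_v r)^d - 1}{\pi_v} \in \ov$$
defines a bijection $\ov \to \ov$ whose Jacobian $dz'/dr = d w_0^{-1}(1 + w_0^{-1}\pi_v r)^{d-1}$ is a unit. Under this substitution,
$$u \pi_v^{-nd}(w_0 + \pi_v r)^d = u w_0^d \pi_v^{-nd} + u w_0^d \pi_v^{-nd+1} z',$$
and since $\varphi_v$ is additive the inner integral factors as
$$\varphi_v(u w_0^d \pi_v^{-nd}) \cdot \int_\ov \varphi_v(u w_0^d \pi_v^{-nd+1} z') \, dz'.$$

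I would then apply the standard orthogonality fact that for $c \in F_v$ the integral $\int_\ov \varphi_v(c z') \, dz'$ equals $1$ if $c \in \ov$ and $0$ otherwise. Taking $c = u w_0^d \pi_v^{-nd+1} \in \pi_v^{-nd+1} \ov^*$, this is nonzero only when $-nd + 1 \geq 0$, that is, only when $n = d = 1$. In that exceptional case the substitution is trivial ($d = 1$), the inner integral evaluates to $1$, and the outer sum reduces to a Gauss-type sum
$$\sum_{w_0 \in \fv^*} \varphi_v(u w_0 \pi_v^{-1}) = \sum_{w_0 \in \fv^*} \chi\bigl(\Tr_{\fv/\mathbb{F}_p}(u_0 w_0)\bigr) = \sum_{y \in \fv^*} \chi(\Tr(y)) = -1,$$
where $u_0 \in \fv^*$ denotes the reduction of $u$ and we use that $\chi \circ \Tr_{\fv/\mathbb{F}_p}$ is a nontrivial character of $\fv$. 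Multiplying by $1/q_v$ yields $-1/q_v$ when $n = d = 1$, and $0$ in all remaining cases.

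The main technical point to verify is that $r \mapsto z'$ defines a measure-preserving bijection of $\ov$; this is exactly where the hypothesis $p \nmid d$ enters essentially, since if $p \mid d$ the linear term in $(1 + w_0^{-1}\pi_v r)^d - 1$ would vanish and the change of variables would degenerate. The assumption that $v$ does not lie above the place $1/t$ of $F_0$ does not intervene in the integral computation itself, as $\varphi_v$ is defined uniformly at every place; it is presumably present to ensure compatibility with the character $\psi_v$ used in downstream applications.
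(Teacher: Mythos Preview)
Your argument is correct and takes a genuinely different route from the paper. The paper treats $d=1$ by the decomposition $\int_{\ov^*}=\int_{\ov}-q_v^{-1}\int_{\ov}$ and, for $d\geq 2$, partitions $\ov^*$ into balls $D(a,\pi_v^e)$ of \emph{variable} radius with $nd/2\leq e<nd$, then uses the first-order Taylor approximation $w^d\equiv a^d+\tilde d\,\pi_v^e a^{d-1}z\pmod{\pi_v^{2e}}$; the point of the choice of $e$ is that the quadratic and higher terms land in $\ov$ and are invisible to $\varphi_v$. You instead fix the coarsest partition (balls of radius $|\pi_v|$) and absorb all higher-order terms via the exact substitution $z'=\pi_v^{-1}\bigl((1+w_0^{-1}\pi_v r)^d-1\bigr)$, which is an analytic automorphism of $\ov$ with unit Jacobian precisely because $p\nmid d$. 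Your method handles all $d\geq 1$ uniformly and avoids the case split and the auxiliary parameter $e$; the paper's method is slightly more elementary in that it never invokes a nonlinear change of variables, only a congruence. One cosmetic point: you write $w_0\in\fv^*$ but then use $w_0^{-1}\in\ov$; it would be cleaner to say you fix a lift $w_0\in\ov^*$ of each residue class.
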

\begin{proof}
	If $d=1$, then 
	\begin{equation*}
		\begin{split}
			\int_{\ov^*} \varphi_v(u \pi_v^{-n} w) dw &=  \int_{\ov} \varphi_v( u \pi_v^{-n} w) dw - \frac{1}{q_v} \int_{\ov} \varphi_v(u\pi_v^{-n+1} w) dw
			=
			\left\{
			\begin{array}{ll}
				0 & \text{ if } n\geq 2; \\
				-1/q_v & \text{ if } n=1. \\
			\end{array} 
			\right.
		\end{split}
	\end{equation*}
	
	Suppose now that $d\geq 2$ and $p\nmid d$. We integrate over disks $D(a, \pi_v^e) \subset \ov^*$ for a suitable choice of $e\geq 1$. Let $w=a+\pi_v^e z$ for some $z\in \ov$. Then 
	$$w^d = a^d + \tilde{d}\pi_v^e a^{d-1} z \mod \pi_v ^{2e}$$
	where $\tilde{d}=\sum_{i=1}^d 1 \in \fv$, which is non-zero as $p\nmid d$. Therefore, if we chose $e$ such that 
	$$e-nd <0 \quad \text{and} \quad 2e-nd\geq 0, \text{ then}$$
	$$\int_{D(a,\pi_v ^e)} \varphi_v(u\pi_v^{-nd} w^d) dw = q_v^{-e} \varphi_v(u \pi_v^{-nd} a^d) \int_{\ov} \varphi_v( \tilde{d} u \pi_v^{e-nd} a^{d-1} z) dz =0.$$ 
	Any choice of an integer $e$ satisfying $nd/2 \leq e < nd$ works and such a choice exists as $nd \geq 2$.
\end{proof}
\begin{rem}\label{rem:LemmaHoldsForPSi}
	By \Cref{lem:psiIsVarphiUnit}, for almost all places $v$, we have that $\psi_v=\varphi_{v,c_v}$ for some $c_v\in \ov^*$. Therefore, for such places $v$ the conclusion of \Cref{lem:int-char-units} holds for $\psi_v$.
\end{rem}

The following is an analogue of \cite[Proposition 10.2]{CLYT}. We use the interpretation of the characters as Brauer group elements (see \S\ref{subsec:BrToChar}) to treat the case when $d_\alpha$ is a non-zero multiple of $p$. We will restrict to the case where $f_\mathbf{a}$ has no poles along the irreducible divisors that are not geometrically reduced, i.e., $d_\beta=0$ for all $\beta \in \mathcal{B}$. Note that if $G=\ga$, then $\mathcal{B}=\emptyset$, and this condition is always vacuously satisfied.  

\begin{prop}\label{prop:H_v-estimate-otherChars}
	Let $f_\mathbf{a}\in F(X)$ be the restriction of a linear form and write $\Div(f_a)=E-\sum_{\alpha\in \mathcal{A}} d_\alpha D_\alpha$. 
	Suppose that $d_\beta=0$ for all $\beta \in \mathcal{B}$. Then
	for all $\epsilon>0$ sufficiently small, there exists a constant $C(\epsilon)$, depending on $\epsilon$, such that for any $\mathbf{s}\in \Omega_{-1/(2e_X)+\epsilon}$  and any place $v\notin S(\mathbf{a})$, we have
	$$ \left| \hat{H}_v(\Psi_{v,\mathbf{a}};\mathbf{s})\prod_{\alpha\in \mathcal{A}_0(\mathbf{a})} \prod_{\{\alpha_v:\> \alpha_v \mid \alpha\}} (1-q_v^{-f_{\alpha_v}(1+s_\alpha-\rho_\alpha)}) -1 \right| \leq C(\epsilon) q_v^{-1-\epsilon} $$
\end{prop}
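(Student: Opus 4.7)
The plan is to adapt the proof of Denef's formula (\Cref{thm:Denef}) by decomposing
$\hat H_v(\Psi_{v,\mathbf{a}};\mathbf{s}) = \int_{G(F_v)} H(\mathbf{x};\mathbf{s})^{-1} \Psi_{v,\mathbf{a}}(\mathbf{x})\, d\mathbf{g}_v$
as a sum over residue classes $\bar x \in \mathcal{X}(\mathbb{F}_v)$ and evaluating each local contribution in étale coordinates adapted to both $D$ and the zero--pole divisor of $f_\mathbf{a}$. For each $\bar x$, I would choose local coordinates $\{y_\alpha\}_{\alpha\in\mathcal{A}_{\bar x}}\cup\{y_i\}$ as in \Cref{lem:local-coor-all-v}, augmented by a local equation $y_E$ for $E$ when $\bar x \in \mathcal{E}(\mathbb{F}_v)$; this is possible after a mild further enlargement of $S(\mathbf{a})$ ensuring that $\mathcal{E}$ meets the relative strict normal crossings boundary transversally. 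Because $d_\beta = 0$ for $\beta\in\mathcal{B}$ by hypothesis, and because \Cref{assum}\,(ii)(2) forces $\mathcal{B}\cap\mathcal{A}_{\bar x}=\emptyset$ at any $\bar x$ lifting to an $F_v$-point, the function $f_\mathbf{a}$ pulls back to $u\, y_E \prod_{\alpha\in\mathcal{A}_{\bar x}} y_\alpha^{-d_\alpha}$ for some local unit $u$.

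The contribution of $\red_v^{-1}(\bar x)$ then factors as a product of one-variable integrals of the form
$$I_\alpha(\mathbf{s}) := \int_{\mathfrak{m}_v} \psi_v\bigl(u'\, y^{-d_\alpha}\bigr)\, q_v^{v(y)(s_\alpha-\rho_\alpha)}\, dy,$$
together with an analogous integral in $y_E$ and the factor $\prod_{\beta\in\mathcal{B}_{\bar x}} q_v^{\beta(\bar x)(\rho_\beta-s_\beta)}$ that appears identically in the proof of \Cref{thm:Denef}. Expanding $I_\alpha(\mathbf{s}) = \sum_{n\geq 1} q_v^{-n(1+s_\alpha-\rho_\alpha)} \int_{\mathcal{O}_v^*} \psi_v(u'\pi_v^{-nd_\alpha} w^{d_\alpha})\, dw$ and combining with \Cref{lem:psiIsVarphiUnit} (so that \Cref{lem:int-char-units} applies to $\psi_v$) shows that for $d_\alpha=0$ the integral yields the standard factor $(q_v-1)/(q_v^{1+s_\alpha-\rho_\alpha}-1)$, while for $d_\alpha\geq 1$ with $p\nmid d_\alpha$ only the term $n=d_\alpha=1$ survives and is bounded.

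The genuinely new case in positive characteristic is $p\mid d_\alpha$; here \Cref{lem:int-char-units} fails and the Brauer group description of \S\ref{subsec:BrToChar} becomes essential. Writing $d_\alpha = p^m d'$ with $p\nmid d'$, the identity $[a^{p^m},b) = [a,b)$ of \Cref{lem:Br-xp-x}, translated via \Cref{prop:localBrEltToChar} and \Cref{lem:psiIsVarphiUnit}, lets me replace $\psi_v(u'\, y_\alpha^{-d_\alpha})$ by $\psi_v(\tilde u\, y_\alpha^{-d'})$ for a suitable modified unit $\tilde u$ on each coset of $(\mathcal{O}_v^*)^{p^m}$, thereby reducing the integral to the already-handled case where the exponent is coprime to $p$.

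Assembling the local contributions and running the Lang--Weil estimates of \Cref{lem:Lang-Weil} stratum by stratum on $\mathcal{D}^\circ_A(\mathbb{F}_v)$, exactly as in \Cref{prop:H_v-estimate}, produces the claimed main term $\prod_{\alpha\in\mathcal{A}_0(\mathbf{a})}\prod_{\alpha_v\mid\alpha}(1-q_v^{-f_{\alpha_v}(1+s_\alpha-\rho_\alpha)})^{-1}$ with an error of size $C(\epsilon) q_v^{-1-\epsilon}$: the divisors $D_\alpha$ with $d_\alpha>0$ drop out of the main term because $I_\alpha$ either vanishes (by \Cref{lem:int-char-units}) or is absorbed into the error. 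The main obstacle I anticipate is the clean execution of the $p\mid d_\alpha$ case through the Brauer-theoretic dictionary, in particular controlling how the modification $u'\mapsto\tilde u$ interacts with the $v$-adic valuation and verifying that the resulting bound is uniform in $v\notin S(\mathbf{a})$ and $\mathbf{s}\in\Omega_{-1/(2e_X)+\epsilon}$.
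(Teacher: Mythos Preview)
Your plan is correct and matches the paper's proof: residue-class decomposition, local coordinates with $f_{\mathbf{a}}=u\,x_\alpha^{-d_\alpha}$ on the one-divisor strata away from $\mathcal{E}$, \Cref{lem:int-char-units} when $p\nmid d_\alpha$, the Brauer identity \Cref{lem:Br-xp-x} when $p\mid d_\alpha$, and Lang--Weil for the strata with $|A|\geq 2$ or $\bar x\in\mathcal{E}$. The $p\mid d_\alpha$ step is simpler than you anticipate: writing $d_\alpha=mp^k$ with $p\nmid m$, one rewrites $\psi_v(u_0\,x_\alpha^{-d_\alpha})=\chi([x_\alpha^{-d_\alpha},b)_v)$ via \Cref{prop:localBrEltToChar} and \Cref{lem:psiIsVarphiUnit}, and then the identity $[x_\alpha^{-d_\alpha},b)=[x_\alpha^{-m},b)$ in $\Br(F_v(x_\alpha))$ immediately reduces to the coprime case---no coset-by-coset modification of the unit is required.
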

\begin{proof}
	We follow the proof strategy of \cite[Proposition 10.2]{CLYT} and \cite[\S 8.1.1]{Campana}. However, there is a new case to be considered when $p$ divides $d_\alpha$ for some $\alpha\in \mathcal{A}$. To overcome this we relate the character to an element of $\Br(G)$ using \S\ref{subsec:BrToChar}. 
	
	We proceed by computing the integrals over the different residue classes. Let $\tilde{x}\in \mathcal{X}(\fv)$ and $A=\{\alpha \in \mathcal{A} \> : \> \tilde{x}\in \mathcal{D}_\alpha\}.$
	\begin{itemize} [leftmargin=20pt,itemsep=5pt,parsep=1pt,topsep=4pt]
		\item If $A=\emptyset$, then $\redv(\tilde{x})^{-1} \subset G(\ov)$. As $\Psi_{v,\mathbf{a}}$ is trivial on $G(\ov)$, we get
		\begin{equation*}
			\begin{split}
				\int_{\mathbf{x}\in \redv^{-1}(\tilde{x})} H_v(\mathbf{x};\mathbf{s})^{-1} \Psi_{v,\mathbf{a}}(\mathbf{x})  \> d \mathbf{g}_v = \int_{\mathbf{x}\in \redv^{-1}(\tilde{x})}  \> d \mathbf{g}_v =q_v^{- \dim(X)}.
			\end{split}
		\end{equation*}
		As $G(\fv)\cong \ga(\fv) \cong \prod^n \fv$, this contributes to a $1$.
		
		\item Suppose that $A=\{\alpha\}$, $\tilde{x}\notin \mathcal{E}$. By \cite[Prop 10.2, case 2]{CLYT} or \cite[\S 8.1.1]{Campana}, we can introduce local analytic coordinates $x_\alpha$ and $x_1,\ldots , x_{n-1}$ such that $f_{\mathbf{a}}= u x_\alpha ^{-d_\alpha}$, for $u\in \ov^*$. We have two cases. 
		\begin{enumerate}[label=(\roman*)]
			\item Assume that $d_\alpha$ is coprime to $p$. Then
			\begin{equation*}
				\begin{split}
					\int_{\redv^{-1}(\tilde{x})} H_v(\mathbf{x};\mathbf{s})^{-1} \Psi_{v,\mathbf{a}}(\mathbf{x}) \> d \mathbf{g}_v &= \int_{\mathfrak{m}_v \times \mathfrak{m}_v ^{n-1}}  q_v^{-(s_\alpha-\rho_\alpha)v(x_\alpha)} \psi_{v}(u x_\alpha^{-d_\alpha})  \> d x_\alpha \prod_{i\in I} dx_i 
					\\&= \frac{1}{q_v^{n-1}} \sum_{n_\alpha \geq 1} q_v^{-(1+s_\alpha-\rho_\alpha)n_\alpha} \int_{\ov^*} \psi_v(u \pi_v^{-n_\alpha d_\alpha} t_\alpha^{-d_\alpha})  dt_\alpha
					\\&= \left\{
					\begin{array}{ll}
						\frac{q_v-1}{q_v^n} \frac{1}{q_v^{1+s_\alpha-\rho_\alpha}-1} & \text{ if } d_\alpha=0; \\
						-\frac{1}{q_v^n} q_v^{-(1+s_\alpha-\rho_\alpha)} & \text{ if } d_\alpha=1; \\
						0 & \text{ if } d_\alpha\geq 2;
					\end{array} 
					\right.
				\end{split}
			\end{equation*}
			where the last equality follows by \Cref{lem:int-char-units} and \Cref{rem:LemmaHoldsForPSi}. Thus, the contribution to the main term comes from $\alpha \in \mathcal{A}$ with $d_\alpha=0$.
			
			\item Assume that $d_\alpha=mp^k$ for $m,k\in \z_{>0}$ and $p\nmid m$. We will use the results in \S\ref{subsec:BrToChar} to deal with this case. 
			Note that by \Cref{lem:psiIsVarphiUnit}, we have $\psi_v=\varphi_{v,c_v}$ for some $c_v^*$. Let $u_0:= c_v u$. Then,
			by \Cref{prop:Ga-LocalCharBr},
			$$\Psi_\mathbf{v,a}(x)=\psi_v(u x_\alpha ^{-d_\alpha}) = \varphi_v(u_0 x_\alpha ^{-d_\alpha}) = \chi( [x_\alpha ^{-d_\alpha},b)_v ),$$
			for some $b\in F_v$.
			Now, by \Cref{lem:Br-xp-x}, 
			$$[x_\alpha^{-d_\alpha},b)=[x_\alpha^{-mp^k},b) = [x_\alpha^{-m},b) \in \Br(F_v(\mathbb{G}_a)).$$
			Therefore,
			as $m$ is non-zero and coprime to $p$, this reduces the computation to the previous case with $d_\alpha=m>0$. Hence, we get no contribution to the main term.
		\end{enumerate}

		\item If $|A|\geq 2$, or $|A|=1$ and $\tilde{x}\in \mathcal{E}$, then we don't get a contribution to the main term. This follows by \Cref{lem:Lang-Weil} and proceeding as in \Cref{prop:H_v-estimate}.
	\end{itemize}
	The result now follows by summing all the contributions using \Cref{lem:Lang-Weil}.
\end{proof}

\begin{cor}\label{cor:FT-otherChar-exression}
	Let $f_\mathbf{a}\in F(X)$ be the restriction of a linear form and write $\Div(f_a)=E-\sum_{\alpha\in \mathcal{A}} d_\alpha D_\alpha$. 
	Suppose that $d_\beta=0$ for all $\beta \in \mathcal{B}$.
	Then the function 
	$$\phi (\mathbf{s}):=\hat{H}(\Psi_\mathbf{a};\mathbf{s})\prod_{\alpha \in \mathcal{A}_0(\mathbf{a})} \zeta_{F_\alpha}(1+s_\alpha-\rho_\alpha )^{-1}$$
	is holomorphic on $\Omega_{-1/(2e_X)}$. 
\end{cor}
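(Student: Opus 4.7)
The plan is to mimic the argument of Corollary \ref{cor:FT-exression}, with Proposition \ref{prop:H_v-estimate-otherChars} now playing the role that Proposition \ref{prop:H_v-estimate} played there. First, I would split the Euler product as
$$\hat H(\Psi_\mathbf{a};\mathbf{s}) = \prod_{v \in S(\mathbf{a})} \hat H_v(\Psi_{v,\mathbf{a}};\mathbf{s}) \cdot \prod_{v \notin S(\mathbf{a})} \hat H_v(\Psi_{v,\mathbf{a}};\mathbf{s}).$$
For the finite set $S(\mathbf{a})$ of bad places, Lemma \ref{lem:trivial-bound-localTransform} already shows that each $\hat H_v(\Psi_{v,\mathbf{a}};\mathbf{s})$ is holomorphic and uniformly bounded on $\Omega_{-1+\epsilon}$; thus the finite product over $v \in S(\mathbf{a})$ is a bounded holomorphic function on the target domain $\Omega_{-1/(2e_X)}$, and so will not obstruct the conclusion.

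For the places $v \notin S(\mathbf{a})$, I would use the $F_v$-algebra decomposition $F_\alpha \otimes_F F_v \simeq \prod_{\alpha_v \mid \alpha} F_{\alpha_v}$ to rewrite
$$\zeta_{F_\alpha,v}(s) = \prod_{\{\alpha_v \mid \alpha\}} (1 - q_v^{-f_{\alpha_v} s})^{-1},$$
so that the Euler factor appearing in Proposition \ref{prop:H_v-estimate-otherChars} is precisely $\prod_{\alpha \in \mathcal{A}_0(\mathbf{a})} \zeta_{F_\alpha,v}(1+s_\alpha - \rho_\alpha)^{-1}$. Setting
$$\phi_v(\mathbf{s}) := \hat H_v(\Psi_{v,\mathbf{a}};\mathbf{s}) \prod_{\alpha \in \mathcal{A}_0(\mathbf{a})} \zeta_{F_\alpha,v}(1+s_\alpha-\rho_\alpha)^{-1},$$
the proposition translates to the uniform bound $|\phi_v(\mathbf{s}) - 1| \leq C(\epsilon) q_v^{-1-\epsilon}$ for $\mathbf{s} \in \Omega_{-1/(2e_X)+\epsilon}$ and all $v \notin S(\mathbf{a})$.

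Since $\sum_v q_v^{-1-\epsilon} < \infty$ over any global function field, the Euler product $\prod_{v \notin S(\mathbf{a})} \phi_v(\mathbf{s})$ then converges absolutely and uniformly on compact subsets of $\Omega_{-1/(2e_X)+\epsilon}$, hence defines a holomorphic function there. Multiplying by the bounded holomorphic contribution from $S(\mathbf{a})$ and letting $\epsilon \to 0$ yields that $\phi$ admits a holomorphic extension to all of $\Omega_{-1/(2e_X)}$; uniqueness of analytic continuation identifies this extension with the original definition on the region of absolute convergence. The only non-routine input is Proposition \ref{prop:H_v-estimate-otherChars} itself, which has already been established, so no serious obstacle remains beyond careful bookkeeping of the Euler factors; note that the hypothesis $d_\beta = 0$ for $\beta \in \mathcal{B}$ is inherited directly from that proposition, which is why the product defining $\phi$ ranges only over $\mathcal{A}_0(\mathbf{a})$ and no zeta factor is needed to cancel any contribution from the non-geometrically-reduced components.
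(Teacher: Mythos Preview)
Your proposal is correct and follows exactly the approach the paper intends: the paper's own proof reads simply ``The proof is analogous to \Cref{cor:FT-exression},'' and you have faithfully spelled out that analogy, replacing Proposition~\ref{prop:H_v-estimate} by Proposition~\ref{prop:H_v-estimate-otherChars} and the bad set $S$ by $S(\mathbf{a})$. One small remark on your final sentence: since $d_\beta=0$ forces $\mathcal{B}\subset\mathcal{A}_0(\mathbf{a})$, the zeta factors for $\beta\in\mathcal{B}$ \emph{are} present in the product defining $\phi$; the point is rather that Proposition~\ref{prop:H_v-estimate-otherChars} already accounts for them there, so no separate treatment is needed.
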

\begin{proof}
	The proof is analogous to \Cref{cor:FT-exression}.
\end{proof}

\section{Proofs of the main theorems}\label{sec:Asump-LeadCons}
In this section we prove \Cref{prop:HeightZetaPoles}. Subsequently, we will use an appropriate Tauberian theorem to conclude the proof of \Cref{thm:Manin}. Finally, we will compute the leading constant to prove \Cref{thm:Leading-constant}.

In \Cref{prop:HeightZetaPoles}, to ensure the existence of a meromorphic continuation of $Z_\lambda(s)$ to the left of the line $\{\re(s)=a_\lambda\}$, we restrict to the situation where
\Cref{cor:FT-otherChar-exression} can be applied to all the relevant characters. To that end, we ask that the following condition holds:
\begin{condition}\label{condition}
	For all non-trivial characters $\Psi \in (G(\ad)/(G(F)+\mathbb{K}))^\wedge$, there exists a representative $\Psi_a= \psi\circ f_\mathbf{a}$ of $\Psi$ such that if  $\Div(f_a)=E-\sum_{\alpha\in \mathcal{A}} d_\alpha D_\alpha$, then
	$d_\beta=0$ for all $\beta \in \mathcal{B}$.
\end{condition}
\begin{rem}
	Note that \Cref{condition} is satisfied vacuously when
	\begin{itemize}[itemsep=2pt,parsep=2pt,topsep=2pt]
		\item $\pic(G)$ is trivial (by \Cref{Cor:PicG-D-g.reduced});
		\item or $G(\ad)=G(F)+\mathbb{K}$. 
	\end{itemize}
\end{rem}

\subsection{Proof of \Cref{prop:HeightZetaPoles}}
Let $\lambda\in \pic(X)_\cx$ be the vector corresponding to the big line bundle $\mathcal{L}_\lambda$, and let $H_\lambda(\cdot)=H(\cdot; s(\lambda_\alpha))$ be the associated height function. Write
$Z_\lambda(s):=Z(s\lambda)$
for the height zeta function relative to $H_\lambda$, for $s\in \cx$.
By \Cref{cor:lambda-pole-order}, $\hat{H}(\mathbf{0};s\lambda)$ converges absolutely for $\{s\in \cx \> :\>  \re(s)>a_\lambda\}$ and has a meromorphic continuation to $\re(s)>a_\lambda -\delta$ for some $\delta>0$; moreover, the poles of $\hat{H}(\mathbf{0};s\lambda)$ in $\Omega$ of order $b_\lambda$ with largest real value are $$s_j = a_\lambda + j \frac{2\pi i}{d_\lambda \log(q)},\quad j\in J_\lambda:=\{0,\ldots, \lceil d_\lambda \rceil-1\}.$$
Now, by \Cref{lem:Finite-Quo}, there exists a set of representatives $\{y_1,\ldots, y_m\}$ for the cosets of $G(F)+ \mathbb{K}$ in $G(\ad)$. Recall that we are assuming \Cref{condition} is satisfied. By \Cref{cor:FT-otherChar-exression}, for all  $\Psi \in (G(\ad)/(G(F)+\mathbb{K}))^\wedge$, we have that $\hat{H}(\Psi;s\lambda)$ converges absolutely for $\{s\in \cx \> :\>  \re(s)>a_\lambda\}$ and has a meromorphic continuation to ${\re(s)>a_\lambda -\delta_0}$ for some $\delta_0>0$; moreover, the set of poles of $\hat{H}(\Psi;s\lambda)$ in $\Omega$ of order $b_\lambda$ with largest real value is a subset of $\{s_j\}_{j\in J_\lambda}$. Therefore, by the Poisson summation formula \eqref{eqn:Poisson} the series $$Z_{\lambda}(s)=\sum_{x\in G(F)} H_{\lambda}^{-s}(x)$$
converges absolutely and uniformly for $\re(s)>a_\lambda$, and has a meromorphic continuation to $\re(s)>a_\lambda-\delta$ for some $\delta>0$. It remains to show that $\{s_j\}_{j\in J_\lambda}$ is the set of poles of $Z_\lambda(s)$ of order $b_\lambda$ with largest real value.

By the Poisson summation formula \eqref{eqn:Poisson}, we have 
	\begin{align*}\label{eqn:zeta_char_orthog}
			q^{\dim(G)(g_F-1)}{\tau(G)}\cdot Z_{\lambda}(s)&= \sum_{\Psi \in (G(\ad)/(G(F)+\mathbb{K}))^\wedge}  \hat{H}(\Psi;s\lambda) \\&=\sum_{\Psi \in (G(\ad)/(G(F)+\mathbb{K}))^\wedge}  \int_{\mathbf{x}\in G(\ad)} H(\mathbf{x};s\lambda)^{-1} \Psi(\mathbf{x}) \> d \mathbf{g} 
			\\ &= \int_{\mathbf{x}\in G(\ad)} H(\mathbf{x};s\lambda)^{-1} \sum_{\Psi \in (G(\ad)/(G(F)+\mathbb{K}))^\wedge} \Psi(\mathbf{x}) \> d \mathbf{g} 
			\\&=\sum_{i=1}^m \underset{\mathbf{x}\in G(\ad)+\mathbb{K}}{\int} H(y_i+\mathbf{x};s\lambda)^{-1} \sum_{\Psi\in \left( \frac{G(\ad)}{G(F)+\mathbb{K}}\right)^\wedge}  \Psi (y_i+ \mathbf{x}) \> d \mathbf{g}
			\\&= \sum_{i=1}^m \left(\sum_{\Psi\in \left( \frac{G(\ad)}{G(F)+\mathbb{K}}\right)^\wedge}  \Psi(y_i) \right)\underset{\mathbf{x}\in G(\ad)+\mathbb{K}}{\int} H(y_i+\mathbf{x};s\lambda)^{-1}  \> d \mathbf{g}
			\\&= m \int_{\mathbf{x}\in G(F)+\mathbb{K}} H(\mathbf{x};s\lambda)^{-1}  \> d \mathbf{g}, 
	\end{align*}
	where the last equality follows by character orthogonality. By the proof of \cite[Lemma 5.2]{CLYT}, for a fixed $\lambda \in \pic(X)_\cx$, there exists $C_\lambda\in \mathbb{R}_{>0}$ such that for all $y_i$ and all $x\in G(\ad)$, we have
	\begin{equation}\label{eqn:height-bound-translation}
		C_\lambda^{-1} |H(x;s \lambda )|\leq |H(y_i + x;s \lambda )| \leq C_\lambda |H(x;s \lambda )|
	\end{equation}
	for all $s\in \cx$ as $\re(s) \rightarrow a_\lambda^+$. Note that we can write
	$$ \hat{H}(\mathbf{0};s\lambda)= \sum_{i=1}^m \int_{\mathbf{x}\in G(F)+\mathbb{K}} H(y_i+\mathbf{x};s\lambda)^{-1} \> d \mathbf{g} .$$
	Therefore, by \eqref{eqn:height-bound-translation}, we conclude that
	$$m C_\lambda^{-1}  {|Z_\lambda(s)|}  \leq \frac{1}{q^{\dim(G)(g-1)}{\tau(G)}} |\hat{H}(\mathbf{0};s\lambda)|   \leq m C_\lambda |Z_\lambda(s)| ,$$
	for all $s\in \cx$ as $\re(s) \rightarrow a_\lambda^+$.
	Hence, by \Cref{cor:lambda-pole-order}, we deduce that
	$$\lim_{s\rightarrow s_j} (s-s_j)^{b_\lambda} Z_\lambda(s) \neq 0,$$
	for all $j=0,\ldots \lceil d_\lambda \rceil-1$, as desired. \hfill \qed

\subsection{Proof of \Cref{thm:Manin}}
By \Cref{prop:HeightZetaPoles}, we have that $Z_\lambda(s)$ converges absolutely and is holomorphic in the region $\{s\in \Omega \>|\> \re(s)>a_\lambda-\delta\}$ except at the poles $\{s_j\}_{J_\lambda}$ of order $b_\lambda$, and possibly at some other poles of order less than $b_\lambda$. For all $j\in J_\lambda$, we define
$$r_j:= \lim_{s\rightarrow s_j} (s-s_j)^{b_\lambda} Z_\lambda(s).$$ 
By definition, $c_\lambda:=\lim_{s\rightarrow a_\lambda} (s-a_\lambda)^{b_\lambda} Z_\lambda(s) = r_0$.
\begin{prop} \label{thm:Tauberian}
	Let $\lambda=(\lambda_\alpha) \in \bigoplus_{\alpha\in \mathcal{A}}\z [D_\alpha]$. Then for all $M\in \z_{>0}$
	$$N(\lambda , M) = \frac{\log(q)^{b_\lambda}}{(b_\lambda-1)!} M^{b_\lambda-1} \sum_{j\in J_\lambda} q^{s_j M} r_j + o(q^{a_\lambda M} M^{b_\lambda-1}).$$
\end{prop}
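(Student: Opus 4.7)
The plan is to view $Z_\lambda(s)$ as a generating function in $u := q^{-s}$ and extract the coefficients $N(\lambda, M)$ via singularity analysis. Since $\lambda \in \bigoplus_\alpha \z[D_\alpha]$, the height $H_\lambda(x)$ always lies in $q^{\z}$, so
\begin{equation*}
Z_\lambda(s) = \sum_{M \geq 0} N(\lambda, M)\, q^{-sM} = \tilde Z(u)
\end{equation*}
is an honest power series converging on $\{|u| < q^{-a_\lambda}\}$. By \Cref{rem:PeriodZ} the substitution is well-defined, and the meromorphic continuation of $Z_\lambda$ to $\{\re(s) > a_\lambda - \delta\}$ supplied by \Cref{prop:HeightZetaPoles} becomes a meromorphic continuation of $\tilde Z$ to $\{|u| < q^{-a_\lambda + \delta}\}$. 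Under this substitution, the order-$b_\lambda$ poles $s_j = a_\lambda + 2\pi i j/(d_\lambda \log(q))$, $j \in J_\lambda$, become order-$b_\lambda$ poles of $\tilde Z$ at $u_j := q^{-a_\lambda} e^{-2\pi i j/d_\lambda}$, while any remaining pole on the circle $|u| = q^{-a_\lambda}$ has order at most $b_\lambda - 1$.

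My next step is to decompose
\begin{equation*}
\tilde Z(u) = \sum_{j \in J_\lambda} \frac{c_j}{(u - u_j)^{b_\lambda}} + T(u) + R(u),
\end{equation*}
where $T(u)$ is a finite sum of principal parts of order $< b_\lambda$ at points on $|u| = q^{-a_\lambda}$ and $R(u)$ is holomorphic on $\{|u| < q^{-a_\lambda + \delta'}\}$ for some $0 < \delta' < \delta$. Expanding each $(u - u_j)^{-k}$ as $(-1)^k \sum_M \binom{M + k - 1}{k - 1} u_j^{-M - k} u^M$ produces closed-form coefficients; for $k < b_\lambda$ this yields $[u^M] T(u) = O(q^{a_\lambda M} M^{b_\lambda - 2})$, and Cauchy's inequality on a circle of radius slightly less than $q^{-a_\lambda + \delta'}$ gives $[u^M] R(u) = O(q^{(a_\lambda - \delta'/2)M})$. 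Both contributions are absorbed into the error $o(q^{a_\lambda M} M^{b_\lambda - 1})$.

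For the leading term, a short Taylor computation using $u - u_j = -u_j \log(q)(s - s_j) + O((s - s_j)^2)$ relates $c_j$ to $r_j$ by $c_j = (-1)^{b_\lambda}(u_j \log(q))^{b_\lambda}\, r_j$. Substituting this into the expansion of $(u - u_j)^{-b_\lambda}$ and using $u_j^{-M} = q^{s_j M}$ together with $\binom{M + b_\lambda - 1}{b_\lambda - 1} \sim M^{b_\lambda - 1}/(b_\lambda - 1)!$ yields the contribution
\begin{equation*}
[u^M]\, \frac{c_j}{(u-u_j)^{b_\lambda}} \sim r_j\,\frac{(\log(q))^{b_\lambda}}{(b_\lambda - 1)!}\, q^{s_j M} M^{b_\lambda - 1}
\end{equation*}
from the $j$-th pole; summing over $j \in J_\lambda$ produces the claimed asymptotic.

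The main obstacle is verifying that \Cref{prop:HeightZetaPoles}, which asserts meromorphicity only on the strip $\Omega$, actually glues via the $2\pi i/\log(q)$-periodicity of $Z_\lambda$ to a meromorphic continuation of $\tilde Z$ across the full circle $|u| = q^{-a_\lambda}$ with only finitely many poles there, and that $R$ has enough growth control to justify Cauchy's inequality. Both points reduce to the factorization $\hat H(\mathbf{0}; s\lambda) = \phi(s\lambda) \prod_\alpha \zeta_{F_\alpha}(1 + \lambda_\alpha s - \rho_\alpha)$ supplied by \Cref{cor:FT-exression}: the only singularities of $\tilde Z$ on $|u| = q^{-a_\lambda}$ come from zeroes of the local Euler factors $1 - q_w^{-(1 + \lambda_\alpha s - \rho_\alpha)}$ of $\zeta_{F_\alpha}$, and the required vertical-strip bounds are classical.
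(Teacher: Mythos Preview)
Your argument is correct and amounts to proving the Tauberian theorem from scratch via standard singularity analysis of the power series $\tilde Z(u)$. The paper takes a shorter route: it simply observes that $\lambda\in\bigoplus_\alpha\z[D_\alpha]$ forces $g_\lambda\in\z$, so by \Cref{lem:Period} the function $Z_\lambda(s)$ has imaginary period $2\pi i/\log(q)$, and then invokes a known Tauberian theorem for such Dirichlet series (Rosen \cite[Theorem~17.4]{Rosen}, or the variant \cite[Theorem~2.1]{Herrero}) as a black box. Your approach is exactly the mechanism behind that cited theorem, so the two are equivalent in content; your version is self-contained, the paper's is a one-line citation.

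One comment on your final paragraph: you are overcomplicating the justification. Once you know $Z_\lambda$ is $2\pi i/\log(q)$-periodic and meromorphic on $\{\re(s)>a_\lambda-\delta\}$ (both supplied by \Cref{prop:HeightZetaPoles} together with \Cref{lem:Period}), the function $\tilde Z(u)$ is automatically meromorphic on the disk $\{|u|<q^{-a_\lambda+\delta}\}$, and compactness of the circle $|u|=q^{-a_\lambda}$ forces the poles there to be finite in number. After subtracting the principal parts at those poles, the remainder $R(u)$ is holomorphic on a disk of radius strictly larger than $q^{-a_\lambda}$, and Cauchy's inequality on any circle inside that disk gives $[u^M]R(u)=O(q^{(a_\lambda-\delta')M})$ immediately---no vertical-strip growth bounds on $\zeta_{F_\alpha}$ are needed. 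Also note that $Z_\lambda$ is a finite sum over characters (Poisson formula, \Cref{prop:Z(s)-Poisson}), not just $\hat H(\mathbf{0};s\lambda)$, so if you want to argue via explicit factorization you should invoke \Cref{cor:FT-otherChar-exression} as well; but the compactness argument makes this unnecessary.
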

\begin{proof}
	 As $\lambda=(\lambda_\alpha) \in \bigoplus_{\alpha\in \mathcal{A}}\z [D_\alpha]$, we see that $g_\lambda:=\gcd(\{\lambda_\alpha \}) \in \z$. By \Cref{lem:Period}, we deduce that $2\pi i/\log(q)$ is an imaginary period for $Z_\lambda(s)$. Therefore, by applying a straightforward generalisation of the Tauberian theorem \cite[Theorem 17.4]{Rosen} (see, e.g., \cite[Theorem 2.1]{Herrero}), the result follows.
\end{proof}

We apply \Cref{thm:Tauberian} to compute the weighted average counting function:
\begin{equation*}
	\begin{split}
		N_{\av} (\mathcal{L}_\lambda, M) &:= \frac{1}{d_\lambda} \sum_{k=0}^{d_\lambda-1}  q^{-a_\lambda k} N(\mathcal{L}_\lambda, M+k)
		\\&= \frac{\log(q)^{b_\lambda}}{(b_\lambda-1)!} M^{b_\lambda-1} \frac{1}{d_\lambda} \sum_{k=0}^{d_\lambda-1}  q^{-a_\lambda k} \sum_{j=0}^{d_\lambda-1} r_j q^{(a_\lambda + j \frac{2\pi i}{d_\lambda \log(q)}) (M+k)}  + o(q^{a_\lambda M} M^{b_\lambda-1})
		\\&= \frac{\log(q)^{b_\lambda}}{(b_\lambda-1)!} M^{b_\lambda-1} \frac{1}{d_\lambda} \sum_{j=0}^{d_\lambda-1} r_j  q^{(a_\lambda + j \frac{2\pi i}{d_\lambda \log(q)})M} \sum_{k=0}^{d_\lambda-1} e^{j k \frac{2\pi i}{d_\lambda}}  + o(q^{a_\lambda M} M^{b_\lambda-1})
		\\&= c_\lambda \frac{\log(q)^{b_\lambda}}{(b_\lambda-1)!} M^{b_\lambda-1}  q^{a_\lambda M} + o(q^{a_\lambda M} M^{b_\lambda-1}),
	\end{split}
\end{equation*}
where the last equality follows from the fact that $r_0=c_\lambda$, $\sum_{k=0}^{d_\lambda-1} e^{j k \frac{2\pi i}{d_\lambda}}=d_\lambda$ for $j=0$, and $\sum_{k=0}^{d_\lambda-1} e^{j k \frac{2\pi i}{d_\lambda}}=0$ for $j\in \{1,\ldots,d_\lambda-1\}.$ This proves the first statement of \Cref{thm:Manin}.

Suppose now that $d_\lambda \mid g_\lambda$. Then $\frac{2 \pi i}{d_\lambda \log(q)}$ is an imaginary period for $Z_\lambda(s)$, so that
$$\lim_{s\rightarrow s_j} (s-s_j)^{b_\lambda} Z_\lambda(s)= \lim_{z\rightarrow a_\lambda} (z-a_\lambda)^{b_\lambda} Z_\lambda(z+j\frac{2 \pi i}{d_\lambda \log(q)})= \lim_{z\rightarrow a_\lambda} (z-a_\lambda)^{b_\lambda} Z_\lambda(z) = c_\lambda,$$
i.e., $r_j=c_\lambda$ for all $j\in J_\lambda$. Therefore, by \Cref{thm:Tauberian},
\begin{equation}\label{eqn:N-count}
	N(\lambda , M) = \frac{\log(q)^{b_\lambda}}{(b_\lambda-1)!} M^{b_\lambda-1} c_\lambda q^{a_\lambda M} \sum_{j\in J_\lambda} q^{j\frac{2\pi i}{d_\lambda \log(q)} M} + o(q^{a_\lambda M} M^{b_\lambda-1})
\end{equation}
Suppose that $d_\lambda \mid M$ and set $M'=M/d_\lambda$. Then
$$\sum_{j=0}^{d_\lambda-1} q^{j \frac{2\pi i}{d_\lambda\log(q)} M}= \sum_{j=0}^{d_\lambda-1} e^{ (2\pi i) jM'}= d_\lambda,$$
which implies that
$$N(\lambda , M) = d_\lambda c_\lambda \frac{\log(q)^{b_\lambda}}{(b_\lambda-1)!} q^{a_\lambda M} M^{b_\lambda-1}+o(q^{a_\lambda M} M^{b_\lambda-1}).$$
This proves (2) in \Cref{thm:Manin}.

We now prove $(1)$.
Suppose that $d_\lambda \mid  g_\lambda$ and $d_\lambda \nmid M$. Recall from \eqref{eqn:defnHeight} the definition of the height function
$$H_\lambda(x)= \prod_v \prod_\alpha \lVert \mathsf{s}_\alpha \rVert_v (x)^{-\lambda_\alpha }.$$  
For a fixed place $v$, we can write $$\prod_\alpha \lVert \mathsf{s}_\alpha \rVert_v (x)^{-\lambda_\alpha } = q^{\sum_\alpha v_{x,\alpha} \lambda_\alpha} $$
for some integers $v_{x,\alpha}\in \z$. Since $d_\lambda\mid g_\lambda:=\gcd(\{\lambda_\alpha\}_{\lambda\in \mathcal{A}})$, one easily sees that if $H_\lambda(x)=q^M$ for some $x\in G(F)$, then $d_\lambda$ has to divide $M$. This shows that $N(\lambda,M)=0$ when $d_\lambda \nmid M$, which proves (1). \hfill \qed

\subsection{Proof \Cref{thm:Leading-constant}}
We compute the leading constant for $\rho=(\rho_\alpha)\in \pic(X)_\cx$, the vector corresponding to the isomorphism class of $\omega_X^{-1}$. Note that $a_\rho=1$ and $b_\rho=\rank(\pic(X))$.
We can rewrite the relation \eqref{eqn:dg=dtau} between the measures $\tau_{X,v}$ and $d\mathbf{g}_v$ as
$$d\tau_{X,v}= |c|_v^{-1} H_v(x_v;\rho)^{-1} \> d\mathbf{g}_v.$$ 
Therefore, as $\tau_{X,v}(D(F_v))=0$, we have 
\begin{equation}\label{eqn:TauXH}
	\tau_{X,v}(X(F_v))=\tau_{X,v}(G(F_v))=|c|_v^{-1} \int_{G(F_v)} H_v(x_v;\rho)^{-1} \> d\mathbf{g}_v = |c|_v^{-1} \hat{H}_v(\mathbf{0};\rho).
\end{equation}
Thus, by \Cref{cor:FT-exression}, the Euler product
$$\prod_v \left(\tau_{X,v}(X(F_v)) \prod_{\alpha\in \mathcal{A}} \zeta_{F_\alpha,v}(1)^{-1}\right) $$
converges absolutely to a strictly positive number.
\begin{defn} \label{defn:Peyre's-constant}
	The Tamagawa measure on $X(\ad)$ defined by Peyre \cite[Definition 2.6.1]{Peyre} is 
	$$\tau_X:=q^{-\dim(G)(g_F-1)}\prod_{\alpha\in \mathcal{A}} \residue_{s=1} \zeta_{F_\alpha}(s) \times \prod_v \left( \prod_{\alpha\in \mathcal{A}}  \zeta_{F_\alpha,v}(1)^{-1}\right) \cdot \tau_{X,v}.$$
\end{defn}
Note that $\Br(X)=\Br(F)$ by \Cref{prop:picX}, so that $X(\ad)^{\Br}=X(\ad)$.

\begin{prop}\label{prop:tamagawa}
	We have that
	$$\lim_{s\rightarrow 1^+} {(s-1)^{b_\rho}} \hat{H}(\mathbf{0};s\rho)=q^{\dim(G)(g-1)} \tau_X(X(\ad))  \prod_\alpha \rho_\alpha^{-1}.$$
\end{prop}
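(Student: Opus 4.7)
The plan is to unwind the factorization of $\hat{H}(\mathbf{0};\mathbf{s})$ provided by \Cref{cor:FT-exression}, track the orders of vanishing/pole at $s=1$ coming from the zeta factors, and then match what survives with Peyre's definition of $\tau_X$ via the product formula.

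First I would write
$$\hat{H}(\mathbf{0};s\rho)=\phi(s\rho)\prod_{\alpha\in\mathcal{A}}\zeta_{F_\alpha}\bigl(1+(s-1)\rho_\alpha\bigr),$$
where $\phi$ is holomorphic on $\Omega_{-1/(2e_X)}$ by \Cref{cor:FT-exression}. Each $\zeta_{F_\alpha}$ has a simple pole at $1$, so the substitution $z_\alpha=1+(s-1)\rho_\alpha$ gives the local expansion $\zeta_{F_\alpha}(1+(s-1)\rho_\alpha)\sim \rho_\alpha^{-1}(s-1)^{-1}\residue_{z=1}\zeta_{F_\alpha}(z)$ near $s=1$. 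Since $b_\rho=|\mathcal{A}|$, this yields
\begin{equation*}
\lim_{s\to 1^+}(s-1)^{b_\rho}\hat{H}(\mathbf{0};s\rho)=\phi(\rho)\prod_{\alpha\in\mathcal{A}}\rho_\alpha^{-1}\residue_{z=1}\zeta_{F_\alpha}(z).
\end{equation*}

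Next I would compute $\phi(\rho)$ locally. By construction $\phi(\rho)=\prod_v\phi_v(\rho)$ with $\phi_v(\rho)=\hat{H}_v(\mathbf{0};\rho)\prod_\alpha \zeta_{F_\alpha,v}(1)^{-1}$. The key identity is \eqref{eqn:TauXH}, which gives $\hat{H}_v(\mathbf{0};\rho)=|c|_v^{-1}\tau_{X,v}(X(F_v))$ (recall $c\in F^\times$ is the constant from \eqref{eqn:dg=dtau} relating $d\mathbf{g}_v$ and $\tau_{X,v}$, and $\tau_{X,v}(D(F_v))=0$ so $\tau_{X,v}(X(F_v))=\tau_{X,v}(G(F_v))$). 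Multiplying over all $v$ and invoking the product formula $\prod_v|c|_v=1$, one obtains
$$\phi(\rho)=\prod_v\tau_{X,v}(X(F_v))\prod_{\alpha\in\mathcal{A}}\zeta_{F_\alpha,v}(1)^{-1}.$$

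Finally, comparing with \Cref{defn:Peyre's-constant}, the right-hand side equals $q^{\dim(G)(g_F-1)}\tau_X(X(\ad))\bigl(\prod_\alpha\residue_{z=1}\zeta_{F_\alpha}(z)\bigr)^{-1}$. Substituting back into the earlier limit, the residue factors cancel and one is left with
$$\lim_{s\to 1^+}(s-1)^{b_\rho}\hat{H}(\mathbf{0};s\rho)=q^{\dim(G)(g_F-1)}\tau_X(X(\ad))\prod_{\alpha\in\mathcal{A}}\rho_\alpha^{-1},$$
which is the claimed formula. No step is really an obstacle here; the only subtlety worth stressing is the bookkeeping of the constant $|c|_v$, which must be tracked carefully so that the product formula can be applied and the residual factors line up exactly with Peyre's normalization of $\tau_X$.
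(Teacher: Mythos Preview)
Your proposal is correct and follows essentially the same approach as the paper: factor via \Cref{cor:FT-exression}, extract the residues of the $\zeta_{F_\alpha}$ at $s=1$, evaluate $\phi(\rho)$ as a product of local factors using \eqref{eqn:TauXH}, and match against \Cref{defn:Peyre's-constant}. The only slip is that \eqref{eqn:TauXH} actually gives $\hat{H}_v(\mathbf{0};\rho)=|c|_v\,\tau_{X,v}(X(F_v))$ rather than $|c|_v^{-1}\tau_{X,v}(X(F_v))$, but as you correctly observe, the product formula $\prod_v|c|_v=1$ makes this immaterial.
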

\begin{proof}
	By \Cref{cor:FT-exression}, we have that $\lim_{s\rightarrow 1^+} {(s-1)^{b_\rho}} \hat{H}(\mathbf{0};s\rho)$ is equal to
	\begin{align*}
			& \lim_{s\rightarrow 1^+} {(s-1)^{b_\rho}} \phi(s (\rho_\alpha)) \prod_{\alpha \in \mathcal{A}} \zeta_{F_\alpha}(1+\rho_\alpha(s-1))
			\\&= \left(\prod_\alpha \rho_\alpha^{-1}\right) \left( \lim_{s\rightarrow 1^+}  \prod_{\alpha\in \mathcal{A}} \rho_\alpha(s-1) \zeta_{F_\alpha}(1+\rho_\alpha(s-1))   \right) \left( \lim_{s\rightarrow 1^+} \prod_v \phi_v(s (\rho_\alpha)) \right) 
			\\&= \left(\prod_\alpha \rho_\alpha^{-1}\right) \left(   \prod_{\alpha\in \mathcal{A}} \residue_{s=1} \zeta_{F_\alpha}(s)  \right) \left( \prod_v  \hat{H}_v(0;\rho)  \prod_{\alpha\in \mathcal{A}} \zeta_{F_\alpha,v}(1)^{-1} \right)
			\\&= \left(\prod_\alpha \rho_\alpha^{-1}\right) \left(   \prod_{\alpha\in \mathcal{A}} \residue_{s=1} \zeta_{F_\alpha}(s)  \right) \cdot \left(  \prod_v |c|_v \tau_{X,v}(X(F_v))  \prod_{\alpha\in \mathcal{A}} \zeta_{F_\alpha,v}(1)^{-1} \right)
			\\&= \left(\prod_\alpha \rho_\alpha^{-1}\right) \tau_X(X(\ad)) \cdot q^{\dim(G)(g_F-1)},
	\end{align*}
	as desired.
\end{proof}

By \Cref{cor:lambda-pole-order}, \Cref{cor:FT-otherChar-exression}, and the Poisson summation formula \eqref{eqn:Poisson},  we deduce that
$$\lim_{s\rightarrow 1^+} {(s-1)^{b_\rho}} Z_\rho(s)= \lim_{s\rightarrow 1^+} {(s-1)^{b_\rho}} \frac{1}{q^{\dim(G)(g_F-1)}\tau(G)} \hat{H}(\mathbf{0}; \rho s).$$  
By \Cref{prop:tamagawa}, we get 
\begin{equation}
	\begin{split}
		c_\rho:=\lim_{s\rightarrow 1^+} (s-1)^{b_\rho} Z_{\rho}(s)
		= \frac{\tau_X(X(\ad))}{\tau(G)} \prod_\alpha \rho_\alpha^{-1}
	\end{split}
\end{equation}
Now, by \Cref{prop:picX}, we see that 
$$\alpha^*(X)= \frac{\prod_{\alpha \in \mathcal{A}} \rho_\alpha^{-1}}{|\pic(G)|}.$$
Also, by \cite[Theorem 1.1]{ZRos}, we can express the Tamagawa number of $G$ as
$$\tau(G)= \frac{\# \text{Ext}^1(G,\mathbb{G}_m)}{\# \Sh(G)}.$$
By \cite[Corollary 4.14]{Achet}
$$ \text{Ext}^1(G,\mathbb{G}_m)\cong \pic(G),$$
and by \Cref{thm:ShaTrivial} we have $\Sh(G)=0$.
Therefore, we can rewrite $\alpha^*(X)$ as
$$\alpha^*(X)= \frac{\prod_{\alpha \in \mathcal{A}} \rho_\alpha^{-1}}{\tau(G)}.$$
Finally, we conclude that
$c_\rho=\alpha^*(X) \tau_X(X(\ad)) \hfill\qed$

\section{Purely inseparable example} \label{section:Proj-example}
In this final section, we start by showing that \Cref{assum} holds in the setting of \Cref{thm:resGm}, which follows by \Cref{prop:P^p-1-valuations-insep} below. Subsequently, we show  that \Cref{condition} holds as well, which is a consequence of \Cref{prop:trivialOnF+K}. Finally, we compute the leading constant explicitly in \Cref{prop:tau(X(ad))}.
 
Let $F=\mathbf{F}_q(t)$ and $F^{1/p}=\mathbf{F}_q(t^{1/p})$. Let $G=\WeilRes_{F^{1/p}/F} \mathbb{G}_m / \mathbb{G}_m$, which is a non-trivial twist of $\mathbb{G}_a^{p-1}$ that splits over $F^{1/p}$ (see \Cref{example:ResGm}). By \Cref{prop:ResGm-SmComp}, $X:= \mathbb{P}^{p-1}$ is a smooth compactification of $G$, where the boundary divisor $D$ is given by the equation
$$\sum_{i=0}^{p-1} t^i X_i^{p}= X_0^p+tX_1^p+t^2X_2^p+\cdot\cdot\cdot + t^{p-1} X_{p-1}^p =0,$$
where $X_0,X_1,...,X_{p-1}$ are the projective coordinates of $\mathbb{P}^{p-1}$; we denote the defining polynomial by $f$. Let $\mathcal{G}\subset \pp_{\mathbb{F}_q[t]}$ be the $\mathbb{F}_q[t]$-model of $G$ defined by $f$. Note that $D$ is a geometrically irreducible, reduced, and regular divisor that is not geometrically reduced. To follow the notation in the previous sections, we will denote $D$ by $D_\beta$, so that $\mathcal{A}=\mathcal{B}=\{\beta\}$. It is clear that $\mathcal{O}(D_\beta)\cong \mathcal{O}(p) \cong \omega_X^{-1}$. Therefore, to follow our setting, we choose the class of $\mathcal{O}(D_\beta)$ as a basis for $\pic(X)_\cx\cong \cx [\mathcal{O}(D_\beta)]$. Note that the coordinate of  $\omega_X^{-1}$ with respect to this basis is $\rho=(\rho_\beta)=(1)$. 
We endow $\mathcal{O}(D_\beta)$ with the adelic $q$-metric $(\lVert \cdot \rVert_v)_{v\in \Omega_F}$ defined by 

$$\lVert \mathsf{s}(x_v) \rVert_v := \max \left\{ \frac{|X_0(x_v)|_v^p}{|\mathsf{s}(x_v)|_v} ,  \frac{|X_1(x_v)|_v^p}{|\mathsf{s}(x_v)|_v}, \cdot\cdot\cdot, \frac{|X_{p-1}(x_v)|_v^p}{|\mathsf{s}(x_v)|_v}, \frac{|f(x_v)|_v}{|\mathsf{s}(x_v)|_v} \right\}^{-1}$$
for any $\mathsf{s}\in \mathcal{O}(D_\beta)(U)$, any $U$ open in $X$, and any $x_v\in U(F_v)$. In particular, for the canonical section $f$ of $\mathcal{O}(D_\beta)$, we have
$$\lVert f(x_v) \rVert_v := \max \left\{ \frac{|X_0(x_v)|_v^p}{|f(x_v)|_v} ,  \frac{|X_1(x_v)|_v^p}{|f(x_v)|_v}, \cdot\cdot\cdot, \frac{|X_{p-1}(x_v)|_v^p}{|f(x_v)|_v}, 1 \right\}^{-1}.$$

Set $\mathfrak{o}_F:=\mathbb{F}_q[t]$ and identify $\spec(\mathfrak{o}_F)$ with the complement of the point at infinity in $\mathcal{C}_F=\mathbb{P}^1_{\mathbb{F}_q}$. The place corresponding to $1/t\in F$, which we denote by $v=\infty$, is identified with the point at infinity in $\mathbb{P}^1_{\mathbb{F}_q}$.  Let $\mathcal{X}:=\mathbb{P}^{p-1}_{\mathfrak{o}_F}$ be the $\mathfrak{o}_F$-model of $X$,  $\mathcal{D}_\beta$ be the divisor defined by $f$, and $\mathcal{G}:=\mathcal{X}\setminus \mathcal{D}_\beta$.

\begin{lem}
	The adelic $q$-metric $(\mathcal{O}(D_\beta), \lVert \cdot \rVert_v)_{v\in \Omega_F}$ is given by the integral model at all places $v\neq \infty$. For $v=\infty$, we have $\lVert f(x_\infty) \rVert_\infty=1$ for all $x_\infty \in X(F_\infty)$.
\end{lem}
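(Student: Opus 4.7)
The plan is to verify both claims by direct ultrametric computation after choosing convenient representatives of projective coordinates.

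For a place $v \neq \infty$, I would observe that $t \in \mathfrak{o}_F \subset \mathfrak{o}_v$, so $f$ has coefficients in $\mathfrak{o}_v$ and the closure of $D_\beta$ in $\mathcal{X}_v = \mathbb{P}^{p-1}_{\mathfrak{o}_v}$ is cut out by the same equation. Given $x_v \in X(F_v)$, I would pick primitive projective coordinates $(x_0, \ldots, x_{p-1}) \in \mathfrak{o}_v^{\,p}$ with $\max_i |x_i|_v = 1$, and let $j$ be an index realising this maximum. A trivialising local generator of $\mathcal{O}(D_\beta) \cong \mathcal{O}(p)$ on $\mathcal{X}_v$ near $x_v$ is $X_j^p$; writing the canonical section $f$ as $(f/X_j^p)\cdot X_j^p$ on the open $\{X_j \neq 0\}$ shows that the integral model norm of $f$ at $x_v$ equals $|f(x_v)/x_j^p|_v = |f(x_v)|_v$. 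On the other hand, plugging primitive coordinates into the formula defining $\lVert \cdot \rVert_v$ gives $\max_i |x_i|_v^p = 1$, while integrality of $f$ at $x_v$ forces $|f(x_v)|_v \leq 1$; hence the maximum in the denominator is $1$ and $\lVert f(x_v) \rVert_v = |f(x_v)|_v$, matching the integral model metric.

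The case $v = \infty$ is the key calculation. Here the uniformiser is $\pi_\infty = 1/t$, so $|t|_\infty = q$. For any non-zero representative $(x_0, \ldots, x_{p-1})$ of $x_\infty$, I would observe that the valuations
$$v_\infty(t^i x_i^p) \;=\; -i + p\,v_\infty(x_i), \qquad i = 0, 1, \ldots, p-1,$$
are pairwise distinct modulo $p$, since $i$ ranges over a complete set of residues. The strong triangle inequality then applies with equality, giving
$$|f(x_\infty)|_\infty \;=\; \max_{0 \leq i \leq p-1} q^i |x_i|_\infty^p \;\geq\; \max_{0 \leq i \leq p-1} |x_i|_\infty^p.$$
Substituting into the formula for $\lVert f(x_\infty) \rVert_\infty$, the largest term in the denominator of the max is $|f(x_\infty)|_\infty$ itself, so $\lVert f(x_\infty) \rVert_\infty = 1$.

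There is essentially no technical obstacle; both halves reduce to tracking valuations at primitive representatives. The point worth highlighting is that the collapse of $\lVert f \rVert_\infty$ to the single value $1$ is a direct manifestation of the mod-$p$ distinctness of the valuations of the monomials of $f$, which is itself the arithmetic shadow of $D_\beta$ being geometrically irreducible but not geometrically reduced. This is precisely the phenomenon flagged in the introduction and is what makes Condition $(3)$ of \Cref{assum} tractable in this example.
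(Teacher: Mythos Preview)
Your proof is correct and follows essentially the same approach as the paper: both pick primitive coordinates at finite $v$ to identify the metric with the integral one, and at $v=\infty$ both establish $|f(x_\infty)|_\infty \geq \max_i |x_i|_\infty^p$ to force the norm to $1$. Your mod-$p$ distinctness observation at $\infty$ is a cleaner way to rule out cancellation than the paper's explicit computation (which picks the maximal index $j$ with $x_j$ a unit and checks $|f(x_\infty)|_\infty = q^j$), and it has the virtue of making transparent the link to the geometry of $D_\beta$ that you flag at the end.
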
 
\begin{proof}
	Suppose that $v\neq\infty$. For any primitive point ${x_v=(x_0:\ldots:x_{p-1})\in \mathcal{X}(\ov)}$, we have $|X_i(x_v)|_v=1$ for some $i\in \{0,\ldots, p-1\}$; this implies that $\lVert \mathsf{s}(x_v) \rVert_v =|\mathsf{s}(x_v)|_v$ for any  $\mathsf{s}\in \mathcal{O}(D_\beta)(U)$ and any $U$ open in $X$  (in particular,
	$\lVert f(x_v) \rVert_v = |f(x_v)|_v$). This shows that the $v$-adic metric is given by the integral model for all places $v\neq \infty$.
	
	Let $v= \infty$ and  $x_\infty=(x_0:\ldots:x_{p-1})\in \mathcal{X}(\ov)$ be a primitive point. We have
	$$|f(x_\infty)|_\infty=|t|_\infty^{p-1} |\sum_{i=0}^{p-1} (1/t)^{p-1-i} X_i^{p} |_\infty= q^{p-1} |\sum_{i=0}^{p-1} (1/t)^{p-1-i} X_i^{p} |_\infty.$$
	Let $j\in \{0,\ldots p-1\}$ be the maximal index such that $x_j\in \ov^*$. Then we have $|\sum_{i=0}^{p-1} (1/t)^{p-1-i} X_i^{p} |_v= q^{-(p-1-j)}$, which implies that
	\begin{equation}\label{eqn:f_infty}
		|f(x_\infty)|= q^{p-1} q^{-(p-1-j)}= q^j.
	\end{equation}
	This shows that for all primitive points $x_\infty=(x_0:\ldots:x_{p-1})\in \mathcal{X}(\ov)$, we have that
	$\lVert f(x_\infty) \rVert_\infty=1,$ as desired.
\end{proof}

We now prove that \Cref{assum} is satisfied for $D_\beta$.

\begin{prop} \label{prop:P^p-1-valuations-insep}
	The following statements hold for $D_\beta$:
	\begin{enumerate}[label=(\roman*)]
		\item $D_\beta (F_v)=\emptyset$ for all $v\in \Omega_F$.
		\item For all places $v$ and for all $x\in X(F_v)$, we have $$\lVert f(x) \rVert_v \in \{1,q_v^{-1},\cdot \cdot \cdot, q_v^{-(p-1)}\};$$ 
		moreover, the value $\lVert f(x) \rVert_v$  depends only on $\redv(x)\in {\mathcal{X}}(\fv)$. 
		\item For any place $v\neq \infty$,
		$$\# \{\bar{x}\in {\mathcal{X}}(\fv):\> \lVert f (\redv^{-1}(\bar{x})) \rVert_v=q_v^{m} \} = q_v^{p-1-m}$$
		for $0\leq m \leq p-1$. In particular, we have that ${e_X=p-1}$.
	\end{enumerate}
\end{prop}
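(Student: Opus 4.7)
The plan is to address the three claims separately, with (iii) being the most computational. For (i), I would invoke \Cref{prop:BoundaryEmpty} directly: $G = \WeilRes_{F^{1/p}/F}\mathbb{G}_m/\mathbb{G}_m$ is $F$-wound and $X = \mathbb{P}^{p-1}$ is a smooth compactification. An alternative intrinsic argument uses that $F_v$ has degree of imperfection $1$ and $t \notin F_v^p$ (checked place by place: at $v = (t)$ or $v = \infty$ one has $v(t) \not\equiv 0 \pmod p$, while at any other finite $v$ the element $t - \tilde\alpha$ is a uniformizer, where $\tilde\alpha$ is the Teichmüller lift of the residue of $t$, so the coefficient of $\pi_v$ in the expansion of $t$ is the unit $1$). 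Then $\{1, t, \ldots, t^{p-1}\}$ is an $F_v^p$-basis of $F_v$, and $\sum t^i x_i^p = 0$ forces each $x_i = 0$, contradicting projectivity.

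For (ii), the case $v = \infty$ is already handled by the formula $\lVert f(x_\infty)\rVert_\infty = 1$ proved above. For a finite $v$ and primitive $x \in \mathcal{X}(\ov)$, the integral model gives $\lVert f(x)\rVert_v = |f(x)|_v = q_v^{-v(f(x))}$, with $v(f(x)) \geq 0$ since $f$ has integral coefficients. The key step is the upper bound $v(f(x)) \leq p-1$, which I would establish by contradiction: if $\sum x_i^p t^i \in \pi_v^p \mathfrak{o}_v$, then expressing both sides in the $F_v^p$-basis $\{t^i\}_{i=0}^{p-1}$ and using that $\pi_v^p \in F_v^p$, uniqueness of the decomposition forces each coordinate $x_i^p \in \pi_v^p F_v^p = (\pi_v F_v)^p$, hence $x_i \in \pi_v F_v \cap \mathfrak{o}_v = \mathfrak{m}_v$ for all $i$, violating primitivity. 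The residue-dependence claim follows from the characteristic-$p$ identity $(y + \pi_v z)^p = y^p + \pi_v^p z^p$, which gives $x_i^p \equiv \tilde{\bar x}_i^p \pmod{\pi_v^p}$; thus $f(x) \pmod{\pi_v^p}$ depends only on $\bar x$, and since $v(f(x)) < p$ this reduction determines the valuation.

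For (iii), fix a finite $v \neq \infty$ and let $\alpha \in \fv$ be the residue of $t$, so that $\pi := t - \tilde\alpha$ is a uniformizer. For a residue class $\bar x$ with a Teichmüller-lifted representative $(c_0, \ldots, c_{p-1}) \in \fv^p$, the reduction from (ii) yields $f(x) \equiv g(t) \pmod{\pi_v^p}$ with $g(Y) := \sum c_i^p Y^i$. Expanding $t = \tilde\alpha + \pi$,
\[
g(t) = \sum_{k=0}^{p-1} e_k \pi^k, \qquad e_k = \sum_{i=k}^{p-1} \binom{i}{k} c_i^p \tilde\alpha^{i-k} \in \fv,
\]
so $v(f(x)) = \min\{k : e_k \neq 0\}$. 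The $\fv$-linear map $(c_0^p, \ldots, c_{p-1}^p) \mapsto (e_0, \ldots, e_{p-1})$ has upper-triangular matrix with diagonal entries $\binom{i}{i} = 1$, so it is a bijection on $\fv^p$. Counting projective tuples $(e_0 : \cdots : e_{p-1})$ with $e_0 = \cdots = e_{m-1} = 0$ and $e_m \neq 0$ gives $q_v^{p-1-m}$ residue classes, and $e_X = p - 1$ is attained at $m = p - 1$.

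The main subtlety is ensuring the change-of-basis matrix remains invertible in characteristic $p$: this is guaranteed by the diagonal of $1$'s, which avoids any dependence on whether the off-diagonal binomial coefficients $\binom{i}{k}$ vanish modulo $p$.
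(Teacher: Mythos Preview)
Your argument for (i) is correct and matches the paper.

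For (ii), there is a genuine error in the contradiction step. You write ``$x_i \in \pi_v F_v \cap \mathfrak{o}_v = \mathfrak{m}_v$'', but $F_v$ is a field and $\pi_v \neq 0$, so $\pi_v F_v = F_v$ and the intersection is all of $\ov$, not $\mathfrak{m}_v$. Knowing only $x_i^p \in \pi_v^p F_v^p$ is vacuous. What is actually needed is that $\{1, t, \dots, t^{p-1}\}$ is an $\ov^p$-module basis of $\ov$ (not merely an $F_v^p$-basis of $F_v$): then writing $w = \sum d_i t^i \in \ov$ forces $d_i \in \ov^p$, whence $x_i^p = \pi_v^p d_i \in \pi_v^p \ov^p$ and $x_i \in \pi_v \ov = \mathfrak{m}_v$. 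Your own computation in (iii) supplies exactly this: with $\pi = t - \alpha$ as uniformizer, the change-of-basis matrix between $\{\pi^k\}$ and $\{t^i\}$ is unipotent triangular with entries in $\fv \subset \ov^p$, hence invertible over $\ov^p$. Equivalently, your (iii) shows directly that $(\bar x_i) \neq 0 \Rightarrow (e_k) \neq 0 \Rightarrow v(f(x)) \leq p-1$, which is the bound you want; so the gap is real but locally repairable with tools you already have.

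For (iii), your approach is correct and genuinely different from the paper's. The paper works with the global polynomial model: it represents each residue class by an element of $S_v \subset \mathbb{F}_q[t]$ of degree $< d_v$, observes that $\sum t^i x_{i,0}^p$ is then a polynomial in $\mathbb{F}_q[t]$ of degree $\leq p d_v - 1$, and counts via the bijection $\prod^p S_v \to P_{p d_v - 1}$ together with divisibility by powers of the irreducible polynomial $\pi_v$. Your approach is purely local: expand $t = \alpha + \pi$ and reduce the count to a unipotent $\fv$-linear change of coordinates on $\fv^p$. Your version is cleaner and makes the counting transparent without appealing to the global polynomial ring; the paper's version handles all finite $v$ with a single uniform description using $\mathbb{F}_q[t]$-representatives, though your formula also specializes correctly at $v = (t)$ where $\alpha = 0$.
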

\begin{proof}
	$(i)$ As $G$ is $F$-wound, the statement follows by \Cref{prop:BoundaryEmpty}. 
	
	$(ii)$
	Note that we have a one-to-one correspondence between the finite place of $F$ and the irreducible polynomials of $\mathfrak{o}_F=\mathbb{F}_q[t]$; for a finite place $v$, we denote the corresponding irreducible polynomial by $\pi_v$, which is a uniformizing parameter for $F_v$. Let $v$ be any place different from  $\infty$. We wish to study $v(f(x_v))$ for any primitive point $x_v\in X(F_v)$. Let $S_v$ be the set of representatives of $\ov/\pi_v$ consisting of all polynomials in $\mathbb{F}_q[t]$ of degree less than $d_v:=\deg(\pi_v)$. Note that $S_v$ is an additive set of representatives, but not multiplicative. Every $y\in F_v$ has a unique representation
	$y=\sum_{j=-n}^{\infty} y_j \pi_v^j,$
	for some $n\in \z$ and $y_j\in S_v$, for all $j$. Hence, if $y\in \ov$, we can write $y=y_0+\pi_v y'$, for some $y_0\in S_v$ and $y'\in \ov$. Thus, $y^p = y_0^p + \pi_v^p y'^p$, which  implies that 
	$$y^p \equiv y_0^p \mod \pi_v^p,$$ 
	Let $(x_0:x_1:\cdot\cdot\cdot: x_{p-1})\in \mathbb{P}^{p-1}(\ov)$ be a primitive representative for the $x$ (i.e., $x_i\in \ov$ for all $i$, and there exists $j$ such that $x_j\in \ov^*$.) Then for all $i\in \{0,\ldots,p-1\}$ we have a unique representation 
	$$x_i:=\sum_{j=0}^{\infty} x_{i,j} \pi_v^j$$
	where $x_{i,j}\in S_v$ for all $j$. Therefore, as $t\in \ov$ for all $v\neq \infty$, we have
	$$f(x)= \sum_{i=0}^{p-1} t^i x_i^{p}\equiv  \sum_{i=0}^{p-1} t^i x_{i,0}^{p} \mod \pi_v^p.$$
	Thus, if  $v(\sum_{i=0}^{p-1} t^i x_{i,0}^{p} )<p$, then $v(f(x))=v(\sum_{i=0}^{p-1} t^i x_{i,0}^{p} )$. Hence, it suffices to prove that $v(\sum_{i=0}^{p-1} t^i x_{i,0}^{p} )<p$.  As $x_{i,0}\in S_v \subset \mathbb{F}_q[t] \subset F_v$, and $t \in \mathbb{F}_q[t] \subset F_v $,  we have $\sum_{i=0}^{p-1} t^i x_{i,0}^{p}\in \mathbb{F}_q[t] \subset{F_v}$ (by the inclusion $\mathbb{F}_q[t]\subset F_v$, we mean the the restriction of the embedding of $F$ inside $F_v$). Since the degree of the polynomial $x_{i,0}\in S_v$ is $\leq d_v-1$ for all $i$, the polynomial $\sum_{i=0}^{p-1} t^i x_{i,0}^{p}$ has degree at most $pd_v-1$; since the degree of the polynomial $\pi_v$ is $d_v$, this implies that $v(\sum_{i=0}^{p-1} t^i x_{i,0}^{p})\leq p-1$. Therefore, we have shown that  $$v(f(x))=v(\sum_{i=0}^{p-1} t^i x_{i,0}^{p} )< p,$$
	which proves $(ii)$.
	
	$(iii)$ Observe the following:
	\begin{itemize} [itemsep=4pt,parsep=1pt,topsep=4pt]
		\item We have an isomorphism of (additive) groups, given by our choice of representatives $S_v$ for $\fv$,
		$$\iota: \prod^{p} S_v \xrightarrow{\sim} \mathbb{G}_a^{p}(\fv), \quad (y_{0}, \cdot\cdot\cdot, y_{p-1}) \mapsto (\bar{y}_{0}, \cdot\cdot\cdot, \bar{y}_{p-1})$$
		where $\bar{y}_{i}$ is the image of $y_{i}$ under $\mathbb{F}_q[t] \subset \ov\rightarrow \ov/ \pi_v \cong \fv$. 
		\item Let $P_r\subset \mathbb{F}_q[t]$ be the additive subgroup of polynomials of degree at most $r$, where we set $P_{-1}:=\{0\}$. Note that the cardinality of $P_{pd_v-1}$ is $q^{pd_v}$. The map
		$$f: \prod^{p} S_v \rightarrow P_{pd_v-1}, \quad (y_0, \cdot\cdot\cdot, y_{p-1}) \mapsto \sum_{i=0}^{p-1} t^i y_{i}^{p}$$
		is a homomorphism of additive groups.
		Since $D_\beta (F)=\emptyset$, the homomorphism is injective. As the cardinality of $\prod^{p} S_v $ is equal to $q_v^{p}=q^{pd_v}$, the map is an isomorphism.
		\item Let $0\leq m \leq p-1$ be an integer. Then one sees that the cardinality of the subset
		$$P_{pd_v-1,m}:=\{ h(t)\in P_{pd_v-1}: \> \pi_v^m \mid h(t),\> \pi_v^{m+1} \nmid h(t)\}$$
		is equal to $$\# P_{d_v(p-m)-1} - \# P_{d_v(p-m-1)-1} = q^{d_v(p-m)} - q^{d_v(p-m-1)}= q_v^{p-m}- q_v^{p-m-1}.$$
		
		\item Let $\bar{y}:=(\bar{y}_0,\ldots, \bar{y}_{p-1})\in \mathbb{G}_a^p(\fv)$. For $\bar{a}\in \fv^*$, define $\bar{a} \bar{y}:=(\bar{a} \bar{y}_0,\ldots,\bar{a} \bar{y}_{p-1}) $. Then $\iota^{-1}(\bar{a}\bar{y})\equiv a \iota^{-1}(\bar{y}) \mod \prod^p \pi_v$ in $\prod^{p} F_v$ for any lift $a\in \ov^*$ of $\bar{a}$. Since $\sum_{i=0}^{p-1} t^i a^p y_{i}^{p}=a^p \sum_{i=0}^{p-1} t^i y_{i}^{p}$ where $(y_0,\ldots ,y_{p-1})=\iota^{-1}(\bar{y})$, we see that
		$$v( f(\iota^{-1}(\bar{a}\bar{y}) )= v( a^p f (\iota^{-1}(\bar{y}) ))= v( f_ \beta(\iota^{-1}(\bar{y}) ).$$
		
	\end{itemize}
	
	In conclusion, we have an isomorphism $$\tilde{f}:=f\circ \iota^{-1}: \mathbb{G}_a^{p}(\fv)\xrightarrow{\sim} P_{pd_v-1}$$
	with the property that $v(\tilde{f}((\bar{y}_0:\ldots: \bar{y}_{p-1})))$ is well defined for any projective point $(\bar{y}_0:\ldots: \bar{y}_{p-1})\in \mathbb{P}^{p-1}(\fv)\cong \mathbb{G}_a^{p}(\fv)/\fv^*$; also, if $y\in \redv^{-1}(\bar{y})$ then ${v(f(y))= v(\tilde{f}(\bar{y}))}$.
	Therefore, for $1\leq m\leq p-1$, we have
	\begin{equation*}
		\begin{split}
			\# \{\bar{x}\in \mathbb{P}^{p-1}(\fv) :\> v(\tilde{f}(\bar{x}))=m\}  &=\frac{1}{q_v-1} \# \{\bar{x}\in \mathbb{G}_a^{p}(\fv) :\>  v(\tilde{f}(\bar{x})))=m \} 
			\\&=\frac{1}{q_v-1} \# \{\bar{x}\in \mathbb{G}_a^{p}(\fv) \>:\> \tilde{f}(\bar{x}) \in P_{pd_v-1,m} \}
			\\&= \frac{1}{q_v-1} \#P_{pd_v-1,m} 
			\\&=  \frac{1}{q_v-1} (q_v^{p-m}-q_v^{p-m-1})
			\\&= q_v^{(p-1)-m}
		\end{split}
	\end{equation*}
	This completes the proof of $(iii)$.
\end{proof}

Let $U_j\subset \pp_F$, for $j\in \{0,\ldots, p-1\}$, be the affine patch given by ${X_j\neq 0}$ with coordinates $\{x_{i/j} \}_{i\neq j}$, where $x_{i/j}=X_i/X_j$. We can view ${f_j:=X_j^{-(p-1)}f(X_0,\ldots, X_{p-1})}$ as a polynomial in the variables $\{x_{i/j} \}_{i\neq j}$. The measures 
${|f_j|_v^{-1} \bigwedge_{i\neq j} dx_{i/j}}$ glue to a Haar measure $d\mathbf{g}_v$ on $G(F_v)$; this induces the Haar measure $d\mathbf{g}:=\prod_v d\mathbf{g}_v$ on $G(\ad)$. Hence, the local Tamagawa measure $\tau_{X,v}$ on $X(F_v)$ is equal to the measure induced by $\lVert f \rVert_v d\mathbf{g}_v$ (c.f. \Cref{TamHaarMeasure}).

We now show that \Cref{condition} holds. First, we prove the following lemma.
\begin{lem}\label{lem:volFinfty}
	We have that $\vol(G(F_\infty):d\mathbf{g}_\infty)=p\cdot q^{-(p-1)}$.
\end{lem}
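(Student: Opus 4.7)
The plan is to reduce to a direct local computation in affine charts and stratify to avoid double-counting. Since $G$ is $F$-wound, \Cref{prop:BoundaryEmpty} gives $G(F_\infty) = X(F_\infty) = \mathbb{P}^{p-1}(\mathfrak{o}_\infty)$, so I need to compute $\int_{\mathbb{P}^{p-1}(\mathfrak{o}_\infty)} d\mathbf{g}_\infty$ where on each affine chart $U_j$ the measure is $|f_j|_\infty^{-1} \bigwedge_{i \neq j} dx_{i/j}$ with $f_j = \sum_{i=0}^{p-1} t^i x_{i/j}^p$ and $x_{j/j} = 1$.

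To make the charts disjoint, I would stratify $\mathbb{P}^{p-1}(\mathfrak{o}_\infty)$ into $p$ pieces $V_0, \ldots, V_{p-1}$, where $V_j$ consists of those classes whose primitive representative $(x_0 : \ldots : x_{p-1})$ has $j$ as its \emph{maximal} unit index (i.e.\ $x_j \in \mathfrak{o}_\infty^{*}$ and $x_i \in \mathfrak{m}_\infty$ for $i > j$). Normalizing so that $x_j = 1$, each $V_j$ sits inside $U_j(F_\infty)$ and is cut out in the affine coordinates by
\[
x_{i/j} \in \mathfrak{o}_\infty \text{ for } i < j, \qquad x_{i/j} \in \mathfrak{m}_\infty \text{ for } i > j.
\]
These conditions are mutually exclusive across different $j$, so the $V_j$ partition $\mathbb{P}^{p-1}(\mathfrak{o}_\infty)$.

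The core step is to compute $|f_j|_\infty$ on $V_j$. Using $v_\infty(t) = -1$, the term for $i = j$ contributes $|t^j \cdot 1^p|_\infty = q^j$; the terms for $i < j$ have $|t^i x_{i/j}^p|_\infty \leq q^i < q^j$; and the terms for $i > j$ satisfy $|t^i x_{i/j}^p|_\infty \leq q^{i-p} \leq q^{-1} < q^j$ since $|x_{i/j}|_\infty \leq q^{-1}$. Thus by the ultrametric inequality $|f_j|_\infty = q^j$ on $V_j$. The Haar measure of $V_j$ in the affine coordinates is $\mu_\infty(\mathfrak{o}_\infty)^j \cdot \mu_\infty(\mathfrak{m}_\infty)^{p-1-j} = q^{-(p-1-j)}$, so
\[
\vol(V_j; d\mathbf{g}_\infty) = q^{-j} \cdot q^{-(p-1-j)} = q^{-(p-1)}.
\]
Summing over the $p$ strata gives the stated $\vol(G(F_\infty); d\mathbf{g}_\infty) = p \cdot q^{-(p-1)}$. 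No serious obstacle is anticipated: the only step needing care is verifying the ultrametric domination on each stratum, which is immediate once the indices are parsed against $v_\infty(t) = -1$.
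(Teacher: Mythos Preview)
Your proposal is correct and follows essentially the same approach as the paper: the paper's proof partitions $\mathbb{P}^{p-1}(\mathfrak{o}_\infty)$ into the same strata $B_m$ (your $V_j$) indexed by the maximal unit coordinate, cites the earlier computation \eqref{eqn:f_infty} for $|f(x)|_\infty = q^m$ on $B_m$, and sums $q^{-m}\cdot q^{m-(p-1)}$ over $m$. Your argument spells out the ultrametric domination directly rather than invoking \eqref{eqn:f_infty}, but the two are otherwise identical.
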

\begin{proof}
	First, note that we can identify $G(F_v)$ with the set of primitive points of $\pp(\ov)$. We define the subset $$B_m:=\{(x_0:\ldots:x_{p-1})\in \pp(\ov) : x_m \in \ov^* \text{ and } x_j\notin \ov^*, \forall j>m \},$$
	which is contained in the affine patch $U_m$.
	Then $\{B_i\}_{i=0}^{p-1}$ gives a partition for $\pp(\ov)$.
	Now for $x\in B_m$, we see that $|f(x)|_\infty= q^m$ by \eqref{eqn:f_infty}. Therefore,
	\begin{equation}
		\begin{split}
			\int_{g\in G(F_\infty)} d\mathbf{g}_\infty &= \sum_{m=0}^{p-1} \int_{x\in B_m\subset U_m} |f_m(x)|^{-1}_\infty \bigwedge_{i\neq m} dx_{i/m}
			\\&= \sum_{m=0}^{p-1} q^{-m} \int_{x\in B_m\subset U_m} \bigwedge_{i\neq m} dx_{i/m}
			\\&= \sum_{m=0}^{p-1} q^{-m} q^{m-(p-1)}=pq^{-(p-1)} 
		\end{split}
	\end{equation}
	as desired.
\end{proof}

Let $\mathbb{K}\subset G(\ad)$ be the maximal compact open subgroup stabilizing the compatible system of heights
$$H: G(\ad) \times \pic(X)_\mathbb{C} \rightarrow \mathbb{C}, \>\>\> ( x ;  s ) \mapsto \prod_v \lVert f \rVert (x_v)^{-s}.$$
By our choice of the adelic metrics and \Cref{prop:K-stabilizer}, we see that
$$\mathbb{K}= G(F_\infty)\times \prod_{v\neq \infty} \mathcal{G}(\ov).$$

\begin{prop}\label{prop:trivialOnF+K}
	We have the following:
	\begin{enumerate}
		\item $\mathbb{K}\cap G(F)=\{e_G\}$, where $e_G$ is the identity of $G(F)$.
		\item $\tau_G(\mathbb{K})=p$.
		\item $G(\ad)=G(F)\oplus \mathbb{K}$.
	\end{enumerate}
	In particular, $\{\Psi\in \widehat{G(\ad)}:\> \Psi(\mathbb{K}+G(F))=1\}$ consists of the trivial character only.
\end{prop}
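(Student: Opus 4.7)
My plan is to verify the three items in turn; the final ``in particular'' statement then follows immediately, since it identifies the dual of $G(\ad)/(G(F)+\mathbb{K})$, which is trivial by (3). For (1), I would take $x\in G(F)\cap\mathbb{K}$ and rescale to a primitive representative $(x_0:\cdots:x_{p-1})$ with $x_i\in\mathfrak{o}_F=\mathbb{F}_q[t]$ and $\gcd(x_0,\ldots,x_{p-1})=1$. Such a representative is primitive at every finite place, and since the adelic metric is given by the integral model away from $\infty$, the condition $x\in\mathcal{G}(\ov)$ for all $v\neq\infty$ becomes $f(x)=\sum_{i=0}^{p-1}t^i x_i^p\in\ov^*$ for all finite $v$, equivalently $f(x)\in\mathbb{F}_q^*$. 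Expanding $x_i=\sum_{j\geq 0} a_{i,j}t^j$ with $a_{i,j}\in\mathbb{F}_q$ gives
\[
f(x)=\sum_{i=0}^{p-1}\sum_{j\geq 0}a_{i,j}^p\,t^{i+jp},
\]
and the exponents $i+jp$ (with $0\leq i\leq p-1$ and $j\geq 0$) run bijectively over $\mathbb{Z}_{\geq 0}$. Hence $f(x)\in\mathbb{F}_q^*$ forces $a_{i,j}=0$ for all $(i,j)\neq(0,0)$, so $x=(1:0:\cdots:0)=e_G$.

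For (2), I compute $d\mathbf{g}(\mathbb{K})$ place by place. \Cref{lem:volFinfty} gives $d\mathbf{g}_\infty(G(F_\infty))=p\cdot q^{-(p-1)}$. At a finite place $v$, the relation $d\tau_{X,v}=\lVert f\rVert_v\,d\mathbf{g}_v$ combined with $\lVert f\rVert_v\equiv 1$ on $\mathcal{G}(\ov)$ (by primitivity and since $f(x)$ is a unit) yields $d\mathbf{g}_v(\mathcal{G}(\ov))=\tau_{X,v}(\mathcal{G}(\ov))=\#\mathcal{G}(\fv)/q_v^{p-1}$. Surjectivity of Frobenius on $\fv$ produces $\bar{s}\in\fv$ with $\bar{s}^p=\bar{t}$, whence $\sum\bar{t}^i X_i^p=(\sum\bar{s}^i X_i)^p$, so $\mathcal{D}_{\beta,\fv}$ is set-theoretically a hyperplane, and $\#\mathcal{G}(\fv)=\#\mathbb{P}^{p-1}(\fv)-\#\mathcal{D}_{\beta,\fv}(\fv)=q_v^{p-1}$. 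Thus $d\mathbf{g}_v(\mathcal{G}(\ov))=1$, so $d\mathbf{g}(\mathbb{K})=p\cdot q^{-(p-1)}$, and since $g_F=0$ equation \eqref{Tau_G} gives $\tau_G(\mathbb{K})=q^{p-1}\cdot d\mathbf{g}(\mathbb{K})=p$.

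For (3), I invoke Rosengarten's formula $\tau(G)=\#\mathrm{Ext}^1(G,\mathbb{G}_m)/\#\Sh(G)$, together with $\mathrm{Ext}^1(G,\mathbb{G}_m)\cong\pic(G)\cong\mathbb{Z}/p\mathbb{Z}$ (\Cref{prop:picX}(3) applied with $\mathcal{B}=\{\beta\}$ and $p_\beta=p$) and $\Sh(G)=0$ (\Cref{thm:ShaTrivial}), giving $\tau(G)=p$. By \eqref{eqn:TamagawaNumberOfG} the $d\mathbf{g}$-volume of $G(\ad)/G(F)$ equals $p\cdot q^{-(p-1)}=d\mathbf{g}(\mathbb{K})$. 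By (1) the projection $\mathbb{K}\hookrightarrow G(\ad)/G(F)$ is injective, and its image is an open subset of the compact Hausdorff quotient whose measure equals the total measure; the image therefore exhausts $G(\ad)/G(F)$, yielding $G(\ad)=G(F)\oplus\mathbb{K}$. The only genuinely new input is the exponent-disjointness argument driving (1) and the hyperplane identification of $\mathcal{D}_{\beta,\fv}$ driving (2); the remaining steps are direct assemblies of results already established in the paper, so I expect the delicate bookkeeping of the volume formulas (matching $\tau_G(\mathbb{K})$ with $\tau(G)$) to be the main obstacle to executing cleanly.
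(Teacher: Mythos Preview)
Your proposal is correct, and for parts (1) and (3) you take a genuinely different route from the paper.

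For (1), the paper transfers the problem to the $p$-polynomial model $G'\subset\mathbb{G}_a^p$ given by $y_0^p+ty_1^p+\cdots+t^{p-1}y_{p-1}^p=y_{p-1}$ via the explicit isomorphism of \cite[Proposition VI.5.3]{Ost}, then invokes \Cref{prop:SA-close-property} to obtain the decomposition $G'(\ad)=G'(F)\oplus(\mathbb{H}_\infty\times\prod_{v\neq\infty}\mathcal{G}'(\ov))$ and checks that $\phi(\mathbb{K})$ lands in the second summand. Your argument is more elementary and self-contained: you stay in the projective model and exploit directly that the exponents $i+jp$ in the expansion of $f(x)$ are distinct, so $f(x)\in\mathbb{F}_q^*$ pins down $x=e_G$. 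This avoids the auxiliary group $G'$ entirely.

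For (2), your argument coincides with the paper's, supplying the detail (via the hyperplane identification of $\mathcal{D}_{\beta,\fv}$) behind the paper's bare assertion $\int_{\mathcal{G}(\ov)}d\mathbf{g}_v=1$.

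For (3), the paper cites Oesterl\'e's direct computation $\tau(G)=p$ \cite[Corollary VI.5.4]{Ost}, whereas you derive it from Rosengarten's formula together with $\pic(G)\cong\z/p\z$ and $\Sh(G)=0$ (\Cref{thm:ShaTrivial}). Both are valid; yours uses heavier but already-established machinery, while the paper's is a one-line citation. One small imprecision: an open subset of full measure in a compact group need not exhaust it (remove a point from $\z_p$). Your conclusion is saved because the image of $\mathbb{K}$ is also \emph{closed} (being compact), so its complement is open of measure zero, hence empty; equivalently, the image is an open subgroup, so the coset decomposition forces index $1$. You should make this step explicit.
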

\begin{proof}
	(1) Let $G'$ be the closed subgroup of $\mathbb{G}_a^p$ defined by
	$$y_0^p+ty_1^p+t^2 y_2^p + \cdots+ t^{p-1}y_{p-1}= y_{p-1}.$$
	We fix an $\mathfrak{o}_F$-model $\mathcal{G}'$ of $G'$ which is given by the same equation.
	By \cite[Proposition VI.5.3]{Ost}, there is an explicit $F$-groups isomorphism $$\phi: G \rightarrow G', \> (x_0:\cdots : x_{p-1})\rightarrow (F_0(x_0\cdots, x_{p-1}), \cdots, F_{p-1}(x_0,\cdots, x_{p-1})),$$
	where each $F_i$ is a quotient of two homogenous polynomials of degree $p$ having denominator equal to $x_0^p +tx_1^p + \cdots t^{p-1} x_{p-1}^p$. This shows that $\phi(\mathcal{G}(\ov))\subset \mathcal{G}'(\ov)$ for all $v\neq \infty$. Now, by \Cref{prop:SA-close-property}, 
	\begin{equation}\label{eqn:adelesDecom}
		G'(\ad)= G'(F) \oplus \left( \mathbb{H}_\infty \times \prod_{v\neq \infty}\mathcal{G}'(\ov) \right)
	\end{equation}
	where $\mathbb{H}_\infty=\mathfrak{m}_\infty^{p} \cap G'(F_\infty)$, the intersection being taken in $\mathbb{G}_a^{p}(F_\infty)$. In fact, by \eqref{eqn:f_infty}, we have that the valuation of $x_0^p +tx_1^p + \cdots t^{p-1} x_{p-1}^p$ at $v=\infty$ is strictly negative so that $v(F_i(x))$ is strictly positive for all $i$ and all $x\in G(F_\infty)$. Thus, $\mathbb{H}_\infty= G'(F_\infty)$. Therefore, $\phi(\mathbb{K})\subset  G(F_\infty) \times \prod_{v\neq \infty}\mathcal{G}'(\ov)$; by \eqref{eqn:adelesDecom}, this implies that $\mathbb{K}\cap G(F)=\{e_G\}$.
	
	$(2)$ By formula \eqref{Tau_G}, we have
	$$\tau_G(\mathbb{K})= q^{-\Dim(G)(g_F-1)} \left( \int_{G(F_v)} d\mathbf{g}_v \right) \prod_{v\neq \infty } \int_{\mathcal{G}(\ov)} d\mathbf{g}_v = q^{(p-1)}\cdot (p q^{-(p-1)}) = p,$$
	using that $\int_{\mathcal{G}(\ov)} d\mathbf{g}_v=1$, for all $v\neq \infty$, and that $\int_{G(F_v)} d\mathbf{g}_v=p q^{-(p-1)}$ by \Cref{lem:volFinfty}.
	
	(3) Note that $\tau_G(G(\ad)/G(F))=p$ by \cite[Corollary VI.5.4]{Ost}. Now, since $\tau_G((G(F)\oplus\mathbb{K})/G(F))=\tau_G(\mathbb{K})=p$ and $G(F)\oplus\mathbb{K}$ is cocompact in $G(\ad)$, we deduce that $G(\ad)=G(F)\oplus\mathbb{K}$.
\end{proof}

Therefore, we have shown that $X$ satisfies the conditions of \Cref{prop:HeightZetaPoles}. As $\pic(G)\cong \z/p\z$ and $\rho_\beta=1$, we have
$$\alpha^*(X)= \frac{ \rho_\beta^{-1}}{|\pic(G)|}=1/p.$$
Therefore, by \Cref{thm:Manin}, \Cref{thm:Leading-constant}, and the fact that $d_\rho=\rho_\beta=1$, we see that
$$ N_{\av}(\omega_X^{-1}, M) = N(\omega_X^{-1}, M)  \sim c_\rho { \log(q)} q^{M}, \quad \text{ as } M\rightarrow \infty,$$
where 
$c_\rho=  \tau_X(X(\ad))/p.$ We now  complete the proof of \Cref{thm:resGm}

\begin{prop}\label{prop:tau(X(ad))}
	We have
	$$\tau_X(X(\ad)) =\frac{p}{\log q} \prod_{v\neq \infty} \left(1-\frac{1}{q_v}\right) \left(1+ \frac{1}{q_v} + \frac{1}{q_v^2} +\cdots + \frac{1}{q_v^{p-1}} \right).$$
\end{prop}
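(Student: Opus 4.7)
The plan is to unpack \Cref{defn:Peyre's-constant} in the present setting, evaluate each factor using the preceding results, and multiply. Since $\mathcal{A}=\{\beta\}$ with $F_\beta=F$ (because $D_\beta$ is geometrically irreducible), $g_F=0$, and $\dim G=p-1$, the definition specialises to
\[
\tau_X(X(\ad)) \;=\; q^{p-1}\cdot\residue_{s=1}\zeta_F(s)\cdot\prod_v \zeta_{F,v}(1)^{-1}\tau_{X,v}(X(F_v)).
\]
For $F=\mathbb{F}_q(t)$ one has $\zeta_F(s)=\frac{1}{(1-q^{-s})(1-q^{1-s})}$, so a direct calculation yields $\residue_{s=1}\zeta_F(s)=\frac{1}{(1-q^{-1})\log q}$, and of course $\zeta_{F,v}(1)^{-1}=1-q_v^{-1}$.

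Next I would handle the finite places $v\neq \infty$. There the adelic metric on $\mathcal{O}(D_\beta)$ is given by the smooth integral model $\mathbb{P}^{p-1}_{\ov}$, so a standard partition of $X(F_v)=\mathcal{X}(\ov)$ by fibres of the reduction map (each of $\tau_{X,v}$-measure $q_v^{-(p-1)}$) produces $\tau_{X,v}(X(F_v))=\#\mathbb{P}^{p-1}(\fv)\cdot q_v^{-(p-1)}=\frac{1-q_v^{-p}}{1-q_v^{-1}}$. Hence the local Euler factor at such $v$ is $(1-q_v^{-1})\cdot\frac{1-q_v^{-p}}{1-q_v^{-1}}=(1-q_v^{-1})(1+q_v^{-1}+\cdots+q_v^{-(p-1)})$, precisely the factor appearing on the right-hand side of the desired formula.

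The main subtlety is at the place $\infty$, where the chosen metric is \emph{not} the integral-model metric. The key idea is to use the relation \eqref{eqn:dg=dtau}: a quick local check on each affine chart $U_j$, comparing $\omega = \omega_j/f_j$ with the factorisation $f = X_j^p f_j$, shows that the global constant $c$ relating $\omega$ to $\prod \mathsf{s}_\alpha^{-\rho_\alpha}$ can be taken to be $1$, so $|c|_v=1$ at every place and $d\tau_{X,\infty}=\|f\|_\infty\, d\mathbf{g}_\infty$. But the preliminary lemma giving $\|f\|_\infty\equiv 1$ (for every primitive projective representative) immediately collapses this to $\tau_{X,\infty}(X(F_\infty))=\vol(G(F_\infty),d\mathbf{g}_\infty)=pq^{-(p-1)}$ by \Cref{lem:volFinfty}. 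Thus the archimedean Euler factor is $(1-q^{-1})pq^{-(p-1)}$.

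Finally, multiplying everything:
\[
\tau_X(X(\ad)) \;=\; q^{p-1}\cdot\frac{1}{(1-q^{-1})\log q}\cdot (1-q^{-1})pq^{-(p-1)}\cdot\prod_{v\neq \infty}(1-q_v^{-1})\bigl(1+q_v^{-1}+\cdots+q_v^{-(p-1)}\bigr);
\]
the two pairs of factors $q^{\pm(p-1)}$ and $(1-q^{-1})^{\pm 1}$ cancel, leaving the claimed expression. The principal obstacle is keeping the two distinct metrisations straight: without explicitly observing that the $\infty$-adic metric is normalised so that $\|f\|_\infty\equiv 1$ (and therefore does \emph{not} come from the integral model), one cannot replace the Haar-measure computation of \Cref{lem:volFinfty} by a naive $\fv$-point count, and the final cancellation producing the factor $p/\log q$ would be invisible.
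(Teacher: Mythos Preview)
Your proof is correct and follows the same overall structure as the paper: unpack \Cref{defn:Peyre's-constant}, compute the residue of $\zeta_F$, handle $v=\infty$ via $\|f\|_\infty\equiv 1$ and \Cref{lem:volFinfty}, and multiply. The one genuine difference is at the finite places: you compute $\tau_{X,v}(X(F_v))=\#\mathbb{P}^{p-1}(\fv)\cdot q_v^{-(p-1)}$ directly from Weil's formula (fibres of reduction each have volume $q_v^{-(p-1)}$ for the integral-model metric), whereas the paper instead invokes its Denef formula \Cref{thm:Denef} together with the explicit valuation count of \Cref{prop:P^p-1-valuations-insep} to obtain $\hat{H}_v(\mathbf{0};s)=1+q_v^{-s}+\cdots+q_v^{-(p-1)s}$ and then specialises at $s=1$. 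Your route is more elementary and self-contained for this particular example; the paper's route has the advantage of exhibiting the example as a concrete instance of the general machinery developed earlier, and of displaying the full local height integral as a function of $s$ rather than just its value at $\rho$.
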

\begin{proof}
	By \Cref{defn:Peyre's-constant}, we have
	$$\tau_X(X(\ad))=q^{(p-1)} \residue_{s=1} \zeta_{F}(s) \times \prod_v (1-{q_v}^{-1}) \cdot \tau_{X,v}(X(F_v)) .$$
	By \cite[Theorem 5.9]{Rosen}, we see that $\residue_{s=1} \zeta_{F}(s)= q/((q-1)\log q)$. As $d\tau_{X,v}=\lVert f \rVert_v d\mathbf{g}_v$ and $\lVert f \rVert_\infty=1$, we have $\vol(G(F_\infty):d\tau_{X,\infty})=p q^{-(p-1)}$ by \Cref{lem:volFinfty}. Recall from \eqref{eqn:TauXH} that $$\tau_{X,v}(X(F_v))=\hat{H}_v(\mathbf{0};\rho).$$
	Let $v\neq \infty$. By applying Denef's formula (\Cref{thm:Denef}) and using \Cref{prop:P^p-1-valuations-insep}, we have 
	\begin{equation*}
		\begin{split}
			\hat{H}_v(\mathbf{0}; s)&= 1+ q_v^{-\Dim(X)} \sum_{\bar{x}\in \mathcal{D}_\beta (\fv)} q_v^{\beta(\bar{x})(1-s)}. 
			\\ &=  1+ q_v^{-\Dim(X)} \sum_{m=1}^{p-1} q_v^{m(1-s)} \# \{\bar{x}\in \mathcal{X}(\fv):\> \lVert f \rVert_v (x)=q_v^{m}, \> \forall x\in \redv^{-1}(\bar{x}) \} 
			\\&= 1+  \sum_{m=1}^{p-1} q_v^{m(-s)} = 1+ q_v^{-s} +q_v^{-2s} +\cdots + q_v^{-(p-1)s}.
		\end{split}
	\end{equation*}
	for all $s\in \cx$ with $\re(s)>0$. As $\rho=(1) \in \cx [\mathcal{O}(D_\beta)] \cong \pic(X)_\cx$, the result follows by a straightforward computation. 
\end{proof}


\begin{thebibliography}{9}
	

	
	\bibitem{Achet} R. Achet, \textit{Picard group of the forms of the affine line and of the additive group}, Journal of Pure and Applied Algebra Volume \textbf{221}(11)  (2017), 2838-2860.
	
	\bibitem{Achet2} R. Achet, \textit{Unirational Algebraic Groups}, 2019, available at https://hal.archives-ouvertes.fr/hal-02358528/document.
	
	\bibitem{Bastos} G. Bastos, \textit{Some results on the degree of imperfection of complete valued fields}, Manuscripta Math \textbf{25} (1978), 315–322.
	
	\bibitem{BatManin} V. Batyrev and Y. Manin, \textit{Sur le nombre des points rationnels de hauteur bornée des variétés algébriques}, Math. Ann. \textbf{286} (1990), 27-43.
	
	\bibitem{BeckerMac}	M. Becker and S. MacLane, \textit{The minimum number of generators for in- separable extensions}, Bull. Amer. Math. Soc.46 (1940), 182-186.
	
	\bibitem{Bor1} A. Borel, \textit{Linear algebraic groups}, Second enlarged edition, Gra. Texts Math. \textbf{126}, Springer, 1991.
	
	\bibitem{Neron} S. Bosch, W. Lutkebohmert, M. Raynaud, \textit{Néron Models}, Ergebnisse der Math. Springer Heidelberg, 21, 1990.
	
	\bibitem{BGM20} S. Boucksom, W. Gubler and F. Martin, 
		\textit{Non-Archimedean volumes of metrized nef line bundles},
		{{\'E}pijournal de G{\'e}om{\'e}trie Alg{\'e}brique},
		2020.
	
	\bibitem{BrowningDP} T. Browning, \textit{An overview of Manin’s conjecture for del Pezzo surfaces}, \textit{In Analytic number theory}, volume 7 of Clay Math. Proc., pages 39–55. Amer. Math. Soc., Providence, RI, 2007.
	
	\bibitem{Bri} M. Brion, \textit{On Linearization of Line Bundles}, J. Math. Sci. Univ. Tokyo \textbf{22} (2015), 113–147.
	
	
	\bibitem{CLYTig} A. Chambert-Loir, Y. Tschinkel, \textit{Igusa integrals and volume asymptotics in analytic and adelic geometry}, Confluences
	Math. \textbf{2}(3) (2010), 351–429.
	
	\bibitem{CLYT} A. Chambert-Loir, Y. Tschinkel, \textit{On the distribution of points of bounded height on equivariant compactifications of vector groups}, Invent. math. \textbf{148} (2002), 421–452.
	
	\bibitem{CLSurvey} A. Chambert-Loir, \textit{Lectures on height zeta functions: At the confluence of algebraic geometry,	algebraic number theory, and analysis}, MSJ Memoirs \textbf{21} (2010), 17-49.
	
	\bibitem{CTSK} J. Colliot-Thélène, A. Skorobogatov, \textit{The Brauer–Grothendieck Group}, Ergebnisse der Mathematik und ihrer Grenzgebiete. 3. Folge / A Series of Modern Surveys in Mathematics, Springer, 2021.
	
	\bibitem{CGP} B. Conrad, O. Gabber, G. Prasad, Pseudo-reductive Groups, Cambridge Univ. Press (2nd edition), 2015.
	
	\bibitem{Conrad1}  B. Conrad, \textit{Finiteness theorems for algebraic groups over function fields,} Compositio Math. \textbf{148} (2012), 555-639.
	
	\bibitem{Denef} J. Denef. \textit{On the degree of Igusa’s local zeta function}, Amer. J. Math. \textbf{109} (1987), 991–1008.
	
	\bibitem{Azur} Azur Đonlagić \textit{Brauer-Manin obstructions for homogeneous spaces of commutative affine algebraic groups over global fields}, arXiv:2410.12127 (2024).
	
	\bibitem{FMT} J. Franke, Y. Manin, Y. Tschinkel,\textit{ Rational points of bounded height on Fano varieties}. Invent. Math. \textbf{95} (1989), 421–435.
	
	\bibitem{Herrero} S. Herrero, T. Martínez, P. Montero, \textit{Counting rational points on Hirzebruch-Kleinschmidt varieties over global function fields}, arXiv:2408.07631 (2024).
	
	 \bibitem{Illusie} L. Illusie and M. Temkin, Exposé X. Gabber’s modification theorem (log smooth case), Astérisque
	363-364 (2014), 167–212. Travaux de Gabber sur l’uniformisation locale et la cohomologie étale des schémas quasi-excellents.
	
	\bibitem{KMT} T. Kambayashi, M. Miyanishi, M. Takeuchi, \textit{Unipotent Algebraic Groups}, Lecture Notes in Mathematics, vol 414. Springer, Berlin, Heidelberg (1974).
	
	\bibitem{LangWeil} S. Lang and A. Weil, \textit{Number of points of varieties in finite fields}, Amer. J. Math. \textbf{76} (1954), 819–827.
	
	\bibitem{Liu} Qing Liu, \textit{Algebraic Geometry and Arithmetic Curves}, Oxford University Press, 2006.
	
	MacLane, The minimum number of generators for in- separable extensions, Bull. Amer. Math. Soc. 46 (1940), 182-186
	
	\bibitem{ManinPan} Y. Manin and A. Panchishkin, \textit{Number theory I. Introduction to number theory}, Springer,
	Berlin, 1995.
	
	\bibitem{Mumford}  D. Mumford, J. Fogarty, F. Kirwan,\textit{ Geometric invariant theory}. Third edition, Ergeb. Math. Grenzgeb.  \textbf{34}(2), Springer-Verlag, Berlin, 1994.
	
	\bibitem{Peyre1} E. Peyre, \textit{Hauteurs et mesures de Tamagawa sur les variétés de Fano}, Duke Math. J. \textbf{79}(1) (1995), 101–218.
	
	\bibitem{Peyre} E. Peyre, \textit{Points de hauteur bornée sur les variétés de drapeaux en caractéristique finie}, Acta Arithmetica \textbf{152}(2) (1995), 185–216.
	
	\bibitem{Campana} M. Pieropan, A. Smeets, S. Tanimoto, A. Várilly-Alvarado, \textit{Campana points of bounded height on vector group compactifications}, Proc. London Math. Soc. \textbf{123}(3)  (2021), 57–101.
	
	\bibitem{Poonen}
	B. Poonen, \textit{Rational points on varieties}, Graduate Studies in Mathematics
	186, American Mathematical Society, Providence, 2017.
	
	\bibitem{Rosen} M. Rosen, \textit{Number Theory in Function Fields}, Springer-Verlag, New York, 2002.
	
	\bibitem{Rosenlicht} M. Rosenlicht, 
		\textit{Toroidal algebraic groups},
		Proceedings of the American Mathematical Society
		 \textbf{12} (6)  (1961), 984-988.
	
	\bibitem{ZRos1} Z. Rosengarten, \textit{Pathological Behavior of Arithmetic Invariants of Unipotent Groups}, Algebra and Number Theory \textbf{15}(7) (2021), 1593-1626.
	
	\bibitem{ZRos} Z. Rosengarten, \textit{Tamagawa Numbers And Other Invariants of Pseudo-reductive Groups Over Global
	Function Fields}, Algebra and Number Theory \textbf{15}(8) (2021), 1865–1920.
	
	\bibitem{RosNN} Z. Rosengarten, N. Tân, N. Thang, \textit{On The Galois And Flat Cohomology Of Unipotent Algebraic Groups Over Local And Global Function Fields II}, Michigan Mathematical Journal Advance Publication (2023).
	
	
	\bibitem{Serre1}  J.-P. Serre, \textit{Local Fields}, Graduate Texts in Mathematics, vol 67, 1979.
	
	
	\bibitem{Sal} P. Salberger, \textit{Tamagawa measures on universal torsors and points of bounded height on Fano varieties}, Astérisque \textbf{251} (1998), 91–258. Nombre et répartition de points de hauteur bornée
	(Paris, 1996).
	
	\bibitem{Ti1} J. Tits,\textit{ Lectures on algebraic groups}, notes by P. André and D. Winter. Yale University, 
	1968.
	
	\bibitem{StacksEx} The Stacks Project Authors, \textit{Stacks Project}, 2024. 
	
	\bibitem{Ost} J. Oesterlé,\textit{ Nombres de Tamagawa et groupes unipotents en caractéristique p}, Inventiones mathematicae \textbf{78}(1) (1984), 13–88.
	
	\bibitem{Weil1} A. Weil, \textit{Basic Number Theory}, Springer Berlin Heidelberg, 1973.
	
	\bibitem{Weil2} A. Weil, \textit{Adeles and algebraic groups}, Progr. Math., no. 23, Birkhäuser, 1982.
	

	
	
\end{thebibliography}
\end{document}